\newlength\PullBackLength
\newcommand\PullBack[1][2]{%
  \setlength{\global\PullBackLength}{#1em}%
  \kern\PullBackLength%
  &
  \kern-\PullBackLength}
\newcommand{\ba}{\setminus}
\newcommand{\btu}{\bigtriangleup}
\newcommand{\btus}{\underline{\bigtriangleup}}
\newcommand{\F}{\mathcal{F}}
\newtheorem{theorem}{Theorem}[section]
\newtheorem{sublemma}{}[theorem]
\newtheorem{proposition}[theorem]{Proposition}
\newtheorem{lemma}[theorem]{Lemma}
\newtheorem{corollary}[theorem]{Corollary}
\theoremstyle{definition}
\newtheorem{definition}[theorem]{Definition}
\newtheorem{axiom}[theorem]{Axiom}
\theoremstyle{remark}
\newtheorem{remark}[theorem]{Remark}
\newtheorem{example}[theorem]{Example}
\numberwithin{equation}{section}
\DeclareMathOperator{\cell}{cell}
\newcommand{\spn}{w}
\begin{document}

\title{Matroids, delta-matroids and embedded graphs}
\author[cc]{Carolyn Chun}
\ead{chun@usna.edu}
\author[im]{Iain Moffatt}
\ead{iain.moffatt@rhul.ac.uk}
\author[sn]{Steven D. Noble\corref{cor1}}
\ead{steven.noble@brunel.ac.uk}
\author[rr]{Ralf Rueckriemen\fnref{fn1}}
\ead{ralf@rueckriemen.de}

\address[cc]{Mathematics Department, United States Naval Academy, Chauvenet Hall, 572C Holloway Road, Annapolis, Maryland 21402-5002, United States of America}
\address[im]{Department of Mathematics, Royal Holloway University of London, Egham, Surrey, TW20 0EX, United Kingdom}
\address[sn]{Department of Mathematics, Brunel University, Uxbridge, Middlesex, UB8 3PH, United Kingdom}
\address[rr]{   Aschaffenburger Strasse 23,  10779, Berlin}
\fntext[fn1]{Ralf Rueckriemen was financed by the DFG through grant RU 1731/1-1.}

\date{\today}

\begin{abstract}
Matroid theory is often thought of as a generalization of graph theory.
In this paper we propose an analogous correspondence between embedded graphs and delta-matroids.
We show that delta-matroids arise as the natural extension of graphic matroids to the setting of embedded graphs. We show that various basic ribbon graph operations and concepts have delta-matroid analogues, and  illustrate how the connections between embedded graphs and delta-matroids can be exploited.
Also, in direct analogy with the fact that the Tutte polynomial is matroidal, we show that several polynomials of embedded graphs from the literature, including the  Las Vergnas, Bollab\'as-Riordan and Krushkal polynomials, are in fact delta-matroidal. \end{abstract}

\begin{keyword}
matroid \sep delta-matroid \sep ribbon graph \sep quasi-tree \sep partial dual \sep topological graph polynomial
\MSC[2010]{05B35, 05C10, 05C31, 05C83}
\end{keyword}

\maketitle

\section{Overview}

Matroid theory is often thought of as a generalization of graph theory. Many results in graph theory turn out to be special cases of results in matroid theory. This is beneficial in two ways.

First, graph theory can serve as an excellent guide for studying matroids.
As reported by Oxley, in~\cite{Oxley01}, Tutte famously observed that, ``If a theorem about graphs can be expressed in terms of edges and circuits alone it probably exemplifies a more general theorem about matroids.''
Perhaps one of the most spectacular illustrations of the effect of graph theory on matroid theory can be found in Geelen, Gerards and Whittle's recent and at the time of writing unpublished result that, for any finite field, the class of matroids that are representable over that field is well-quasi-ordered by the minor relation. This profound result is the matroid analogue of an equally profound result that came out of Robertson and Seymour's Graph Minors Project, in which, they proved that graphs are well-quasi-ordered by the minor relation~\cite{RS20}. Rather than the result itself, here  we want to focus on the fact that, to quote a recent statement of Whittle~\cite{whittle} about his work with Geelen and Gerards,
``It would be inconceivable to prove a structure theorem for matroids without the Graph Minors Structure Theorem as a guide''.

Second, insights from matroid theory can lead to new results about graphs. For example, Wu~\cite{Wu} established an upper bound for the number of edges of a loopless 2-connected graph, which was an improvement on existing results suggested by matroid duality.
Graph theory and matroid theory are mutually enriching, and this is the subject of~\cite{Oxley01} by Oxley.

The key purpose of this paper is to propose and study a similar correspondence between embedded graphs and delta-matroids.

Delta-matroids, introduced by Bouchet~\cite{ab1}, can be seen as a generalization of matroids. Where a matroid has bases, a delta-matroid has feasible sets.  These satisfy a symmetric exchange axiom, but do not all have to be of the same size.  We give a formal definition in the next section. The greater generality of delta-matroids allows us to capture not only information about a graph, but also about its embedding in a surface.  Bouchet was the first to observe a connection between embedded graphs and delta-matroids in~\cite{ab2}.  Our  approach is more direct than  his and has the advantage that it enables us to exploit the theory of ribbon graphs, much of which has developed since Bouchet did his work.

We will describe embedded graphs as ribbon graphs. The cycle matroid of a connected graph is constructed by taking the collection of spanning trees of the graph as its bases. In a connected ribbon graph, the spanning-trees are precisely the genus-zero spanning ribbon subgraphs that have exactly one boundary component. In the context of ribbon graphs, the genus-zero restriction is artificial, and it is subgraphs with exactly one boundary component, called \emph{quasi-trees} that play the role of trees. It turns out that the edge set of a ribbon graph together with its  spanning quasi-trees form a delta-matroid.

Moreover, we will see that this delta-matroid arises as the natural extension of a cycle matroid to the setting of embedded graphs, and that the delta-matroid structure follows from basic properties of surfaces.
We show that various concepts related to cellularly embedded graphs are special cases of concepts for delta-matroids. Because of this compatibility between the two structures, we extend Bouchet's initial ideas and propose that there is a correspondence between embedded graphs and delta-matroids that is analogous to the one between graphs and matroids.  We  justify this proposition  by illustrating how results from topological graph theory can be used to guide the development of delta-matroid theory, just as graph theory often guides matroid theory.  We also see that several polynomials of embedded graphs, including the Tutte, Las Vergnas, Bollob\'as-Riordan and Krushkal polynomials, are in fact delta-matroidal objects, just as many graph polynomials are matroidal.

The paper is structured as follows.
In Section~\ref{matroidsanddelta-matroids}, we give an overview of some relevant properties of matroids and delta-matroids.  Section~\ref{ribbongraphs} contains some background on cellularly embedded graphs. Most of the time, we will use the language of ribbon graphs instead of cellularly embedded graphs.  These are equivalent concepts (see Figure~\ref{f.desc}), but ribbon graphs have the advantage of being closed under the natural minor operations.

In Section~\ref{s4}, we describe how delta-matroids arise from ribbon graphs, emphasising that they arise as the natural extensions of various classes of matroids associated with graphs. We show that some of  these delta-matroids, albeit in a different language, appeared in Bouchet's foundational work in delta-matroids. In Section~\ref{s5} we discuss their connections with graphic matroids and describe how basic properties of a ribbon graph are encoded in its delta-matroid. We provide evidence of the basic compatibility between delta-matroids and ribbon graphs.   In particular, we prove that one of the most fundamental operations of delta-matroids, the twist, is the delta-matroid analogue of a partial dual of a ribbon graph, which turns out to be a key result in connecting the two areas. We describe how to see edge structure and connectivity in a ribbon graph in terms of its delta-matroid, and show how results on delta-matroid connectivity inform ribbon graph theory. We also demonstrate that excluded minor characterisations that have appeared in both the delta-matroid and ribbon graph literature are translations of one another.

In Section~\ref{s6}, we discuss various polynomials. Some well-known graph polynomials, and in particular the Tutte polynomial, are properly understood as matroid polynomials, rather than graph polynomials. There has been considerable recent interest in extensions of the Tutte polynomial to  graphs embedded in surfaces. Three generalizations of the Tutte polynomial to embedded graphs in the literature are the Las~Vergnas polynomial, the Bollob\'as-Riordan polynomial, and the Kruskal polynomial. We show that each of these generalizations is determined by the delta-matroids of ribbon graphs, and that the ribbon graph polynomials are special cases of more general  delta-matroid polynomials. That is, while the Tutte polynomial is properly a matroid polynomial, its topological extensions are properly delta-matroid polynomials.

Our results here offer new perspectives on delta-matroids. We illustrate here a fundamental interplay between ribbon graphs and delta-matroids, that is analogous to the interplay between graphs and matroids. By doing so we offer a new approach to delta-matroid theory.

\section{Matroids and delta-matroids}
\label{matroidsanddelta-matroids}
Our terminology follows~\cite{ab1} and~\cite{Oxley11}, except where explicitly stated.

\subsection{Set systems and delta-matroids}
A \emph{set system} is a pair $D=(E,{\mathcal{F}})$ where $E$ is a set, which we call the \emph{ground set}, and $\mathcal{F}$ is a collection of subsets of $E$. The members of $\mathcal{F}$ are called \emph{feasible sets}. A set system is \emph{proper} if $\mathcal{F}$ is not empty; it is \emph{trivial} if $E$ is empty.
For a set system $D$ we will often use $E(D)$ to denote its ground set and $\mathcal{F}(D)$ its collection of feasible sets. In this paper we will always assume that $E$ is a finite set and will do so without further comment.

The \emph{symmetric difference} of sets $X$ and $Y$, denoted by $X\bigtriangleup Y$,  is  $(X\cup Y)-(X\cap Y)$.

A \emph{delta-matroid} is a proper set system $D=(E,{\mathcal{F}})$ that satisfies the Symmetric Exchange Axiom:
\begin{axiom}[Symmetric Exchange Axiom]
\label{sea}
 For all $(X,Y,u)$ with $X,Y\in \mathcal{F}$ and $u\in X\bigtriangleup Y$,  there is an element $v\in X\bigtriangleup Y$ such that $X\bigtriangleup \{u,v\}$ is in $\mathcal{F}$.
\end{axiom}

Note that we allow $v=u$ in the Symmetric Exchange Axiom.

If the feasible sets of a delta-matroid are equicardinal, then the delta-matroid is a \emph{matroid} and we refer to its feasible sets as its \emph{bases}.
If a set system forms a matroid $M$, then we usually denote $M$ by $(E,\mathcal{B})$,  and often use $\mathcal{B}(M)$ to denote its collection of bases $\mathcal{B}$. It is not hard to see that the definition of a matroid given here is equivalent to the `usual'  definition of a matroid through bases given in, for example,~\cite{Oxley11,Webook}.

Throughout this paper, we will often omit the set brackets in the case of a single element set.
For example, we write $E-e$ instead of $E-\{e\}$, or $F\cup e$ instead of $F\cup \{e\}$.

\subsection{Graphic matroids}
For a graph $G=(V,E)$ with $k$ connected components, let $\mathcal{B}$ be the edge sets of the maximal spanning forests of $G$.
$\mathcal{B}$ is obviously non-empty, and its elements are equicardinal  since each spanning forest of $G$ has $|V|-k$ edges. It is not too hard to see that the Symmetric Exchange Axiom holds, and so
the set system $M(G)=(E,\mathcal{B})$ is a  matroid, which is called the \emph{cycle matroid of $G$}.
Any matroid that is the cycle matroid of a graph is a \emph{graphic matroid}.

\begin{example}\label{e.mundane1}
If $G$ is the graph shown in Figure~\ref{f.desca}, then $M(G)=(E,\mathcal{B})$ where $E=\{1,2,3,4\}$ and $ \mathcal{B}= \{\{1\}, \{2\} \}$.
\end{example}

\subsection{Matroid rank}
Let $M$ be a matroid with ground set $E$.
 A subset $I$ of $E$ is an \emph{independent set} of $M$ if and only if it is a subset of a basis of $M$.
A \emph{rank function} is defined for all subsets of the ground set of a matroid. Its value on a subset $A$ of $E$ is the cardinality of the largest independent set contained in $A$.
The rank of a set $A$ is written $r_M(A)$, or just $r(A)$ if the matroid is clear from the context.
Thus,
$ r_M(A)=\text{max}\{|A\cap B| \mid B\in\mathcal{B}(M)\}$.
We say that the \emph{rank of $M$}, written $r(M)$, is equal to $r(E)$, which is equal to $|B|$, for any $B\in\mathcal{B}(M)$.

\begin{example}\label{e.mundane2}
For a graph $G=(V,E)$, the  rank function of its cycle matroid $M=M(G)$ is given by $r(A)=|V|-k(A)$, where $k(A)$ is the number of connected components of the spanning subgraph $(V,A)$ of $G$, and $A\subseteq E$.
\end{example}

\subsection{Width and evenness}
For a delta-matroid $D=(E,\mathcal{F})$, let $\mathcal{F}_{\max}(D)$ and $\mathcal{F}_{\min}(D)$ be the set of feasible sets with maximum and minimum cardinality, respectively. We will usually omit $D$ when the context is clear.
Let $D_{\max}:=(E,\mathcal{F}_{\max})$ and let $D_{\min}:=(E,\mathcal{F}_{\min})$.
Then $D_{\max}$ is the \emph{upper matroid} and $D_{\min}$ is the \emph{lower matroid} of $D$. These matroids were defined by Bouchet in~\cite{ab2}.
It is straightforward to show that the upper matroid and the lower matroid are indeed matroids.
The \emph{width of $D$}, denoted by $\spn(D)$, is defined by  \[\spn(D):=r(D_{\max})-r(D_{\min}).\]
Thus the width of $D$ is the difference between the sizes of its largest and smallest feasible sets.

If the sizes of the feasible sets of a delta-matroid all have the same parity, then we say that the delta-matroid is \emph{even}.
Otherwise, we say that the delta-matroid is \emph{odd}.
In particular, every matroid is an even delta-matroid.
 It is perhaps worth emphasising that an even delta-matroid need not have feasible sets of even cardinality.

It is convenient to record the following useful result here.
\begin{lemma}\label{lem:useful}
Let $D=(E,\mathcal{F})$ be a delta-matroid, let $A$ be a subset of $E$ and let $s_0 = \min\{|B \cap A| \mid B\in\mathcal {B}(D_{\min})\}$. Then for any $F\in\mathcal{F}$ we have $|F\cap A|\geq s_0$.
\end{lemma}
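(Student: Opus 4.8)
The plan is to prove the inequality by induction on $|F|$. Write $w$ for the common size of the bases of the matroid $D_{\min}$, which is the minimum cardinality of a feasible set of $D$, and recall that $\mathcal{B}(D_{\min})=\mathcal{F}_{\min}$. The base case is $|F|=w$, where $F$ is itself a basis of $D_{\min}$ and so $|F\cap A|\ge s_0$ holds directly from the definition of $s_0$. For the inductive step I would isolate the claim that \emph{any feasible set $F$ with $|F|>w$ properly contains a feasible set of strictly smaller cardinality}; granting this, the resulting $F'\subsetneq F$ with $|F'|<|F|$ satisfies $F'\cap A\subseteq F\cap A$, so the induction hypothesis yields $|F\cap A|\ge|F'\cap A|\ge s_0$, completing the induction.

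To prove the claim, the first move is to replace an arbitrary minimum feasible set by one contained in $F$. I would pick $B\in\mathcal{F}_{\min}$ minimising $|B\ba F|$ and show $B\subseteq F$: if some $u$ lay in $B\ba F\subseteq B\btu F$, then the Symmetric Exchange Axiom applied to $B$, $F$, $u$ returns $v\in B\btu F$ with $B\btu\{u,v\}\in\mathcal{F}$; the cases $v=u$ and $v\in B\ba F$ produce a feasible set smaller than $w$ and are impossible, so $v\in F\ba B$ and $(B\ba u)\cup v$ is a member of $\mathcal{F}_{\min}$ with smaller symmetric difference from $F$, contradicting the choice of $B$. Now that $B\subseteq F$, and $F\ba B=F\btu B$ is nonempty because $|F|>|B|$, I pick $u\in F\ba B$ and apply the Symmetric Exchange Axiom to $F$, $B$, $u$; the returned $v$ lies in $F\btu B=F\ba B\subseteq F$, so $F\btu\{u,v\}$ is just $F$ with one or two of its elements deleted, giving the required $F'$.

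The step I expect to be the main obstacle is precisely this claim, and the reason a direct approach fails illustrates why: applying the Symmetric Exchange Axiom between $F$ and an arbitrary minimum feasible set $B$ may only trade an element of $F\ba B$ for one of $B\ba F$, producing a feasible set of the \emph{same} cardinality as $F$ whose intersection with $A$ could even be larger. Forcing $B\subseteq F$ beforehand rules this out, since then every permitted exchange merely deletes elements of $F$ and so cannot increase $|F\cap A|$.
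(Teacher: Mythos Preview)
Your proof is correct, but it takes a different route from the paper's. The paper argues directly by contradiction: supposing $|F\cap A|<s_0$, it picks $F'\in\mathcal{F}_{\min}$ with $|F'\cap A|=s_0$ and $|F'\cap F\cap A|$ maximal, then a single application of the Symmetric Exchange Axiom at an element $x\in A\cap(F'-F)$ produces a new member of $\mathcal{F}_{\min}$ contradicting the maximality of $|F'\cap F\cap A|$. Your argument instead establishes the stronger structural fact that every feasible set contains a basis of $D_{\min}$ (hence, by repeated application, a descending chain of feasible sets down to $\mathcal{F}_{\min}$), and deduces the lemma from monotonicity of $|{\cdot}\cap A|$ along this chain. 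The paper's proof is shorter and needs only one exchange, while your intermediate claim is a tidy result in its own right that one could reuse elsewhere; both are perfectly valid.
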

\begin{proof}
We proceed by contradiction. If $s_0=0$, then there is nothing to prove, so we can assume that $s_0>0$. Suppose that $F\in\mathcal F$ and $|F\cap A|<s_0$. Choose $F'\in \mathcal {F}_{\min}$ with $|F'\cap A|=s_0$ and $|F'\cap F\cap A|$ as large as possible. Now there exists $x\in A\cap(F'- F)$ and so $x\in F' \bigtriangleup F$. Hence there exists $y$ belonging to $F' \bigtriangleup F$ such that $F''=F'\bigtriangleup \{x,y\} \in \mathcal F$. Because $F'\in\mathcal {F}_{\min}$, we have $y\in F-F'$. And because $|F' \cap A|=s_0$, we must have $y\in F\cap A$. But then $F''\in\mathcal {F}_{\min}$, $|F''\cap A|=s_0$ and $|F''\cap F\cap A| > |F'\cap F\cap A|$, contradicting the choice of $F'$.
\end{proof}

\subsection{Twists, duals, loops, coloops, and minors}
Twists, introduced by Bouchet in~\cite{ab1}, are one of the fundamental operations of delta-matroid theory.
\begin{definition}
Let $D=(E,{\mathcal{F}})$ be a set system.
For $A\subseteq E$, the  \emph{twist} of $D$ with respect to $A$, denoted by $D* A$, is given by $(E,\{A\bigtriangleup X \mid  X\in \mathcal{F}\})$.
The \emph{dual} of $D$, written $D^*$, is equal to $D*E$.
\end{definition}

It follows easily from the identity $(F'_1\btu A)\btu(F'_2\btu A)=F'_1\btu F'_2$ that the twist of a delta-matroid is also a delta-matroid.
We restate this fact in the following lemma.
\begin{lemma}[Bouchet~\cite{ab1}]
\label{obvious}
Let $D$ be a delta-matroid and let $A$ be a subset of $E(D)$.
Then $D*A$ is a delta-matroid.
\end{lemma}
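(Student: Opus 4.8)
The plan is to verify directly that $D*A$ is a proper set system satisfying the Symmetric Exchange Axiom, leaning on the fact that symmetric difference makes the collection of subsets of $E$ into an abelian group in which every element is its own inverse. In particular $\btu$ is associative and $A\btu A=\emptyset$, which together give the identity $(F'_1\btu A)\btu(F'_2\btu A)=F'_1\btu F'_2$ quoted just before the lemma; this is the one substantive point, and everything else is bookkeeping.

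Properness is immediate: since $D$ is a delta-matroid, $\mathcal{F}(D)$ is non-empty, hence so is $\mathcal{F}(D*A)=\{A\btu X\mid X\in\mathcal{F}(D)\}$.

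For the Symmetric Exchange Axiom I would take arbitrary feasible sets $X'$ and $Y'$ of $D*A$, write $X'=A\btu X$ and $Y'=A\btu Y$ with $X,Y\in\mathcal{F}(D)$, and suppose $u\in X'\btu Y'$. The first step is to note, using the identity above, that $X'\btu Y'=(A\btu X)\btu(A\btu Y)=X\btu Y$, so $u\in X\btu Y$ as well. Applying Axiom~\ref{sea} for $D$ to $X$, $Y$ and $u$ yields an element $v\in X\btu Y$ with $X\btu\{u,v\}\in\mathcal{F}(D)$. By definition of the twist, $A\btu(X\btu\{u,v\})\in\mathcal{F}(D*A)$, and by associativity this set equals $(A\btu X)\btu\{u,v\}=X'\btu\{u,v\}$. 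Since also $v\in X\btu Y=X'\btu Y'$, the element $v$ witnesses Axiom~\ref{sea} for $D*A$ with respect to $X'$, $Y'$ and $u$. As these were arbitrary, $D*A$ satisfies the Symmetric Exchange Axiom and is therefore a delta-matroid.

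I do not expect any real obstacle here; the only thing to watch is the symmetric-difference algebra, and the case $v=u$ permitted in Axiom~\ref{sea} causes no difficulty — if the Symmetric Exchange Axiom for $D$ returns $v=u$, then $X\btu\{u,v\}=X$ and correspondingly $X'\btu\{u,v\}=X'\in\mathcal{F}(D*A)$, so the same $v$ still works for $D*A$. Since $D^*=D*E$ by definition, taking $A=E$ gives as an immediate corollary that the dual of a delta-matroid is again a delta-matroid.
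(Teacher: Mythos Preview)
Your proof is correct and is exactly the argument the paper has in mind: the paper states the lemma immediately after noting that it ``follows easily from the identity $(F'_1\btu A)\btu(F'_2\btu A)=F'_1\btu F'_2$,'' and your write-up simply spells out that derivation together with the trivial properness check.
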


Although it is always a delta-matroid, a twist of a matroid $M=(E,\mathcal{B})$ need not be a matroid. (For example, if $M=(\{1,2\}, \{\{1\},\{2\}\}))$ then $M\ast\{1\}$ has feasible sets $\{\emptyset, \{1,2\}\}$ and so is not a matroid.)
However, its dual $M^*=M\ast E$ is always a matroid.
The rank function of $M^*$ is given by
\begin{equation}\label{eq:matrank}
r_{M^*}(A)=r_M(E-A)+|A|-r_M(E).
\end{equation}

For a delta-matroid $D=(E,\mathcal{F})$, and $e\in E$, if $e$ is in every feasible set of $D$, then we say that $e$ is a \emph{coloop of $D$}.
If $e$ is in no feasible set of $D$, then we say that $e$ is a \emph{loop of $D$}.
Note that a coloop or loop of $D$ is a loop or coloop, respectively, of $D*A$ for any subset $A$ of $E$ containing $e$.

 If $e$ is not a coloop, then, following Bouchet and Duchamp~\cite{BD91}, we define $D$ \emph{delete} $e$, written $D\ba e$, to be
 \[D\ba e:=(E-e, \{F \mid F\in \mathcal{F}\text{ and } F\subseteq E-e\}).\]
 If $e$ is not a loop, then we define $D$ \emph{contract} $e$, written $D/e$, to be \[D/e:=(E-e, \{F-e \mid F\in \mathcal{F}\text{ and } e\in F\}).\]
 If $e$ is a
loop or a coloop, then one of $D\ba e$ and $D/ e$ has already been
defined, so we can set $D/e=D\ba e$.

Both $D\ba e$ and $D/e$ are delta-matroids (see~\cite{BD91}).
Let $D'$ be a delta-matroid obtained from $D$ by a sequence of deletions and contractions.  Then $D'$ is independent of the order of the  deletions and contractions used in its construction (see~\cite{BD91}) and $D'$ is called a \emph{minor} of $D$.
If $D'$ is formed from $D$ by deleting the elements of $X$ and contracting the elements of $Y$ then we write
$D'=D\setminus X/Y$. The \emph{restriction} of $D$ to a subset $A$ of $E$, written $D|A$, is equal to $D\ba (E-A)$.

Note that $D^*\ba e=(D/e)^*$. The next result shows that deletion, contraction and twists are also related. It is a reformulation of  Property~2.1 of~\cite{BD91}.
\begin{lemma}\label{le:delcondual1}
For a delta-matroid $D$ and distinct elements $e$ and $f$ of $E(D)$, we have
\begin{enumerate}
\item  $D\setminus e = ((D*f) \setminus e)*f$ and $D/e = ((D*f) / e)*f$;
\item $D\setminus e = (D*e) / e$ and $D/e = (D*e) \setminus e$.
\end{enumerate}
\end{lemma}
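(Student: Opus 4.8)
The plan is to prove both parts by computing the collections of feasible sets of each side directly from the definitions. The fact used repeatedly is that for a fixed set $A$ the map $X\mapsto A\btu X$ is an involution, so that $(D'*A)*A=D'$ for every set system $D'$; this is immediate from $A\btu A=\emptyset$. Throughout, the only genuine subtlety is bookkeeping with the loop/coloop conventions so that every deletion and contraction written down is actually defined, and this is the one place where the hypothesis $e\neq f$ in part~(1) is used essentially.

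For part~(1) I would first isolate the statement that a twist commutes with deleting or contracting a \emph{different} element: assuming $e\neq f$, if $e$ is not a coloop of $D$ then $(D*f)\setminus e=(D\setminus e)*f$, and if $e$ is not a loop of $D$ then $(D*f)/e=(D/e)*f$. Each of these is a one-line computation of feasible sets, using that, because $e\neq f$, membership of $e$ is unchanged by $\btu\{f\}$, that is, $e\in\{f\}\btu X$ iff $e\in X$, and that $\btu\{f\}$ commutes with removing $e$, that is, $(\{f\}\btu X)-e=\{f\}\btu(X-e)$. Applying $*f$ to these and using the involution property gives at once $D\setminus e=((D*f)\setminus e)*f$ (when $e$ is not a coloop) and $D/e=((D*f)/e)*f$ (when $e$ is not a loop). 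Since $e$ and $f$ are distinct, $e$ is a loop, respectively coloop, of $D$ exactly when it is a loop, respectively coloop, of $D*f$; so if $e$ is a coloop of $D$ the two identities of part~(1) say the same thing (their left sides agree since $D\setminus e=D/e$, their right sides since $(D*f)\setminus e=(D*f)/e$), and this common identity is the contraction identity just established, and symmetrically if $e$ is a loop. This covers all cases.

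For part~(2) there is no restriction on $e$, so I would argue directly, first noting that $e$ is a loop, respectively coloop, of $D$ exactly when it is a coloop, respectively loop, of $D*e$. When $e$ is neither a loop nor a coloop of $D$, all of $D\setminus e$, $D/e$, $(D*e)\setminus e$, $(D*e)/e$ are given by their defining formulas, and since $e\in\{e\}\btu X$ iff $e\notin X$, in which case $\{e\}\btu X=X\cup e$, one obtains $\mathcal{F}((D*e)/e)=\{X\mid X\in\mathcal{F}(D),\ e\notin X\}=\mathcal{F}(D\setminus e)$, and dually $\mathcal{F}((D*e)\setminus e)=\{X-e\mid X\in\mathcal{F}(D),\ e\in X\}=\mathcal{F}(D/e)$. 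When $e$ is a loop or a coloop of $D$, the two identities coincide (left sides since $D\setminus e=D/e$, right sides since $e$ is then a coloop or loop of $D*e$), and evaluating the common right side with whichever of the deletion/contraction formulas is valid for $D*e$ shows it equals $\mathcal{F}(D)$ when $e$ is a loop and $\{X-e\mid X\in\mathcal{F}(D)\}$ when $e$ is a coloop, which is $\mathcal{F}(D\setminus e)$ in both cases.

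The main obstacle, such as it is, is organisational rather than mathematical: none of the individual feasible-set equalities is hard, but one must be careful that operations such as $(D*e)\setminus e$ and $(D*e)/e$ are given by the usual formulas only away from the degenerate cases, and that in those degenerate cases it is exactly the loop/coloop conventions that make the stated identities hold. Casting part~(1) in terms of the commutation formulas also reduces its nondegenerate content to one short calculation plus the involution identity.
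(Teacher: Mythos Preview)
Your proof is correct. The paper does not actually prove this lemma: it merely states that the result ``is a reformulation of Property~2.1 of \cite{BD91}'' and moves on. So there is no paper proof to compare against, and your direct verification from the definitions --- checking feasible-set equalities in the generic case and then reducing the loop/coloop cases to one another via the convention $D\setminus e=D/e$ --- is exactly the sort of argument one would expect to fill the gap. Your bookkeeping is sound: in particular you correctly observe that for $e\ne f$ the loop/coloop status of $e$ is unchanged by $*f$, and that for $*e$ it is swapped, which is precisely what makes the degenerate cases collapse as you describe.
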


Using Lemma~\ref{le:delcondual1}  and induction we obtain the following.
\begin{proposition}\label{pr:delcondualmany}
Let $D$ be a delta-matroid and let $A,X$, and $Y$ be subsets of $E(D)$ with $X\cap Y=\emptyset$. Then
\[(D*A)\setminus X/Y = (D \ba ((X-A)\cup (Y\cap A))  / ((Y-A)\cup (X\cap A))) * (A-X-Y).\]
In particular, $D\ba X=(D^*/X)^*$ and, when $A$ is the disjoint union of $X$ and $Y$, we have
\[ (D*A)\ba X/Y = D \ba Y  / X . \]
\end{proposition}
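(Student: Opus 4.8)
The plan is to prove the general twist–deletion–contraction formula by induction on $|A|$, using Lemma~\ref{le:delcondual1} as the engine for peeling off one element of $A$ at a time, and then to derive the three special cases as corollaries. First I would set up the induction: when $A=\emptyset$ the claimed identity reads $D\setminus X/Y = D\setminus X/Y$, so the base case is trivial. For the inductive step, pick $e\in A$ and write $A = A'\cup e$ with $e\notin A'$. The element $e$ falls into exactly one of three cases according to whether $e\in X$, $e\in Y$, or $e\notin X\cup Y$ (recall $X\cap Y=\emptyset$). In each case I would use part~(1) or part~(2) of Lemma~\ref{le:delcondual1} to move the twist by $\{e\}$ past the relevant delete/contract, thereby reducing $(D*A)\setminus X/Y$ to an expression of the form $(D'*A')\setminus X'/Y'$ for a suitable minor $D'$ of $D$ and suitable $X',Y'$, to which the inductive hypothesis applies. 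Concretely: if $e\notin X\cup Y$, then $e$ survives into $(D*A)\setminus X/Y$ as a coloop-or-not issue is irrelevant since we are not deleting or contracting it, and I would simply carry the twist by $e$ along, noting that $(D*A)\setminus X/Y = ((D*A')\setminus X/Y)*e$ by repeated application of part~(1); since $e$ ends up in $A-X-Y$, this matches the right-hand side. If $e\in X$, then part~(2) gives $(D*e)\setminus e = D/e$, so twisting by $e$ and then deleting $e$ has the same effect as contracting $e$ in the original; thus $e$ moves from the "deleted" pile on the left to the "contracted" pile on the right, matching the term $(Y\cap A)$ — wait, matching $(X\cap A)$ in the contraction index on the right. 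Similarly if $e\in Y$, part~(2) gives $(D*e)/e = D\setminus e$, so $e$ moves to the deletion index, matching $(Y\cap A)$ on the right.

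The one subtlety to handle with care is the commutation of the twist by $\{e\}$ with deletions and contractions of \emph{other} elements, which is exactly what part~(1) of Lemma~\ref{le:delcondual1} provides: $D\setminus f = ((D*e)\setminus f)*e$ and $D/f = ((D*e)/f)*e$ for $f\neq e$. Iterating this lets me pull a single twist $*\{e\}$ all the way to the outside (or inside) of any string of deletions and contractions not involving $e$, at the cost of possibly replacing $D$ by a minor. So the bookkeeping is: decompose $A = \{e\}\cup A'$, commute $*\{e\}$ outward past all the $X$- and $Y$-operations that do not touch $e$, then apply part~(2) (if $e\in X\cup Y$) or simply recognize $e$ as a leftover twist element (if $e\notin X\cup Y$), and finally invoke the inductive hypothesis on $A'$. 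I expect the main obstacle to be purely notational — keeping the four index sets $(X-A)\cup(Y\cap A)$, $(Y-A)\cup(X\cap A)$, $A-X-Y$, and their updates under removal of $e$ straight through the induction — rather than anything conceptually deep; a careful case split on where $e$ lies relative to $X$ and $Y$ makes each update routine.

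Finally I would read off the three advertised special cases. Taking $A = E$, $Y = \emptyset$ in the general formula and using $D^* = D*E$ gives $D\setminus X = D*E \setminus (E-X) / X$ restricted appropriately; more directly, $D^*/X = (D*E)/X$, and applying the general identity with the twist on the outside yields $D^* / X = (D\setminus X)* (E - X)$, hence $(D^*/X)^* = (D^*/X)*(E-X) = D\setminus X$, giving $D\setminus X = (D^*/X)^*$. For the last identity, set $A = X\sqcup Y$ (disjoint union); then $X - A = \emptyset$, $Y\cap A = Y$, $Y - A = \emptyset$, $X\cap A = X$, and $A - X - Y = \emptyset$, so the general formula collapses to $(D*A)\setminus X / Y = D\setminus Y / X$, as claimed. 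Each of these is a one-line substitution once the general formula is in hand, so no separate argument is needed.
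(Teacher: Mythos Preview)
Your proposal is correct and follows exactly the approach the paper indicates: the paper's ``proof'' is the single sentence ``Using Lemma~\ref{le:delcondual1} and induction we obtain the following,'' and your plan spells out precisely that induction on $|A|$ with the three-way case split on whether the peeled element lies in $X$, in $Y$, or outside $X\cup Y$. Your bookkeeping for the special cases is also right (the derivation of $D\setminus X=(D^*/X)^*$ is slightly muddled in the writing but the substitution $A=E$, deleted set $\emptyset$, contracted set $X$ does give $D^*/X=(D\setminus X)*(E-X)$, whence the claim).
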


\subsection{Delta-matroid rank}
Bouchet defined an analogue of the rank function for delta-matroids in~\cite{abrep}. For a delta-matroid $D=(E,\mathcal{F})$, it is denoted by $\rho_D$ or simply $\rho$ when $D$ is clear from the context. Its value on a subset $A$ of $E$ is given by \[\rho(A):=|E|-\min\{|A\bigtriangleup F| \mid F\in \mathcal{F}\}.\]
Note that the feasible sets of a delta-matroid can be recovered from its rank function.

An easy consequence of basic properties of the symmetric difference operation is the following.
\begin{lemma} \label{le:rankdual}
Let $D$ be a delta-matroid and let $A$ be a subset of $E(D)$. Then $\rho_{D^*}(A)= \rho_D(E-A)$.
\end{lemma}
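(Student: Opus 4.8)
The plan is to unwind the definitions of the dual and of the delta-matroid rank function and reduce everything to a single identity about symmetric differences. Recall that $D^\ast = D\ast E$, so $\mathcal F(D^\ast) = \{E\btu X \mid X\in\mathcal F(D)\}$, and crucially $E(D^\ast) = E(D) = E$, so the constant term $|E|$ appearing in the definition of $\rho$ is the same for both delta-matroids. Thus
\[
\rho_{D^\ast}(A) = |E| - \min\{\, |A \btu (E\btu X)| \mid X\in\mathcal F(D)\,\}.
\]

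The key step is the identity $A\btu(E\btu X) = (E-A)\btu X$ for every $X\subseteq E$. This follows from associativity and commutativity of the symmetric difference operation together with the fact that $A\btu E = E-A$ when $A\subseteq E$: indeed $A\btu(E\btu X) = (A\btu E)\btu X = (E-A)\btu X$. (I would just cite this as an elementary property of $\btu$; it is the same kind of computation used right before Lemma~\ref{obvious}.)

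Substituting this into the displayed expression gives
\[
\rho_{D^\ast}(A) = |E| - \min\{\, |(E-A)\btu X| \mid X\in\mathcal F(D)\,\},
\]
and the right-hand side is exactly $\rho_D(E-A)$ by the definition of $\rho_D$. This completes the argument.

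There is no real obstacle here: the statement is essentially a bookkeeping consequence of the definition of the dual and the algebra of symmetric differences, and the only thing to be slightly careful about is noting that $|E|$ is unchanged under dualization so it cancels cleanly. The whole proof is two or three lines.
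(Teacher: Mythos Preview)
Your proof is correct and is exactly what the paper has in mind: the paper does not give a detailed proof but simply remarks that the lemma is ``an easy consequence of basic properties of the symmetric difference operation,'' which is precisely the identity $A\btu(E\btu X)=(E-A)\btu X$ that you spell out.
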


The next two results show how the rank function changes when an element is deleted or contracted.
\begin{lemma}
\label{avoide}
Let $D=(E,\mathcal{F})$ be a delta-matroid and let $e$ be an element in $E$, and $X$ a subset of $E-e$.
Then either $e$ is a coloop or there exists $F\in\mathcal{F}$ such that $\rho (X)=|E|-|X\btu F|$ and $e\notin F$.
\end{lemma}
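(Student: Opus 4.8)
The statement asserts that for a delta-matroid $D = (E, \mathcal F)$, an element $e$, and $X \subseteq E - e$, either $e$ is a coloop, or the minimum of $|X \btu F|$ over $F \in \mathcal F$ is achieved by some feasible set $F$ avoiding $e$. My plan is to pick an $F \in \mathcal F$ realising $\rho(X) = |E| - \min\{|X \btu F'| : F' \in \mathcal F\}$, that is, an $F$ minimising $|X \btu F|$. If this $F$ already satisfies $e \notin F$ we are done, so assume $e \in F$ for every rank-minimising feasible set, and aim to show $e$ is then a coloop.

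Assuming toward a contradiction that $e$ is not a coloop, there is some $G \in \mathcal F$ with $e \notin G$. Then $e \in F \btu G$, so by the Symmetric Exchange Axiom there is $v \in F \btu G$ with $F' := F \btu \{e, v\} \in \mathcal F$. Since $e \notin G$ and $v \in F \btu G$, one checks $v \in F$ (if $v = e$ then $F' = F - e$; otherwise $v \in F - G$, so $v \in F$ as well); in either case $F' = F - \{e, v\} \subsetneq F$ if $v \ne e$, or $F' = F - e$ if $v = e$, and in all cases $e \notin F'$. Now I compare $|X \btu F'|$ with $|X \btu F|$: since $e \notin X$, removing $e$ from $F$ changes the symmetric difference with $X$ by removing $e$ from it, decreasing the size by $1$; removing a further element $v \in F - X$ would decrease it more, while $v \in F \cap X$ would increase it by $1$. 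So $|X \btu F'| \le |X \btu F|$ unless $v \ne e$ and $v \in X$, in which case $|X \btu F'| = |X \btu F|$. Either way $|X \btu F'| \le |X \btu F|$, so $F'$ is also rank-minimising for $X$, but $e \notin F'$, contradicting our assumption. (The case analysis is clean because the only element whose membership matters for the comparison besides $e$ is $v$, and $e \notin X$.)

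The one point requiring a little care — and the step I expect to be the main obstacle — is arranging that the element $v$ produced by the exchange axiom actually lies in $F$ (so that $F'$ is a subset of $F$ and hence still avoids $e$), rather than being a new element brought in from $G$. This is controlled by choosing the exchange against a feasible set $G$ with $e \notin G$: then $F \btu G$ meets $F$ only in $F - G$, and contains $e$, so any $v \in F \btu G$ is either $e$ itself or lies in $F - G \subseteq F$. That forces $F' = F \btu \{e,v\}$ to be obtained from $F$ purely by deletions, keeping $e$ out. The rank comparison afterwards is then the routine computation sketched above, using only that $e \notin X$.

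A cleaner packaging, which I would likely adopt in the write-up, is to invoke the dual formulation: by Lemma~\ref{le:rankdual}, $\rho_D(X) = \rho_{D^*}(E - X)$, and $e$ is a coloop of $D$ iff $e$ is a loop of $D^*$; so the statement is equivalent to its ``dual'' form about loops and sets containing $e$, which may match the structure of a companion lemma (the contraction analogue) and let both be proved at once. But the direct argument above is self-contained and short, so I would probably just present it as written.
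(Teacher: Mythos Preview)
Your overall approach matches the paper's: pick a minimiser $F$ of $|X\btu F|$, assume $e\in F$, use the Symmetric Exchange Axiom against a feasible set $G$ with $e\notin G$ to produce $F' = F\btu\{e,v\}$, and show that $F'$ is still a minimiser with $e\notin F'$.

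However, the step you flag as ``the main obstacle'' contains an error. You assert that $v\in F\btu G$ with $v\ne e$ forces $v\in F-G$, so that $F'$ is obtained from $F$ ``purely by deletions''. This is false: $F\btu G = (F-G)\cup(G-F)$, and nothing prevents $v\in G-F$. In that case $F' = (F-e)\cup v$ has a new element added, contrary to your claim.

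The repair is easy and the paper's proof shows how. Even when $v\in G-F$ one still has $e\notin F'$, since $e\in F-G$ forces $e\ne v$. For the size comparison, write
\[
X\btu F' = X\btu\big((F-e)\btu v\big) = \big(X\btu(F-e)\big)\btu v.
\]
Since $e\in F$ and $e\notin X$, we have $|X\btu(F-e)| = |X\btu F|-1$, and toggling the single element $v$ changes the cardinality by exactly one. Hence $|X\btu F'|\le |X\btu F|$ regardless of whether $v$ lies in $F-G$ or $G-F$. The paper simply does this computation without ever claiming $v\in F$.
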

\begin{proof}
Suppose $e$ is not a coloop.
Then there is a feasible set $F$ avoiding $e$.
Take $F'\in\mathcal{F}$ such that $\rho(X)=|E|-|X\btu F'|$.
If $F'$ avoids $e$ then the lemma holds, so we assume this is not the case.
Then $e\in F'\btu F$, so the Symmetric Exchange Axiom (Axiom~\ref{sea}) implies that there exists $f\in F'\btu F$ such that $F''=F'\btu \{e,f\}\in\mathcal F$.
If $f=e$, then $|X\btu F''| = |X\btu (F'-e)|<|X \btu F'|$ which is not possible, because $\rho(X)=|E|-|X\btu F'|$. So $f\ne e$ and $X\btu F''=X\btu (F'\btu \{e,f\})=X\btu ((F'-e)\btu f)=(X\btu (F'-e))\btu f$, so we deduce that  $|X\btu F''|\leq|X\btu F'|$.
As $F'$ was chosen from $\mathcal{F}$ to minimize $|X\btu F'|$, we deduce that $|X\btu F''|=|X\btu F'|$.
Since $e\notin F''$, the lemma holds.
\end{proof}

\begin{lemma}
\label{minorbirank}
Let $D=(E,\mathcal{F})$ be a delta-matroid and let $e$ be an element in $E$, and $X$ a subset of $E-e$. Then

\begin{align}\label{eq:rankdel}\rho _{D\ba e}(X) &= \begin{cases} \rho _D(X), & \mbox{if }e \mbox{ is a coloop of } D\\ \rho _D(X) -1, & \mbox{otherwise} \end{cases}\\
\intertext{and}
\label{eq:rankcon}\rho_{D/e}(X) &= \begin{cases} \rho_D(X \cup e), & \mbox{if }e \mbox{ is a loop of } D\\ \rho_D(X\cup e) -1, & \mbox{otherwise.} \end{cases}\end{align}
\end{lemma}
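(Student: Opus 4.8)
The plan is to prove the two equations separately, using Lemma~\ref{avoide} to control the minimum in the definition of $\rho$, together with the deletion/contraction definitions of the feasible sets of $D\ba e$ and $D/e$.

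First I would handle \eqref{eq:rankdel}. By definition $\rho_{D\ba e}(X) = |E-e| - \min\{|X\btu F| \mid F\in\mathcal{F}(D\ba e)\}$, and the feasible sets of $D\ba e$ are exactly the $F\in\mathcal F$ with $e\notin F$ (when $e$ is not a coloop; when $e$ is a coloop, $D\ba e=D/e$ and one argues analogously, or handles it as a degenerate case). For $X\subseteq E-e$ and $F$ avoiding $e$ we have $e\notin X\btu F$, so $|X\btu F|$ computed in the ground set $E-e$ equals $|X\btu F|$ computed in $E$. Hence $\min\{|X\btu F|\mid F\in\mathcal F,\ e\notin F\}$ is the quantity to understand. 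If $e$ is not a coloop, Lemma~\ref{avoide} gives a feasible set $F$ avoiding $e$ that \emph{also} attains $\min\{|X\btu F'|\mid F'\in\mathcal F\}$, so the restricted minimum equals the unrestricted one; combining $\rho_D(X)=|E|-\min_{F\in\mathcal F}|X\btu F|$ with $|E-e|=|E|-1$ yields $\rho_{D\ba e}(X)=\rho_D(X)-1$. If $e$ is a coloop, every feasible set of $D$ contains $e$, so deleting $e$ (which by definition equals contracting $e$) removes $e$ from every feasible set; then for $F\in\mathcal F$, $X\btu(F-e)=(X\btu F)-e$ has size $|X\btu F|-1$, so the minimum drops by $1$ while $|E|$ also drops by $1$, giving $\rho_{D\ba e}(X)=\rho_D(X)$.

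Next I would deduce \eqref{eq:rankcon} from \eqref{eq:rankdel} via duality rather than repeating the argument. Using Lemma~\ref{le:delcondual1}(2), $D/e=(D*e)\ba e$, and the fact that $e$ is a loop of $D$ if and only if $e$ is a coloop of $D*e$. Applying \eqref{eq:rankdel} to $D*e$ gives $\rho_{D/e}(X)=\rho_{D*e}(X)$ if $e$ is a loop of $D$, and $\rho_{D*e}(X)-1$ otherwise. It then remains to relate $\rho_{D*e}(X)$ to $\rho_D$: since $\mathcal F(D*e)=\{\{e\}\btu F\mid F\in\mathcal F\}$ and $X\subseteq E-e$, we have $X\btu(\{e\}\btu F)=(X\cup e)\btu F$ (as $e\notin X$), so $\min\{|X\btu F'|\mid F'\in\mathcal F(D*e)\}=\min\{|(X\cup e)\btu F|\mid F\in\mathcal F\}$, whence $\rho_{D*e}(X)=\rho_D(X\cup e)$. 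Substituting gives \eqref{eq:rankcon}. Alternatively one can run the direct argument: the feasible sets of $D/e$ are $\{F-e\mid F\in\mathcal F,\ e\in F\}$, and for such $F$, $X\btu(F-e)=(X\cup e)\btu F$, so the relevant minimum is $\min\{|(X\cup e)\btu F|\mid F\in\mathcal F,\ e\in F\}$; one then needs the dual of Lemma~\ref{avoide} (a minimizing feasible set can be taken to contain $e$ when $e$ is not a loop), which follows by applying Lemma~\ref{avoide} to $D^*$ and using Lemma~\ref{le:rankdual}.

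The main obstacle is the bookkeeping in \eqref{eq:rankdel}: one must be careful that the minimum defining $\rho_{D\ba e}(X)$ ranges only over feasible sets avoiding $e$, and the whole point of Lemma~\ref{avoide} is precisely that this restriction does not change the value of the minimum when $e$ is not a coloop. The coloop case and the $D/e=D\ba e$ degenerate cases need to be checked separately but are routine. Choosing the duality route for \eqref{eq:rankcon} keeps the argument short and avoids re-deriving an analogue of Lemma~\ref{avoide} for contraction.
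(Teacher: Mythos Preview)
Your argument is correct and follows essentially the same route as the paper: Equation~\eqref{eq:rankdel} is proved exactly as in the paper via Lemma~\ref{avoide} plus the trivial coloop bookkeeping, and Equation~\eqref{eq:rankcon} is deduced from \eqref{eq:rankdel} by a duality trick. The only difference is cosmetic: for \eqref{eq:rankcon} the paper passes through full duality, writing $\rho_{D/e}(X)=\rho_{(D/e)^*}(E-e-X)=\rho_{D^*\ba e}(E-e-X)$ and invoking Lemma~\ref{le:rankdual}, whereas you pass through the single-element twist $D/e=(D*e)\ba e$ and compute $\rho_{D*e}(X)=\rho_D(X\cup e)$ directly; both reductions are one line and equivalent in spirit.
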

\begin{proof}We first establish~\eqref{eq:rankdel}.
Suppose that $e$ is not a coloop. Lemma~\ref{avoide} implies that there exists $F\in\mathcal{F}(D)$ such that $e\notin F$ and $\rho_D(X)=|E|-|X\btu F|$.
Thus $\rho _{D\ba e}(X) \leq |E-e|-|X\btu F|=|E|-|X\btu F|-1=\rho_D(X)-1$.
Moreover every feasible set of $D\setminus e$ is a feasible set of $D$. Hence $\rho_D(X) \leq \rho_{D\ba e}(X)+1$. Combining these two inequalities gives the result.

Suppose that $e$ is a coloop of $D$. Let $A$ be a subset of $E-e$. Then $A$ is a feasible  set of $D\setminus e$ if and only if $A\cup e$ is a feasible set of $D$. Furthermore, $|X\btu A|=|X\btu (A\cup e)|-1$.
Take $F\in\mathcal{F}(D \ba e)$ such that $\rho _{D\ba e}(X)=|E|-|X\btu F|$. Then $F\cup e$ is in $\mathcal{F}$ and has smallest symmetric difference with $X$ of all feasible sets in $\mathcal{F}$.  Thus $\rho _{D}(X)=|E|-|X\btu (F\cup e)|=|E|-|X\btu F|-1=|E-e|-|X\btu F|=\rho _{D\ba e}(X)$.

Now Equation~\eqref{eq:rankcon} is obtained by using duality.
Lemma~\ref{le:rankdual} implies that $\rho_{D/e}(X)=\rho_{(D/e)^*}(E-e-X)=\rho_{D^*\ba e}(E-e-X)$. Using Equation~\eqref{eq:rankdel}, we obtain
\[\rho_{D^*\ba e}(E-e-X) = \begin{cases} \rho_{D^*}(E-e-X), & \mbox{if }e \mbox{ is a coloop of } D^*\\ \rho_{D^*}(E-e-X) -1, & \mbox{otherwise.} \end{cases}\]
The result follows by applying duality again and noting that $e$ is a coloop of $D^*$ if and only if it is a loop of $D$.
\end{proof}


\section{Ribbon graphs}
\label{ribbongraphs}
We are concerned here with connections between cellularly embedded graphs and delta-matroids. As it is much more convenient for our purposes,  we realize cellularly embedded graphs as ribbon graphs. This section provides a  brief overview of ribbon graphs, as well as standard ribbon graph notation and constructions.
A more thorough treatment of the topics covered in this section can be found in, for example,~\cite{EMMbook}.

\subsection{Cellularly embedded graphs and ribbon graphs}
\subsubsection{Ribbon graphs}\label{sss.ribbongraph}
A \emph{cellularly embedded  graph} $G\subset \Sigma$ is a graph drawn on a closed compact surface $\Sigma$  in such a way that edges only intersect at their ends,  and such that each connected component of $\Sigma - G$
is homeomorphic to a disc.
Note that each connected component of $G$ must be embedded in a different component of the surface.

Two cellularly embedded graphs $G\subset \Sigma$ and  $G'\subset \Sigma'$  are
 \emph{equivalent} if there is a homeomorphism,
 $\varphi:\Sigma \rightarrow \Sigma'$,
 which is orientation-preserving if $\Sigma$ is orientable, and has the property that $\varphi|_{G}:G\rightarrow G'$ is a graph isomorphism. We consider cellularly embedded graphs up to equivalence.

Ribbon graphs provide an alternative, and more natural for the present setting, description of cellularly embedded graphs.
\begin{definition}
A \emph{ribbon graph} $G =\left(  V(G),E(G)  \right)$ is a surface with boundary, represented as the union of two  sets of  discs: a set $V (G)$ of \emph{vertices} and a set of \emph{edges} $E (G)$ with the following properties.
\begin{enumerate}
 \item The vertices and edges intersect in disjoint line segments.
 \item Each such line segment lies on the boundary of precisely one vertex and precisely one edge. In particular, no two vertices intersect, and no two edges intersect.
 \item Every edge contains exactly two such line segments.
\end{enumerate}
\end{definition}

\begin{figure}
\centering
\subfigure[A cellularly embedded graph $G$.]{
\labellist \small\hair 2pt
\pinlabel {$1$} at   84 14
\pinlabel {$2$} at    135 19
\pinlabel {$3$} at    68 32
\pinlabel {$4$} at   115 32
\endlabellist
\raisebox{0mm}{\includegraphics[height=2.5cm]{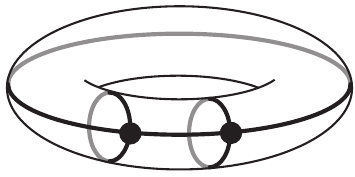}}
 \label{f.desca}
}
\qquad\qquad
\subfigure[$G$ as a ribbon graph.]{
\labellist \small\hair 2pt
\pinlabel {$1$} at   60 22.7
\pinlabel {$2$} at   54 45.6
\pinlabel {$3$} at   38 7
\pinlabel {$4$} at   79 7
\endlabellist
\includegraphics[height=2.5cm]{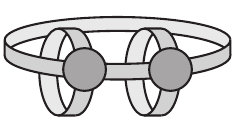}
 \label{f.descb}
}
\subfigure[The partial dual $G^{\{1,3\}}$.]{
\labellist \small\hair 2pt
\pinlabel {$1$} at   66 7
\pinlabel {$2$} at   44 45.3
\pinlabel {$3$} at   48 23
\pinlabel {$4$} at   79 7
\endlabellist
\includegraphics[height=2.5cm]{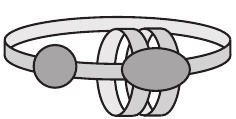}
 \label{f.descc}
}
\caption{Embedded graphs and ribbon graphs.}
\label{f.desc}
\end{figure}

It is well-known that ribbon graphs are just descriptions of  cellularly embedded
graphs (see for example~\cite{GT87}).  If $G$ is a cellularly embedded graph, then a ribbon
graph representation results from taking a small neighbourhood  of
the cellularly embedded graph $G$, and deleting its complement. On the other hand, if $G$ is a
ribbon graph, then, topologically, it is a  surface with boundary. Capping off the holes, that is, `filling in' each hole by identifying its boundary component with the boundary of a disc, results in a ribbon graph embedded in a closed surface from which  a graph embedded in the surface is readily obtained.  Figure~\ref{f.desc} shows an embedded graph described as both a  cellularly embedded graph and a ribbon graph. We say that two ribbon graphs are \emph{equivalent} if they define equivalent cellularly embedded graphs, and we consider ribbon graphs up to equivalence. This means that ribbon graphs are considered up to homeomorphisms that preserve the graph structure  of the ribbon graph and the cyclic order of half-edges at each of its vertices.

\subsubsection{Ribbon subgraphs and edge deletion}\label{sss.srs}
Let $G=(V,E)$ be a ribbon graph. Then a ribbon graph $H$ is a \emph{ribbon subgraph}  of $G$ if it can be obtained by removing vertices and edges of $G$. If $V(H)=V(G)$ then $H$ is a \emph{spanning ribbon subgraph} of $G$. Note that every subset $A$ of $E$ uniquely determines a spanning ribbon subgraph $(V,A)$ of $G$.

If $e$ is an edge of $G$, then $G$ \emph{delete} $e$, written $G\ba e$,  is defined to be the ribbon subgraph $(V, E-e)$ of $G$. Similarly, for $A\subseteq E$,  $G\ba A$ is defined to be $(V, E-A)$.  Table~\ref{tablecontractrg} shows the local effect of deleting an edge of a ribbon graph.

An important observation about ribbon subgraphs is that if a ribbon graph $G$ is realised as a graph cellularly embedded in a surface $\Sigma$,  and $G\ba e$, or a ribbon subgraph $H$ of $G$, is realised  as a  graph cellularly embedded in a surface $\Sigma'$, then $\Sigma$ and $\Sigma'$ need not be homeomorphic.

\subsubsection{Standard parameters}\label{sss.st}
A ribbon graph is a graph with additional structure and so standard graph terminology carries over to ribbon graphs.
If $G$ is a ribbon graph, then  $v(G)$ and $e(G)$ denote $|V(G)|$ and $|E(G)|$, respectively.  Furthermore, $k(G)$ denotes the number of connected components in $G$, and $f(G)$ is the number of boundary components of the surface defining the ribbon graph.
For example, the ribbon graph $G$ of Figure~\ref{f.descb} has $f(G)=2$.
Note that, if $G$ is realised as a cellularly embedded graph, then $f(G)$ is the number of its faces.
The \emph{rank} of $G$, denoted by $r(G)$, is defined to be $v(G)-k(G)$, and the \emph{nullity} of $G$, denoted by $n(G)$, is defined to be $e(G)-r(G)$.

A ribbon graph $G$ is \emph{orientable} if it is orientable when regarded as a surface. We define a ribbon graph parameter $t$ by setting $t(G)=1$ if $G$ is non-orientable, and $t(G)=0$ otherwise.

The \emph{genus} of a ribbon graph $G$ is its genus when regarded as a surface. If $G$ is realized as a graph cellularly embedded in $\Sigma$, then its genus is exactly the genus of $\Sigma$, and $G$ is orientable if and only if $\Sigma$ is.
  The \emph{Euler genus}, $\gamma(G)$, of  $G$ is the genus of $G$ if $G$ is non-orientable, and is twice its genus if $G$ is orientable.  Euler's formula gives $\gamma(G)=2k(G)-v(G)+e(G)-f(G)$.
  We say that a ribbon graph $G$ is   \emph{plane}  if $\gamma(G)=0$. Note that we allow plane graphs to have more than one connected component. Plane ribbon graphs correspond to graphs that can be cellularly embedded in some disjoint union of spheres.

For each subset $A$ of $E$, we let $r(A)$, $k(A)$, $n(A)$,  $f(A)$, $t(A)$, and $\gamma(A)$ each refer to the spanning ribbon subgraph $(V,A)$ of $G$, where $G$ is given by context. When the choice of $G$ is not clear from the context, we write $r_G(A)$, $k_G(A)$, etc..
Observe that the function $r$ on $E$ defined here coincides with the rank function of the cycle matroid $M(G)$ of $G$.

The following result is an obvious, but useful, consequence of the fact that each edge of a ribbon graph  meets one or two boundary components.
\begin{proposition}\label{obv}
If $G$ is ribbon graph, $A\subseteq E(G)$ and $e\in E(G)$, then $f(A)$ and $f(A\btu e)$ differ by at most one.
\end{proposition}

\subsubsection{Loops and bridges}\label{sss.lb}
An edge $e$ of a ribbon graph $G$ is a \emph{bridge} if $k(G\ba e)>k(G)$.
The edge $e$ is a \emph{loop} if it is incident with exactly one vertex.
We will abuse notation and also use the term loop to describe the ribbon subgraph of $G$ consisting of $e$ and its incident vertex.
In ribbon graphs, loops can have various properties. A loop or cycle is said to be \emph{non-orientable} if it is homeomorphic to a M\"obius band. Otherwise it is \emph{orientable}.
 Two cycles $C_1$ and $C_2$ in $G$ are said to be \emph{interlaced} if there is  a vertex $v$   such that   $V(C_1)\cap V(C_2)=\{v\}$, and  $C_1$ and $C_2$ are met in the cyclic order $C_1\,C_2\,C_1\,C_2$ when travelling around the boundary of the vertex $v$. A loop is \emph{non-trivial} if it is interlaced with  some cycle in $G$, otherwise it is \emph{trivial}.

\subsubsection{Ribbon graph minors}\label{sss.rgmin}
For a ribbon graph $G$ with an edge $e$ recall that $G\ba e$ is obtained by removing $e$ from $G$.  Similarly, if  $v$ is a vertex of $G$, then the \emph{vertex deletion} $G \ba v$ is defined to be the ribbon graph obtained from $G$ by removing the vertex $v$ together with all its incident edges.

The definition of edge contraction, introduced in~\cite{BR2,Ch1}, is a little more involved than that of edge deletion.
\begin{definition}\label{d.cont}
 Let $G$ be a ribbon graph. Let $e\in E(G)$ and  $u$ and $v$ be its incident vertices, which are not necessarily distinct. Then  $G/e$ denotes the ribbon graph obtained as follows. Consider the boundary component(s) of $e\cup u \cup v$ as curves on $G$. For each resulting curve, attach a disc, which will form a vertex of $G/e$, by identifying its boundary component with the curve. Delete $e$, $u$ and $v$ from the resulting complex.
We say that $G/e$ is obtained from $G$ by \emph{contracting} $e$.
\end{definition}

A ribbon graph $H$ is a \emph{minor} of a ribbon graph $G$ if $H$ is obtained from $G$ by a sequence of edge deletions, vertex deletions, and edge contractions.

The local effect of contracting an edge of a ribbon graph is shown in Table~\ref{tablecontractrg}. Observe that contracting an edge may change the number of vertices  or  orientability of a ribbon graph.
Since deletion and contraction are local operations, if some edges in a ribbon graph are deleted and some others are contracted, then the same ribbon graph will be produced regardless of the order of operations.

 The definition of edge contraction might be a little surprising at first.
However, the reader should see that it is  natural upon observing that  Definition~\ref{d.cont} is just an expression of the
obvious idea of contraction  as the `identification of $e$ and its incident vertices into a single vertex' in a way that allows it to be applied to loops. (See also the discussion in~\cite{EMMbook} on this topic.)
Unlike for graphs, when working with ribbon graph minors it is necessary to be able to contract loops as otherwise the set of ribbon graphs will contain infinite anti-chains when quasi-ordered using the minor relation (see~\cite{Moprep}).

\begin{table}
\centering
\begin{tabular}{|c||c|c|c|}\hline
 &  non-loop & non-orientable loop&orientable loop\\ \hline
\raisebox{6mm}{$G$} &
\includegraphics[scale=.25]{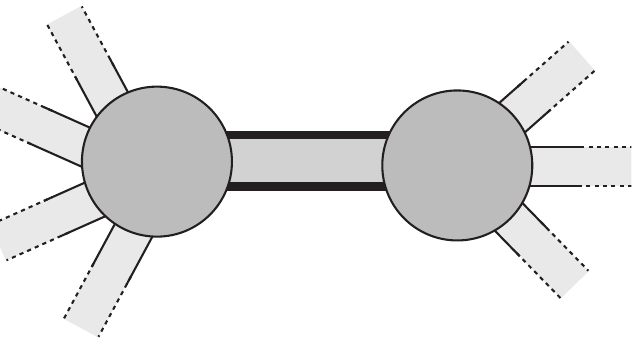} &\includegraphics[scale=.25]{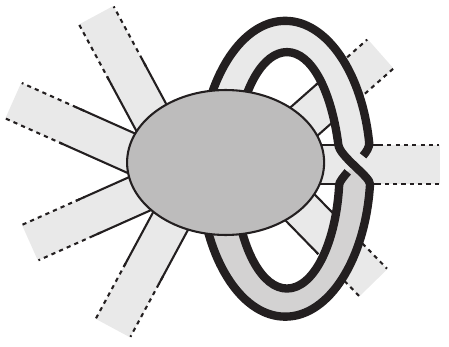} &\includegraphics[scale=.25]{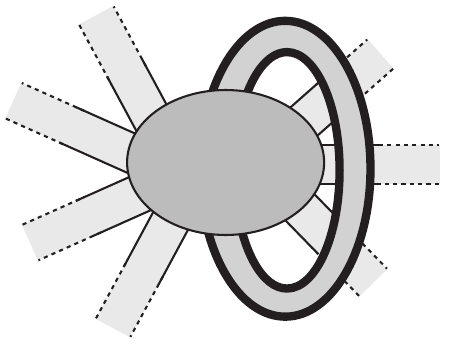}
\\ \hline
\raisebox{6mm}{$G\ba e$}
&
\includegraphics[scale=.25]{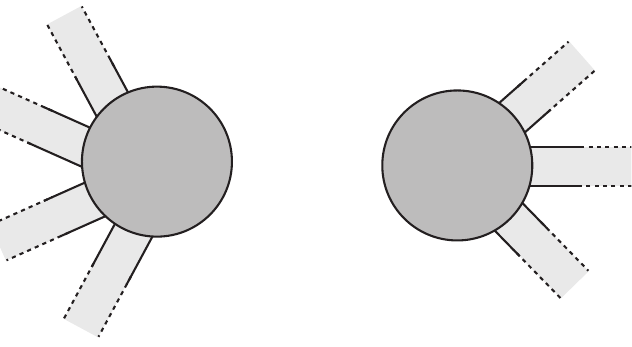} &\includegraphics[scale=.25]{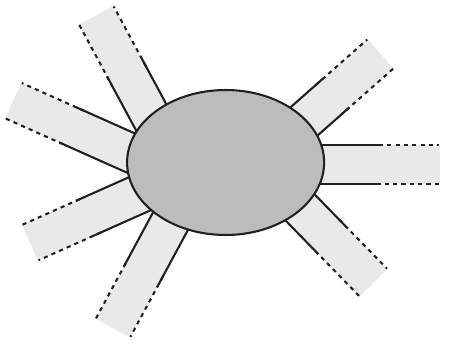}&\includegraphics[scale=.25]{ch4_35a}
\\ \hline
\raisebox{6mm}{\begin{tabular}{l} $G/e$ \\ $=G^{e}\setminus e$\end{tabular}}
&
\includegraphics[scale=.25]{ch4_35a} &\includegraphics[scale=.25]{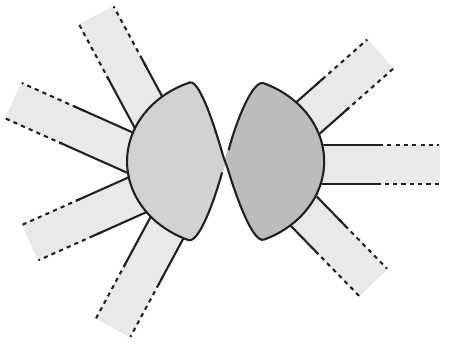}&\includegraphics[scale=.25]{ch4_38a} \\ \hline
\raisebox{6mm}{$G^{e}$} &
\includegraphics[scale=.25]{ch4_35} &\includegraphics[scale=.25]{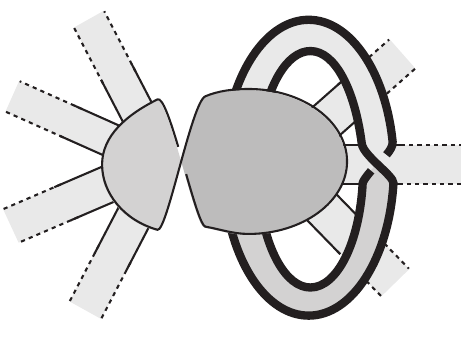} &\includegraphics[scale=.25]{ch4_38}
\\ \hline
\end{tabular}
\caption{Operations on an edge $e$ (highlighted in bold) of a ribbon graph. The ribbon graphs are identical outside of the region shown.}
\label{tablecontractrg}
\end{table}

\subsubsection{Separability}\label{sss.se}
For a ribbon graph $G$ and non-trivial ribbon subgraphs $P$ and $Q$ of $G$, we write $G=P\sqcup Q$ when $G$ is the \emph{disjoint union} of $P$ and $Q$, that is,  when $G=P\cup Q$ and $P\cap Q=\emptyset$.
A vertex $v$ of $G$ is  a \emph{separating vertex} if there are non-trivial ribbon subgraphs $P$ and $Q$ of $G$ such that $G=P\cup Q$ and $P\cap Q=\{v\}$. In this case we write $G=P\oplus Q$.

We write $G=P\curlyvee Q$, if $G=P \oplus Q$ and no cycle in $P$ is interlaced with a cycle in $Q$.
 Observe  it is possible that $G=P\curlyvee Q$ and  $G'=P\curlyvee Q$, for non-equivalent ribbon graphs $G$ and $G'$.

(We remark that here there is a close relationship with the join operation, $\vee$, on ribbon graphs:  $G=P\curlyvee Q$ if and only if
$P=G_1 \vee \cdots \vee G_i$,  $Q=G_{i+1} \vee \cdots \vee G_n$, and, for some permutation $\sigma$,
$G=G_{\sigma(1)} \vee \cdots \vee G_{\sigma(n)}$,  where each join occurs at the same vertex.  We refer the reader to~\cite{Mo5,Mof11c} for a fuller discussion of separability for ribbon graphs.)

\subsection{Geometric duals and partial duals}\label{ss.dual}
The construction of the \emph{geometric dual}, $G^*$, of a cellularly embedded graph $G$ is well known: $V(G^*)$ is obtained by placing one vertex in each face of $G$, and $E(G^*)$ is obtained by embedding an edge of $G^*$ between two vertices whenever the faces of $G$ in which they lie are adjacent. Geometric duality has a particularly neat description when translated to the  language of ribbon graphs. Let $G=(V(G), E(G))$ be a ribbon graph. Recalling that, topologically, a ribbon graph is a  surface with boundary, we cap off the holes using a set of discs, denoted by $V(G^*)$, to obtain a surface without boundary. The \emph{geometric dual} of $G$  is the ribbon
graph $G^* = (V(G^*), E(G))$. Observe that, for ribbon graphs, the edges of $G$  and $G^*$ are identical.  The only change is which arcs on their boundaries do and do not intersect vertices. This allows us to consider a subset $A$ of edges of $G$ as also being a subset of edges of $G^*$ and vice versa. We adopt this convention. Although it is common to distinguish the two sets by writing $A$ and $A^*$, doing so proves to be notationally difficult in the current setting.

Chmutov, in~\cite{Ch1}, introduced a far-reaching generalization of geometric duality, called partial duality.
Roughly speaking, a partial dual of a ribbon graph  is obtained by forming the geometric dual with respect to only a subset of its edges. Partial duality arises as a natural operation in knot theory, topological graph theory, graph polynomials, and quantum field theory. We will see later that it is also an analogue of a fundamental operation on delta-matroids. Here we define partial duals directly on ribbon graphs.  We refer the reader to~\cite{Ch1,EMM,Mo4}  or the exposition~\cite{EMMbook} for alternative constructions and other perspectives of partial duals.

Let $G=(V,E)$ be a ribbon graph and $A\subseteq E$. The  partial dual $G^{A}$ of $G$ is obtained by forming the geometric dual of $G$ as described above but ignoring the edges not in $A$ as follows.  Regard the boundary components of the spanning ribbon subgraph $(V,A)$ of $G$ as curves on the surface of $G$. Glue a disc to $G$ along each connected component of this curve and remove the interior of all vertices of $G$. The resulting ribbon graph is the \emph{partial dual} $G^{A}$.

We identify the edges of $G$ with those of $G^A$ using the natural correspondence.
 Table~\ref{tablecontractrg} shows the local effect of partial duality on an edge $e$ (highlighted in bold) of a ribbon graph $G$. The ribbon graphs are identical outside of the regions shown. In fact Table~\ref{tablecontractrg} serves as a perfectly adequate definition of partial duality for this paper.

Observe from Table~\ref{tablecontractrg} that $e$ is a bridge of $G$ if and only if $e$ is a trivial orientable loop in $G^{e}$;  $e$ is a non-loop non-bridge edge of $G$ if and only if $e$ is a non-trivial orientable loop in $G^{e}$; and $e$ is a (non-)trivial non-orientable loop in $G$ if and only if $e$ is a (non-)trivial non-orientable   loop in $G^{e}$.
We also record the following basic properties of partial duality for  use later.
\begin{proposition}[Chmutov~\cite{Ch1}]\label{p.pd2}
Let $G$ be a  ribbon graph and $A, B\subseteq E(G)$.  Then
\begin{enumerate}
\item $G^{E(G)}=G^*$ and $G^{\emptyset}=G$;
\item $(G^A)^B=G^{A\triangle B}$;
\item \label{p.pd2.3}$G/e= G^{e}\ba e$;
\item $G$ is orientable if and only if $G^A$ is orientable.
\end{enumerate}
\end{proposition}
Note that it follows from the proposition that partial duals may be formed one edge at a time. Also note that the form of Item~\ref{p.pd2.3} of the proposition is very similar to that of the second part of Lemma~\ref{le:delcondual1}. We will return to this later.

\subsection{Quasi-trees}\label{sss.qt}
Quasi-trees are one of our fundamental objects of study. They are the analogue of trees for ribbon graphs, and our terminology reflects this.
A \emph{quasi-tree} $Q$ is a connected ribbon graph with exactly one boundary component.
If $G$ is a connected ribbon graph, a \emph{spanning quasi-tree} $Q$ of $G$ is a spanning ribbon subgraph with exactly one boundary component.
For disconnected graphs, we abuse notation by saying that $Q$ is a \emph{spanning quasi-tree} of $G$ if $k(Q)=k(G)$ and the connected components of $Q$ are spanning quasi-trees of the connected components of $G$.

We record the following basic facts about quasi-trees for reference later. For~\eqref{l.3a.3}, recall that, for ribbon graphs, $E(G)=E(G^*)$.
\begin{lemma}\label{l.3a}
 Let $G$ be a ribbon graph, and  $Q$ be a spanning quasi-tree of $G$. Then the following hold.
\begin{enumerate}
\item \label{l.3a.1} $0\leq \gamma(Q) \leq \gamma(G)$.
\item \label{l.3a.2} $\gamma(Q)=0$ if and only if $Q$ is a maximal spanning forest of $G$.
\item \label{l.3a.3}  $(V(G),A)$ is a spanning quasi-tree of $G$ of Euler genus  $\gamma$ if and only if $(V(G^*),A^c)$ is a spanning quasi-tree of $G^*$ of Euler genus $\gamma(G)-\gamma$.
\item \label{l.3a.4} If $Q=(V(G),A)$ then $\gamma(Q)=\gamma(G)$ if and only if $(V(G^*),A^c)$ is a maximal spanning forest of $G^*$.
\end{enumerate}
\end{lemma}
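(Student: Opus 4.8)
The plan is to prove the four parts of Lemma~\ref{l.3a} using Euler's formula $\gamma(H) = 2k(H) - v(H) + e(H) - f(H)$ together with the duality relation between boundary components, and to observe that parts \eqref{l.3a.1} and \eqref{l.3a.2} are really statements about how $f$ behaves when edges are added to a spanning subgraph, while \eqref{l.3a.3} and \eqref{l.3a.4} follow from \eqref{l.3a.1} and \eqref{l.3a.2} by passing to the geometric dual.

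For part \eqref{l.3a.1}: write $Q = (V(G),A)$ with $A \subseteq E(G)$. Since $Q$ is a spanning quasi-tree, each component of $Q$ is connected with one boundary component; a connected ribbon graph on $v$ vertices and $e$ edges has $e \ge v-1$ (it contains a spanning tree), so $|A| \ge v(G) - k(G) = r(G)$, and in fact a component that is a quasi-tree has $\gamma = 2 - v_i + e_i - 1 = 1 - v_i + e_i \ge 0$, giving $\gamma(Q) \ge 0$ (this is also immediate since Euler genus is always nonnegative). For the upper bound $\gamma(Q) \le \gamma(G)$: add the edges of $E(G) \setminus A$ back one at a time to $Q$ to build up $G$. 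Adding a single edge $e$ to a ribbon graph either joins two components (decreasing $k$ by $1$, so $\gamma$ unchanged since $f$ also drops by $1$ and $v$ is unchanged while $e$ goes up by $1$: $\Delta\gamma = 0 - 0 + 1 - 1 = 0$... wait, $\Delta k = -1$ contributes $-2$, $\Delta e = +1$, $\Delta f$ could be $-1$ giving $\Delta\gamma = -2 + 1 - (-1) = 0$) or stays within one component, in which case $\Delta k = 0$, $\Delta e = 1$, and $\Delta f \in \{-1, +1\}$ (an edge added within a connected ribbon graph either splits a boundary component into two or merges two into one, changing $f$ by $\pm 1$); thus $\Delta\gamma = 1 - \Delta f \in \{0, 2\}$, so $\gamma$ is nondecreasing as we add edges. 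Hence $\gamma(Q) \le \gamma(G)$. The main obstacle here is the clean justification of the claim that adding one edge changes $f$ by exactly $\pm 1$ (within a component) or exactly $-1$ (joining components); I would cite the standard ribbon-graph fact, or argue it directly by tracing the boundary walk through the new edge's two line segments. This case analysis is also what makes part \eqref{l.3a.1} the crux of the whole lemma.

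For part \eqref{l.3a.2}: if $\gamma(Q) = 0$ then $Q$ is a plane ribbon graph that is connected on each component with one boundary component; for a connected plane ribbon graph $0 = 2 - v_i + e_i - 1$ forces $e_i = v_i - 1$, so each component is a tree, and since $Q$ spans $G$ this means $Q$ is a maximal spanning forest. Conversely a maximal spanning forest has each component a tree, which as a ribbon graph is a disc with one boundary component and $\gamma = 0$; summing, $\gamma(Q) = 0$. (One should note a tree, embedded as a ribbon graph, necessarily has one boundary component and genus zero, so it is automatically a quasi-tree-component.)

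For parts \eqref{l.3a.3} and \eqref{l.3a.4}: here I would invoke the key duality facts that for a spanning ribbon subgraph $(V(G),A)$ of $G$, the complementary spanning subgraph of the dual satisfies $f((V(G^*),A^c)) = v(G) - k(G) + k((V(G),A)) \cdots$ — more usefully, the relation $f(G^{A}) $-type identity, but at the level we need it: $(V(G^*), A^c)$ is the geometric dual of... no. The cleanest route is the standard identity that the boundary components of $(V(G), A)$ correspond to the vertices of... Actually the relevant known fact is: $k((V(G),A)) + k((V(G^*),A^c)) $ and $f$ swap appropriately so that $(V(G^*),A^c)$ is connected with $k(G^*)=k(G)$ components iff $(V(G),A)$ spans with one boundary component per component — this is precisely the ribbon-graph analogue of the tree/cotree duality. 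Granting the identity $\gamma((V(G),A)) + \gamma((V(G^*),A^c)) = \gamma(G)$ whenever both are spanning quasi-trees (which follows from Euler's formula once one knows $f((V(G^*),A^c)) = v(G) - r(A) $-style bookkeeping, equivalently $e(G) - r(G) - r_{G^*}(A^c)$ manipulations), part \eqref{l.3a.3} is immediate: $(V(G),A)$ is a spanning quasi-tree of genus $\gamma$ iff $A$ has exactly $r(G)+\tfrac{\gamma+?}{2}$... I'll instead just write it as: $(V(G^*),A^c)$ has one boundary component per component iff $(V(G),A)$ does (the symmetric statement), and then Euler's formula pins down the genus as $\gamma(G) - \gamma$. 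Finally \eqref{l.3a.4} is the special case $\gamma = \gamma(G)$ of \eqref{l.3a.3} combined with \eqref{l.3a.2} applied to $G^*$. The thing to be careful about is getting the duality bookkeeping for $f$ exactly right; I expect this to be a short computation with Euler's formula applied to both $(V(G),A)$ and $(V(G^*),A^c)$ and the known relation between their parameters, and I would set it up by first recording $v((V(G^*),A^c)) = f((V(G),A))$ and $f((V(G^*),A^c)) = v((V(G),A))$ and $e = e$, $k$ preserved under complementary-dual for spanning quasi-trees, which makes everything fall out.
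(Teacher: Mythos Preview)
Your treatment of parts \eqref{l.3a.1}, \eqref{l.3a.2}, and \eqref{l.3a.4} is essentially fine (modulo one omission noted below), but part \eqref{l.3a.3} has a genuine gap, and it is the heart of the lemma.

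The paper's proof of \eqref{l.3a.3} rests on one clean topological observation: form the closed surface $\Sigma = V(G)\cup E(G)\cup V(G^*)$ (the intermediate step in constructing $G^*$ by capping off the holes of $G$). Then $(V(G),A)$ is obtained from $\Sigma$ by removing $V(G^*)$ and the edges in $A^c$, while $(V(G^*),A^c)$ is obtained by removing $V(G)$ and the edges in $A$. These two surfaces-with-boundary share \emph{exactly the same boundary curves} inside $\Sigma$, so $f((V(G),A)) = f((V(G^*),A^c))$. In particular one is a spanning quasi-tree if and only if the other is. The Euler-formula computation then uses $v(Q')=v(G^*)=f(G)$ and $f(Q)=f(Q')=1$ (per component) to get $\gamma(Q)+\gamma(Q')=\gamma(G)$.

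Your attempt at \eqref{l.3a.3} never finds this key identity. The relations you propose to record at the end are incorrect: $v((V(G^*),A^c))$ equals $v(G^*)=f(G)$, a constant independent of $A$, not $f((V(G),A))$; and $f((V(G^*),A^c))$ equals $f((V(G),A))$, not $v((V(G),A))$. With those formulas swapped, the bookkeeping collapses, and more importantly you have no argument that one side being a quasi-tree forces the other side to be one. The ``same boundary components'' observation is exactly the missing idea.

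A minor point on \eqref{l.3a.1}: your claim that adding an edge within a component changes $f$ by exactly $\pm 1$ omits the non-orientable case, where attaching a half-twisted band to a single boundary component can leave $f$ unchanged (e.g.\ a single non-orientable loop on one vertex). Your monotonicity conclusion survives, since then $\Delta\gamma = 1 \ge 0$, but the case split as written is incomplete.
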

\begin{proof}
Items \eqref{l.3a.1} and \eqref{l.3a.2} follow easily from Euler's formula. Item~\ref{l.3a.4} is an immediate consequence of \eqref{l.3a.2} and \eqref{l.3a.3}. It remains to prove \eqref{l.3a.3}. For this first assume that $G$ is connected. Consider the intermediate step of the formation of $G^*$ from $G$, as described in Section~\ref{ss.dual}, in which the holes of $G$ have been capped off with elements of $V(G^*)$ giving a surface  $\Sigma := V(G)\cup V(G^*)\cup E(G)$. For each $A\subseteq E(G)$, observe that  $V(G)\cup A=(\Sigma \ba V(G^*))\ba A^c $ and $V(G^*) \cup A^c=(\Sigma \ba V(G))\ba A$ have the same boundary components. Thus  $Q:=(V(G),A)$ is a spanning quasi-tree of $G$ if and only if
$Q':=(V(G^*),A^c)$ is a spanning quasi-tree of $G^*$.
Suppose that $Q$ and $Q'$ are both spanning quasi-trees. Then each of $Q$ and $Q'$ has one boundary component and is connected. Moreover $v(Q')=v(G^*)=f(G)$.
Euler's formula gives  $\gamma(Q)=2k(Q)-v(Q)+e(Q)-f(Q)= 1-v(G)+|A|$ and $\gamma(Q')=2k(Q')-v(Q')+e(Q')-f(Q')= 1-f(G)+e(G)-|A|$. Thus  $\gamma(Q)+\gamma(Q')=2-v(G)+e(G)-f(G)=\gamma(G)$. Extending the result to disconnected graphs is straightforward because each of the parameters $v$, $e$, $f$ and $k$ is additive over connected components, and the geometric dual of a disconnected ribbon graph is the disjoint union of the geometric duals of its connected components.
\end{proof}

\section{Delta-matroids from ribbon graphs}\label{s4}

\subsection{Defining the delta-matroids}\label{s4.1}

Consider a connected ribbon graph $G=(V,E)$. We start by considering some standard ways that $G$ gives rise to a matroid.
The most fundamental  matroid associated with $G$ is its cycle matroid
$M(G)=(E, \mathcal{B})$, where $ \mathcal{B}$ consists of the edge sets of the spanning trees of $G$. The matroid $M(G)$ contains no information about the topological structure of $G$, only its graphical structure.  This is because trees always have genus zero and therefore cannot depend upon the embedding of $G$. Our aim here is to find the matroidal analogue of an embedded graph, and to do this we clearly need to adapt the definitions of  $M(G)$. By thinking of the the construction of $M(G)$ in terms of ribbon graphs it becomes obvious how this should be done: spanning trees are genus-zero spanning ribbon subgraphs with exactly one boundary component, so to retain topological information, we drop the genus zero condition, consider quasi-trees instead of trees, and obtain the set system
$(E, \mathcal{F})$, where  $\mathcal{F}$ consists of the edge sets of the spanning quasi-trees of $G$.

There is a natural variation of the construction of a cycle matroid obtained by choosing $n\in \mathbb{N}_0$, taking  $E$ as the ground set and $\mathcal{B}$ to be either the edge sets formed by deleting $n$ edges from each spanning tree, or the edge sets formed by adding  $n$ edges to each spanning tree.
In the former case, $\mathcal{B}$ consists of the edge sets of spanning forests of $G$ having exactly $n+1$ connected components and $(E,\mathcal B)$ is shown to be a matroid by noting that it is the $n$th truncation of $M(G)$, see~\cite{Oxley11}. In the latter case, $(E,\mathcal{B})$ is the dual of the $n$th truncation of $M(G)^*$.
Consider this construction in terms of quasi-trees of ribbon graphs:
the number of boundary components is not determined by the number of edges added or removed and
can be anywhere between $1$ and $n+1$, if $n$ edges are added or removed. In the quasi-tree setting it no longer makes sense to make the distinction between adding and removing edges, as we did in the case of matroids and spanning trees.
These ribbon graph extensions of matroids naturally lead us to the make the following definition.
\begin{definition}
Let $G=(V,E)$ be a ribbon graph with $k(G)$ connected components, and let $n\in \mathbb{N}_0$. Then we define
\begin{enumerate}
\item $\F_{\leq n}(G):=   \{ A\subseteq E  \mid  f(A) \leq k(G)+n \}   $, and
\item  $\F_{n}(G):=   \{ A\subseteq E  \mid  f(A) =k(G)+n \}   $.
\end{enumerate}
\end{definition}

For a connected ribbon graph, $\F_{n}(G)$ is the collection of all edge sets that determine a spanning ribbon subgraph of $G$ with exactly $n+1$ boundary components, and $\F _{\leq n}(G)$ is the collection of all edge sets that determine a spanning ribbon graph of $G$ with at most $n+1$ boundary components. Note that $\F_{\leq 0}(G)=\F_{0}(G)$. This set will be particularly important to us here, and later we will denote it by just $\F(G)$. Note that $\F_n(G)$ may be empty.

\begin{example}\label{s4.ex1}
For the ribbon graph $G$ of Figure~\ref{f.descb},
\begin{align*}
\F_0(G)=&\F_{\leq 0}(G)=\{\{1\},\{2\},\{1,2,3\},\{1,2,4\}\},\\
\F_1(G)=&\{ \emptyset,  \{1,2\},\{1,3\},\{1,4\},\{2,3\},\{2,4\},\{1,2,3,4\}\},\\
 \F_2(G)=&\{   \{3\},\{4\},\{1,3,4\},\{2,3,4\}\},\\
  \F_3(G)=&\{  \{3,4\}\}, \text{ and}\\
  \F_n(G)=&\emptyset, \text{ for $n>3$}.
  \end{align*}
Then $\F_{\leq n}(G)$ can be found easily from these.
\end{example}

\begin{definition} For a ribbon graph  $G=(V,E)$ and a non-negative integer $n$, let
 $D_{\leq n}(G)$ denote the set system $(E, \F_{\leq n}(G))$, and $D_{n}(G)$ denote the set system $(E, \F_{n}(G))$.
\end{definition}

 \begin{theorem}\label{t.1}
 Let $G=(V,E)$ be a ribbon graph, and $n\in \mathbb{N}_0$. Then
 \begin{enumerate}
  \item \label{t.1c} $D_{\leq n}(G)=(E, \F_{\leq n}(G))$ is a delta-matroid, and
 \item  \label{t.1b} $D_1(G)=(E, \F_1(G))$ is a delta-matroid, if $G$ is non-empty and orientable.
 \end{enumerate}
 \end{theorem}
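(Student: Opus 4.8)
\emph{The plan} is to settle properness by hand and then reduce the Symmetric Exchange Axiom, via partial duality, to the case where one of the two feasible sets is $\emptyset$. For properness: a maximal spanning forest $F$ of $G$ is plane, so Euler's formula gives $f(F)=k(G)$; thus $F\in\F_{0}(G)=\F_{\leq 0}(G)\subseteq\F_{\leq n}(G)$ and $D_{\leq n}(G)$ is proper. For the $1$-toggle with $G$ non-empty and orientable, $\F_{1}(G)\neq\emptyset$: if $G$ is a forest, $F\ba e$ works for any edge $e$; otherwise pick $e\notin F$, and since $F\cup e$ is orientable with a single independent cycle it is plane, so Euler's formula gives $f(F\cup e)=k(G)+1$; components of $G$ are treated separately. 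For the reduction: the construction of partial duals in Section~\ref{ss.dual} gives $v(G^{C})=f(C)$, which together with $(G^{C})^{X}=G^{C\btu X}$ from Proposition~\ref{p.pd2} yields $f_{G^{C}}(X)=f_{G}(C\btu X)$ for all $C,X\subseteq E$. As partial duality also preserves the number of connected components and (see~\cite{EMMbook}) orientability, this gives $D_{\leq n}(G^{C})=D_{\leq n}(G)* C$, and $D_{1}(G^{C})=D_{1}(G)* C$ for orientable $G$. By Lemma~\ref{obvious} twists preserve the delta-matroid property, and since twisting twice by $C$ is the identity, only the Symmetric Exchange Axiom needs checking; to verify it for feasible sets $A,B$ and $u\in A\btu B$ we may replace $G$ by $G^{A}$, hence assume $A=\emptyset$, and then seek $v\in B$, possibly $v=u$, with $\{u,v\}$ feasible.

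For part~(\ref{t.1c}), assume $\emptyset$ is feasible, so $f(\emptyset)=v(G)\leq k(G)+n$. Adding one edge to the edgeless spanning subgraph changes $f$ by $-1$ (non-loop), $0$ (non-orientable loop), or $+1$ (orientable loop), so $v=u$ works unless $u$ is an orientable loop, at a vertex $p$ say, and $v(G)=k(G)+n$. In that exceptional case, the local behaviour of boundary components under band attachment (attaching a band along two arcs of one boundary component splits it; along arcs of two boundary components merges them) shows that $f(\{u,v\})\leq k(G)+n$ exactly when $v$ is a non-loop or a loop interlaced with $u$. Finally $B$ must contain such an edge: otherwise every edge of $B$ is a loop, $(V,B)$ is the disjoint union of its vertices carrying their loops, and $f(B)\geq f\big((\{p\},B_{p})\big)+(v(G)-1)$, where $B_{p}$ is the set of loops of $B$ at $p$; feasibility of $B$ then forces $(\{p\},B_{p})$ to be a quasi-tree in which $u$ is an orientable loop interlaced with no other loop, hence a trivial loop, and the join decomposition from Section~\ref{sss.se} together with $f(P\vee Q)=f(P)+f(Q)-1$ gives $f\big((\{p\},B_{p})\big)\geq 2$, a contradiction.

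Part~(\ref{t.1b}) follows the same template, but now $f(\{u,v\})$ must equal $k(G)+1$ \emph{exactly}, which is where orientability is indispensable: every edge addition changes $f$ by exactly $\pm1$, so $f(\{u,v\})$ is one of two values two apart, exactly one being the target, and $v=u$ never works. After reducing to $A=\emptyset$ we have $v(G)=k(G)+1$, so $G$ has a single component with two vertices $p,q$ and all other components are single vertices. Splitting on whether $u$ is a loop at $p$ or a non-loop between $p$ and $q$, the requirement $f(\{u,v\})=k(G)+1$ becomes a condition on whether the two feet of $v$ lie on the same or on different boundary components of $(V,\{u\})$; in the loop case the required $v\in B\ba u$ is produced by the quasi-tree/join contradiction above, and in the non-loop case by the observation that $G$ then has no edge with exactly one endpoint in $\{p,q\}$, so every edge of $B\ba u$ qualifies, and $B\ba u\neq\emptyset$ since $\{u\}$ is infeasible. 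The main obstacle throughout is the boundary-component bookkeeping in these exceptional cases: pinning down exactly which $v$ meet the numerical target, keeping the orientable/non-orientable loop distinctions straight, and recognising the join decomposition of a trivial loop as the lever that forces the contradiction.
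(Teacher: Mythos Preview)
Your proof is correct but proceeds quite differently from the paper's. The paper establishes a single combinatorial lemma (Lemma~\ref{l1}): whenever $A\in\F_n(G)$, $B\in\F_{\leq n}(G)$, $e\in A\btu B$ and $A\btu e\notin\F_{\leq n}(G)$, there is an $f\in A\btu B$ with $A\btu\{e,f\}\in\F_n(G)$. Its proof is a direct boundary-component argument: if no edge of $(A\btu B)\setminus e$ meets two distinct boundary components of $(V,A\btu e)$, then a colouring of boundary components, propagated through the sequence of edge-toggles taking $(V,A\btu e)$ to $(V,B)$, shows that $(V,B)$ has at least as many boundary components as $(V,A\btu e)$, contradicting $B\in\F_{\leq n}(G)$. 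Both parts of the theorem then fall out quickly, part~(\ref{t.1b}) needing only the parity observation that in an orientable ribbon graph $f(A)-f(B)$ and $|A|-|B|$ have the same parity.

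Your route instead uses the identity $f_{G^C}(X)=f_G(C\btu X)$, derived from $v(H^Y)=f_H(Y)$ and Proposition~\ref{p.pd2}, to reduce to $A=\emptyset$; this is essentially the content of Theorem~\ref{t.twdu}, which the paper proves later. The reduction forces $v(G)\leq k(G)+n$ (respectively $v(G)=k(G)+1$), and you finish by case analysis on the type of $u$, using the join decomposition of a trivial orientable loop to produce the contradiction in the hard case. The payoff is a more geometric argument that avoids the colouring trick and foreshadows the partial-duality/twist correspondence; the cost is importing more about partial duals (including orientability-preservation from~\cite{EMMbook}) and a fiddlier endgame split over several subcases. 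One minor point: your appeal to Lemma~\ref{obvious} in the reduction is superfluous and slightly misleading, since you are not yet assuming anything is a delta-matroid; the reduction works directly because $X\mapsto A\btu X$ gives a bijection between $\F_{\leq n}(G)$ and $\F_{\leq n}(G^A)$ that carries the pair $(A,B)$ to $(\emptyset,A\btu B)$.
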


The proof of Theorem~\ref{t.1} follows from the next lemma.  For the next two proofs we use $G_A$ to denote the spanning ribbon subgraph $(V,A)$ of $G$. Note that $G_A$ does \emph{not} denote the induced ribbon subgraph $G|_A$.
 \begin{lemma}\label{l1}
 Suppose  $A\in \F_n(G)$,  $B\in \F_{\leq n}(G)$,  $e\in A\btu B$, and $A\btu e\not\in \F_{\leq n}(G)$. Then there exists  $f\in A\btu B$ such that  $A\btu \{e,f\}\in \F_{n}(G)$.
\end{lemma}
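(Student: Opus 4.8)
The plan is to use partial duality to reduce to the case $A=\emptyset$, and then to finish with a short topological case analysis on the number of boundary components.

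First I would record two elementary facts, writing $f_G(X)$ for the number of boundary components of the spanning ribbon subgraph $(V,X)$: toggling a single edge changes $f_G$ by at most $1$; and $v(G^D)=f_G(D)$ for every $D\subseteq E(G)$, which is immediate from the construction of the partial dual in Section~\ref{ss.dual}. Applying the second fact to $G$ and to $G^A$, and using part~(2) of Proposition~\ref{p.pd2}, gives the identity $f_{G^A}(D)=f_G(A\btu D)$ for every $D\subseteq E$. From $A\in\F_n(G)$ we get $f_G(A)=k(G)+n$, and combining $A\btu e\notin\F_{\le n}(G)$ with the first fact pins down $f_G(A\btu e)=k(G)+n+1$. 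Setting $H:=G^A$ and $C:=A\btu B$, the identity above rewrites the hypotheses as: $f_H(\emptyset)=k(H)+n$ (equivalently $v(H)=k(H)+n$, since $f_H(\emptyset)=v(H)$ and $k(H)=k(G)$), $f_H(C)\le k(H)+n$, $e\in C$, and $f_H(\{e\})=k(H)+n+1$; and the conclusion to be proved becomes: there is $f\in C$ with $f_H(\{e,f\})=k(H)+n$.

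Now I would analyse $H$. Since $f_H(\{e\})=v(H)+1$, and a single edge of $H$ that is a bridge, a non-orientable loop, or an orientable loop contributes $v(H)-1$, $v(H)$, or $v(H)+1$ respectively, the edge $e$ must be an orientable loop; let $w$ be its vertex, so $H_{\{e\}}$ is the disjoint union of the $v(H)-1$ vertex-discs other than $w$ together with an annulus at $w$. If $C$ contains a non-loop edge $f$, its two ends lie on distinct components, hence on distinct boundary components, of $H_{\{e\}}$, so adding $f$ merges two boundary components and $f_H(\{e,f\})=f_H(\{e\})-1=v(H)=k(H)+n$, as required. Otherwise every edge of $C$ is a loop, so each component of $H_C$ is a one-vertex bouquet and $k(H_C)=v(H)$; since $v(H)\le k(H_C)\le f_H(C)\le v(H)$, equality holds throughout and every component of $H_C$ has exactly one boundary component. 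In particular the component $B_w$ of $H_C$ containing $w$ is a quasi-tree containing the orientable loop $e$. If $e$ were a trivial loop of $B_w$ then, by the description of trivial loops as joins in Section~\ref{sss.se} together with the identity $f(P\vee Q)=f(P)+f(Q)-1$, we would get $f(B_w)\ge 2$, contradicting $f(B_w)=1$; hence $e$ is interlaced with some loop $f\in C$ of $B_w$. Then in $H_{\{e,f\}}$ the vertex $w$ carries two interlaced loops, one of them orientable, so that component is a quasi-tree — a pair of interlaced loops at least one of which is orientable has Euler genus $2$, hence, by Euler's formula, a single boundary component — and therefore $f_H(\{e,f\})=1+(v(H)-1)=k(H)+n$, which completes the reduced statement.

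I expect the only delicate part to be getting the reduction identity $f_{G^A}(D)=f_G(A\btu D)$ and the accompanying bookkeeping with $v$, $k$ and $f$ straight; once $A=\emptyset$ the argument is routine topology, using only the effect of a single edge on the boundary count, the join formula for $f$, and the standard fact that two interlaced loops with at least one orientable form a quasi-tree. Theorem~\ref{t.1} then follows, as indicated in the text, by observing that the non-trivial instance of the symmetric exchange axiom for these set systems is precisely the situation covered by this lemma.
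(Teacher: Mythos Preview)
Your proof is correct and takes a genuinely different route from the paper's.

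The paper works directly in $G$: it looks at the $n+k(G)+1$ boundary components of $G_{A\btu e}$ and argues that some edge of $(A\btu B)-e$ must touch two of them (so that toggling it drops the count back to $n+k(G)$). The impossible case, where every such edge touches only one boundary component of $G_{A\btu e}$, is ruled out by a colouring argument: colour the boundary components of $G_{A\btu e}$ distinctly and toggle edges one at a time until reaching $G_B$; under the hypothesis, colours never merge, so $G_B$ has at least as many boundary components as $G_{A\btu e}$, contradicting $B\in\F_{\le n}(G)$.

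Your approach instead uses partial duality to normalise to $A=\emptyset$ via the identity $f_{G^A}(D)=f_G(A\btu D)$, and then reduces everything to a concrete statement about a ribbon graph $H$ with $v(H)=k(H)+n$: the edge $e$ is forced to be an orientable loop, and one either finds a non-loop $f\in C$ (which joins two components of $H_{\{e\}}$) or, in the all-loops case, uses $f_H(C)\le v(H)=k(H_C)$ to force each bouquet to be a quasi-tree, whence $e$ is interlaced with some $f\in C$. This is elegant and makes the topology very transparent, at the cost of invoking the partial-duality machinery from Section~\ref{ss.dual}; the paper's argument is more self-contained and sets up nothing beyond the colouring trick. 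Both arguments ultimately exploit the same obstruction, but yours converts it into a statement about small explicit ribbon graphs, while the paper's keeps it as a global counting statement about boundary components.

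Two minor expository points. First, you use $k(G^A)=k(G)$ without comment; this is standard (and visible from Table~\ref{tablecontractrg} one edge at a time), but worth a word. Second, your sentence ``a pair of interlaced loops at least one of which is orientable has Euler genus $2$, hence, by Euler's formula, a single boundary component'' is stated a bit circularly, since with $v=k=1$ and $e=2$ the two assertions are equivalent; the fact itself is correct and is most cleanly justified by a direct boundary trace (or, in the language of Section~\ref{s5.7}, by checking that the relevant $2\times 2$ matrix over $GF(2)$ is non-singular whenever at least one diagonal entry is zero).
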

\begin{proof}
The ribbon graph $G_{A}$ has $n+k(G)$ boundary components and $G_{B}$ has at most  $n+k(G)$ boundary components. By Proposition~\ref{obv},  $f(A\btu e)$ and $f(A)$ differ by at most one.
Thus $G_{A\btu e}$ has $n+k(G)+1$ boundary components (as $A\btu e\not\in \F_{\leq n}(G)$).
We think of $G_{A\btu e}$ as a ribbon subgraph of $G_{A\cup B}$. We can then consider how the edges in $(A\btu B)\ba e$ meet the boundary components of $G_{A\btu e}$.

If there is an edge $f\in (B\ba A)\ba e$ that intersects two distinct boundary components of $G_{A\btu e}$, then adding this edge to $G_{A\btu e}$ will give a ribbon subgraph with one fewer boundary component, and so $A\btu \{e,f\}\in \F_{n}$.
If there is an edge $f\in (A\ba B)\ba e$ that  meets  two distinct boundary components of $G_{A\btu e}$, then removing  this edge from $G_{A\btu e}$ results in a ribbon subgraph with one fewer boundary component, and so $A\btu \{e,f\}\in \F_{n}(G)$.

All that remains is the case in which each edge in $(A\btu B)\ba e$ intersects exactly one boundary component of $G_{A\btu e}$.
We shall show that this case cannot happen.

To see why, observe that $G_B$ can be obtained from $G_{A\btu e}$ by first deleting the edges in $(A\ba B)\ba e$ and then adding the edges in $(B\ba A)\ba e$, one by one. Colour the boundary components of $G_{A\btu e}$ so that each one receives a different colour. Whenever an edge is added or deleted, the only boundary components that change are those intersecting an edge that is deleted or those intersecting the two line segments forming the ends of an edge that is added. At each step the number of boundary components may stay the same, or increase or decrease by one. After a step where the number of boundary components increases by one, the two new boundary components are given the same colour as the one they replace. We claim that when the number of boundary components decreases by one, the two boundary components being replaced have the same colour. The single boundary component replacing them may then be given this common colour. Suppose that the claim is not true and consider the first time that an edge $f$ is added or deleted in such a way that the number of boundary components decreases and the two boundary components $C_1$ and $C_2$ that are changed by the edge addition or deletion have different colours. Let $G'$ denote the ribbon graph obtained just before $f$ is added or deleted.
 Both $C_1$ and $C_2$ contain a line segment that is removed from the boundary of $G'$ after the addition or deletion of $f$.
 Let $L_1$ and $L_2$ denote these line segments. Then $L_1$ and $L_2$
 are part of the boundary of each ribbon graph in the process up to the current step, including $G_{A\btu e}$. Although the boundary components to which these line segments belong may change, their colours do not.
 As $f\in (A\btu B)\ba e$, it intersects exactly one boundary component of $G_{A\btu e}$.
Therefore $L_1$ and $L_2$ have the same colour in $G_{A\btu e}$, and consequently in $G'$.
Thus the claim follows and moreover all the original colours used to colour the boundary components of $G_{A\btu e}$ are used to colour the boundary components of $G_B$.
Therefore $G_B$ has at least as many boundary components as $G_{A\btu e}$. This contradicts our hypotheses from the statement of the  lemma  that $B\in \F_{\leq n}(G)$ and  $A\btu e\not\in \F_{\leq n}(G)$. %
\end{proof}

\begin{proof}[Proof of Theorem~\ref{t.1}]
In each case it is enough to show that the given families of feasible sets satisfy the Symmetric Exchange Axiom.

For Item~\ref{t.1c}, let $A,B\in \F_{\leq n}(G)$ and $e\in A \btu B$. If $A\btu e\in \F_{\leq n}(G)$, then taking  $f=e$ gives $A\btu \{e,f\}\in \F_{\leq n}(G)$, as desired. In the exceptional case, $A\btu e\notin \F_{\leq n}(G)$, so  it follows from
 Proposition~\ref{obv} that  $A\in \F_{n}(G)$.
Then Lemma~\ref{l1} guarantees that there is an element $f\in A\btu B$   such that  $A\btu \{e,f\}\in \F_{\leq n}(G)$.

For Item~\ref{t.1b}, we first observe that it follows easily from Euler's formula that the parity of $f(A)-f(B)$ is the same as the parity of $e(A)-e(B)$. In particular, the sizes of all spanning quasi-trees of $G$ have the same parity, and the sizes of all members of ${\mathcal F}_1$ have the opposite parity. By Proposition~\ref{obv}, we have $|f(A\btu e)-f(A)| \leq 1$. Thus, if $A \in \F_0(G)$ and $e \in E$, then $A\btu e \in {\mathcal F}_1(G)$, so $D_1$ is a proper set system. Let $A$, $B$ be members of ${\mathcal F}_1(G)$ and $e\in A \btu B$. If $A \btu e \notin \mathcal{F}_{\leq 1}(G)$, then by Lemma~\ref{l1}, there exists $f \in A\btu e$ such that $A\btu\{e,f\}\in \mathcal{F}_1(G)$. It remains to consider what happens if $A\btu e \in \mathcal{F}_0(G)$. As $|A|$ and $|B|$ have the same parity, there exists $f  \in (A\btu B)-e$. Now, by our earlier observation, $(A\btu e)\btu f \in \mathcal{F}_1(G)$. Hence $D_1(G)$ is a delta-matroid.
\end{proof}

In general,  the set system $D_n(G)$ is not a delta-matroid. For example, if $G$ is the plane graph obtained by taking a triangle with edges 1, 2, 3 and adding an edge 4 in parallel with edge 3, then  $\F_2(G)=\{ \emptyset, \{3,4\},\{1,2,3,4\}  \}$  and it is readily seen that   $D_2(G)$  is not a delta-matroid.  Also, if $G$ is non-orientable $D_1(G)$ may not be a delta-matroid.  Consider, for example, the ribbon graph $G$ of Euler genus 2 obtained by adding an interlaced non-orientable loop to a plane 2-cycle.

\subsection{Ribbon-graphic delta-matroids}\label{s4.2}
One of the main purposes of this article is to illustrate that the delta-matroid $D_{0}(G)=D_{\leq 0}(G)$ plays a role in delta-matroid theory analogous to
the role graphic matroids play in matroid theory. In this subsection we set up some additional terminology for these delta-matroids and show that they have appeared in the literature in other guises.

\begin{definition}
Let $G=(V,E)$ be a ribbon graph. We use $\F(G)$ to denote the set $\F_0(G)=\F_{\leq 0}(G)$, so that
\[\F(G):=\{F\subseteq E(G) \mid F\text{ is the edge set of a spanning quasi-tree of }G\},\]
and $D(G)=(E,\F)$ to denote the delta-matroid $D_0(G)=D_{\leq0}(G)$. We say that  $D(G)$ is a \emph{ribbon-graphic} delta-matroid.
\end{definition}

\begin{example}
For the ribbon graph $G$ of Figure~\ref{f.descb}, \[D(G)=(\{1,2,3\}, \{\{1\},\{2\},\{1,2,3\},\{1,2,4\}\}).\]
\end{example}

To relate the delta-matroid $D(G)$ to the literature, particularly to Bouchet's foundational work on delta-matroids, we take what may appear to be a detour into transition systems.
Let $F=(V,E)$ be a 4-regular graph. Each vertex $v$ of $F$ is incident with exactly four half-edges. A \emph{transition} $\tau_v$ at a vertex $v$ is a partition of the half-edges at $v$ into two pairs, and a \emph{transition system}, $\tau:=\{\tau_v\mid v\in V\}$ of $F$ is a choice of transition at each of its vertices.

For the purposes of this section, we allow graphs to include \emph{free loops}, that is edges which are not incident with any vertex. We think of a free loop as a circular edge or as a cycle on zero vertices.
Given a transition system $\tau$ of $F$, we can obtain a set of free loops as follows.  If $(u,v)$ and $(w,v)$ are two non-loop edges whose half edges are paired at the vertex $v$, then we replace these two edges with a single edge $(u,w)$. In the case of a loop, we temporarily imagine an extra vertex of degree two on the loop, carry out the operation, and then suppress the temporary vertex. Doing this replacement for each pair of half edges paired together in the transition system $\tau$ results in a set of free loops, that we denote by $F(\tau)$ and call a \emph{graph state}.

Since $F$ is 4-regular, at each vertex there are three transitions. Choose exactly two transitions $\tau_v$ and $\tau_v'$ at each vertex, and consider the set $\mathcal{T}$ consisting of all transition systems of $F$ in which the transition at each vertex $v$ is one of the distinguished transitions, $\tau_v$ or $\tau_v'$. An element of $\mathcal{T}$ is called an \emph{allowable transversal}. Fix some allowable transversal $T\in \mathcal{T}$, and let
 \[ D(F,\mathcal{T},T)=  (T , \{ \tau \cap T \mid \tau \in \mathcal{T}\text{ and } |F(\tau)|=k(F)  \}).  \]
Kotzig's Theorem~\cite{kotzig} implies that $D(F,\mathcal{T},T)$ is a proper set system.
Bouchet showed in~\cite{ab1} that $D(F,\mathcal{T},T)$ is a delta-matroid. A delta-matroid that can be obtained in this way is called an \emph{Eulerian delta-matroid}. (Note that although Bouchet never uses the term ``Eulerian delta-matroid'' in~\cite{ab1}, it is implied that this is the intended definition by his later work, such as~\cite{abcoverings}.)

Bouchet showed  that $D(G)$ is a delta-matroid, albeit using a different language. Following~\cite{ab2}, let $G$ be a connected graph cellularly embedded in a surface $\Sigma$, and let $G^*$ be its geometric dual. Consider the natural immersion of $G\cup G^*$ in $\Sigma$.
For each $B\subseteq  E(G)$ let $B^*$ denote the corresponding set in $E(G^*)$. A set $B\subseteq  E(G)$ is said to be a \emph{base} if $\Sigma - \mathrm{cl}(B\cup (B^c)^*)$ is connected, where $\mathrm{cl}$ denotes the topological closure operator. Let $\mathcal{F}_b(G)$ denote the collection of all bases of $G$. Bouchet showed that $\mathcal{F}_b(G)$ satisfies the Symmetric Exchange Axiom, and so the pair $D_{\cell}(G)=(E,\mathcal{F}_b(G))$ is a delta-matroid.

By changing from the language of cellularly embedded graph to ribbon graphs we can see that $D(G)$ and  $D_{\cell}(G)$ are identical objects. To see this consider $G\subset \Sigma$ and $G^*\subset \Sigma$ as ribbon graphs $G'$ and $G'^*$ respectively. Then $\Sigma = V(G') \cup V(G'^*) \cup E(G)$ as described in Section~\ref{ss.dual}. It is not hard to see that  the number of components of $\Sigma - \mathrm{cl}(B\cup (B^c)^*)$ is exactly the number of boundary components of $G'\ba B^c$. It follows that $B$ defines a base of $G\subset \Sigma$ if and only $(V(G'),B) $ is a spanning quasi-tree of $G'$. Thus $D(G)$ and $D_{\cell}(G)$ coincide.

Bouchet did not use the language of quasi-trees to show that $D_{\cell}(G)$ is a delta-matroid, but rather transition systems and Eulerian delta-matroids, identifying it with a construction from~\cite{ab1}. For this, again let $G$ be a connected graph cellularly embedded in a surface. Its {\em medial graph}, $G_m$, is the embedded graph constructed by  placing a vertex on each edge of $G$, and then drawing the edges of the medial graph by following the face boundaries of $G$ (so  each vertex of $G_m$ is of degree $4$). The medial graph of an isolated vertex is a free loop. The vertices of $G_m$ are 4-valent and correspond to the edges of $G$. Every medial graph has a  \emph{canonical face 2-colouring} given by colouring faces corresponding to a vertex of $G$ black, and the remaining faces white. We can use the canonical face 2-colouring to distinguish among the three types of vertex transitions. We call a vertex transition \emph{white} if it pairs half-edges that share a white face, \emph{black} if it pairs half-edges that share a black face, and \emph{crossing} otherwise. If $\mathcal{T}_m$ consists of all the transition systems that have only white or black transitions at each vertex, and $W$ consists only of the white transitions, then it is not hard to see that $D(G)=D(G_m,\mathcal{T}_m, W)$.

This discussion shows that every ribbon-graphic delta-matroid is Eulerian. In fact, ribbon-graphic delta-matroids are exactly Eulerian delta-matroids.
\begin{theorem}[Bouchet~\cite{ab2}]
\label{ribboneuler}
A delta-matroid $D$ is Eulerian if and only if $D\cong D(G)$, for some ribbon graph $G$.
\end{theorem}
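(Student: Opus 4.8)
The plan is to prove both directions of Theorem~\ref{ribboneuler} by exploiting the medial-graph construction discussed just before the statement. One direction is essentially already established: the discussion shows that $D(G) = D(G_m, \mathcal{T}_m, W)$, where $G_m$ is the medial graph of $G$ (a connected $4$-regular graph, after replacing an isolated vertex by a free loop), $\mathcal{T}_m$ consists of the transition systems using only black or white transitions, and $W$ is the all-white transversal. Hence every ribbon-graphic delta-matroid is Eulerian. I would state this first, citing the medial-graph discussion, but I would take care to check the edge cases: when $G$ is disconnected one must argue componentwise (the medial graph of each component, glued appropriately, or a direct-sum argument for delta-matroids), and when $G$ has isolated vertices the corresponding free loops must be handled — each contributes a loop element to the delta-matroid, which is consistent with the fact that such an edge lies in no spanning quasi-tree.

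For the converse — every Eulerian delta-matroid is ribbon-graphic — the key step is to reverse the medial-graph construction. Given an Eulerian delta-matroid $D = D(F, \mathcal{T}, T)$, where $F$ is a connected $4$-regular graph with a chosen pair of transitions at each vertex, I would first observe that $F$, being $4$-regular and connected, is the medial graph of some graph cellularly embedded in a surface. This is a classical fact: a connected $4$-regular graph embedded in a surface with a checkerboard (face $2$-)colouring of its faces arises as the medial graph of the embedded graph whose vertices are the black faces and whose edges are the vertices of $F$. The subtlety is that $F$ comes to us as an abstract graph, not an embedded one, and the two distinguished transitions at each vertex must be made to play the roles of the ``black'' and ``white'' transitions. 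So the real content is: choose an embedding of $F$ (equivalently, a rotation system) compatible with the prescribed transition pairs, so that at each vertex the two chosen transitions become the black transition and the white transition of the resulting checkerboard colouring, with the transversal $T$ corresponding to the all-white transversal $W$. Once such an embedding is fixed, let $G$ be the embedded graph with $G_m = F$; then by the identification $D(G) = D(G_m, \mathcal{T}_m, W)$ from the preceding discussion, $D \cong D(G)$.

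The main obstacle is exactly this embedding step: showing that the abstract $4$-regular graph $F$ with its distinguished transition pairs can be realised as a checkerboard-coloured medial graph in which the distinguished transitions are precisely the face-respecting ones. Concretely, at each vertex $v$ one must install a cyclic order of the four half-edges so that the two chosen transitions $\tau_v, \tau_v'$ each pair ``opposite'' half-edges in a way consistent with a global face $2$-colouring — and one must verify that such local choices can always be made globally consistent. I expect this to follow from a parity/Eulerian argument: Kotzig's theorem (already invoked for properness) and the bipartite-like structure of transition graphs guarantee that the distinguished transitions can be globally $2$-coloured as ``black'' and ``white,'' and then the rotation at each vertex is forced up to reflection. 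One then checks that different admissible choices yield equivalent (or at least isomorphic-delta-matroid-producing) ribbon graphs. The remaining verifications — that $|F(\tau)| = 1$ translates to ``$(V(G), \tau\cap T)$ is a spanning quasi-tree,'' and that relabelling the transversal only twists $D(G)$ by a fixed set, which preserves the isomorphism type of being ribbon-graphic via Proposition~\ref{p.pd2} and the partial-dual/twist correspondence developed later — are routine once the embedding is in hand.
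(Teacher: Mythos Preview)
Your approach is essentially the paper's: the forward direction is the medial-graph identification $D(G)=D(G_m,\mathcal{T}_m,W)$ already established in the preceding discussion, and the converse reverses this by realising the data $(F,\mathcal{T},T)$ as the medial data of some ribbon graph $G$. The paper handles the converse in one line by citing the \emph{cycle family graph} construction of Ellis-Monaghan and Moffatt~\cite{EMM}: the cycle family graphs of an abstract $4$-regular graph $F$ are exactly the ribbon graphs whose medial graph is $F$, and the six local choices at each vertex in that construction correspond bijectively to the six ordered pairs of (white, black) transitions there. Picking, at each vertex $v$, the choice matching $(\tau_v,\tau_v')$ with the $T$-transition declared white yields $G$ with $D(G)=D(F,\mathcal{T},T)$.

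Your one inefficiency is the worry about global consistency of the checkerboard colouring. This concern is misplaced: the construction is purely local and the $2$-colouring is not imposed a posteriori on an embedding of $F$ but is built into $G$ from the start. Concretely, the two distinguished transitions at $v$ determine a cyclic order on the four half-edges (making those two the non-crossing ones); one then replaces $v$ by a ribbon-edge of $G$, and the vertex-discs of $G$ are simply the closed walks of $F$ under the ``black'' transition system. The black faces of $G_m$ are then the vertices of $G$ and the white faces are the faces of $G$, by construction --- no parity or Kotzig-type argument is needed here. Your remarks about twisting to adjust which transversal plays the role of $W$, and about handling disconnected $G$ componentwise, are correct but likewise become unnecessary once the local construction is set up with the right choices from the outset.
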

\begin{proof}[Sketch of proof.]
If $D$ is Eulerian then, by definition, we can obtain it as some $D(F,\mathcal{T},T)$. We need to find a ribbon graph $G$ such that  $D=D(G_m,\mathcal{T}_m, W)$. But such a ribbon graph can be obtained as a cycle family graph of $F$, from~\cite{EMM}. (The cycle family graphs of $F$ are precisely the embedded graphs that have a medial graph isomorphic to $F$.) The six choices at each vertex in the construction of a cycle family graph correspond to the six choices of the white and black transitions of $G_m$ (c.f. the proof of Theorem~4.12 of~\cite{EMM}).
\end{proof}

We have just seen that the delta-matroids of ribbon graphs considered here appeared in a rather different framework as Eulerian delta-matroids in Bouchet's initial work on delta-matroids. Here, we are proposing that for many purposes, the class of Eulerian delta-matroids, and delta-matroid theory in general, is best thought of as extensions of ribbon graph theory. (Saying this, of course there are certainly situations where it is most helpful to think of Eulerian delta-matroids as generalisations of transition systems.)  As we will demonstrate here, this is because there is a natural and fundamental compatibility between ribbon graph theory and delta-matroid theory, with many constructions, results, and proofs in the two areas being translations of one another.

From the perspective of Eulerian delta-matroids, $D(G_m,\mathcal{T}_m, W)$ is significant since the transition systems of $G_m$ arise  canonically. Another setting in which canonical transition systems arise is in digraphs. Suppose that $\vec{F}$ is a 4-regular digraph with two incoming and two outgoing half-edges at each vertex.  At each of its vertices there are two natural transitions that are consistent with the direction of the half-edges of the digraph. We take $\vec{\mathcal{T}}$ to be the set of all transition systems that arise from these choices. Then for each $\vec{T}\in \vec{\mathcal{T}}$, $D(\vec{F},\vec{\mathcal{T}}, \vec{T})$ is a delta-matroid. We call a delta-matroid arising in this way a \emph{directed Eulerian delta-matroid}.

\begin{theorem}[Bouchet~\cite{ab2}]
\label{dieul}
A delta-matroid $D$ is directed Eulerian if and only if $D= D(G)$, for some orientable ribbon graph $G$.
\end{theorem}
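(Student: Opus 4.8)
The plan is to mirror the proof sketch already given for Theorem~\ref{ribboneuler}, adapting it to the directed setting, and then to pin down the orientability correspondence. First I would establish the forward direction: suppose $D$ is directed eulerian, so $D = D(\vec{F},\vec{\mathcal{T}},\vec{T})$ for some $4$-regular digraph $\vec{F}$. I would show that the underlying $4$-regular graph $F$ of $\vec{F}$ arises as the medial graph $G_m$ of some ribbon graph $G$, and that under this identification the two direction-consistent transitions at each vertex of $\vec{F}$ correspond exactly to the white and black transitions of $G_m$ (never the crossing transition). The key point is that a direction-respecting transition pairs an in-half-edge with an out-half-edge, and when one draws the medial graph of a ribbon graph, the two faces incident to a $4$-valent vertex that can be separated by a non-crossing transition are precisely the ones respecting a consistent ``rotational'' direction around the vertex; so a coherent orientation of $F$ is the same data as a choice, at each vertex, labelling one of the two non-crossing transitions ``white'' and the other ``black.'' Hence $\vec{\mathcal{T}}$ maps onto $\mathcal{T}_m$ and $D(\vec{F},\vec{\mathcal{T}},\vec{T}) \cong D(G_m,\mathcal{T}_m,W) = D(G)$. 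Then I would argue $G$ is orientable: a $4$-regular graph admits a coherent (Eulerian) orientation making all transitions ``in-out'' iff the corresponding ribbon graph is orientable --- this is the classical fact that the medial graph of $G$ carries a consistent orientation exactly when $G$ (equivalently its surface) is orientable.

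For the converse, I would start with an orientable ribbon graph $G$ and form its medial graph $G_m$. By Theorem~\ref{ribboneuler} (or rather its proof) we already know $D(G) = D(G_m,\mathcal{T}_m,W)$ is Eulerian; the task is to upgrade this to ``directed eulerian.'' Since $G$ is orientable, $\Sigma = V(G)\cup V(G^*)\cup E(G)$ is an orientable surface, and a fixed orientation of $\Sigma$ induces a coherent orientation of the medial graph $G_m$ --- concretely, orient each edge of $G_m$ so that black faces lie consistently (say) on its left. With this orientation $\vec{G_m}$, the two direction-consistent transitions at each vertex are exactly the white and black transitions (the crossing transition is the one excluded by direction-consistency), so $\vec{\mathcal{T}} = \mathcal{T}_m$ and $D(G) = D(G_m,\mathcal{T}_m,W) = D(\vec{G_m},\vec{\mathcal{T}},\vec{W})$ is directed eulerian.

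The main obstacle, and the step deserving the most care, is the precise bookkeeping identifying ``direction-consistent transition'' with ``non-crossing (white or black) transition,'' and in particular the claim that orientability of $G$ is equivalent to the existence of a coherent orientation of $G_m$ realising this correspondence at every vertex simultaneously. One has to check that the local choices glue: a coherent orientation of $G_m$ is a global object, and one must verify it exists precisely in the orientable case and that it restricts correctly at each vertex. I would handle this by invoking the standard description of the medial graph of a ribbon graph via the ribbon structure (each vertex of $G_m$ sits on an edge-disc of $G$, and the four medial edges run along the four corners where the edge meets vertex-discs), from which the orientation statement follows from tracking the induced boundary orientations; alternatively one can cite the treatment of medial graphs and orientations in \cite{EMM} or \cite{EMMbook}. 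A secondary, more routine point is that isomorphism of delta-matroids is respected throughout --- relabelling the ground set (the transversal $\vec{T}$ versus $W$) gives an isomorphism, not equality, but this is harmless since the statement is up to $\cong$. With these identifications in hand, the equivalence drops out of Theorem~\ref{ribboneuler} and its proof with essentially no extra combinatorial work.
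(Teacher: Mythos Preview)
Your proposal is correct and follows essentially the same route as the paper's sketch: both directions go through the medial graph $G_m$ with its canonical face 2-colouring, identifying direction-consistent transitions with the non-crossing (black/white) transitions, and using the surface orientation of an orientable $G$ to coherently orient $G_m$ in the converse. The one place the paper is more concrete than you are is in the forward direction: having obtained some $G$ with $G_m\cong F$ via the cycle-family-graph construction, the paper explicitly applies twisted duals $G^{\tau(e)}$ or $G^{\tau\delta(e)}$ (from \cite{EMM}) vertex-by-vertex to force the direction-consistent transitions to coincide with the black/white ones, and then reads off orientability from the induced orientations on the black faces; you correctly flag this alignment as the main obstacle and propose citing the same source, but you should be aware that it is precisely the twisted-dual operations that do the work of making the local identifications glue.
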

\begin{proof}[Sketch of proof.]
First suppose that $D= D(G)$, for some orientable ribbon graph $G$. Arbitrarily orient (the surface) $G$ and draw its canonically face 2-coloured medial graph $G_m$ on it. Direct  each edge of $G_m$ so that it is consistent with the orientation of the black face it bounds.

Conversely, suppose that $D$ is directed Eulerian, arising from a digraph $\vec{F}$. By the proof Theorem~\ref{ribboneuler}, we know $D=D(G_m,\mathcal{T}_m, W)$ for some ribbon graph $G$, where the underlying graphs of $G_m$ and $\vec{F}$ are isomorphic. The direction of $\vec{F}$ induces a direction of $G_m$. Furthermore, by forming the twisted duals (see~\cite{EMM}) $G^{\tau(e)}$ or $G^{\tau\delta(e)}$, if necessary, we may assume that the transitions that are consistent with the directions of $\vec{F}$ coincide with the black and white transitions of $G_m$. These directions induce an orientation on each black face of $G_m$, and hence of each vertex and half-edge of $G$. Since the black and white transitions of $G_m$ are consistent with   transitions coming from the directions of $\vec{F}$, these orientations of vertices must be consistent and so $G$ is orientable.
\end{proof}

Combining Theorems~\ref{ribboneuler} and~\ref{dieul}, and using the fact from Proposition~\ref{p.4b}  that $D(G)$ is even if and only if $G$ is orientable,  immediately gives the following.
\begin{corollary}[Bouchet~\cite{ab2}]\label{cor:beforetraldi}
A delta-matroid $D$ is directed Eulerian if and only if it is Eulerian and even.
\end{corollary}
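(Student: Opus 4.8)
The plan is to deduce this from Theorems~\ref{ribboneuler} and~\ref{dieul} via the single intermediate fact that, for every ribbon graph $G$, the delta-matroid $D(G)$ is even if and only if $G$ is orientable. Granting this, the corollary follows by a short chain of equivalences (everything being up to delta-matroid isomorphism): $D$ is directed Eulerian iff $D\cong D(G)$ for some orientable ribbon graph $G$ (Theorem~\ref{dieul}); by the intermediate fact this holds iff $D\cong D(G)$ for some ribbon graph $G$ with $D(G)$ even; since a delta-matroid isomorphism is a bijection of ground sets preserving the feasible sets, it preserves their cardinalities and hence evenness, so the previous condition is equivalent to $D$ being even and $D\cong D(G)$ for some ribbon graph $G$; and by Theorem~\ref{ribboneuler} the latter says precisely that $D$ is even and Eulerian. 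Note that in the middle step the ``only if'' uses the easy direction of the intermediate fact applied to the orientable witness, while the ``if'' uses the hard direction to conclude that the witness $G$ may be taken to be the one already in hand.

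It therefore remains to prove the intermediate fact. For the easy direction, suppose $G$ is orientable. Then every spanning ribbon subgraph of $G$ is orientable, so every spanning quasi-tree $(V,A)$ of $G$ has even Euler genus; applying Euler's formula to such a quasi-tree, where $k(A)=k(G)$, $v(A)=v(G)$ and $f(A)=k(G)$, gives $|A|=\gamma(A)+v(G)-k(G)$. Since $\gamma(A)$ is even, $|A|$ is congruent to $v(G)-k(G)$, which does not depend on $A$; hence all feasible sets of $D(G)$ have the same parity and $D(G)$ is even.

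For the converse I would argue contrapositively: if $G$ is non-orientable I produce a spanning quasi-tree of odd Euler genus, so that, by the same parity computation, $D(G)$ has feasible sets of both parities and is odd. It suffices to handle a single non-orientable connected component, so assume $G$ is connected and non-orientable; then $G$ contains a non-orientable cycle $C$. Choose an edge $e$ of $C$, extend the path $C\ba e$ to a spanning tree $T$ of $G$, and consider the spanning ribbon subgraph $T\cup e$. As a surface this is the disc $T$ with one band $e$ attached, so it is an annulus or a M\"obius band; it contains $C$, a neighbourhood of which is a M\"obius band, so $T\cup e$ is itself a M\"obius band and has exactly one boundary component. Thus $T\cup e$ is a spanning quasi-tree, and Euler's formula gives $\gamma(T\cup e)=2-v(G)+v(G)-1=1$, which is odd, while the maximal spanning forest supplies a spanning quasi-tree of Euler genus $0$. (For disconnected $G$ a single non-orientable component already forces feasible sets of both parities, by combining these two quasi-trees on that component with any fixed spanning quasi-trees on the others.)

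The main obstacle is this converse, and inside it the two standard topological inputs about ribbon graphs (see~\cite{EMMbook}): that a non-orientable ribbon graph contains a non-orientable cycle, and that adjoining the closing edge of a non-orientable fundamental cycle to a spanning tree produces a surface with exactly one boundary component. Everything else---the parity bookkeeping relating $|A|$ to $\gamma(A)$, the isomorphism-invariance of evenness, and the equivalence chain of the first paragraph---is routine, as is the reduction of the disconnected case to the connected one.
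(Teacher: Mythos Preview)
Your proposal is correct and follows essentially the same route as the paper. The paper simply asserts that the corollary is immediate from Theorems~\ref{ribboneuler} and~\ref{dieul}, but this of course tacitly uses the intermediate fact you isolate---that $D(G)$ is even if and only if $G$ is orientable---which the paper proves later as Proposition~\ref{p.4b}\eqref{p.4b.4}; your argument for that fact (orientable subgraphs force even Euler genus in one direction, and extending a non-orientable cycle minus an edge to a spanning tree and adjoining the edge back in the other) is the same as the paper's proof there.
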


In recent papers, Traldi  introduced the transition matroid of an abstract four-regular graph~\cite{Tr15trans} and the isotropic matroid of a symmetric binary matrix~\cite{Tr15iso}.
These two matroids have almost identical definitions: both are binary matroids described by a representation, with the only difference being a permutation of some of the columns labels.
Moreover, both are relevant to ribbon graphs. We have described the fundamental relationship between a ribbon graph and its medial graph, which is an embedded four-regular graph; in Section~\ref{s5.7} we describe how  a ribbon graph with one vertex may be represented by a symmetric binary matrix. In~\cite{Tr:CII} Brijder and Traldi  describe the construction of the transition matroid of a ribbon graph. We now describe the almost identical construction of the isotropic matroid of a ribbon graph, and discuss the extent to which it determines the ribbon graph.

Let $G=(V,E)$ be a connected ribbon graph and $G_m$ be its canonically face 2-coloured medial graph. 
Let $T$ be a transition system in $\mathcal{T}_m$ with $|G_m(T)|=1$.
In other words, $T$ defines an Eulerian circuit $C(T)$ in $G_m$ with no crossing transitions. Apply an orientation to the edges of $G_m$, so that $C(T)$ is now a directed Eulerian cycle.

We say that  two vertices $u$ and $v$ of $G_m$ are \emph{interlaced} with respect to $T$ if they are met in the cyclic order  $u\,v\,u\,v$ when travelling round $C(T)$. Let $A(G,T)$ denote the binary $|E|$ by $|E|$ matrix whose rows and columns are indexed by the elements of $E$. The $(e,e)$-entry of $A(G,T)$ is zero if and only if in $G_m$, opposite edges at the vertex corresponding to $e$ have inconsistent orientations in $C(T)$. For $e\neq f$, the $(e,f)$--entry is one if and only the vertices corresponding to $e$ and $f$ in $G_m$ are interlaced with respect to $T$.

We now let $IAS(G,T)$ be the $|E| \times 3|E|$ matrix
\[ \big(I \mid A(G,T) \mid I+A(G,T)\big).\]
The \emph{isotropic matroid} of $G$ is the binary matroid $M[IAS(G,T)]$ with representation $IAS(G,T)$.
Each edge of $G$ indexes three columns of $IAS(G,T)$, one in each of the three blocks, with the order of the indices consistent with the indices of $A(G,T)$.
Following Traldi, we use $e_\phi$, $e_\chi$ and $e_\psi$ to denote the columns of $IAS(G,T)$ corresponding to $e$ in $I$, $A(G,T)$ and $I+A(G,T)$ respectively. For $\nu \in \{\phi,\chi,\psi\}$, let $E_\nu=\{e_\nu\mid e\in E\}$.
A basis of $M[IAS(G,T)]$ is called \emph{transverse} if for each $e\in E$, it contains precisely one of $e_\phi$, $e_\chi$ and $e_\psi$.

The isotropic matroid itself does not determine $D(G)$, because knowledge of $T$ is required. Let $T_w$ denote the edges of $G$ where, at the corresponding vertex of $G_m$, $T$ takes the white transition.
Then from the discussion above $T_w$ is a feasible set of $D(G)$. We claim that $T_w \btu F$ is a feasible set of $D(G)$ if and only if the principal submatrix of $A(G,T)$ corresponding to the edges of $F$ is non-singular. This is easily verified when $|F| \leq 2$, by considering the effect of switching the transitions of $T$ from black to white or vice versa at the vertices of $G_m$ corresponding to edges in $F$. Results of Bouchet presented as Lemmas~\ref{for1} and~\ref{for2}, and Theorem~\ref{ribbonisbinary} in Section~\ref{s5.7} show that this is enough to verify the claim.
Thus there is a bijection between transverse bases of $M[IAS(G,T)]$ which do not intersect $E_\psi$ and feasible sets of $D(G)$ associating a basis $B$ with the feasible set $(B \cap E_\chi) \btu T_w$.

In~\cite{Tr15iso}, Traldi introduces the isotropic matroid of a symmetric binary matrix $A$, which has a representation of the same form as above, that is
\[ (I \mid A \mid I+A).\]
In particular in~\cite[Theorem 7]{Tr15iso} he describes exactly when two binary symmetric matrices have  isomorphic isotropic matroids.
To translate this result to ribbon graphs, requires the notion of twisted duality from~\cite{EMM}. Two ribbon graphs are twisted duals of each other if and only if their medial graphs are isomorphic as abstract graphs. Given a connected ribbon graph $G$ and a spanning quasi-tree $Q$ of $G$, let $T(Q)$ denote the transition system of $G_m$ taking the white transition at vertices of $G_m$ corresponding to edges of $Q$ and the black transition otherwise.
\begin{theorem}
Let $G_1$ and $G_2$ be connected ribbon graphs and let $Q_1$ and $Q_2$ be spanning quasi-trees of $G_1$ and $G_2$ respectively.
Then $IAS(G_1,T(Q_1)) \simeq IAS(G_2,T(Q_2))$ if and only if $D(G_1) \simeq D(G_3)$ for some twisted dual $G_3$ of $G_2$.
\end{theorem}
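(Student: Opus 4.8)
The plan is to reduce the theorem to Traldi's classification of symmetric binary matrices with isomorphic isotropic matroids, \cite[Theorem 7]{Tr15iso}, by setting up a dictionary between the matrix-level operations in that theorem and the ribbon-graph operation of twisted duality. First I would recall the key fact, stated in the paragraph preceding the theorem, that two connected ribbon graphs are twisted duals of each other if and only if their medial graphs are isomorphic as abstract graphs; combining this with the observation that $IAS(G,T(Q))$ depends only on the abstract medial graph $G_m$ together with a choice of Eulerian circuit (coming from $T(Q)$) and an orientation of its edges, I can translate ``$D(G_1)\simeq D(G_3)$ for some twisted dual $G_3$ of $G_2$'' into a statement purely about the combinatorics of medial graphs and their Eulerian circuits.

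Next I would spell out how $IAS(G,T(Q))$ is literally the isotropic matroid, in Traldi's matrix sense, of the symmetric binary matrix $A(G,T(Q))$: this is immediate from the definitions, since $IAS(G,T) = (I \mid A(G,T) \mid I + A(G,T))$ has exactly the form $(I \mid A \mid I+A)$ appearing in \cite{Tr15iso}. So $IAS(G_1,T(Q_1)) \simeq IAS(G_2,T(Q_2))$ holds if and only if $A(G_1,T(Q_1))$ and $A(G_2,T(Q_2))$ have isomorphic isotropic matroids in the sense of \cite{Tr15iso}. The bulk of the argument is then to check that the operations on symmetric binary matrices that Traldi's Theorem~7 identifies as preserving the isotropic matroid (up to isomorphism) --- permutation/relabelling of the ground set, and the local complementation / pivot-type operations, possibly combined with column swaps between the three blocks --- correspond precisely, under the correspondence $G \mapsto A(G,T(Q))$, to passing from $G$ to a twisted dual and/or changing the spanning quasi-tree $Q$. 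Here I would use the interlacement description of $A(G,T)$: local complementation of the matrix at an element $e$ corresponds to a partial Petrie dual or partial dual at $e$ of the ribbon graph (these are the generators of twisted duality, by \cite{EMM}), and changing $T(Q)$ to $T(Q')$ for another spanning quasi-tree $Q'$ corresponds to a sequence of such local moves as well, since switching the transition at a vertex of $G_m$ is exactly a local complementation. The column-swap freedom between the $\phi$, $\chi$, $\psi$ blocks matches the freedom to replace the chosen quasi-tree, via the bijection, already noted in the excerpt, between transverse bases of $M[IAS(G,T)]$ avoiding $E_\psi$ and feasible sets of $D(G)$.

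Putting these pieces together: if $IAS(G_1,T(Q_1))\simeq IAS(G_2,T(Q_2))$, then $A(G_1,T(Q_1))$ is obtained from $A(G_2,T(Q_2))$ by a sequence of the matrix operations of Traldi's Theorem~7; translating each operation into its ribbon-graph counterpart produces a twisted dual $G_3$ of $G_2$ and an identification of $D(G_1)$ with $D(G_3)$. Conversely, if $D(G_1)\simeq D(G_3)$ for a twisted dual $G_3$ of $G_2$, then $G_1$ and $G_3$ have the same delta-matroid and $G_3$ has medial graph isomorphic to that of $G_2$; choosing compatible quasi-trees and unwinding the definitions shows $IAS(G_1,T(Q_1))$ and $IAS(G_2,T(Q_2))$ differ by the same list of matrix moves, hence are isomorphic. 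The main obstacle I anticipate is the bookkeeping in the forward direction: matching Traldi's precise list of matrix operations (including the subtle orientation-dependence of the diagonal of $A(G,T)$, which encodes whether a loop of the partial-dual is orientable) one-for-one with the generators of twisted duality, and verifying that the ``change of quasi-tree'' moves and the ``twisted dual'' moves together generate exactly the equivalence relation on the matrix side --- no more and no less. Care is also needed at the edge cases (free loops in $G_m$, i.e.\ isolated vertices of $G$, and non-connected situations) and in tracking isomorphisms versus equalities of delta-matroids throughout.
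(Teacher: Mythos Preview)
The paper does not give a proof of this theorem at all: it is stated immediately after the remark that Traldi's \cite[Theorem~7]{Tr15iso} characterises when two symmetric binary matrices have isomorphic isotropic matroids, together with the observation that twisted duals are exactly ribbon graphs with isomorphic abstract medial graphs. The theorem is offered purely as the ribbon-graph translation of Traldi's result, with the translation left to the reader.

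Your plan is therefore exactly the intended one --- reduce to \cite[Theorem~7]{Tr15iso} via the identification $IAS(G,T(Q)) = (I \mid A(G,T(Q)) \mid I + A(G,T(Q)))$ and set up the dictionary between Traldi's matrix moves and twisted duality plus change of quasi-tree. Two small points worth tightening in your sketch. First, for the converse direction you should make explicit the fact (noted in the paper just before the theorem) that the binary matrix $A(G,T(Q))$ is \emph{uniquely} determined by $D(G)*E(Q)$, since its entries are read off from the feasible sets of size at most two; this is what lets you pass from $D(G_1)\simeq D(G_3)$ to $A(G_1,T(Q_1)) = A(G_3,T(Q_3))$ for suitable quasi-trees, and hence to isomorphic $IAS$ matroids. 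Second, your claim that ``switching the transition at a vertex of $G_m$ is exactly a local complementation'' needs care: switching a black/white transition corresponds to a pivot (edge-local complementation) in the interlacement matrix, whereas the loop-complementation-type moves correspond to the partial Petrial; keeping these two generators of twisted duality separate when matching them to Traldi's list will avoid confusion in the bookkeeping you correctly flag as the main obstacle.
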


In Section~\ref{s5.7} we discuss binary delta-matroids, which arise from binary symmetric matrices. Further results from~\cite{Tr15iso} describe how any binary delta-matroid can be viewed as an isotropic matroid.

\subsection{The spread of a delta-matroid}\label{s4.3}

In Section~\ref{s4.1} we associated a family of delta-matroids to a ribbon graph.
In this section we introduce an operation on delta-matroids that enables us to relate $D_{\leq n}(G)$ to $D(G)$.

\begin{definition}
\label{kspreaddef}
Let $D=(E,\mathcal{F})$ be a delta-matroid and $n$ be a non-negative integer. Then we define $\F _{\leq n}$ by
\[\F _{\leq n}:=\{F\btu A \mid F\in\F\text{ and }A\subseteq E\text{ and }|A|\leq n\}.\]
We say that the set system   $D_{\leq n}:=(E,\mathcal{F}_{\leq n})$ is the \emph{$n$-spread of $D$}.
\end{definition}

Note that $D_{\leq 0}=D$. In order to show that  $D_{\leq n}$ is a delta-matroid, we will define delta-matroid sum. This sum is not the same concept as the direct sum, which we define later. We will only refer to the sum in this section, so confusion should not arise.
If $D=(E,\F)$ and $D'=(E,\F')$ are proper set-systems then their \emph{sum} is the set system $(E,\F \btus \F')$ where
\[ \F\btus \F' := \{ F\btu F' \mid F \in \F \text{ and } F' \in \F'\}.\]
Bouchet and Schw\"arzler~\cite{BS98} attribute the following result to Duchamp. A proof of the corresponding result for jump systems may be found in~\cite{BC95} and it is easy to translate this proof to delta-matroids.
\begin{theorem}\label{th:sum}
If $D$ and $D'$ are delta-matroids, then their sum is also a delta-matroid.
\end{theorem}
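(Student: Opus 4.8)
The plan is to prove Theorem~\ref{th:sum} directly from the Symmetric Exchange Axiom (Axiom~\ref{sea}), following the outline of the jump-system proof referenced in the text. Let $D=(E,\F)$ and $D'=(E,\F')$ be delta-matroids and set $\F\btus\F'=\{F\btu F'\mid F\in\F,\ F'\in\F'\}$. Since $D$ and $D'$ are proper, so is their sum. Now fix $G_1,G_2\in\F\btus\F'$ and write $G_1=F_1\btu F_1'$, $G_2=F_2\btu F_2'$ with $F_i\in\F$, $F_i'\in\F'$. Choose such representations so that the quantity $|F_1\btu F_2|+|F_1'\btu F_2'|$ is as small as possible; this minimality is the engine of the argument. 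Fix $u\in G_1\btu G_2$; we must find $v\in G_1\btu G_2$ with $G_1\btu\{u,v\}\in\F\btus\F'$.

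First I would record the key identity $G_1\btu G_2=(F_1\btu F_2)\btu(F_1'\btu F_2')$, so $u$ lies in at least one of $F_1\btu F_2$ or $F_1'\btu F_2'$; by symmetry between $D$ and $D'$ assume $u\in F_1\btu F_2$. Apply the Symmetric Exchange Axiom in $D$ to $F_1,F_2$ and the element $u$: there is $w\in F_1\btu F_2$ with $F_1\btu\{u,w\}\in\F$. Consider two cases according to whether $w\in G_1\btu G_2$. If $w\in G_1\btu G_2$ (this includes the case $w=u$), I would try to take $v=w$: we then need $F_1\btu\{u,w\}\btu F_1'$ to equal $G_1\btu\{u,w\}$, which holds provided $\{u,w\}\cap F_1'=\emptyset$. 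Handling the possibility that $u$ or $w$ lies in $F_1'$ is where one must be careful — one uses that $u\in G_1\btu G_2$ and $u\in F_1\btu F_2$ forces (via $G_1\btu G_2=(F_1\btu F_2)\btu(F_1'\btu F_2')$) that $u\notin F_1'\btu F_2'$, and similarly analyses $w$; when such an element lies in both $F_i$ and $F_i'$ one first uses the minimality of the chosen representation to derive a contradiction or to reduce to a cleaner subcase.

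The substantive case is $w\notin G_1\btu G_2$, equivalently $w\in F_1'\btu F_2'$ as well (after the bookkeeping above). Then I would apply the Symmetric Exchange Axiom in $D'$ to $F_1',F_2'$ and the element $w$: there is $w'\in F_1'\btu F_2'$ with $F_1'\btu\{w,w'\}\in\F'$. Now $F_1\btu\{u,w\}\in\F$ and $F_1'\btu\{w,w'\}\in\F'$, so $(F_1\btu\{u,w\})\btu(F_1'\btu\{w,w'\})=G_1\btu\{u,w'\}\in\F\btus\F'$ (the two copies of $w$ cancel). If $w'\in G_1\btu G_2$ we are done with $v=w'$; if not, then $G_1\btu\{u,w'\}$ is a new representation witnessing membership in the sum, and one checks it strictly decreases $|F_1\btu F_2|+|F_1'\btu F_2'|$ — because we moved $w$ from the symmetric-difference region of the $D$-parts and $w,w'$ within the $D'$-parts in a way that shrinks the total — contradicting minimality. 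This contradiction rules out the bad case and completes the proof.

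The main obstacle I expect is the bookkeeping in the ``mixed membership'' subcases: an element can simultaneously lie in $F_i$ and $F_i'$ (since $\F$ and $\F'$ share the ground set $E$), so the naive identification of $G_1\btu\{u,v\}$ with $(F_1\btu\{u,v\})\btu F_1'$ can fail, and one genuinely needs the minimality of the representation both to force the cancellations to be clean and to close the final case by descent. I would structure the write-up so that the minimality hypothesis is invoked exactly twice — once to clean up the first case and once to derive the contradiction in the last case — and otherwise keep the symmetric-difference manipulations to a minimum by always expanding $G_1\btu G_2$ as $(F_1\btu F_2)\btu(F_1'\btu F_2')$ at the outset.
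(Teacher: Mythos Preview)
The paper gives no proof of this theorem --- it only cites Duchamp (via Bouchet--Schw\"arzler) and the jump-system argument of Bouchet--Cunningham --- so there is nothing to compare your attempt against directly. Your outline is in the right spirit, but Case~1 contains a spurious worry and Case~2 contains a genuine gap.

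In Case~1 your concern is unfounded: associativity and commutativity of $\btu$ give $G_1\btu\{u,w\}=(F_1\btu\{u,w\})\btu F_1'$ unconditionally, regardless of whether $u$ or $w$ lies in $F_1'$, so if $w\in G_1\btu G_2$ you are done with $v=w$ and minimality plays no role. The real problem is the descent in Case~2 when $w'\notin G_1\btu G_2$. You correctly obtain $\hat F_1:=F_1\btu\{u,w\}\in\F$ and $\hat F_1':=F_1'\btu\{w,w'\}\in\F'$ with $|\hat F_1\btu F_2|+|\hat F_1'\btu F_2'|=N-4$, but $(\hat F_1,\hat F_1')$ represents $G_1\btu\{u,w'\}$, \emph{not} $G_1$. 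Your minimality was taken over representations of $G_1$ and $G_2$, so finding a smaller value for a representation of a \emph{different} set yields no contradiction; the argument does not close. One clean repair is to replace the one-shot contradiction by strong induction on $\min\{|F_1\btu F_2|+|F_1'\btu F_2'|\}$: since $\hat G_1:=G_1\btu\{u,w'\}$ admits a representation with value at most $N-4$, the inductive hypothesis applied to the pair $(\hat G_1,G_2)$ at the element $w'\in\hat G_1\btu G_2=((G_1\btu G_2)\ba\{u\})\cup\{w'\}$ yields some $v'$ with $\hat G_1\btu\{w',v'\}=G_1\btu\{u,v'\}\in\F\btus\F'$ (or $G_1\btu u$ when $v'=w'$), and in either case one reads off the required $v\in G_1\btu G_2$. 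You should also treat the boundary case $w'=w$ separately: there your displayed identity is wrong --- the left side equals $G_1\btu u$, not $G_1\btu\{u,w\}$ --- but this is harmless since $v=u$ then works at once.
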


\begin{proposition}
\label{k-spread-delta}
If $D=(E,\F)$ is a delta-matroid and $n$ a non-negative integer, then $D_{\leq n}$ is a delta-matroid.
\end{proposition}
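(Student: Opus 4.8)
The plan is to deduce Proposition~\ref{k-spread-delta} directly from Theorem~\ref{th:sum} by exhibiting $D_{\leq n}$ as a delta-matroid sum. First I would introduce the set system $U_{\leq n} := (E, \{A \subseteq E \mid |A| \leq n\})$, consisting of all subsets of $E$ of size at most $n$. The observation that does all the work is that $\mathcal{F}_{\leq n} = \mathcal{F} \,\btus\, \{A \subseteq E \mid |A| \leq n\}$, i.e. $D_{\leq n} = D + U_{\leq n}$, which is immediate from Definition~\ref{kspreaddef}. So once we know that $U_{\leq n}$ is a delta-matroid, Theorem~\ref{th:sum} finishes the proof, since $D$ is a delta-matroid by hypothesis and $U_{\leq n}$ is clearly proper (it contains $\emptyset$).

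The one thing that genuinely requires an argument is that $U_{\leq n}$ satisfies the Symmetric Exchange Axiom (Axiom~\ref{sea}). This is an easy direct check: take $X, Y$ with $|X|, |Y| \leq n$ and $u \in X \btu Y$. If $u \in X \setminus Y$, then $X \btu \{u\} = X - u$ has size $|X| - 1 \leq n$, so we may take $v = u$. If instead $u \in Y \setminus X$, then $X \btu \{u\} = X \cup u$ has size $|X| + 1$, which may exceed $n$ if $|X| = n$; in that case pick any $v \in X \setminus Y \subseteq X \btu Y$ — such an element exists because $|X| = n \geq |Y \cap X|$ forces $X \not\subseteq Y$ unless $X = Y \cap X$, and in the latter situation $X \subseteq Y$ with $|X| = n = |Y|$ gives $X = Y$, contradicting $u \in X \btu Y$ — and then $X \btu \{u,v\} = (X - v) \cup u$ has size $n$, so it is feasible. (One should double-check the degenerate cases $n = 0$ and $X = Y$, but these cause no trouble: when $X = Y$ there is no $u \in X \btu Y$, and when $n=0$ only $X = Y = \emptyset$ is feasible.)

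I do not expect any real obstacle here; the proposition is essentially a corollary of Theorem~\ref{th:sum} once the right decomposition is spotted, and the verification that $U_{\leq n}$ is a delta-matroid is the only piece of actual (and very short) work. The writeup would be: state $D_{\leq n} = D + U_{\leq n}$, verify $U_{\leq n}$ is a delta-matroid via the case analysis above, and invoke Theorem~\ref{th:sum}.

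\begin{proof}
Let $U_{\leq n}:=(E,\{A\subseteq E\mid |A|\leq n\})$. We first check that $U_{\leq n}$ is a delta-matroid. It is proper since $\emptyset$ is feasible. For the Symmetric Exchange Axiom, let $X$ and $Y$ be feasible sets of $U_{\leq n}$ and let $u\in X\btu Y$. If $u\in X-Y$, then $X\btu\{u\}=X-u$ has size at most $n$, so taking $v=u$ suffices. If $u\in Y-X$ and $|X|<n$, then $X\btu\{u\}=X\cup u$ has size at most $n$, so again $v=u$ works. Finally, if $u\in Y-X$ and $|X|=n$, then since $u\notin X$ we cannot have $X=Y$; as $|X|=n\geq|Y|$ this forces $X\not\subseteq Y$, so there is some $v\in X-Y\subseteq X\btu Y$. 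Then $X\btu\{u,v\}=(X-v)\cup u$ has size $n$, hence is feasible. So $U_{\leq n}$ is a delta-matroid.

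Now by Definition~\ref{kspreaddef}, $\F_{\leq n}=\F\btus\{A\subseteq E\mid |A|\leq n\}$, so $D_{\leq n}$ is the sum of the delta-matroids $D$ and $U_{\leq n}$. By Theorem~\ref{th:sum}, $D_{\leq n}$ is a delta-matroid.
\end{proof}
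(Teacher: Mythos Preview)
Your proof is correct and follows essentially the same approach as the paper: the paper writes $D_{\leq n}$ as the sum of $D$ and $O_n=(E,\{\emptyset\}_{\leq n})$, which is exactly your $U_{\leq n}$, and then invokes Theorem~\ref{th:sum}. The only difference is that the paper declares it ``clear'' that $O_n$ is a delta-matroid, whereas you supply the short verification.
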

\begin{proof}
Let $O_n=(E,\{\emptyset\}_{\leq n})$. Then it is clear that $O_n$ is a delta-matroid and that $D_{\leq n}$ is the sum of $D$ and $O_n$. The result follows from Theorem~\ref{th:sum}.
\end{proof}

\begin{remark}\label{rem:wiggle}
  Theorem~\ref{th:sum} can be used to generate interesting families of delta-matroids.  The \emph{uniform matroid}, denoted by $U_{r,m}$, is a matroid with $m$ elements in the ground set and rank $r$, such that every subset of the ground set with $r$ elements is a basis.  An interesting family of delta-matroids may be constructed by taking the sum of a delta-matroid $D$ with the uniform matroid of rank $r$ defined on the ground set of $D$.  This gives a delta-matroid in which a set $F$ is feasible if and only if there is a feasible set $F'$ of $D$ with $|F\btu F'|=r$.
\end{remark}

The following is an easy observation concerning spreads.
\begin{proposition}\label{prop:twistspreadcommute}
If $D=(E,\F)$ is a delta-matroid, $n$ is a non-negative integer and $A$ is a subset of $E$ then
$(D *A)_{\leq n} = D_{\leq n} *A$.
\end{proposition}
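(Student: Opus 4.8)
The plan is to unwind both sides of the claimed identity $(D*A)_{\leq n} = D_{\leq n} * A$ directly from the definitions of twist and $n$-spread, and to check that they produce the same collection of feasible sets. Since both set systems have ground set $E$, it suffices to show that their feasible-set collections coincide, so everything reduces to a short symmetric-difference manipulation.

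First I would write out $\F((D*A)_{\leq n})$. By definition of the twist, $\F(D*A) = \{A\btu F \mid F\in\F\}$, and then by the definition of the $n$-spread applied to the delta-matroid $D*A$ (which is a delta-matroid by Lemma~\ref{obvious}), we get
\[
\F((D*A)_{\leq n}) = \{(A\btu F)\btu X \mid F\in\F,\ X\subseteq E,\ |X|\leq n\}.
\]
Next I would write out $\F(D_{\leq n}*A)$. By the definition of the $n$-spread, $\F(D_{\leq n}) = \{F\btu X \mid F\in\F,\ X\subseteq E,\ |X|\leq n\}$, and then applying the twist by $A$ gives
\[
\F(D_{\leq n}*A) = \{A\btu(F\btu X)\mid F\in\F,\ X\subseteq E,\ |X|\leq n\}.
\]
The equality of the two collections is now immediate from the associativity and commutativity of symmetric difference, since $(A\btu F)\btu X = A\btu(F\btu X)$ for every choice of $F$ and $X$; there is a term-by-term bijection between the two families. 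One should also note, for completeness, that $D_{\leq n}*A$ is a set system (indeed a delta-matroid) because $D_{\leq n}$ is a delta-matroid by Proposition~\ref{k-spread-delta} and twists of delta-matroids are delta-matroids by Lemma~\ref{obvious}; but this is not strictly needed for the stated equality of set systems.

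There is really no substantive obstacle here: the proposition is a formal consequence of the associativity of $\btu$, and the only thing to be careful about is bookkeeping — making sure the quantifier ``$X\subseteq E$ with $|X|\leq n$'' is handled identically on both sides and that one is comparing set systems (ground set plus feasible sets) rather than, say, multisets. If one wanted to phrase the argument even more slickly, one could invoke the delta-matroid sum: writing $O_n = (E,\{\emptyset\}_{\leq n})$ as in the proof of Proposition~\ref{k-spread-delta}, we have $D_{\leq n} = D \,\text{(sum)}\, O_n$, and twisting a sum by $A$ amounts to twisting one summand by $A$, i.e. $(D\,\text{sum}\,O_n)*A = (D*A)\,\text{sum}\,O_n$, which again follows from associativity of symmetric difference. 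Either route is a one-line computation once the definitions are unpacked.
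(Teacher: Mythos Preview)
Your proof is correct and takes essentially the same approach as the paper: both reduce the identity to the associativity (and commutativity) of symmetric difference. The paper phrases it slightly differently, first observing that $\F_{\leq n} = \bigcup_{|X|\leq n}\F(D*X)$ and then using $(D*X)*A=(D*A)*X$, but this is the same one-line manipulation you carry out directly.
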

\begin{proof}
A set is feasible in the $n$-spread of $D$ if and only if it is feasible in $D\ast X$ for some $X$ with  $|X|\leq n$.
That is,
\[\mathcal{F}_{\leq n} = \bigcup_{\substack{ X \subseteq E \\ |X|\leq n}} \mathcal{F}(D*X).\]
Thus
\[\F(D_{\leq n}* A)= \bigcup _{\substack{ X \subseteq E\\|X|\leq n}}\F ((D*X)*A)=\bigcup _{\substack{X \subseteq E\\ |X|\leq n}}\F ((D*A)*X)=\F((D*A)_{\leq n}).\]
\end{proof}

\begin{definition}
\label{ktoggledef}
Let $D=(E,\F)$ be a delta-matroid and $n$ a non-negative number. Then we define $\F _{\btu n}$ as $\F _{\leq n}- \F _{\leq n-1}$.  The \emph{$n$-toggle of $D$}, which is denoted by $D_{\btu n}$, is defined to be $(E,\F_{\btu n})$.
\end{definition}

Note that $D_{\btu 0}=D$.

\begin{proposition}
\label{eventoggle}
Let $D=(E,\F)$ be an even delta-matroid with $E \ne \emptyset$.
Then $D_{\btu 1}$ is a delta-matroid.
\end{proposition}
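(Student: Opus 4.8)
The plan is to show that the set system $D_{\btu 1}=(E,\F_{\btu 1})$ satisfies the Symmetric Exchange Axiom, given that $D=(E,\F)$ is an even delta-matroid. First I would unpack what $\F_{\btu 1}$ is for an even delta-matroid. Since $D$ is even, all feasible sets of $D$ have the same parity, so $F\btu A$ has parity $|A|$ relative to that common parity. Hence $\F_{\leq 0}=\F$ consists of sets of one parity, while the sets in $\F_{\leq 1}\setminus\F_{\leq 0}$ are exactly those of the form $F\btu\{e\}$ with $F\in\F$ and $e\notin$ (one should be careful, but really $F\btu\{e\}$ for any $e\in E$) --- that is, $\F_{\btu 1}=\{F\btu\{e\}\mid F\in\F,\ e\in E\}$, and these all have the opposite parity to the sets in $\F$. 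This parity separation is the crucial structural feature and is why we need evenness. A set system whose feasible sets all have the same parity (the ``odd'' translate of an even delta-matroid) is a good candidate to be a delta-matroid, but this is not automatic --- we must verify exchange.

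The key steps, in order, would be: (1) Establish the explicit description $\F_{\btu 1}=\{F\btu e\mid F\in\F,\ e\in E\}$, using evenness to see that no set $F\btu e$ can already lie in $\F_{\leq 0}$ (they have the wrong parity) --- equivalently, $\F_{\leq 1}$ is the disjoint union of $\F$ and $\F_{\btu 1}$. Since $E\neq\emptyset$, $\F_{\btu 1}$ is nonempty, so $D_{\btu 1}$ is a proper set system. (2) Take $X,Y\in\F_{\btu 1}$ and $u\in X\btu Y$; write $X=F_X\btu x$ and $Y=F_Y\btu y$ with $F_X,F_Y\in\F$ and $x,y\in E$. I want to produce $v\in X\btu Y$ with $X\btu\{u,v\}\in\F_{\btu 1}$. (3) The natural route is to invoke Proposition~\ref{k-spread-delta}: $D_{\leq 1}$ is a delta-matroid, so there is $v\in X\btu Y$ with $X\btu\{u,v\}\in\F_{\leq 1}=\F\sqcup\F_{\btu 1}$. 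If $v\neq u$, then $|X\btu\{u,v\}|=|X|$, which has the same parity as the sets in $\F_{\btu 1}$ and the opposite parity to those in $\F$, so $X\btu\{u,v\}\in\F_{\btu 1}$ and we are done. The only problematic case is $v=u$, i.e.\ the delta-matroid $D_{\leq 1}$ only guarantees $X\btu u\in\F_{\leq 1}$; but $|X\btu u|=|X|-1$ has the parity of $\F$, so $X\btu u\in\F$, not $\F_{\btu 1}$, and this does not finish the proof.

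So the main obstacle is exactly this $v=u$ case: we know $X-u\in\F$ (after checking $u\in X$; if instead $u\notin X$ then $X\btu u=X\cup u$, and this has parity of $\F$, so $X\cup u\in\F$). To handle it, I would go back into $D$ itself. Suppose $X\btu u\in\F$; set $F:=X\btu u\in\F$. Now $Y\btu y=F_Y\in\F$ as well (choosing the representation so that $F_Y\in\F$). Apply the Symmetric Exchange Axiom of $D$ to $F$ and $F_Y$: since $u\in X\btu Y$ and $X=F\btu u$, a short computation places $u$ (or a nearby element) in $F\btu F_Y$, and exchange yields $w$ with $F\btu\{u,w\}\in\F$; then $X\btu\{u,w\}=(F\btu u)\btu\{u,w\}=F\btu w$, which differs from a member of $\F$ by a single element, hence lies in $\F_{\btu 1}$, and one checks $w\in X\btu Y$. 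The bookkeeping here --- tracking which of $x,y,u$ coincide, and verifying the produced element lies in $X\btu Y$ rather than merely in $E$ --- is the fiddly part, and it is where I would spend the most care; I expect one may need to split into subcases according to whether $u=x$, $u\in F_X$, etc. An alternative, possibly cleaner, approach avoiding Proposition~\ref{k-spread-delta} entirely: work directly with $F_X,F_Y\in\F$, note $u\in X\btu Y=(F_X\btu x)\btu(F_Y\btu y)$, and carefully apply the Symmetric Exchange Axiom of $D$ to $F_X,F_Y$ while keeping track of the extra toggled elements $x,y$; the parity constraint then forces the resulting exchange element to keep us in $\F_{\btu 1}$. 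Either way, the heart of the argument is combining the exchange axiom for $D$ with the rigid parity structure that evenness provides.
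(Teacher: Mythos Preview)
Your plan matches the paper's proof exactly through step (3) and the case $v\neq u$: invoke Proposition~\ref{k-spread-delta} to get $v\in X\btu Y$ with $X\btu\{u,v\}\in\F_{\leq 1}$, and use parity to force membership in $\F_{\btu 1}$ when $v\neq u$.

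Where you diverge is the remaining case $v=u$, and here you make it much harder than it is. You propose going back into $D$ and applying the Symmetric Exchange Axiom to $F:=X\btu u\in\F$ and some $F_Y\in\F$ with $Y=F_Y\btu y$. But note that $X\btu Y=(F\btu F_Y)\btu(\{u\}\btu\{y\})$, so when $u\neq y$ one actually has $u\notin F\btu F_Y$, and you cannot exchange at $u$; your hedge ``(or a nearby element)'' signals real trouble, and the ensuing bookkeeping is not clearly salvageable without cases.

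The paper's resolution is a one-liner that your own setup already contains. You have $F:=X\btu u\in\F$. For \emph{any} $z\in E$, the set $F\btu z$ lies in $\F_{\leq 1}$ by definition of the $1$-spread, and by parity it cannot lie in $\F$, so $F\btu z\in\F_{\btu 1}$. Since $X$ and $Y$ both lie in $\F_{\btu 1}$, they have the same parity, so $|X\btu Y|$ is even; as $u\in X\btu Y$, we get $|X\btu Y|\geq 2$. Pick any $z\in(X\btu Y)\setminus\{u\}$. Then $X\btu\{u,z\}=(X\btu u)\btu z=F\btu z\in\F_{\btu 1}$, and you are done. No further appeal to exchange in $D$ is needed.
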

\begin{proof}
Take $A$ and $B$ in $\F _{\btu 1}$ and $x$ in $A\btu B$. Then $A$ and $B$ are in
$\F _{\leq 1}$.
By Proposition~\ref{k-spread-delta}, there is an element $y$ in $A\btu B$ such that $A\btu \{x,y\}\in\F _{\leq 1}$. If $y \ne x$ then $A \btu \{x,y\} \in \F _{\btu 1}$, so we may assume that $y=x$. In this case  we must have $A \btu x \in \F$. Now $|A \btu B| \geq 2$, because $D$ is even, so we may choose $z\in (A \btu B)-x$. Clearly $A \btu \{x,z\} = (A\btu x) \btu z$ is in $\F_{\btu 1}$.
\end{proof}

The following theorem shows that $D_{\leq n}(G)$ and $D_{ n}(G)$ can be obtained from $D(G)$ by $n$-spreads and $n$-toggles.
\begin{theorem}
\label{spreadisspread}
Let $G=(V,E)$ be a ribbon graph and $n$ a non-negative number.
Then
\begin{enumerate}
\item \label{spreadisspread.1} $D_{\leq n}(G)$ is the $n$-spread of $D(G)$, that is  $D_{\leq n}(G)= D(G)_{\leq n}$; and
\item \label{spreadisspread.2} $D_n(G)=D(G)_{\btu n}$.
\end{enumerate}
\end{theorem}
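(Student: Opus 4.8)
The plan is to prove the two statements by carefully unwinding the definitions of $\F_{\leq n}(G)$, $\F_n(G)$ on the one hand and $\F_{\leq n}$, $\F_{\btu n}$ (applied to $D = D(G)$) on the other, and then comparing them using the one key geometric fact, already used in the proof of Lemma~\ref{l1}, that adding or deleting a single edge from a spanning ribbon subgraph changes the number of boundary components by at most one. The core of the argument is to show that for each $m$,
\[
\{A \subseteq E \mid f(A) \le k(G)+m\} = \{F \btu X \mid F \in \F(G), \ X \subseteq E, \ |X| \le m\}.
\]

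For Item~\eqref{spreadisspread.1}, I would prove this set equality by double inclusion. The inclusion $\supseteq$ is the easy direction: if $F \in \F(G)$, so $f(F) = k(G)$, and $|X| \le m$, then writing $F \btu X$ as the result of successively adding or removing the $|X|$ elements of $X$ one at a time, each step changes the number of boundary components by at most $1$, so $f(F \btu X) \le k(G) + |X| \le k(G)+m$, giving $F \btu X \in \F_{\leq m}(G)$. For the reverse inclusion $\subseteq$, suppose $A$ satisfies $f(A) \le k(G)+m$; I must produce a spanning quasi-tree $F$ (i.e.\ $f(F) = k(G)$) with $|A \btu F| \le m$. The natural approach is to take $F \in \F(G)$ minimising $|A \btu F|$ and show this minimum is at most $f(A) - k(G) \le m$. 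If $|A \btu F| > f(A) - k(G)$, I would find an edge $e \in A \btu F$ whose ``toggle'' $F \btu e$ still has $f(F \btu e)$ strictly smaller; then Lemma~\ref{l1} (applied with $n=0$, the roles of the two sets chosen appropriately), or rather a direct application of the Symmetric Exchange Axiom for the delta-matroid $D(G)$ together with a counting argument on $f$, would let me toggle $F$ towards $A$ without increasing $|A \btu F|$ while keeping $F$ feasible — contradiction. Actually a cleaner route: since $D(G)$ is a delta-matroid with $\F(G) = \F_0(G)$, and since $D_{\leq n}$ is known to be a delta-matroid (Proposition~\ref{k-spread-delta}), it suffices to show the two set systems have the same feasible sets, and here the minimality/exchange argument above is the crux.

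For Item~\eqref{spreadisspread.2}, once~\eqref{spreadisspread.1} is established, I would argue as follows. The hypothesis that $G$ has a ribbon subgraph with exactly $n+k(G)$ boundary components says precisely that $\F_n(G) \ne \emptyset$; one must check this is equivalent to $\F_{\btu n}(D(G)) \ne \emptyset$, i.e.\ that $D(G)_{\btu n}$ is defined exactly when $D_n(G)$ is. Then, using that the $n$-spread and $(n-1)$-spread of $D(G)$ equal $D_{\leq n}(G)$ and $D_{\leq n-1}(G)$ by part~\eqref{spreadisspread.1}, we get
\[
\F_{\btu n}(D(G)) = \F_{\leq n} \setminus \F_{\leq n-1} = \F_{\leq n}(G) \setminus \F_{\leq n-1}(G) = \{A \mid f(A) = k(G)+n\} = \F_n(G),
\]
where the second-to-last equality uses that $f(A) \le k(G)+n$ and $f(A) \not\le k(G)+n-1$ together force $f(A) = k(G)+n$. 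This is purely formal given part~\eqref{spreadisspread.1}.

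The main obstacle is the reverse inclusion in part~\eqref{spreadisspread.1}: showing that every $A$ with $f(A) \le k(G)+m$ is within symmetric-difference distance $m$ of some spanning quasi-tree. The subtlety — and the reason Lemma~\ref{l1} is phrased the way it is — is that toggling a single edge of a subgraph need not decrease $f$ even when the subgraph is far from being a quasi-tree in size; one genuinely needs the boundary-colouring argument of Lemma~\ref{l1} (or an equivalent surface argument) to guarantee that one can always move ``downhill'' in $f$ through feasible-adjacent toggles until reaching a quasi-tree, without the symmetric difference to $A$ ever exceeding its starting value. I would therefore structure the proof to invoke Lemma~\ref{l1} directly rather than re-deriving its content. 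Everything else — the $\supseteq$ inclusion, and all of part~\eqref{spreadisspread.2} — is routine bookkeeping with $\btu$ and $f$.
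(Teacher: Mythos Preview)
Your overall architecture matches the paper's exactly: part~\eqref{spreadisspread.2} is derived formally from part~\eqref{spreadisspread.1}, and part~\eqref{spreadisspread.1} is a double inclusion in which the direction ``$n$-spread of $D(G)\subseteq \F_{\le n}(G)$'' follows from the fact that a single edge-toggle changes $f$ by at most one. That direction and all of part~\eqref{spreadisspread.2} are fine.

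The gap is in the other inclusion. Your first attempt --- take a quasi-tree $F$ minimising $|A\btu F|$ and look for $e\in A\btu F$ with $f(F\btu e)$ \emph{strictly smaller} --- cannot work as written: $F$ is already a quasi-tree, so $f(F)=k(G)$ is minimal and $f(F\btu e)$ cannot decrease. More seriously, your fallback plan to ``invoke Lemma~\ref{l1} directly'' does not go through. Lemma~\ref{l1} takes $A\in\F_n(G)$ and $B\in\F_{\le n}(G)$ and, when $A\btu e$ jumps \emph{up} to level $n+1$, produces a second toggle bringing you \emph{back} to level $n$. It gives no mechanism for moving \emph{down} from level $n$ to level $n-1$, which is what you need. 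The colouring argument inside its proof is likewise tuned to the ``went up, must come back'' situation.

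What the paper actually does for this inclusion is much simpler than the colouring argument and does not use Lemma~\ref{l1} at all. It argues by induction on $n$: given $F$ with $f(F)=k(G)+n>k(G)$, either some edge of $F$ meets two boundary components of $(V,F)$ (delete it; $f$ drops by one), or each connected component of $(V,F)$ has a single boundary component but $k(F)>k(G)$, in which case some edge of $G$ joins two components of $(V,F)$ (add it; $f$ drops by one). Either way $F\btu e\in\F_{\le n-1}(G)$, and induction finishes. So the ``downhill in $f$'' step you correctly identify as the crux is available by a direct two-case geometric observation; you were reaching for heavier machinery than is needed, and that machinery does not fit the hypotheses anyway.
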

\begin{proof}
Item \eqref{spreadisspread.2} follows directly from \eqref{spreadisspread.1}, since $\F_n(G)=\F_{\leq n}(G) - \F_{\leq n-1}(G)$, and
$\F(D(G)_{\btu n} )=    \F(D(G)_{\leq n}) - \F(D(G)_{\leq n-1})$.
Thus it suffices to show that \eqref{spreadisspread.1} holds.

We will show that
\begin{sublemma}
\label{forward}
$\F _{\leq n}(G)$ is contained in the feasible sets of the $n$-spread of $D(G)$.
\end{sublemma}
We proceed using induction on $n$. Clearly the result is true when $n=0$.
Take $F\in\F _{\leq n}(G)$.
Suppose there is an edge $e\in F$ such that $e$ is incident with two boundary components of $(V,F)$.
Then $e$ is not a bridge, so $f(F\btu e)-k(G)=(f(F)-1) -k(G)$.
Hence $F\btu e$ is in $\F _{\leq n-1}$.
By induction, we know that $F\btu e$ is a feasible set in the $(n-1)$-spread of $D(G)$.
Hence $F\btu e = F'\btu A$, where $F'\in\F(G)$ and $|A|\leq n-1$.
Then $F=(F'\btu A)\btu e=F'\btu (A\btu e)$, so $F$ is in the $n$-spread of $D(G)$.
So we may assume that each connected component of $(V,F)$ has exactly one boundary component.
If $k(F) \ne k(G)$
then there is an edge $e$ of $G$ which is not in $F$, joining two connected components of $(V,F)$. Thus $f(F\btu e)-k(G)=(f(F)-1) -k(G)$ and the result follows in a similar way.
If $k(F)=k(G)$ then $F$ is a spanning quasi-tree in $G$.
Thus $F$ is in $\F(G)$, which is itself contained in the $n$-spread of $D(G)$ and~\ref{forward} holds.

We conclude this proof by showing that
\begin{sublemma}
\label{reverse}
the feasible sets in the $n$-spread of $D(G)$ are contained in $\F _{\leq n}(G)$.
\end{sublemma}
Again we proceed using induction on $n$. Clearly the result is true when $n=0$.
Take $F$ in the $n$-spread of $D(G)$. Then there is a spanning quasi-tree $F'$ of $G$ and a set $A$ with $|A| \leq n$ such that $F=F'\btu A$. If $A$ is empty, then there is nothing to prove, so let $a\in A$. Now $F\btu a=F'\btu (A-a)$ is in the $(n-1)$-spread of $D(G)$ and, by induction, is contained in $\F_{\leq n-1}(G)$. Thus $f(F\btu a)-k(G) \leq n-1$. But, by Proposition~\ref{obv}, the number of boundary components of $F$ and $F\btu a$ differ by at most one. Hence $f(F)-k(G) \leq f(F\btu a)-k(G)+1 \leq n$, so $F \in \F _{\leq n}(G)$.  Thus~\ref{reverse} holds.
\end{proof}

Two natural questions arise from the preceding results. Is $D_n$ a delta-matroid for $n\geq 2$?
Can the evenness condition be dropped from Proposition~\ref{eventoggle}? Both questions have negative answers.
 We saw at the end of Section~\ref{s4.1} an example that showed that in general $D_2(G)$, which equals $D(G)_{\btu 2}$, is not a delta-matroid. Also the example given there showing that $D_1(G)$, which equals $D(G)_{\btu 1}$, may not be a delta-matroid  shows that evenness cannot be dropped. (We will shortly see (Proposition~\ref{p.4b}) that $D(G)$ is even if and only if $G$ is orientable.)
The class of delta-matroids whose $1$-toggle is a delta-matroid may be a nice class. It would be interesting to have a characterisation of it.

\section{Delta-matroids and ribbon graphs:  geometric interplay}\label{s5}

\subsection{Duals, partial duals and twists}\label{s5.1}
Recall from Section~\ref{matroidsanddelta-matroids} that, if $D=(E,{\mathcal{F}})$ is a delta-matroid and $A\subseteq E$, then the \emph{twist} of $D$ with respect to $A$, is the delta-matroid $D* A:=(E,\{A\bigtriangleup X\mid X\in \mathcal{F}\})$.
In particular, the dual $D^*$ of $D$ is equal to $D*E$. Thus we may regard a twist $D\ast A$ as being a `partial dual' of a delta-matroid in the sense that the dual is `formed with respect to only the elements in $A$'. The following theorem shows that this notion of partial duality corresponds exactly to partial duality of ribbon graphs (see Section~\ref{ss.dual}). That is, on the delta-matroid level, twisting and partial duality are equivalent.  Although this is a fairly simple result, it will prove to be extremely useful and important in what follows.

\begin{theorem}\label{t.twdu}
Let $G=(V,E)$ be a ribbon graph, $A\subseteq E$ and $e\in E$. Then
$D_{\leq k}(G^A)=D_{\leq k}(G)*A$ and, in particular, $D(G^A)=D(G)*A$. Furthermore, if $G$ is orientable, then $D_{1}(G^A)=D_{1}(G)*A$.
\end{theorem}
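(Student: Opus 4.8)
The plan is to reduce everything to a single edge by exploiting the fact (Proposition~\ref{p.pd2}, part 2) that partial duals can be formed one edge at a time, together with the multiplicativity of twists over symmetric difference. Concretely, it suffices to prove the case $A=\{e\}$ for a single edge $e$; then for general $A=\{e_1,\dots,e_m\}$ one writes $G^A=(\cdots((G^{e_1})^{e_2})\cdots)^{e_m}$ and $D_{\leq k}(G)\ast A = (\cdots((D_{\leq k}(G)\ast e_1)\ast e_2)\cdots)\ast e_m$, using $D\ast\{e,f\}=(D\ast e)\ast f$, and induct on $|A|$. The $D(G)$ statement is the special case $k=0$, and the $D_1$ statement will follow by the same one-edge reduction once the single-edge case for $D_1$ is handled, invoking the orientability hypothesis (note $G$ orientable implies $G^A$ orientable, so $D_1(G^A)$ is a delta-matroid by Theorem~\ref{t.1}).

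For the single-edge case, the heart of the matter is to show $F\in\F_{\leq k}(G^e)$ if and only if $F\btu e\in\F_{\leq k}(G)$ (and similarly for $\F_1$, replacing $\leq k$ by the appropriate parity-constrained condition). Since $\F_{\leq k}(G)$ is defined purely in terms of the boundary-component count $f(A)$ of spanning ribbon subgraphs $(V,A)$, and since $D(G)\ast e$ has feasible sets $\{e\btu F : F\in\F_{\leq k}(G)\}$, the claim amounts to the combinatorial identity
\[
f_{G^e}(F) = f_G(F\btu e)\qquad\text{for all }F\subseteq E,
\]
where $f_H$ denotes the boundary-component count in the ribbon graph $H$. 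I would prove this by a direct local analysis using Table~\ref{tablecontractrg}: partial duality at $e$ is a local operation, and its effect on a spanning ribbon subgraph containing (resp.\ not containing) $e$ is to produce the corresponding spanning ribbon subgraph of $G^e$ not containing (resp.\ containing) $e$, with the boundary structure matched up. More cleanly, one can cite the known fact (e.g.\ from \cite{Ch1} or \cite{EMMbook}) that $G^e$ is obtained from $G$ by a local move at $e$ under which, for any $A\subseteq E\setminus e$, the subgraph $(V,A\cup e)$ of $G$ has the same boundary components as $(V,A)$ of $G^e$, and $(V,A)$ of $G$ has the same boundary components as $(V,A\cup e)$ of $G^e$ — this is exactly the content of Table~\ref{tablecontractrg} read across the rows for $G$, $G\ba e$, $G/e=G^e\ba e$, and $G^e$.

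Granting the boundary-count identity, the rest is bookkeeping: for $D(G^e)=D(G)\ast e$ we have $F\in\F(G^e) \iff f_{G^e}(F)=k(G^e) \iff f_G(F\btu e)=k(G)$ (using $k(G^e)=k(G)$, since partial duality preserves connectedness and component count) $\iff F\btu e\in\F(G) \iff F\in\{e\btu X: X\in\F(G)\}=\F(D(G)\ast e)$; the same argument with $k(G)+n$ in place of $k(G)$ handles $D_{\leq k}$, and with the extra parity observation from the proof of Theorem~\ref{t.1}(\ref{t.1b}) — that $f(A)-f(B)\equiv e(A)-e(B)\pmod 2$ — handles $D_1$ in the orientable case. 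The main obstacle is pinning down the boundary-count identity $f_{G^e}(F)=f_G(F\btu e)$ rigorously rather than just "by inspection of the table"; I would either give the short local surface argument or quote the precise statement from the partial-duality literature, and I expect everything downstream to be routine.
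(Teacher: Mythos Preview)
Your proof is correct and the single-edge reduction is exactly how the paper begins. The key identity $f_{G^e}(F)=f_G(F\btu e)$ is indeed what one reads off Table~\ref{tablecontractrg}, once one notes that partial duality at $e$ commutes with deletion of the other edges (so the table applies to every spanning ribbon subgraph, not just $G$ itself). The paper phrases this slightly differently, observing only that $(V(G),Q)$ is a spanning quasi-tree of $G$ iff $(V(G^e),Q\btu e)$ is a spanning quasi-tree of $G^e$, i.e.\ the $k=0$ case of your identity.

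The genuine difference is in how the $D_{\leq k}$ and $D_1$ statements are deduced. You prove the full boundary-count identity once and read off all three statements directly from it. The paper instead proves only the $D(G^e)=D(G)\ast e$ case from the table and then invokes the spread machinery: Theorem~\ref{spreadisspread} says $D_{\leq k}(G)$ is the $k$-spread of $D(G)$, and Proposition~\ref{prop:twistspreadcommute} says spreads commute with twists, so $D_{\leq k}(G^A)=(D(G^A))_{\leq k}=(D(G)\ast A)_{\leq k}=D(G)_{\leq k}\ast A=D_{\leq k}(G)\ast A$. Your route is more self-contained and avoids appealing to the spread results; the paper's route exploits the structural theory already developed in Section~\ref{s4.3} and thereby keeps the proof of the theorem itself very short. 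Both are perfectly sound.
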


\begin{proof}
We will first prove the statement for $D(G)$. It is enough to prove it for $A=\{e\}$. We need to show  for each $Q\subseteq E$ that $ (V(G), Q )$ is a spanning quasi-tree of $G$ if and only if  $ (V(G^e), Q\triangle e )$ is a spanning quasi-tree of $G^{e}$. But this follows immediately upon observing that in Table~\ref{tablecontractrg}, in all cases, $G$ and $G^{e}\setminus e$, as well as $G\setminus e$ and $G^{e}$ have the same number of  boundary components.

The general statement follows directly from the facts that $D_{\leq k}(G)$ is the $k$-spread of $D(G)$ and the $k$-spread and twisting commute.
These facts are established by Theorem~\ref{spreadisspread} and by Proposition~\ref{prop:twistspreadcommute}, respectively.
\end{proof}

For matroids $M(G^*)=M(G)^*$  when $G$ is a plane graph. However, this identity does not hold for non-plane graphs. The following corollary, which is obtained by taking $A=E(G)$ in Theorem~\ref{t.twdu}, explains why this is. It shows that geometric duality is a delta-matroidal property, rather than a matroidal property.
The duality identity $M(G^*)=M(G)^*$  holds only for plane graphs because it is only in this case that  $M(G)$ and $D(G)$ coincide.
\begin{corollary}\label{c.gedu}
Let $G$ be a ribbon graph. Then
$D_{\leq k}(G^*)=D_{\leq k}(G)^*$ and, in particular, $D(G^*)=D(G)^*$. Furthermore, if $G$ is orientable, then $D_{1}(G^*)=D_{1}(G)^*$.
\end{corollary}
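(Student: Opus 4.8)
The plan is to derive Corollary~\ref{c.gedu} directly from Theorem~\ref{t.twdu} by specialising the subset $A$ to the full edge set $E = E(G)$. By Proposition~\ref{p.pd2}(1) we have $G^{E(G)} = G^*$, and by definition of the dual of a set system, $D \ast E(D) = D^*$. Since $E(D_{\leq k}(G)) = E$ and $E(D(G)) = E$, substituting $A = E$ into the identities $D_{\leq k}(G^A) = D_{\leq k}(G) \ast A$ and $D(G^A) = D(G)\ast A$ from Theorem~\ref{t.twdu} yields
\[
D_{\leq k}(G^*) = D_{\leq k}(G^{E(G)}) = D_{\leq k}(G) \ast E(G) = D_{\leq k}(G)^*,
\]
and likewise $D(G^*) = D(G)^*$ is the special case $k = 0$.

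For the final sentence, I would repeat the same substitution in the orientable case. Theorem~\ref{t.twdu} asserts that if $G$ is orientable then $D_1(G^A) = D_1(G) \ast A$; taking $A = E(G)$ and again using $G^{E(G)} = G^*$ together with $D_1(G) \ast E(G) = D_1(G)^*$ gives $D_1(G^*) = D_1(G)^*$. One small point worth a sentence of justification: this requires $G^*$ (equivalently $G^{E(G)}$) to be orientable whenever $G$ is, but in fact we never need that — the hypothesis ``$G$ orientable'' is only used to invoke the corresponding clause of Theorem~\ref{t.twdu}, whose conclusion is an identity of set systems that holds under that hypothesis regardless of the orientability of $G^A$. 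So no extra topological input is needed.

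I do not expect any genuine obstacle here: the corollary is a pure specialisation and the only thing to be careful about is bookkeeping — confirming that the ground sets of all the set systems in play are literally equal (which holds because $E(G) = E(G^*)$ for ribbon graphs, as noted in Section~\ref{ss.dual}, and because $n$-spreads, $n$-toggles and twists do not alter the ground set). If anything counts as ``the hard part'', it is purely expository: making clear to the reader why this explains the classical fact that $M(G^*) = M(G)^*$ fails for non-plane graphs, namely that $M(G)$ and $D(G)$ agree precisely when $G$ is plane (a point the surrounding text already makes), so the matroid identity is just the shadow of the delta-matroid identity in that special case. Thus the proof is a two-line deduction, and I would present it as such.
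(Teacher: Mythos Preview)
Your proposal is correct and is exactly the argument the paper gives: the corollary is stated to follow immediately by taking $A=E(G)$ in Theorem~\ref{t.twdu}, together with $G^{E(G)}=G^*$ and the definition $D\ast E=D^*$. Your additional remarks on ground sets and orientability are accurate but more than the paper bothers to say.
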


\subsection{Seeing ribbon graph structures in a delta-matroid}\label{s5.2}

Next we show that basic topological information about $G$ can be recovered from its delta-matroid.
Because of the connection with Bouchet's work that we have established, we could derive Item~\ref{p.4b.4} in the following proposition from~\cite[Theorem~5.3]{ab2}, but we instead give a direct proof for completeness.
We also give a short proof for Item~\ref{p.4b.3}, although it follows from~\cite[Theorem~4.1(iv)]{ab2}.

\begin{proposition}\label{p.4b}
Let $G$ be a ribbon graph and let $D=D(G)$.
\begin{enumerate}
\item  \label{p.4b.1} The feasible sets of $D$ with cardinality $m$ are in 1-1 correspondence with the spanning quasi-trees of $G$ with Euler genus  $m-v(G)+k(G)$.
\item \label{p.4b.2} The rank of $D_{\min}$ is equal to the size of a maximal spanning forest, that is, $r(D_{\min})= v(G)-k(G)$.
\item\label{p.4b.3}  The width of a ribbon-graphic delta-matroid is equal to the Euler genus of the underlying ribbon graph, that is, $  \gamma(G)=\spn(D)$.
\item\label{p.4b.4} The delta-matroid $D$ is even if and only if $G$ is orientable.
\end{enumerate}
\end{proposition}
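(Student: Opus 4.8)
The plan is to prove the four parts roughly in order, since each builds on the previous one, and to base everything on Euler's formula $\gamma(H) = 2k(H) - v(H) + e(H) - f(H)$ applied to spanning ribbon subgraphs $(V,A)$ of $G$. First I would establish~\eqref{p.4b.1}. Let $Q = (V,A)$ be a spanning quasi-tree of $G$, so $k(Q) = k(G)$, $v(Q) = v(G)$, $e(Q) = |A|$, and $f(Q) = k(G)$ (one boundary component per component). Plugging into Euler's formula gives $\gamma(Q) = 2k(G) - v(G) + |A| - k(G) = |A| - v(G) + k(G)$, so $|A| = \gamma(Q) + v(G) - k(G)$. Since $A$ is a feasible set of $D$ precisely when $(V,A)$ is a spanning quasi-tree, this is exactly the claimed correspondence: feasible sets of size $m$ correspond to spanning quasi-trees of Euler genus $m - v(G) + k(G)$. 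Because Euler genus is non-negative, this also gives a lower bound $|A| \geq v(G) - k(G)$ for every feasible set.

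For~\eqref{p.4b.2}, I would note that the smallest feasible sets are those minimizing $\gamma(Q)$, i.e.\ with $\gamma(Q) = 0$; by Lemma~\ref{l.3a}\eqref{l.3a.2} these are exactly the maximal spanning forests of $G$, which have $v(G) - k(G)$ edges. Such a forest exists (ordinary graph theory), so $\mathcal{F}_{\min}$ is non-empty and every set in it has size $v(G) - k(G)$; hence $r(D_{\min}) = v(G) - k(G)$. For~\eqref{p.4b.3}, by the same reasoning the largest feasible sets are the spanning quasi-trees of maximum Euler genus; by Lemma~\ref{l.3a}\eqref{l.3a.1} this maximum is at most $\gamma(G)$, and I would argue it is attained — most cleanly by invoking Lemma~\ref{l.3a}\eqref{l.3a.3}, which says $(V(G),A)$ is a spanning quasi-tree of $G$ of Euler genus $\gamma$ iff $(V(G^*),A^c)$ is one of $G^*$ of Euler genus $\gamma(G) - \gamma$; taking $A^c$ to be a maximal spanning forest of $G^*$ (genus $0$) yields a spanning quasi-tree of $G$ of Euler genus exactly $\gamma(G)$. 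Thus $r(D_{\max}) = \gamma(G) + v(G) - k(G)$, and $\spn(D) = r(D_{\max}) - r(D_{\min}) = \gamma(G)$.

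Finally, for~\eqref{p.4b.4}, I would use part~\eqref{p.4b.1}: the cardinalities of feasible sets are $\{\gamma(Q) + v(G) - k(G) : Q \text{ a spanning quasi-tree}\}$, so $D$ is even iff all spanning quasi-trees have Euler genus of the same parity. If $G$ is orientable then every ribbon subgraph is orientable, so every Euler genus is even (twice the genus), hence $D$ is even. For the converse I would show that if $G$ is non-orientable then $D$ is odd: a maximal spanning forest gives a feasible set of genus $0$ (even), so it suffices to exhibit a spanning quasi-tree of odd Euler genus, or more simply to find two feasible sets differing in cardinality by an odd number. I expect this direction to be the main obstacle, since it requires producing a spanning quasi-tree realizing a non-orientable (hence parity-changing) piece of the surface; I would handle it by taking a maximal spanning forest $T$ and, using non-orientability of $G$, locating an edge $e \notin T$ whose core is non-orientable or interlaced in a way that makes $(V, T \cup e)$ have one fewer boundary component than expected — concretely, an appropriate non-orientable loop in a suitable partial dual $G^A$ (via Theorem~\ref{t.twdu}, $D(G^A) = D(G)*A$ preserves evenness), reducing to the one-vertex case where the parity of $f$ changes. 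Alternatively, and perhaps more robustly, I would argue via Euler's formula that since $\gamma(G)$ is odd in the purely non-orientable case one can peel off edges from a maximal-genus quasi-tree to realize odd genus, contradicting evenness; the cleanest route is likely to combine~\eqref{p.4b.3} with the observation that $G$ non-orientable forces $\gamma(G) \geq 1$ together with the realizability of intermediate Euler genera among spanning quasi-trees.
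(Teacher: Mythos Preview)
Parts~\eqref{p.4b.1}--\eqref{p.4b.3} and the forward direction of~\eqref{p.4b.4} are handled exactly as in the paper, via Euler's formula and Lemma~\ref{l.3a}; nothing to add there.

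The genuine gap is in the converse of~\eqref{p.4b.4}. Your proposed fallback---that ``$\gamma(G)$ is odd in the purely non-orientable case''---is false: a non-orientable ribbon graph can have even Euler genus (the Klein bottle has $\gamma=2$), so this line yields nothing. The ``realizability of intermediate Euler genera among spanning quasi-trees'' is likewise unproved at this point of the paper and is a nontrivial claim in its own right. Your partial-dual reduction could in principle be made to work, but it is circuitous and you have not actually carried it out.

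The paper's argument for this direction is much shorter and is the idea you are missing: since $G$ is non-orientable it contains a non-orientable cycle $C$. Pick any edge $e$ of $C$ and extend the path $C-e$ to a maximal spanning forest $F$ of $G$ with $e\notin F$. Then $F$ is a spanning quasi-tree (genus $0$), and $F\cup e$ is also a spanning quasi-tree, because adding $e$ closes up the non-orientable cycle $C$ and so does not create an extra boundary component. These two feasible sets differ in size by one, so $D$ is odd. The point is to build the spanning forest \emph{around} a fixed non-orientable cycle, rather than to start from an arbitrary forest and then search for a suitable edge.
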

\begin{proof}
The one-to-one correspondence in~\eqref{p.4b.1} follows immediately from the definition of $D$.
Take $F\in\F(D)$ and let $Q$ be the corresponding spanning quasi-tree.
Let $m=|F|$.  Then $e(Q)=m$.  Furthermore, $v(Q)=v(G)$ and $f(Q)=k(G)$.
Euler's formula gives $ \gamma(Q) = m-v(G)+k(G)$.
This completes the proof of~\eqref{p.4b.1}.

Now~\eqref{p.4b.1} implies that $m$ is minimized (respectively maximized) whenever $\gamma(Q)$ is minimized (respectively maximized).
Thus, if $m=r(D_{\min})$, then by applying Lemma~\ref{l.3a} we obtain $\gamma(Q)=0$ and that $Q$ is a maximal spanning forest of $G$. Moreover $v(G)-k(G)=r(D_{\min})$. Thus \eqref{p.4b.2} holds.

On the other hand, if $m=r(D_{\max})$ then $\gamma(Q)$ is maximized, so by applying Lemma~\ref{l.3a} again we deduce that $\gamma(Q)=\gamma(G)$.
Thus~\eqref{p.4b.3} holds.

Finally, we show that~\eqref{p.4b.4} holds.  Suppose that $G$ is orientable. Then every ribbon subgraph is orientable and so $\gamma(Q)$ is even for each spanning quasi-tree $Q$ of $G$. It follows from   \eqref{p.4b.1} that $|F|-r(D_{\min})$ is even for each feasible set $F$, and so the size of each feasible set has the same parity and $D$ is even.

If $G$ is non-orientable then it contains an non-orientable cycle $C$. Let $e$ be an edge of $C$. Then $C-e$ may be extended to a maximal spanning forest $F$ not containing $e$. But $F \cup e$ is also a spanning quasi-tree of $G$. Thus $D$ has feasible sets with cardinalities of both parities, so it is odd.
\end{proof}

Recall that, if $D$ is a delta-matroid, then $D_{\min}$ and $D_{\max}$ are matroids.
The properties from Proposition~\ref{p.4b} allow us to recognise $D(G)_{\min}$ and $D(G)_{\max}$ in terms of cycle matroids associated with $G$.
The following corollary can be recovered from~\cite{ab2}, but we give an independent proof here for completeness.

\begin{corollary}\label{c.4c}
Let $G$ be a ribbon graph. Then
\begin{enumerate}
\item \label{c.4c.1}  $D(G)_{\min}=M(G)$;
\item  \label{c.4c.4} $D(G)_{\max}=(M(G^*))^*$;
\item\label{c.4c.2} $D(G)=M(G)$ if and only if $G$ is a plane ribbon graph, otherwise $D(G)$ is not a matroid.
\end{enumerate}
\end{corollary}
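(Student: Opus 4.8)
The plan is to prove each of the three parts of Corollary~\ref{c.4c} using the structural information about $D(G)_{\min}$ and $D(G)_{\max}$ that Proposition~\ref{p.4b} already provides, together with Lemma~\ref{l.3a} on quasi-trees and their duals.

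First I would prove \eqref{c.4c.1}. By Proposition~\ref{p.4b}\eqref{p.4b.1}, the feasible sets of $D(G)$ of minimum cardinality correspond to the spanning quasi-trees of $G$ of minimum Euler genus; by Lemma~\ref{l.3a}\eqref{l.3a.2} these are exactly the maximal spanning forests of $G$. Hence $\mathcal{F}(D(G)_{\min})$ is precisely the collection of edge sets of maximal spanning forests of $G$, which is by definition $\mathcal{B}(M(G))$. This gives $D(G)_{\min}=M(G)$ directly, with no real obstacle.

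Next I would prove \eqref{c.4c.4}. By Proposition~\ref{p.4b}\eqref{p.4b.1} applied to $D(G)$, the feasible sets of maximum cardinality correspond to spanning quasi-trees $Q=(V(G),A)$ of maximum Euler genus, and by Lemma~\ref{l.3a}\eqref{l.3a.1} the maximum possible value is $\gamma(G)$. By Lemma~\ref{l.3a}\eqref{l.3a.4}, $(V(G),A)$ achieves $\gamma(Q)=\gamma(G)$ if and only if $(V(G^*),A^c)$ is a maximal spanning forest of $G^*$, i.e.\ $A^c\in\mathcal{B}(M(G^*))$, i.e.\ $A=E-A^c$ with $A^c$ a basis of $M(G^*)$. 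But $\{E-B\mid B\in\mathcal{B}(M(G^*))\}$ is exactly the collection of bases of $(M(G^*))^*$. (Here I use the identification $E(G)=E(G^*)$ recalled in Section~\ref{ss.dual}, so that complementation makes sense.) Therefore $\mathcal{F}(D(G)_{\max})=\mathcal{B}((M(G^*))^*)$, giving \eqref{c.4c.4}. Alternatively, and even more cleanly, one can combine \eqref{c.4c.1} with Corollary~\ref{c.gedu}: $D(G)_{\max}=(D(G)^*)_{\min}=D(G^*)_{\min}=M(G^*)$ — wait, that would give $M(G^*)$, not its dual, so one must be careful that twisting swaps $\min$ and $\max$; indeed $(D^*)_{\min}$ has feasible sets $\{E\btu F\mid F\in\mathcal{F}_{\max}(D)\}$, so $D(G)_{\max}=((D(G)^*)_{\min})^*=(D(G^*)_{\min})^*=(M(G^*))^*$, recovering the claim. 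I would present whichever of these two routes is shorter.

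Finally, for \eqref{c.4c.2}: if $G$ is plane then $\gamma(G)=0$, so by Proposition~\ref{p.4b}\eqref{p.4b.3} the width $\spn(D(G))=0$, meaning all feasible sets are equicardinal, so $D(G)$ is a matroid; moreover $D(G)=D(G)_{\min}=M(G)$ by \eqref{c.4c.1}. Conversely, if $G$ is not plane then $\gamma(G)>0$, so $\spn(D(G))=\gamma(G)>0$, meaning $D(G)$ has feasible sets of different sizes and hence is not a matroid (and in particular is not $M(G)$). The only mild subtlety is the edge case where $\mathcal{F}(D(G))$ might be a single set; but a ribbon graph always has at least one maximal spanning forest and, when $\gamma(G)>0$, a spanning quasi-tree of strictly larger size, so this does not arise. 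I do not anticipate a serious obstacle in any part; the work is entirely in assembling Proposition~\ref{p.4b} and Lemma~\ref{l.3a}, and the one point requiring care is getting the $\min$/$\max$ bookkeeping right under duality in part \eqref{c.4c.4}.
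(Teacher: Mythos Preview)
Your proposal is correct and follows essentially the same route as the paper: for \eqref{c.4c.1} both you and the paper combine Proposition~\ref{p.4b} with Lemma~\ref{l.3a}\eqref{l.3a.2}; for \eqref{c.4c.4} the paper uses Lemma~\ref{l.3a}\eqref{l.3a.3} where you use the equivalent statement \eqref{l.3a.4}, and then both conclude via complementation that $\mathcal{F}(D(G)_{\max})=\mathcal{B}((M(G^*))^*)$; and for \eqref{c.4c.2} both arguments pivot on $\spn(D(G))=\gamma(G)$ from Proposition~\ref{p.4b}\eqref{p.4b.3}. Your alternative duality route for \eqref{c.4c.4} via Corollary~\ref{c.gedu} and the identity $D_{\max}=((D^*)_{\min})^*$ is a legitimate shortcut the paper does not take, but it is only a minor repackaging rather than a genuinely different argument.
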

\begin{proof}
By Proposition~\ref{p.4b},  the feasible sets of $D(G)_{\min}$  are exactly the edge sets of the genus-zero spanning quasi-trees of $G$. By Lemma~\ref{l.3a}, these are the edge sets of the maximal spanning forests of $G$. Thus they are exactly the bases of $M(G)$.  Thus~\eqref{c.4c.1} holds.

Next we prove \eqref{c.4c.4}.  Proposition~\ref{p.4b} implies that $F$ is a feasible set in $D(G)_{\max}$ if and only if $(V(G),F)$ is a spanning quasi-tree of $G$ of genus $\gamma(G)$, which by Lemma~\ref{l.3a}\eqref{l.3a.3} occurs if and only if $(V(G^*),F^c)$ is a spanning tree of $G^*$. Then \eqref{c.4c.1} implies that this holds exactly when $F^c$ is a feasible set in  $D(G^*)_{\min}= M(G^*)$.
The result follows.

Finally, we consider~\eqref{c.4c.2}.  The ribbon graph $G$ is plane if and only if $\gamma (G)=0$.  By Proposition~\ref{p.4b}\eqref{p.4b.3}, this occurs exactly when $\spn (D(G))=0$. But if $w(D(G))=0$, then $D(G)=D(G)_{\min}=M(G)$. If $w(D(G))>0$, then $D(G)$ has feasible sets of different sizes and cannot be a matroid.
\end{proof}

A consequence of Corollary~\ref{c.4c} is that, for a ribbon graph $G$, the spanning quasi-trees of minimal  genus, and of maximal genus, both give rise to matroids. It is natural to ask if the edge sets of spanning quasi-trees of any fixed genus form the bases of a matroid. Although these sets are equicardinal, it is not hard to see that this is not the case in general. For example, while $(\{1,2,3,4\},\{\emptyset,\{1,2\},\{3,4\},\{1,2,3,4\}\})$ is a delta-matroid, the set system $(\{1,2,3,4\},\{\{1,2\},\{3,4\}\})$ is not a matroid.

We now consider when some other classes of delta-matroids that we have defined in terms of ribbon graphs are matroids.
\begin{proposition}
\label{prop4.6}
Let $G=(V,E)$ be a connected ribbon graph.
\begin{enumerate}
\item \label{prop4.6.1} If $G$ is orientable, then $D_1(G)$ is a matroid if and only if one of the following occurs:
\begin{enumerate}
\item $G$ is a tree, hence $D_1(G)\cong U_{|E|-1,|E|}$; or
\item $G$ is a collection of trivial orientable loops on one vertex, hence $D_1(G)\cong U_{1,|E|}$; or
\item $G$ is a pair of interlaced orientable loops on one vertex, so $D_1(G)\cong U_{1,2}$.
\end{enumerate}
\item \label{prop4.6.2} If $D_{\leq k} (G)$ is a matroid for some integer $k \ge 1$, then $G$ comprises a single vertex and no edges.
\end{enumerate}
\end{proposition}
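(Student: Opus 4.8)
\textbf{Plan for the proof of Proposition~\ref{prop4.6}.}

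The strategy is to reduce everything to the width of the relevant delta-matroids via Proposition~\ref{p.4b}\eqref{p.4b.3} and the spread structure established in Theorem~\ref{spreadisspread}. For part~\eqref{prop4.6.2}, since $D_{\leq k}(G)$ is the $k$-spread of $D(G)$ by Theorem~\ref{spreadisspread}\eqref{spreadisspread.1}, its feasible sets include $F$ and $F\btu\{e\}$ for every spanning quasi-tree $F$ and every $e\in E$. If $E\neq\emptyset$, pick any $e\in E$ and a spanning quasi-tree $F$ (one exists since $G$ is connected); then $F$ and $F\btu\{e\}$ are both feasible and differ in size by one, so $D_{\leq k}(G)$ has feasible sets of two different cardinalities and hence is not a matroid. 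Therefore $E=\emptyset$, and since $G$ is connected it is a single vertex. Actually one must be a little careful: $F\btu\{e\}$ lies in $\F_{\leq 1}(G)\subseteq\F_{\leq k}(G)$, so this works for every $k\geq 1$. This part is short.

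For part~\eqref{prop4.6.1} the main work is the forward direction: assuming $G$ is orientable and $D_1(G)$ is a matroid, deduce that $G$ is one of the three listed graphs. Here I would first observe that $D_1(G)=D(G)_{\btu 1}$ (by Theorem~\ref{spreadisspread}\eqref{spreadisspread.2}, valid since an orientable connected $G$ with an edge has a spanning ribbon subgraph with $k(G)+1=2$ boundary components — e.g. add an edge to a spanning quasi-tree), so its feasible sets are exactly the sets $F\btu A$ with $F$ a spanning quasi-tree and $|A|=1$, i.e. the sets obtained by toggling a single edge in or out of a spanning quasi-tree. For this to be equicardinal we need all spanning quasi-trees of $G$ to have the same size, which by Proposition~\ref{p.4b}\eqref{p.4b.1} and \eqref{p.4b.3} forces $\spn(D(G))=\gamma(G)$ to be $0$, i.e. $G$ is plane. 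So $G$ is a connected plane orientable ribbon graph, and $D(G)=M(G)$ by Corollary~\ref{c.4c}\eqref{c.4c.2}, whence $D_1(G)=M(G)_{\btu 1}$, the $1$-toggle of a graphic matroid. The feasible sets of $M(G)_{\btu 1}$ are the spanning forests with exactly $v(G)-k(G)\pm 1$ edges that arise as a symmetric difference of a basis with a singleton; equicardinality means these two families of sizes cannot coexist, so one must be empty. The size $r(M(G))+1$ family is empty iff no edge can be added to any spanning tree, i.e. $G$ has no non-loop non-bridge edge beyond the tree, which (combined with connectivity and planarity) forces $G$ to be a tree or to have exactly one vertex; the size $r(M(G))-1$ family is empty iff $r(M(G))\leq 0$ wait — more carefully, it is empty iff one cannot delete an edge from a spanning tree and stay feasible after a further toggle, which happens precisely when $G$ has at most one edge in any spanning tree, i.e. $r(M(G))\leq 1$. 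Combining these cases with a direct check of the small graphs yields exactly the three listed possibilities, and the stated isomorphisms $U_{|E|-1,|E|}$, $U_{1,|E|}$, $U_{1,2}$ come from reading off the feasible sets directly. The reverse direction is a routine verification in each of the three cases: for a tree every $|E|-1$-subset is a spanning quasi-tree plus-or-minus toggling, giving $U_{|E|-1,|E|}$; for loops on one vertex the spanning quasi-trees are $\emptyset$ and all singletons, and toggling one edge gives exactly the singletons, i.e. $U_{1,|E|}$ (or $U_{1,2}$ in the interlaced case, where the quasi-trees are $\emptyset,\{e_1\},\{e_2\},\{e_1,e_2\}$ and the $1$-toggle picks out the singletons).

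\textbf{Anticipated main obstacle.} The delicate point is the case analysis in the forward direction of~\eqref{prop4.6.1}: having reduced to $G$ connected, plane, orientable with $D_1(G)=M(G)_{\btu 1}$, I must argue carefully that equicardinality of $M(G)_{\btu 1}$ genuinely forces $r(M(G))\leq 1$ or $G$ a tree, and then sort the resulting ribbon graphs — in particular distinguishing a single vertex with several trivial orientable loops (giving $U_{1,|E|}$) from a single vertex with a pair of interlaced orientable loops (giving $U_{1,2}$), and checking that a vertex carrying, say, two trivial loops and one interlaced pair is excluded. This requires using the ribbon structure (interlacement, trivial vs.\ non-trivial loops as in Section~\ref{sss.lb}), not just the matroid $M(G)$, since plane ribbon graphs with the same cycle matroid can differ; I expect to handle it by a short enumeration of one-vertex plane orientable ribbon graphs with a bounded number of loops, checking the toggle in each case.
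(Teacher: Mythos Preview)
Your approach to part~\eqref{prop4.6.2} is correct and essentially identical to the paper's: pick any spanning tree $F$ and any edge $e$, observe that both $F$ and $F\btu e$ lie in $\F_{\leq k}(G)$, and conclude.

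For part~\eqref{prop4.6.1}, however, there is a genuine gap. You claim that for $D_1(G)$ to be equicardinal, all spanning quasi-trees of $G$ must have the same size, and hence $\gamma(G)=0$, so $G$ is plane. This is false, and case~(c) of the proposition is itself a counterexample: if $G$ is a pair of interlaced orientable loops on one vertex, then $\gamma(G)=2$ (it embeds on the torus), the spanning quasi-trees are $\emptyset$ and $E=\{e_1,e_2\}$ (the singletons $\{e_i\}$ have \emph{two} boundary components and are \emph{not} quasi-trees), and yet $D_1(G)\cong U_{1,2}$ is a matroid. Your reduction to the plane case therefore eliminates one of the three outcomes you are trying to establish, and the subsequent analysis of $M(G)_{\btu 1}$ cannot recover it. Your listing of quasi-trees in the small cases is also off: for a vertex with only trivial orientable loops the sole quasi-tree is $\emptyset$, not ``$\emptyset$ and all singletons''; for the interlaced pair the quasi-trees are $\emptyset$ and $\{e_1,e_2\}$, not all four subsets.

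The correct observation, which the paper uses, is sharper than ``all quasi-trees have the same size'': if some $F\in\F(G)$ satisfies $\emptyset\neq F\neq E$, then one can toggle an element in \emph{and} toggle an element out, producing feasible sets of $D_1(G)$ of sizes $|F|+1$ and $|F|-1$, so $D_1(G)$ is not a matroid. Hence $\F(G)\subseteq\{\emptyset,E\}$. This yields three cases --- $\F(G)=\{\emptyset\}$, $\F(G)=\{E\}$, $\F(G)=\{\emptyset,E\}$ --- which are dispatched directly, the last via the Symmetric Exchange Axiom applied to $\emptyset$ and $E$ in $D(G)$, forcing $|E|=2$ (with $e\neq f$ by orientability). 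No reduction to the plane case, and no enumeration of one-vertex ribbon graphs, is needed.
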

\begin{proof}
For \eqref{prop4.6.1}, suppose there exists $F$ in $\F(G)$ such that $F \neq \emptyset$ and $F \neq E$. Then, as $G$ is orientable, Theorem~\ref{spreadisspread}\eqref{spreadisspread.2}  and Proposition~\ref{p.4b}\eqref{p.4b.4}  imply that $D_1(G)$ has feasible sets of size $|F|-1$ and of size $|F|+1$.  Hence $D_1(G)$ is not a matroid in the case that $\F(G)$ is not contained in $\{\emptyset ,E\}$.
If $\F(G)= \{ \emptyset \}$, then $G$ is a collection of trivial orientable loops all connected to the same vertex, embedded in the sphere. In this case $D_1(G)$ is the uniform matroid of rank one, namely $U_{1,|E|}$. If $\F(G)=\{ E\}$, then $G$ is a tree and $D_1(G) = U_{|E|-1,|E|}$ is the uniform matroid of rank $|E|-1$.  Finally, suppose that $\F(G)=\{\emptyset,E\}$.  For $e\in E$, Axiom~\ref{sea} implies that there is an element $f\in E$ such that $\emptyset \btu\{e,f\}$ is feasible.  Hence $E=\{e,f\}$. As, $G$ is orientable, we must have $f\ne e$, so $G$ consists of a pair of interlaced orientable loops and $D_1(G)=\{\{e,f\},\{\{e\},\{f\}\}\}$ is a matroid isomorphic to $U_{1,2}$.
The reverse implication is easily checked. Hence \eqref{prop4.6.1} holds.

Now we show that \eqref{prop4.6.2} holds. Suppose that $(V,F)$ is a spanning tree of $G$ and $e\in E$. Then $(V,F\btu e)$ has at most two boundary components, so both $F$ and $F\btu e$ are in $\F_{\leq k}(G)$. Therefore if $E\ne \emptyset$ then $D_{\leq k}(G)$ is not a matroid. As we assumed that $G$ is connected, it must comprise a single vertex and no edges.
\end{proof}

\subsection{Loops, coloops, and ribbon loops}\label{s5.3}

For a graph $G$, it is well-known that an element $e$ is a loop or coloop in $M(G)$ if and only if $e$ is loop or bridge, respectively, in $G$. One would expect such a relation to hold for ribbon graphs and their delta-matroids, and the following proposition shows that indeed it does.  However,  while coloops in $D(G)$ correspond directly to bridges in a ribbon graph $G$, one has to be a little more careful in the case of loops. The difficulty is that, unlike graphs, ribbon graphs have different types of loops, orientable or non-orientable, and trivial or non-trivial.  Loops in $D(G)$ do not correspond to loops in $G$ in general, but rather to  trivial orientable loops in $G$. 

\begin{lemma}\label{l.4coloop}
Let $G$ be a ribbon graph, $D(G)=(E,\mathcal{F})$, and $e\in E(G)$. Then
\begin{enumerate}
\item \label{l.4coloop.1} $e$ is a coloop  in $D(G)$ if and only if  $e$ is a bridge in $G$; and
\item $e$ is a  loop in $D(G)$ if and only if  $e$ is a trivial orientable loop in $G$.
\end{enumerate}
\end{lemma}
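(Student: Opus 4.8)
The proof naturally splits into the two equivalences, and both hinge on reading off the structure of spanning quasi-trees.

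\textbf{Part 1: coloops and bridges.} The plan is to use Proposition~\ref{p.4b}\eqref{p.4b.2}, which says $r(D(G)_{\min}) = v(G) - k(G)$, together with Corollary~\ref{c.4c}\eqref{c.4c.1}, which identifies $D(G)_{\min}$ with the cycle matroid $M(G)$. First I would recall the classical fact that $e$ is a coloop of $M(G)$ (equivalently, a bridge of $G$) if and only if $e$ lies in every basis of $M(G)$, i.e.\ in every maximal spanning forest. For the forward direction, if $e$ is a coloop of $D(G)$ then $e$ lies in every feasible set, in particular in every feasible set of minimum size, hence in every basis of $D(G)_{\min} = M(G)$, so $e$ is a bridge of $G$. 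For the converse, suppose $e$ is a bridge of $G$. I want to show $e$ lies in every spanning quasi-tree. Given a spanning quasi-tree $Q = (V,F)$, I would argue that $(V, F - e)$ would have $k(G)+1$ connected components if $e \in F$ were removed (since $e$ is a bridge), and a disconnected spanning ribbon subgraph cannot be a spanning quasi-tree (it would have more than $k(G)$ components, violating the defining condition); alternatively, if $e \notin F$, then one connected component of $Q$ would be disconnected in $G \setminus e$ yet $Q$ spans that component as a quasi-tree, which is impossible since a quasi-tree containing no edge crossing the bridge cut cannot be connected. Either way we reach a contradiction, so $e \in F$.

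\textbf{Part 2: loops in $D(G)$ versus trivial orientable loops in $G$.} This is the more delicate direction and I expect it to be the main obstacle. The cleanest route is to use the twist--partial-dual correspondence, Theorem~\ref{t.twdu}: $D(G^e) = D(G)*\{e\}$. Note that $e$ is a loop of $D(G)$ if and only if $e$ is a coloop of $D(G)*\{e\} = D(G^e)$, which by Part~1 holds if and only if $e$ is a bridge of $G^e$. Now invoke the local analysis recorded just before Proposition~\ref{p.pd2}: $e$ is a bridge of $G$ if and only if $e$ is a trivial orientable loop of $G^e$; equivalently (replacing $G$ by $G^e$ and using $(G^e)^e = G$), $e$ is a bridge of $G^e$ if and only if $e$ is a trivial orientable loop of $G$. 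Chaining these equivalences gives exactly: $e$ is a loop of $D(G)$ $\iff$ $e$ is a bridge of $G^e$ $\iff$ $e$ is a trivial orientable loop of $G$.

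\textbf{Anticipated difficulty.} The routine part is the classical matroid fact and bookkeeping with the partial-dual/twist correspondence. The genuinely substantive point is the local classification of edges under partial duality — that a bridge becomes a trivial orientable loop, while non-orientable loops and non-loop non-bridges behave differently — but this is stated in the excerpt as an observation from Table~\ref{tablecontractrg}, so I may cite it. The one place to be careful is ensuring the quasi-tree/connectivity argument in the converse of Part~1 correctly handles the disconnected case (where $e$ is a bridge within one component versus $e$ joining two components); in a ribbon graph a bridge is always an edge whose deletion increases $k$, so removing it from any spanning ribbon subgraph that contains it, or failing to include it, is incompatible with the component count $k(Q) = k(G)$ required of a spanning quasi-tree. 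I would phrase this carefully rather than by example.
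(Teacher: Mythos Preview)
Your proof is correct. Part~1 matches the paper's argument (the paper phrases the bridge direction more cleanly: if $e$ is a bridge then any spanning ribbon subgraph avoiding $e$ has more than $k(G)$ connected components, hence more than $k(G)$ boundary components, so cannot be a spanning quasi-tree). For Part~2 you take a slightly different route: the paper passes to the full geometric dual $G^*$, using that $e$ is a trivial orientable loop of $G$ if and only if $e$ is a bridge of $G^*$, together with Corollary~\ref{c.gedu} ($D(G^*)=D(G)^*$) and the fact that loops and coloops swap under full duality. You instead pass to the single-edge partial dual $G^e$ via Theorem~\ref{t.twdu}, using the explicit Table~\ref{tablecontractrg} observation that bridges and trivial orientable loops interchange under $G\mapsto G^e$. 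Both arguments are short and essentially equivalent; yours has the minor advantage that the edge-type swap you invoke is already spelled out in the paper, whereas the paper's proof relies on the corresponding (standard but unstated) fact for the geometric dual.
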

\begin{proof}
For the first item, if $e$ is a bridge of $G$, then any ribbon subgraph of $G$ not containing $e$ has more
connected components than $G$ and therefore has more than $k(G)$ boundary components and is not a spanning quasi-tree. Thus if $e$ is a bridge it appears in every feasible set of $D(G)$ and so is a coloop. Conversely, if $e$ is a coloop  in $D(G)$ then it appears in every spanning quasi-tree of $G$. In particular, it appears in every spanning tree of $G$ and is therefore a bridge.

For the second item, $e$ is a trivial orientable loop in $G$ if and only if $e$ is a bridge in $G^*$.  Corollary~\ref{c.gedu} and item~\ref{l.4coloop.1} imply that this occurs if and only if $e$ is a coloop  in $D(G^*)=D(G)^*$.
This holds if and only if $e$ is a  loop in $D(G)$.
\end{proof}

We have seen that loops in ribbon graphs can be classified into several types. It turns out that that this classification may be usefully extended to elements of delta-matroids in general.
\begin{definition}\label{def:loops}
Let $D=(E,\mathcal{F})$ be a delta-matroid.
\begin{enumerate}
\item An element $e$ of $E$ is a {\em ribbon loop} if $e$ is a loop in $D_{\min}$.
\item A ribbon loop $e$ is  \emph{non-orientable} if $e$ is a ribbon loop in $D\ast e$ and is \emph{orientable} otherwise.
\item An orientable ribbon loop $e$ is \emph{trivial} if $e$ is in no feasible set of $D$ and is \emph{non-trivial} otherwise.
\item A non-orientable ribbon loop $e$ is \emph{trivial} if $F\btu e$ is in $\F$ for every feasible set $F\in\F$ and is \emph{non-trivial} otherwise.
\end{enumerate}
\end{definition}

If $e$ is a loop in $D$ then it is a ribbon loop of $D$, but the converse is not true in general. In fact, $e$ is a loop in $D$ if and only if it is a trivial orientable ribbon loop of $D$.

We now show that the various types of loops in a ribbon graph $G$ correspond to the various types of ribbon loops in the delta-matroid $D(G)$.

\begin{proposition}\label{p.loop2}
Let $G$ be a ribbon graph, $D=D(G)=(E,\mathcal{F})$, and $e\in E(G)$. Then
\begin{enumerate}
\item \label{p.loop2.a} $e$ is a  loop in $G$ if and only if  $e$ is a  ribbon loop in $D(G)$;
\item \label{p.loop2.b}  $e$ is an orientable loop in $G$ if and only if  $e$ is an orientable ribbon loop in $D(G)$;
\item \label{p.loop2.c}  $e$ is a trivial loop in $G$ if and only if  $e$ is a trivial ribbon loop in $D(G)$.
\end{enumerate}
\end{proposition}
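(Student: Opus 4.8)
The plan is to prove the three equivalences in sequence, using the dictionary between ribbon graph operations and delta-matroid operations that has already been built up. The key tools are Lemma~\ref{l.4coloop} (bridges $\leftrightarrow$ coloops, trivial orientable loops $\leftrightarrow$ delta-matroid loops), Corollary~\ref{c.4c}\eqref{c.4c.1} ($D(G)_{\min}=M(G)$), Theorem~\ref{t.twdu} ($D(G^e)=D(G)*e$), and Proposition~\ref{p.pd2} on how partial duality acts on the various edge types (a bridge becomes a trivial orientable loop, a non-loop non-bridge becomes a non-trivial orientable loop, and a (non-)trivial non-orientable loop stays a (non-)trivial non-orientable loop).

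First I would prove \eqref{p.loop2.a}. By Definition~\ref{def:loops}, $e$ is a ribbon loop in $D(G)$ iff $e$ is a loop in $D(G)_{\min}=M(G)$, and an element is a loop in the cycle matroid $M(G)$ iff it is a loop in $G$ in the graph-theoretic sense, i.e. incident with exactly one vertex. So this item is essentially immediate from Corollary~\ref{c.4c}\eqref{c.4c.1} together with the standard fact about cycle matroids recalled at the start of Section~\ref{s5.3}.

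Next, for \eqref{p.loop2.b}, I would compute with the twist. Assume $e$ is a loop of $G$, so by \eqref{p.loop2.a} it is a ribbon loop in $D(G)$. By Definition~\ref{def:loops}, $e$ is an orientable ribbon loop in $D(G)$ iff $e$ is \emph{not} a ribbon loop in $D(G)*e$. By Theorem~\ref{t.twdu}, $D(G)*e=D(G^e)$, and by \eqref{p.loop2.a} again, $e$ is a ribbon loop in $D(G^e)$ iff $e$ is a loop of the ribbon graph $G^e$. Now I invoke Proposition~\ref{p.pd2} (or equivalently the partial-duality column of Table~\ref{tablecontractrg}): if $e$ is an orientable loop of $G$ then $e$ is a bridge of $G^e$, hence not a loop of $G^e$; if $e$ is a non-orientable loop of $G$ then $e$ is a non-orientable (in particular a) loop of $G^e$. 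So $e$ is an orientable loop of $G$ iff $e$ is not a loop of $G^e$ iff $e$ is not a ribbon loop in $D(G)*e$ iff $e$ is an orientable ribbon loop in $D(G)$, as required.

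Finally, for \eqref{p.loop2.c}, I would split into the orientable and non-orientable cases using \eqref{p.loop2.b}. If $e$ is a trivial orientable loop of $G$, then by Lemma~\ref{l.4coloop}\eqref{l.4coloop.1}'s companion (the second item of Lemma~\ref{l.4coloop}), $e$ is a loop in $D(G)$, i.e. $e$ is in no feasible set, which is exactly the definition of a trivial orientable ribbon loop. For the non-orientable case, a trivial non-orientable loop $e$ of $G$ can be written as a join $G=(P_1\vee P_2)$ or $(P_1\vee P_2)\vee P_3$ with $P_2=(\{v\},\{e\})$; one checks directly that a spanning ribbon subgraph $(V,F)$ is a spanning quasi-tree of $G$ iff $(V,F\btu e)$ is, since adjoining or deleting a trivial non-orientable loop at a cut vertex changes the orientability type but not the number of boundary components of the component containing $v$ — so $F\in\F$ iff $F\btu e\in\F$, which is the definition of a trivial non-orientable ribbon loop. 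Conversely, if $e$ is a non-trivial (non-orientable) loop of $G$, it is interlaced with some cycle, and one exhibits a maximal spanning forest $F$ avoiding $e$ with $F\btu e=F\cup e\notin\F$, or uses the partial-dual description, to see the ribbon loop is non-trivial; the orientable non-trivial case is handled by duality via \eqref{p.loop2.b} and the first item of Lemma~\ref{l.4coloop} applied to $G^*$. I expect the main obstacle to be the bookkeeping in \eqref{p.loop2.c}, specifically verifying cleanly that "trivial non-orientable loop of $G$" forces $F\in\F\iff F\btu e\in\F$; the join decomposition of a trivial loop recorded at the end of Section~\ref{sss.se} is the right tool, but one must be careful that the boundary-component count of the relevant component is genuinely unchanged by toggling $e$.
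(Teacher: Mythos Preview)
Your arguments for \eqref{p.loop2.a} and \eqref{p.loop2.b}, and for the orientable subcase of \eqref{p.loop2.c}, are correct and match the paper's proof essentially line for line.

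The gap is in the converse direction of the non-orientable subcase of \eqref{p.loop2.c}. You assert that if $e$ is a non-trivial non-orientable loop of $G$, then one can exhibit a maximal spanning forest $F$ avoiding $e$ with $F\cup e\notin\F$. This is false: for \emph{any} maximal spanning forest $F$ of $G$ and \emph{any} non-orientable loop $e$, trivial or not, the subgraph $(V,F\cup e)$ is a spanning quasi-tree, since attaching a half-twisted band along two arcs on the same boundary component of a forest preserves the boundary-component count. So $F$ and $F\cup e$ are \emph{both} feasible, and your proposed witness cannot distinguish trivial from non-trivial non-orientable loops. The vague alternative ``or uses the partial-dual description'' does not help either, because a non-trivial non-orientable loop of $G$ remains a non-trivial non-orientable loop of $G^e$.

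The paper's argument supplies the missing idea: the interlaced cycle $C$ must actually be \emph{present} in the witness. Pick $f\in C$, extend $C-f$ to a maximal spanning forest $F'$, and consider $F'\cup f$. Now $C\subseteq F'\cup f$, and because $e$ is interlaced with $C$, exactly one of $F'\cup f$ and $(F'\cup f)\cup e$ is a spanning quasi-tree (which one depends on the orientability of $C$). This gives an $F\in\F$ with $F\btu e\notin\F$, or vice versa, so $e$ is not a trivial non-orientable ribbon loop. Your sketch was on the right track in mentioning the interlaced cycle, but you then discarded it by passing to a bare spanning forest; the fix is to keep the closing edge $f$ in the picture.
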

\begin{proof}
We prove \eqref{p.loop2.a} first.
An edge $e$ is a loop of $G$ if and only if $e$ is an edge of no spanning tree of $G$.
This holds if and only if $e$ appears in no feasible set of $D_{\min}$.

Next we consider \eqref{p.loop2.b}.
From Table~\ref{tablecontractrg} we see that
a loop $e$ of $G$ is orientable if and only if it is not a loop of $G^e$.
By \eqref{p.loop2.a}, $e$ is not a loop of $G^e$ if and only if it is not a ribbon loop of $D(G^e)$. The result follows since  $D(G^e)=D*e$, by Theorem~\ref{t.twdu}.
Thus \eqref{p.loop2.b} holds.

For \eqref{p.loop2.c}, by Lemma~\ref{l.4coloop}, $e$ is a trivial orientable loop of $G$ if and only if $e$ is a loop of $D$ if and only if $e$ is a trivial orientable ribbon loop of $D$. It remains to deal with trivial non-orientable loops.

Suppose first that $e$ is a trivial non-orientable loop of $G$.
Take $F\in\F$. A trivial ribbon loop is not interlaced with any cycle of $G$ so $F \btu e \in \F$.  Hence $e$ is a trivial non-orientable ribbon loop in $D$.

Suppose finally that $e$ is a trivial non-orientable ribbon loop in $D$. It is enough to show that $e$ is trivial in $G$. Suppose that this is not the case. Take $C$ to be a cycle interlaced with $e$ and take $f\in C$. We may extend $C-f$ to a maximal spanning forest $F'$ of $G$.  As $F'$ contains no cycle, we know that $e\notin F'$ and $f\notin F'$. Now exactly one of $F' \cup f$ and $(F' \cup f)\btu e$ is a spanning quasi-tree of $G$, depending on whether or not $C$ is orientable, a contradiction. Thus $e$ is a non-trivial non-orientable loop of $G$ and \eqref{p.loop2.c} holds.
\end{proof}

\begin{lemma}
\label{3.5part2}
Let $D$ be a delta-matroid and $e$ an element of $D$.  Then $e$ is neither a coloop nor a ribbon loop in $D$ if and only if $e$ is a non-trivial orientable ribbon loop in $D*e$.
\end{lemma}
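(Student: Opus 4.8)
The plan is to unwind the definitions on both sides of the equivalence and match them using Lemma~\ref{le:delcondual1}(2), which tells us that $D\ast e$ deleted/contracted at $e$ recovers $D$ contracted/deleted at $e$. Write $D=(E,\F)$ and $D'=D\ast e=(E,\F')$ where $\F'=\{F\btu e\mid F\in\F\}$. I would first record the basic dictionary: $e$ is a ribbon loop in $D$ means $e$ is a loop of $D_{\min}$, i.e. $e$ lies in no minimum-cardinality feasible set of $D$; and $e$ is a coloop of $D$ means $e$ lies in every feasible set. By Definition~\ref{def:loops}, ``$e$ is a non-trivial orientable ribbon loop in $D'$'' unpacks as the conjunction of three things: (i) $e$ is a ribbon loop in $D'$, i.e. a loop of $(D')_{\min}$; (ii) $e$ is orientable in $D'$, i.e. $e$ is \emph{not} a ribbon loop in $D'\ast e=D$; and (iii) $e$ is non-trivial in $D'$, i.e. $e$ lies in \emph{some} feasible set of $D'$.

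Now I would analyze each of (i)--(iii). For (ii), ``$e$ is not a ribbon loop in $D$'' is literally the negation of one of our hypotheses on the left-hand side, so (ii) $\iff$ ($e$ is not a ribbon loop in $D$). This is the cleanest match. For (iii): $e$ is in some feasible set of $D'=D\ast e$ iff some $F\btu e$ contains $e$ iff some $F\in\F$ does \emph{not} contain $e$, i.e. iff $e$ is not a coloop of $D$. So (iii) $\iff$ ($e$ is not a coloop of $D$). Combining (ii) and (iii) already gives ``$e$ is neither a coloop nor a ribbon loop in $D$'' iff ``(ii) and (iii) hold for $D'$''. The remaining work is to show that, \emph{given} (ii) and (iii), condition (i) is automatic — i.e. if $e$ is neither a coloop nor a ribbon loop in $D$, then $e$ is automatically a ribbon loop in $D\ast e$, so that (i) adds nothing.

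For that last step I would argue as follows. Suppose $e$ is not a coloop and not a ribbon loop of $D$; we must show $e$ is a loop of $(D\ast e)_{\min}$, equivalently that the minimum-size feasible sets of $D\ast e$ all avoid $e$, equivalently (translating back through symmetric difference with $e$) that among feasible sets $F$ of $D$, those with $|F\btu e|$ minimum all \emph{contain} $e$. Since $e$ is not a ribbon loop of $D$, some minimum-cardinality feasible set $B$ of $D$ avoids $e$; then $|B\btu e|=|B|+1=r(D_{\min})+1$. Since $e$ is not a coloop, some feasible set avoids $e$; I claim in fact there is a feasible set of $D$ of cardinality $r(D_{\min})$ containing $e$ — this is where I expect the main obstacle, and I would extract it from the symmetric exchange axiom (Axiom~\ref{sea}) applied to $B$ and a feasible set $F_0$ containing $e$: since $e\in B\btu F_0$, there is $v\in B\btu F_0$ with $B\btu\{e,v\}\in\F$; if $v=e$ this is impossible as $B\btu e\ni e$ would have to be feasible but then $B\btu e$ is smaller-than-feasible only if... — more carefully, one uses Lemma~\ref{lem:useful} or Lemma~\ref{avoide}-style minimality arguments to produce a feasible set $B'$ of $D$ with $e\in B'$ and $|B'|=r(D_{\min})$. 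Granting such a $B'$, we get $|B'\btu e|=|B'|-1=r(D_{\min})-1<|B\btu e|$, and more generally any feasible $F$ with $e\notin F$ has $|F\btu e|=|F|+1\ge r(D_{\min})+1$, while any feasible $F$ with $e\in F$ has $|F\btu e|=|F|-1\ge r(D_{\min})-1$; so the minimum of $|F\btu e|$ over $\F$ is attained only at feasible sets containing $e$, which is exactly (i). The converse direction is then immediate by reversing these equivalences. The genuinely delicate point, and the one I would spend the most care on, is the existence of a minimum-cardinality feasible set of $D$ through $e$ given only that $e$ is a non-coloop non-ribbon-loop; Lemma~\ref{avoide} (with $D^*$, or a direct symmetric-exchange argument pushing a feasible set containing $e$ down in size while keeping $e$) is the tool I would use.
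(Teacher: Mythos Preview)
Your overall strategy---unpack ``non-trivial orientable ribbon loop in $D\ast e$'' into conditions (i), (ii), (iii) and match them against ``not a coloop'' and ``not a ribbon loop'' in $D$---is exactly the paper's approach. Your handling of (ii) and (iii) is correct and matches the paper.

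However, you have tripped over the negation in (i) and manufactured a difficulty that is not there. You correctly record at the outset that ``$e$ is a ribbon loop in $D$'' means $e$ lies in \emph{no} basis of $D_{\min}$. The negation is therefore that $e$ \emph{does} lie in some basis of $D_{\min}$---i.e.\ there is a minimum-cardinality feasible set of $D$ \emph{containing} $e$. But later you write ``since $e$ is not a ribbon loop of $D$, some minimum-cardinality feasible set $B$ of $D$ \emph{avoids} $e$'', which is the wrong negation, and then announce as the ``main obstacle'' the existence of a minimum-cardinality feasible set through $e$. That existence is immediate from the hypothesis ``$e$ is not a ribbon loop''; no exchange argument, no Lemma~\ref{lem:useful}, no Lemma~\ref{avoide} is needed. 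Once you have such a basis $B'\ni e$ of $D_{\min}$, your own computation goes through: $B'\setminus e$ is feasible in $D\ast e$ of size $r(D_{\min})-1$, every feasible set of $D\ast e$ has size at least $r(D_{\min})-1$, and the minimum is attained only by sets of the form $F\setminus e$ with $e\in F$, none of which contain $e$. This is precisely the paper's one-line observation ``$e$ belongs to some basis of $D_{\min}$, so no basis of $(D\ast e)_{\min}$ contains $e$''. Fix the negation and your proof is complete and coincides with the paper's.
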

\begin{proof}
Suppose that $e$ is neither a coloop nor a ribbon loop of $D$. Then $e$ belongs to some basis of $D_{\min}$, so no basis of $(D*e)_{\min}$ contains $e$. Thus $e$ is a ribbon loop of $D*e$. Moreover $e$ is an orientable ribbon loop of $D*e$ because it is not a ribbon loop of $(D*e)*e=D$ and it is non-trivial because it is not a coloop of $(D*e)*e=D$.

On the other hand, if $e$ is a non-trivial orientable ribbon loop of $D*e$, then, by Definition~\ref{def:loops}(2), $e$ is not a ribbon loop of $(D*e)*e=D$. Furthermore,  $e$ is not a loop of $D\ast e$ (as it is not a trivial orientable ribbon loop), so it is not a coloop of $D$.
\end{proof}

Another illustration of how ribbon graphs can inform delta-matroids is as follows. Suppose that $G$ is a ribbon graph with a non-orientable loop $e$. If $Q$ is a maximal spanning forest of $G$ then $Q \cup e$ is a spanning quasi-tree. The following lemma shows that this property holds for delta-matroids in general.

\begin{lemma}\label{lem:unorientloops}
Let $D=(E,\mathcal{F})$ be a delta-matroid with $r(D_{\min})=r$ and suppose that $e$ is a non-orientable ribbon loop of $D$.
Then a subset $F$ of $E-e$ is a basis of $D_{\min}$ if and only if $F\cup e$ is a feasible set of $D$ with cardinality $r+1$.
\end{lemma}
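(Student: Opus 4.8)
The plan is to prove both directions directly, using the rank function $\rho$ for the delta-matroid together with the description of $D_{\min}$ in terms of minimum-cardinality feasible sets. Write $r = r(D_{\min}) = |B|$ for any $B \in \mathcal{B}(D_{\min})$, which is the common size of all minimum feasible sets of $D$. Since $e$ is a non-orientable ribbon loop of $D$, by Definition~\ref{def:loops} we have: $e$ is a loop of $D_{\min}$ (so no minimum feasible set contains $e$), and $e$ is a ribbon loop of $D * e$ as well (so no basis of $(D*e)_{\min}$ contains $e$, i.e.\ no minimum-sized feasible set of $D * e$ contains $e$; equivalently, writing out what $D*e$ is, no set of the form $F \btu e$ with $F \in \mathcal{F}$ minimal contains $e$, which just says the minimum-sized feasible sets of $D$ also avoid $e$ --- consistent --- and additionally forces a relation on the second-smallest level).

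First I would prove the forward direction. Suppose $F \subseteq E - e$ is a basis of $D_{\min}$, so $|F| = r$ and $F \in \mathcal{F}$. I want to show $F \cup e \in \mathcal{F}$. The key tool is the Symmetric Exchange Axiom applied to a cleverly chosen pair. Since $e$ is non-orientable, by definition $e$ is a ribbon loop of $D*e$, meaning $e$ lies in no basis of $(D*e)_{\min}$; unpacking, the minimum feasible sets of $D*e$ are exactly $\{F' \btu e : F' \in \mathcal{F}_{\min}(D)\}$ (each of size $r+1$ since $e \notin F'$), and no smaller feasible set of $D*e$ exists below size $r+1$ --- but wait, we need more: actually the cleanest approach is to observe that $F \btu e = F \cup e$ has size $r+1$, and I must produce it as feasible. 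Consider any feasible set $F_0 \in \mathcal{F}$ with $e \in F_0$ (such exists unless $e$ is a loop of $D$, which would make $e$ trivial orientable by Lemma~\ref{l.4coloop}-type reasoning, contradicting non-orientability --- so indeed some feasible set contains $e$). Then $e \in F \btu F_0$. Applying the Symmetric Exchange Axiom with $u = e$: there is $v \in F \btu F_0$ with $F \btu \{e,v\} \in \mathcal{F}$. If $v = e$ we are done since $F \btu \{e,e\} = F \btu \emptyset = F$ --- no, that gives $F$ back, which is useless. So the subtlety: I need $v = e$ to be \emph{forced} or to rule out $v \neq e$ leading somewhere bad. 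Let me instead run the exchange the other way: start from $F_0$ (containing $e$) and $F$ (not containing $e$), with $e \in F_0 \btu F$; the axiom gives $v \in F_0 \btu F$ with $F_0 \btu \{e,v\} \in \mathcal{F}$. Choosing $F_0$ among feasible sets containing $e$ to minimize $|F_0 \btu F|$ will, by a standard minimality argument (mirroring the proof of Lemma~\ref{lem:useful} and Lemma~\ref{avoide}), force the exchanges to march $F_0$ toward $F \cup e$; the non-orientability of $e$ --- i.e.\ that $e$ is still a ribbon loop after twisting --- is precisely what prevents $v = e$ from occurring prematurely and dropping us to the $D_{\min}$ level without $e$. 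This minimality/exchange argument is the heart of the proof.

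For the converse, suppose $F \cup e \in \mathcal{F}$ with $|F \cup e| = r+1$, so $|F| = r$ and $e \notin F$. I must show $F \in \mathcal{F}$ and $F$ has size $r = r(D_{\min})$, i.e.\ $F$ is a basis of $D_{\min}$. Since $e$ is a ribbon loop of $D$, $e$ is a loop of $D_{\min}$; I claim $r$ is genuinely the minimum size. Since $e$ is non-orientable, $e$ is a ribbon loop of $D * e$, so every feasible set of $D$ containing $e$ has size at least $r+1$ (as $D*e$ sends it to a set avoiding $e$ of size $\geq r$... one has to be careful about $\pm 1$ bookkeeping here). Now $F \cup e$ contains $e$, so it has size exactly $r+1$, consistent. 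To get $F \in \mathcal{F}$: apply Lemma~\ref{minorbirank} (the rank-contraction formula). Since $e$ is a ribbon loop of $D$ but not a genuine loop (it lies in $F \cup e \in \mathcal{F}$), $e$ is not a loop of $D$, so $D/e$ is defined and $\rho_{D/e}(X) = \rho_D(X \cup e) - 1$. Evaluating $\rho$ on suitable sets and translating back shows $F \in \mathcal{F}(D/e)$ implies $F \cup e \in \mathcal{F}(D)$ --- but I want the reverse implication of this type, namely $F \cup e \in \mathcal{F}$ with $F$ minimal size forces $F \in \mathcal{F}$ too. This follows from applying the Symmetric Exchange Axiom to $F \cup e$ and a minimum feasible set $B$ of $D_{\min}$ (so $e \notin B$, $|B| = r$): since $e \in (F \cup e) \btu B$, there is $v \in (F\cup e)\btu B$ with $(F \cup e) \btu \{e, v\} \in \mathcal{F}$. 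If $v = e$, this is exactly $F \in \mathcal{F}$, done. If $v \neq e$, then $(F \cup e) \btu \{e,v\} = (F \btu v) \cup e$ has size $r+1$ again and I iterate, choosing $B$ to minimize $|(F\cup e) \btu B|$; the iteration terminates, and non-orientability guarantees we cannot get stuck. Finally, $|F| = r$ together with $F \in \mathcal{F}$ and $r$ being the minimum feasible-set size gives that $F$ is a basis of $D_{\min}$.

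\textbf{Main obstacle.} The crux in both directions is the exchange-and-minimality argument: showing that the Symmetric Exchange Axiom can be iterated to connect $F$ (or a feasible set containing $e$) to $F \cup e$ (resp.\ to $F$), and in particular ruling out the ``$v = e$ collapses us to the wrong level'' degenerate case by using exactly the hypothesis that $e$ is a \emph{non-orientable} ribbon loop, i.e.\ $e$ remains a ribbon loop of $D * e$. Getting the $\pm 1$ cardinality bookkeeping right when passing between $D$ and $D * e$, and confirming that "$r(D_{\min}) = r$" really is the minimum (so that ``feasible of cardinality $r+1$ containing $e$'' is the second level and not something lower), is the place where care is most needed. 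I expect this argument to closely parallel the minimality technique already used in the proofs of Lemma~\ref{lem:useful} and Lemma~\ref{avoide}, so those can serve as templates.
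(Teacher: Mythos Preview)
Your plan has the right instincts but leaves the two hardest steps as bare assertions, and it misses a much shorter route.

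For the direction ``$F\cup e$ feasible of size $r+1$ $\Rightarrow$ $F\in\mathcal{B}(D_{\min})$'', use Lemma~\ref{lem:useful} as a black box rather than as a template to imitate. Assume $F\notin\mathcal{F}$ and set $A=E-(F\cup e)$. Every basis $B$ of $D_{\min}$ has $|B|=r=|F|$, avoids $e$ (since $e$ is a loop of $D_{\min}$), and is distinct from $F$; hence $B-F\subseteq A$ is nonempty, so $s_0\geq 1$ in Lemma~\ref{lem:useful}, forcing every feasible set of $D$ to meet $A$. But $(F\cup e)\cap A=\emptyset$, a contradiction. Notice that non-orientability is not used in this direction at all --- only the ribbon-loop hypothesis.

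For the forward direction the paper does no exchange iteration whatsoever. The key observation, immediate from Definition~\ref{def:loops}, is that ``$e$ is a non-orientable ribbon loop'' is symmetric under $D\leftrightarrow D*e$. One therefore applies the contradiction argument of the previous paragraph verbatim to $D*e$: we have $F\cup e=F\btu e\in\mathcal{F}(D*e)$ because $F\in\mathcal{F}(D)$, and the conclusion $F\in\mathcal{F}(D*e)$ unwinds to exactly $F\cup e\in\mathcal{F}(D)$.

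In your sketch, the passages ``iterate and it terminates'' and ``non-orientability prevents the $v=e$ collapse'' are the entire content of the proof, and neither is carried out; in particular, in your converse argument the step from $F\cup e$ to $F\cup v$ (with $v\in B$) makes no visible progress toward producing $F$ itself. These bookkeeping issues are precisely what Lemma~\ref{lem:useful} already packages, and the $D\leftrightarrow D*e$ symmetry is what cleanly isolates where non-orientability enters.
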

\begin{proof}
Let $F \subseteq E-e$ with $|F|=r$ and $F\cup e \in \mathcal{F}$.
Suppose for contradiction that $F\notin  \mathcal{F}$. Let $A=E-(F\cup e)$.
Since $e$ is a ribbon loop of $D$, every minimum sized feasible set of $D$ contains an element of $A$.
By applying Lemma~\ref{lem:useful} we see that every feasible set $F'$ of $D$ must satisfy $|F'\cap A|\geq 1$.
However, $|(F\cup e)\cap A|=0$, a contradiction. Thus $F\in  \mathcal{F}$.

 By Definition~\ref{def:loops}(2), $e$ is non-orientable ribbon loop of $D\ast e$, and so  $r((D\ast e)_{\min})=r$. Thus, by applying the previous argument to $D\ast e$, we see that if $F\subseteq E-e$ with $|F|=r$ and $F\cup e \in \mathcal{F}(D*e)$ then $F\in \F(D*e)$. So if $F\subseteq E-e$ with $|F|=r$  and $F \in \mathcal{F}(D)$ then $F\cup e\in \F(D)$.
\end{proof}

\subsection{Deletion, contraction, and minors}\label{s5.4}

Deletion and contraction for ribbon graphs and for delta-matroids are compatible operations.
\begin{proposition}\label{p.5minor}
Let $G$ be a ribbon graph,  and $e\in E(G)$. Then
\begin{enumerate}
\item \label{p.5minor.1} $D (G\ba e)=  D(G)\ba e$;
\item  \label{p.5minor.2} $D(G/e)=D(G)/e$.
\end{enumerate}
\end{proposition}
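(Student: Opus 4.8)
The plan is to prove the two identities by showing that the defining families of feasible sets agree, working edge by edge and handling the loop/coloop exceptions with care.

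First I would prove part~\eqref{p.5minor.1}. Suppose $e$ is not a coloop of $D(G)$; by Lemma~\ref{l.4coloop}\eqref{l.4coloop.1} this is equivalent to $e$ not being a bridge of $G$. Then $D(G)\ba e = (E-e, \{F \mid F\in\F(G),\ e\notin F\})$, so I must show that a subset $F$ of $E-e$ is the edge set of a spanning quasi-tree of $G\ba e$ if and only if it is the edge set of a spanning quasi-tree of $G$ not containing $e$. But $(V, F)$ is literally the same spanning ribbon subgraph whether we regard it as a spanning ribbon subgraph of $G\ba e$ or of $G$, so it has the same number of boundary components and the same number of connected components in both cases; moreover $k(G\ba e) = k(G)$ precisely because $e$ is not a bridge. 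Hence the quasi-tree condition $f(F) = k(G\ba e)$ coincides with $f(F) = k(G)$, and the two families agree. If instead $e$ is a coloop of $D(G)$, i.e.\ a bridge of $G$, then $D(G)\ba e = D(G)/e$ by the delta-matroid conventions, and $G\ba e = G/e$ is not true, so here I would appeal to part~\eqref{p.5minor.2} together with the fact that for a bridge $G/e$ and $G\ba e$ have the same ribbon-graphic delta-matroid (both amount to removing the bridge and merging its endpoints or leaving them split, but the boundary-component counts match up after the reindexing $f(F) = k(G)+1$ for $F$ not using $e$).

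For part~\eqref{p.5minor.2}, the cleanest route is to reduce to part~\eqref{p.5minor.1} using partial duality. By Proposition~\ref{p.pd2}\eqref{} we have $G/e = G^{e}\ba e$, and by Theorem~\ref{t.twdu} we have $D(G^e) = D(G)\ast e$, while Lemma~\ref{le:delcondual1}(2) gives $D/e = (D\ast e)\ba e$ for any delta-matroid $D$. Assuming $e$ is not a loop of $D(G)$ (equivalently, by Lemma~\ref{l.4coloop}\eqref{}, not a trivial orientable loop of $G$, which guarantees $e$ is not a coloop of $G^e$ so that part~\eqref{p.5minor.1} applies to $G^e$), I would compute
\[ D(G/e) = D(G^e \ba e) = D(G^e)\ba e = (D(G)\ast e)\ba e = D(G)/e, \]
where the second equality is part~\eqref{p.5minor.1} applied to $G^e$, the third is Theorem~\ref{t.twdu}, and the last is Lemma~\ref{le:delcondual1}(2). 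The degenerate case where $e$ is a loop of $D(G)$ (so $D(G)/e = D(G)\ba e$) must be checked separately; here $e$ is a trivial orientable loop of $G$ by Lemma~\ref{l.4coloop}\eqref{}, and then $G/e = G\ba e$ by the ribbon graph convention (Table~\ref{tablecontractrg}), so the identity collapses to part~\eqref{p.5minor.1}.

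The main obstacle, and the place demanding the most attention, is the bookkeeping around the exceptional cases. The delta-matroid operations $\ba e$ and $/e$ are defined to coincide when $e$ is a loop or coloop, and the ribbon graph operations $\ba e$ and $/e$ coincide precisely when $e$ is a bridge or a trivial orientable loop (as one reads off Table~\ref{tablecontractrg}); these exceptional sets do not match up verbatim on the two sides — a non-trivial loop of $G$ is a loop of $D(G)$ only after passing through $D_{\min}$, and it is here that one uses the correspondence between loop types established in Proposition~\ref{p.loop2} and Lemma~\ref{l.4coloop}. So I would organise the proof as: (i) the generic case where $e$ is neither a loop nor a coloop of $D(G)$, handled by the boundary-component argument for deletion and the partial-duality reduction for contraction; (ii) the bridge/coloop case; and (iii) the trivial-orientable-loop case; checking in each that the ribbon graph degeneracy and the delta-matroid degeneracy are compatible. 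The topological content is entirely in the observation that a spanning ribbon subgraph $(V,A)$ of $G$ has the same boundary components whether viewed inside $G$ or inside $G\ba e$ (for $A\subseteq E-e$), together with the already-proven Theorem~\ref{t.twdu}; the rest is combinatorial translation.
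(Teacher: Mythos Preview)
Your overall strategy matches the paper's: verify deletion directly by comparing feasible sets, then reduce contraction to deletion via $G/e = G^e\ba e$, Theorem~\ref{t.twdu}, and Lemma~\ref{le:delcondual1}(2). The generic cases are fine. The problems are all in your handling of the exceptional cases.

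First, the bridge case of part~\eqref{p.5minor.1}. You propose to deduce it from part~\eqref{p.5minor.2} together with the claim that $D(G\ba e)=D(G/e)$ when $e$ is a bridge. That claim is true, but your justification (``boundary-component counts match up after the reindexing'') is not an argument; you would need something like $D(G_1\vee G_2)=D(G_1)\oplus D(G_2)=D(G_1\sqcup G_2)$. The paper handles this case far more simply and directly: when $e$ is a bridge, $F\subseteq E-e$ is a spanning quasi-tree of $G\ba e$ if and only if $F\cup e$ is a spanning quasi-tree of $G$ (adding the bridge drops both $k$ and $f$ by one), and since $e$ is a coloop of $D(G)$ this is exactly the description of $\mathcal F(D(G)\ba e)$. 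Doing it this way means part~\eqref{p.5minor.1} is proved in full before you start on part~\eqref{p.5minor.2}.

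Second, and more seriously, your degenerate case in part~\eqref{p.5minor.2} contains a false assertion: for a trivial orientable loop $e$ it is \emph{not} true that $G/e=G\ba e$ as ribbon graphs. Table~\ref{tablecontractrg} shows that contracting an orientable loop splits the incident vertex into two, whereas deleting it does not. (It is true that $D(G/e)=D(G\ba e)$ in this situation, but that is what you are trying to prove.) Fortunately this case-split is unnecessary: once part~\eqref{p.5minor.1} is established in full, the chain
\[
D(G/e)=D(G^e\ba e)=D(G^e)\ba e=(D(G)\ast e)\ba e=D(G)/e
\]
goes through unconditionally, since Lemma~\ref{le:delcondual1}(2) holds for all $e$. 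This is exactly how the paper argues part~\eqref{p.5minor.2}, with no case analysis at all.
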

\begin{proof}
If $e$ is a bridge of $G$ then it belongs to every spanning quasi-tree of $G$. Moreover, a subset $F$ of $E-e$ is a spanning quasi-tree of $G\ba e$ if and only if $F\cup e$ is a spanning quasi-tree of $G$. By Lemma~\ref{l.4coloop}, $e$ is a coloop of $D(G)$ and so the first part follows in this case.

On the other hand if $e$ is not a bridge of $G$, then $G$ and $G\ba e$ have the same number of connected components. Thus the spanning quasi-trees of $G\ba e$ are precisely the spanning quasi-trees of $G$ that do not contain $e$.
By Lemma~\ref{l.4coloop} again, $e$ is not a coloop of $D(G)$ and so the first part also follows in this case.

Using Proposition~\ref{p.pd2}\eqref{p.pd2.3}, we have $D(G/e)=D(G^e\ba e)$, which by Theorem~\ref{t.twdu} and the first part of this proposition is the same as $(D(G)*e)\ba e$. Using Lemma~\ref{le:delcondual1}, $(D(G)*e)\ba e = D(G)/e$.
\end{proof}

\begin{remark}
$D_{\leq k}(G\ba e) \neq (D_{\leq k}(G))\ba e$
and $D_{\leq k}(G/ e) \neq (D_{\leq k}(G))/ e$, in general. To construct an example illustrating the former, take a path with $k+1$ edges, attach a non-orientable loop to one of the vertices and let $e$ be one of the edges in the path. An example illustrating the latter can then be constructed by taking the dual. The examples with $k=1$ also illustrate that in general $D_{1}(G\ba e) \neq (D_{1}(G))\ba e$
and $D_{1}(G/ e) \neq (D_{1}(G))/ e$.
\end{remark}

The next corollary follows immediately from Proposition~\ref{p.5minor}.
\begin{corollary}\label{p.5minor3}
Let $G$ and $H$  be ribbon graphs. If $H$ is a minor of $G$ then $D(H)$ is a minor of $D(G)$.
\end{corollary}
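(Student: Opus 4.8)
The plan is to reduce everything to Proposition~\ref{p.5minor} together with the order-independence of deletion and contraction for delta-matroids. Recall from Section~\ref{sss.rgmin} that $H$ being a minor of $G$ means $H$ is obtained from $G$ by a finite sequence of operations, each of which is an edge deletion, an edge contraction, or a vertex deletion. Proposition~\ref{p.5minor} already records that a single edge deletion or edge contraction of a ribbon graph corresponds to exactly the same operation on its delta-matroid, so the only operation requiring separate treatment is vertex deletion. Once all three single operations are shown to produce delta-matroid minors, the result follows by induction on the length of the defining sequence, using the fact (from \cite{BD91}) that a minor of a minor of a delta-matroid is again a minor.

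First I would handle vertex deletion. Let $v$ be a vertex of a ribbon graph $G'$, and let $e_1,\dots,e_m$ be the edges of $G'$ incident with $v$. Then $G'\ba v$ is obtained by first deleting $e_1,\dots,e_m$ in turn, producing a ribbon graph in which $v$ is an isolated vertex, and then removing that isolated vertex. Applying Proposition~\ref{p.5minor}\eqref{p.5minor.1} repeatedly gives $D(G'\ba e_1\ba\cdots\ba e_m)=D(G')\ba e_1\ba\cdots\ba e_m$, which is a minor of $D(G')$. It remains to observe that deleting an isolated vertex does not change the associated delta-matroid: if $w$ is an isolated vertex of a ribbon graph $H'$, then $E(H'\ba w)=E(H')$, and a subset $A$ of this common edge set is the edge set of a spanning quasi-tree of $H'$ if and only if it is the edge set of a spanning quasi-tree of $H'\ba w$, since the component $\{w\}$ is itself a trivial quasi-tree and plays no role in the quasi-tree decomposition of the remaining components. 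Hence $D(H'\ba w)=D(H')$, and therefore $D(G'\ba v)$ is a minor of $D(G')$.

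With the three single operations in hand, I would conclude by induction on the number $\ell$ of operations used to obtain $H$ from $G$. If $\ell=0$ then $H=G$ and there is nothing to prove. If $\ell>0$, write the construction as a single operation producing a ribbon graph $G_1$ from $G$, followed by $\ell-1$ further operations producing $H$ from $G_1$. By Proposition~\ref{p.5minor} (when the first operation is an edge deletion or an edge contraction) or by the previous paragraph (when it is a vertex deletion), $D(G_1)$ is a minor of $D(G)$; by the inductive hypothesis, $D(H)$ is a minor of $D(G_1)$; and since deletions and contractions of a delta-matroid may be performed in any order (\cite{BD91}), so that a minor of a minor is a minor, $D(H)$ is a minor of $D(G)$.

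Since every step is a direct appeal to Proposition~\ref{p.5minor} or to elementary properties of quasi-trees, I do not expect a genuine obstacle here; the only point needing a moment's care is the bookkeeping for vertex deletions, namely realising a vertex deletion as a sequence of edge deletions followed by the deletion of an isolated vertex, and checking that the latter step leaves the delta-matroid unchanged.
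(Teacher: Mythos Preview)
Your proof is correct and follows essentially the same approach as the paper, which simply states that the corollary ``follows immediately from Proposition~\ref{p.5minor}''. In fact you are more careful than the paper: the definition of ribbon graph minor in Section~\ref{sss.rgmin} includes vertex deletion, and you explicitly verify that deleting an isolated vertex leaves $D(\cdot)$ unchanged, a point the paper's one-line proof leaves implicit.
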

The reverse inclusion is not true, because non-isomorphic ribbon graphs may have isomorphic ribbon-graphic delta-matroids.

We will refer to a ``$D$-minor" to mean a ``minor isomorphic to $D$".
A class $\mathcal{C}$ of delta-matroids or ribbon graphs is said to be \emph{minor-closed} if, for each $X\in \mathcal{C}$, every minor of $X$ is also in $\mathcal{C}$.
An \emph{excluded minor} for a minor-closed class $\mathcal{C}$ of delta-matroids or ribbon graphs is a delta-matroid or ribbon graph, respectively, that is not in $\mathcal{C}$ but has each of its proper minors in $\mathcal{C}$.

As a first illustration of the fact that ribbon graph intuition can lead to results about delta-matroids, we consider even delta-matroids.  Recall that an even  delta-matroid is one whose feasible sets all have the same parity. Being even is preserved under taking minors, hence it may be characterised by a set of excluded minors. Our aim is to find the set  of excluded minors for even  delta-matroids. Consider the corresponding problem for ribbon graphs. A ribbon graph is non-orientable if and only if it contains a non-orientable cycle.  Edges in a cycle can be contracted to give a loop, and it follows that a ribbon graph is orientable if an only if it has no $G_0$-minor, where $G_0$ is the ribbon graph consisting of a single non-orientable loop.
Recalling from Proposition~\ref{p.4b}\eqref{p.4b.4}, that a ribbon graph $G$ is orientable if and only if $D(G)$ is even, we deduce that
$D(G)$ is even if and only if it contains no $D(G_0)$-minor. This leads us to posit that a delta-matroid $D$ is even if and only if it contains no $X_0$-minor, where $X_0=D(G_0)=(\{a\},\{\emptyset,\{a\}\})$. This turns out to be a slight reformulation of a result of Bouchet.
\begin{theorem}[Bouchet~\cite{ab2}]
\label{exorient}
Let $X_0=(\{a\},\{\emptyset,\{a\}\})$.
A delta-matroid $D=(E,\mathcal{F})$ is even if and only if it has no $X_0$-minor.
\end{theorem}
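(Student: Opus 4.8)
The plan is to prove the two directions separately; the forward implication is routine and the reverse one carries the content. For ``even implies no $X_0$-minor'', I would first check that evenness is closed under minors. For deletion this is clear since $\mathcal{F}(D\ba e)$ is a subcollection of $\mathcal{F}(D)$, so its members share a parity. For contraction with $e$ not a loop, every member of $\mathcal{F}(D/e)$ has the form $F-e$ with $e\in F\in\mathcal{F}(D)$, and since all such $F$ share a parity so do the sets $F-e$; if $e$ is a loop then $D/e=D\ba e$. Hence every minor of an even delta-matroid is even, and since the feasible sets $\emptyset$ and $\{a\}$ of $X_0$ have cardinalities of opposite parity, $X_0$ is odd and so cannot be a minor of an even delta-matroid.

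For ``not even implies $X_0$-minor'', the key lemma I would isolate is: \emph{if $D=(E,\mathcal{F})$ is odd then there exist $e\in E$ and $F\subseteq E-e$ with both $F$ and $F\cup e$ in $\mathcal{F}$}, i.e.\ $D$ has two feasible sets whose symmetric difference is a single element. To prove it, among all pairs $(A,B)$ of feasible sets with $|A|\not\equiv|B|\pmod 2$ (which exist since $D$ is odd) choose one minimizing $|A\btu B|$; this quantity is odd, hence at least $1$. Suppose it is at least $3$. Pick $u\in A\btu B$; the Symmetric Exchange Axiom supplies $v\in A\btu B$ with $A':=A\btu\{u,v\}\in\mathcal{F}$. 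If $v=u$ then $|A'|\not\equiv|A|$ while $|A'\btu A|=1$, so $(A',A)$ contradicts minimality. If $v\neq u$ then $|A'|\equiv|A|$, so $|A'|\not\equiv|B|$, while $A'\btu B=(A\btu B)\btu\{u,v\}$ has cardinality $|A\btu B|-2$, so $(A',B)$ contradicts minimality. Therefore $|A\btu B|=1$; relabelling so that the unique element of $A\btu B$ lies outside one of the two sets gives the lemma.

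With the lemma in hand I would prove, by induction on $|E|$, the stronger assertion: \emph{if there exist $e\in E$ and $F\subseteq E-e$ with $F,F\cup e\in\mathcal{F}$, then $D$ has a minor isomorphic to $X_0$.} If $E=\{e\}$ then $\{\emptyset,\{e\}\}\subseteq\mathcal{F}\subseteq\{\emptyset,\{e\}\}$, so $D\cong X_0$. If $|E|\geq 2$, fix $f\in E-e$. If $f\in F$ then $f$ is not a loop of $D$ (it lies in the feasible set $F$), so $D/f$ is defined, and $F-f$ together with $(F-f)\cup e$ are feasible sets of $D/f$ with symmetric difference $\{e\}$; apply induction to $D/f$. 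If $f\notin F$ then, as $f\neq e$, also $f\notin F\cup e$, and $f$ is not a coloop of $D$ (the feasible set $F$ omits it), so $D\ba f$ is defined and has $F$ and $F\cup e$ as feasible sets with symmetric difference $\{e\}$; apply induction to $D\ba f$. Combining this with the lemma completes the reverse implication, and hence the theorem.

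The main obstacle is the lemma that an odd delta-matroid has two feasible sets differing in a single element: this is where the Symmetric Exchange Axiom is genuinely used, and the minimal-counterexample argument must be split according to whether the exchange returns $v=u$ or $v\neq u$, since the parity bookkeeping differs between the two cases. Once the lemma is available, the reduction to $X_0$ is an easy induction; the only points to check are that the chosen element $f$ is legitimately contractible (not a loop) or deletable (not a coloop) and that the ``$F,F\cup e$'' configuration is inherited by the minor.
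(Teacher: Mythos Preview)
Your proof is correct and follows essentially the same approach as the paper. The paper cites Bouchet's lemma (that an odd delta-matroid has feasible sets $F$ and $F\cup e$) rather than proving it, and then writes the minor directly as $D/F\ba(E-(F\cup e))\cong X_0$; your induction on $|E|$ simply processes this minor one element at a time, so the two reductions coincide, with your version being more self-contained.
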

\begin{proof}
If $D$ is even, then it clearly does not have $X_0$ as a minor, as any minor of $D$ is even.
By Bouchet's result,~\cite[Lemma~5.4]{ab2}, a delta-matroid is odd if and only if it has a feasible set $F$ and an element $e\notin F$ such that $F\cup e$ is feasible. In this case $D/F\ba(E-(F\cup e))$ is isomorphic to $X_0$, hence the result follows.
\end{proof}

\begin{remark}
As a further illustration of the interactions between ribbon graphs and delta-matroids, it is interesting to note that Bouchet's characterisation of odd delta-matroids given in the proof of Theorem~\ref{exorient} is the direct analogue of the ribbon graph result that $G$ is non-orientable if and only if it has a spanning quasi-tree $Q$ and an edge $e$ not in $Q$ such that $Q\cup e$ is a spanning quasi-tree.
\end{remark}

An excellent illustration of the compatibility between delta-matroid and ribbon graph theory is found by considering twists of matroids.
As the class of matroids is not closed under twists but every matroid is a delta-matroid, twisting provides a way to construct delta-matroids from matroids.
Delta-matroids arising from twists of matroids are of interest since they are an intermediate step between delta-matroid theory in general and the much better developed field of matroid theory. Suppose we are faced with the problem of characterising the class of delta-matroids that arise as twists of matroids. How can we use the insights of ribbon graphs to tackle this problem?

Suppose that $G=(V,E)$ is a ribbon graph with ribbon-graphic delta-matroid $D=D(G)$. We wish to understand when $D$ is the twist of a matroid, that is, we want to determine if $D=M\ast A$ for some matroid $M$ and for some $A\subseteq  E$. As twists are involutary, we can reformulate this problem as one of determining if   $D\ast B =M$ for some matroid $M$ and some $B\subseteq E$. By Theorem~\ref{t.twdu}, $D\ast B =  D(G)\ast B =   D(G^B)$, but, by Corollary~\ref{c.4c}\eqref{c.4c.2},   $D(G^B)$ is a matroid if and only if $G^B$ is a plane graph. Thus $D$ is a twist of a matroid if and only if $G$ is the partial dual of a plane graph.
Given our principle that embedded graphs inform us about delta-matroids,  to characterize the class of delta-matroids that  are twists of matroids, we should look for characterizations of the class of ribbon graphs that arise as partial duals of plane graphs. Fortunately, due to connections with knot theory (see~\cite{Mo5}), this class of ribbon graphs has been characterised.
Let $G_0$ be the ribbon graph consisting of a single non-orientable loop; $G_1$ be the orientable ribbon graph given by vertex set $\{1,2\}$, edge set $\{a,b,c\}$ with the incident edges at each vertex having the  cyclic order $abc$, with respect to some orientation of $G_1$; and let  $G_2$ be the orientable ribbon graph given by vertex set $\{1\}$, edge set $\{a,b,c\}$ with the cyclic order $abcabc$ at the vertex. Then the following holds.
\begin{theorem}[Moffatt~\cite{Moprep}]
$G$ is a partial dual of a plane graph if and only if it has no minors equivalent to $G_0$, $G_1$, or $G_2$.
\end{theorem}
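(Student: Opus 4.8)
The plan is to treat the two implications separately, writing $\mathcal{P}$ for the class of ribbon graphs that are partial duals of plane ribbon graphs. The ``only if'' direction reduces to showing that $\mathcal{P}$ is minor-closed and that $G_0,G_1,G_2\notin\mathcal{P}$; the ``if'' direction is the real content.

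\emph{$\mathcal{P}$ is minor-closed.} Here I would lean on the translation established just before the theorem: a ribbon graph $H$ lies in $\mathcal{P}$ if and only if $D(H)$ is a twist of a matroid, since $D(H)\ast B$ is a matroid precisely when $H^{B}$ is plane, by Corollary~\ref{c.4c}\eqref{c.4c.2}. Now if $H$ is a minor of $G\in\mathcal{P}$, then $D(H)$ is a minor of $D(G)$ by Corollary~\ref{p.5minor3}; and a minor of a twist $M\ast A$ of a matroid $M$ is, by Proposition~\ref{pr:delcondualmany}, a twist of a minor of $M$, hence again a twist of a matroid. Thus $D(H)$ is a twist of a matroid, so $H\in\mathcal{P}$. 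Equivalently, one can argue directly in ribbon-graph terms: $G^{A}\setminus X/Y$ is a partial dual of $G\setminus X'/Y'$ for suitable $X',Y'$ (the exact analogue of Proposition~\ref{pr:delcondualmany}, obtained one edge at a time from $G^{e}\setminus e=G/e$ in Proposition~\ref{p.pd2}), and plane ribbon graphs are trivially minor-closed.

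\emph{$G_0,G_1,G_2$ are excluded minors.} This is a finite verification. To see that $G_i\notin\mathcal{P}$ I would compute $\gamma(G_i^{A})$ for every $A\subseteq E(G_i)$ from Euler's formula, reading off the effect of partial duality from Table~\ref{tablecontractrg}; using the edge-symmetry of each $G_i$ together with the identity $\gamma(G_i^{A^{c}})=\gamma((G_i^{A})^{*})=\gamma(G_i^{A})$ cuts this to a couple of cases each, all giving $\gamma>0$ (for $G_0$ it is immediate, its only partial dual being itself). I would then check that every proper minor of each $G_i$ lies in $\mathcal{P}$; these are small ribbon graphs — edgeless, single-edge, or two-edge — so the check is routine. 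Combined with minor-closedness, this gives the ``only if'' direction.

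\emph{The converse.} It remains to show that a ribbon graph with no $G_0$-, $G_1$-, or $G_2$-minor is a partial dual of a plane graph. I would induct on $e(G)$ and first strip away inessential structure: the components of a disjoint union can be handled separately, since $\mathcal{P}$ is closed under disjoint union; a bridge, a trivial orientable loop, or a trivial non-orientable loop $e$ can be deleted and then re-attached after applying induction to $G\setminus e$, since these operations send $\mathcal{P}$ to itself; and a separating vertex, yielding $G=P\oplus Q$ or a join $G=P\vee Q$, lets one apply induction to $P$ and $Q$ and reassemble. This reduces to the case where $G$ is connected, bridgeless, has no trivial loop, and has no non-trivial separating vertex. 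The crux — and the main obstacle — is to show that such a ``core'' ribbon graph avoiding $G_0,G_1,G_2$ admits some $A$ with $G^{A}$ plane: one analyses the rotation systems and the interlacement pattern of the edges at each vertex (equivalently, invokes the structural description of $\mathcal{P}$ via plane-biseparations from the knot-theoretic work \cite{Mo5}) to produce such an $A$ explicitly, while arguing that any obstruction to doing so propagates, under contractions of cycle-edges, to a surviving crosscap (a $G_0$-minor) or to one of the two forbidden orientable interlacement configurations (a $G_1$- or $G_2$-minor). Verifying that these three are the only obstructions in the core case is where essentially all the difficulty lies.
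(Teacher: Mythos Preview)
The paper does not prove this theorem: it is quoted from \cite{Moprep} without argument, serving only to motivate the delta-matroid analogue, Theorem~\ref{expdm}. So there is nothing in the paper to compare your attempt against, and the proposal must stand on its own. The forward direction does: minor-closedness of $\mathcal{P}$ follows cleanly from Proposition~\ref{pr:delcondualmany} and Corollary~\ref{p.5minor3} via the equivalence $G\in\mathcal{P}\Leftrightarrow D(G)$ is a twist of a matroid, and checking that no partial dual of $G_0,G_1,G_2$ is plane is a finite computation.

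The converse, however, contains a concrete error and an unfilled gap. The error is in your reduction step: you claim that a decomposition $G=P\oplus Q$ at a separating vertex lets you apply induction to $P$ and $Q$ and ``reassemble'', but $P,Q\in\mathcal{P}$ does \emph{not} imply $P\oplus Q\in\mathcal{P}$ when the pieces are interlaced rather than joined. Indeed $G_2$ itself splits as $P\oplus Q$ with $P$ a single orientable loop and $Q$ two interlaced orientable loops; both lie in $\mathcal{P}$ (for $Q$, note $D(Q)\ast e\cong U_{1,2}$), yet $G_2$ does not. Only the join case $G=P\vee Q$ supports such reassembly. The gap is that, even after repairing the reductions, you offer nothing for the residual ``core'' case beyond the remark that one analyses interlacement patterns or invokes \cite{Mo5}, and that this ``is where essentially all the difficulty lies''. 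That sentence is accurate, but it is not an argument: you supply no mechanism for producing an $A$ with $G^{A}$ plane, and no way to force one of the three forbidden minors when no such $A$ exists. The substantive content of the theorem remains unaddressed.
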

The discussion above and our  principle that ribbon graphs inform us about delta-matroids lead us to the conjecture that a delta-matroid $D$ is the twist of a matroid if and only if it does not have a minor isomorphic to $D(G_0)$, $D(G_1)$, or $D(G_2)$. Indeed this result is true  and is readily  derived from work of  A. Duchamp (see~\cite{CMNR} for details of the derivation).
\begin{theorem}[Duchamp~\cite{adfund}]
\label{expdm}
A delta-matroid $D$ is the twist of a matroid if and only if it does not have a minor isomorphic to $D(G_0)$, $D(G_1)$, or $D(G_2)$.
\end{theorem}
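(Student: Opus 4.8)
The plan is to prove the two implications separately: the forward direction is a short consequence of minor-closedness, while the backward direction carries the real content and is where Duchamp's result enters.

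For the forward direction I would first note that the class of twists of matroids is minor-closed: by Proposition~\ref{pr:delcondualmany} every minor of $M\ast A$ has the form $M'\ast A'$ with $M'$ a minor of $M$, and a minor of a matroid is a matroid. Hence it suffices to verify that none of $D(G_0)$, $D(G_1)$, $D(G_2)$ is itself a twist of a matroid. This is a finite check — each ground set has size at most $3$, so there are at most eight twists to examine, and in each case some two feasible sets have different cardinalities — but it also follows conceptually: by Theorem~\ref{t.twdu} and Corollary~\ref{c.4c}\eqref{c.4c.2}, $D(G_i)$ is a twist of a matroid if and only if $G_i$ is a partial dual of a plane graph, and by Moffatt's theorem above none of $G_0$, $G_1$, $G_2$ is, each being an excluded minor for that class. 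Combined with minor-closedness, no twist of a matroid has a minor isomorphic to one of these three.

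For the backward direction I would first reformulate the target condition. For $A\subseteq E$ the set system $D\ast A$ is a matroid exactly when $|A\btu F|=|A|+|F|-2|A\cap F|$ is independent of $F\in\mathcal F$, i.e. when $|F|-2|A\cap F|$ is constant on $\mathcal F$; writing $\epsilon\in\{\pm1\}^{E}$ for the $\pm1$-vector that is $-1$ precisely on $A$, this says $\sum_{e\in F}\epsilon(e)$ is the same for all feasible sets. In particular a twist of a matroid is always even, so one half of the excluded-minor list is accounted for by the observation that $D(G_0)$ is the delta-matroid $X_0$ of Theorem~\ref{exorient}, whose presence as a minor is equivalent to oddness. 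It therefore remains to show that an \emph{even} delta-matroid with no $D(G_1)$- and no $D(G_2)$-minor carries such a sign vector $\epsilon$, and this is the statement derived from Duchamp~\cite{adfund} in~\cite{CMNR}. The argument is the standard excluded-minor induction: pick a minor-minimal even delta-matroid $D=(E,\mathcal F)$ that is not a twist of a matroid and show $|E|\leq 3$. One checks first that $D$ has no loop and no coloop (if $e$ were a loop, then $D\ba e=D/e$ is a proper minor, hence a twist $M\ast A$ of a matroid with $A\subseteq E-e$, and adjoining $e$ to $M$ as a loop exhibits $D$ itself as a twist of a matroid — dually for coloops). Using that the class is also closed under twists, one may replace $D$ by $D\ast X$ for a convenient $X$ and so assume that for a fixed non-loop, non-coloop $e$ the minor $D\ba e$ is an honest matroid (its feasible sets being exactly the feasible sets of $D$ avoiding $e$), while $D/e$, being a proper minor, supplies a sign vector on $E-e$ that already works on the feasible sets of $D$ containing $e$. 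The crux is to splice these two pieces of data: either this sign vector extends over $e$ to one valid on all of $\mathcal F$, whence $D$ is a twist of a matroid after all, or the failure to do so is concentrated on at most two further elements and produces a minor on at most three elements that is an even delta-matroid but not a twist of a matroid. A routine enumeration then shows the only loop-free, coloop-free even delta-matroids on at most three elements that fail to be twists of matroids are $D(G_1)=(\{a,b,c\},\{\{a\},\{b\},\{c\},\{a,b,c\}\})$ and its dual $D(G_2)=(\{a,b,c\},\{\emptyset,\{a,b\},\{a,c\},\{b,c\}\})$, which closes the induction.

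I expect the splicing step to be the genuine obstacle. Minor-closedness, the sign-vector reformulation, the exclusion of loops and coloops, and the finite enumeration of three-element delta-matroids are all routine; the substantive combinatorics lies in controlling how the $\pm1$-vector provided by $D/e$ interacts with the matroid structure of $D\ba e$, and in proving that the only irreducible local obstructions are the two three-element configurations above. That is precisely the content one imports from Duchamp's work, and in a self-contained treatment it would be the part requiring real care.
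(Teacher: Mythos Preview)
The paper does not actually prove this theorem: it is stated as a result of Duchamp, with the remark that it ``is readily derived from work of A.~Duchamp (see \cite{CMNR} for details of the derivation).'' There is therefore no proof in the paper to compare against; your proposal already supplies considerably more than the paper does.

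Your forward direction is correct and complete. Minor-closedness of the class of twists of matroids follows cleanly from Proposition~\ref{pr:delcondualmany}, and your conceptual argument that none of $D(G_0)$, $D(G_1)$, $D(G_2)$ is a twist of a matroid---via Theorem~\ref{t.twdu}, Corollary~\ref{c.4c}\eqref{c.4c.2}, and Moffatt's excluded-minor theorem for partial duals of plane graphs---is sound (and the direct finite check you mention works equally well).

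For the backward direction, your sign-vector reformulation is correct, the identification $D(G_0)=X_0$ together with Theorem~\ref{exorient} does dispose of the odd case, and the minimal-counterexample framework (eliminating loops and coloops, passing to a convenient twist) is standard and valid. Your explicit descriptions of $D(G_1)$ and $D(G_2)$ are accurate, and they are indeed dual to one another. You rightly flag the splicing step---reconciling the matroid $D\ba e$ with the sign vector coming from $D/e$---as the real content, and you defer it to Duchamp, which is exactly what the paper does. So your outline is an honest and accurate sketch of how a self-contained proof would proceed, with the same gap left open as in the paper itself.
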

In this example ribbon graph theory led to a result obtainable from the literature, but below we will see examples where ribbon graph theory leads to genuinely new structural delta-matroid theory.

\subsection{Separability and connectivity for delta-matroids} \label{s5.5}
If $v$ is a separating vertex of a graph $G$,  with $P$ and $Q$  being the subgraphs that intersect in $v$, then
knowledge of $P$, $Q$ and $v$ gives complete knowledge of $G$. However, if $G$ is a ribbon graph this is no longer the case. For example, suppose that $P$ and $Q$ are orientable loops. Then $G$ has genus zero or one, depending on whether or not $P$ and $Q$ are interlaced. Thus separability is a much more subtle concept for ribbon graphs than for graphs. Given our principle that graphs are matroidal, while ribbon graphs are delta-matroidal, we should expect `connectivity' for delta-matroids to be more subtle than for matroids. In this section, we define notions of connectivity and separability of delta-matroids that reflect the corresponding concepts for ribbon graphs defined in Section~\ref{sss.se}.

For matroids $M_1=(E_1,\mathcal{B}_1)$ and $M_2=(E_2,\mathcal{B}_2)$, where $E_1$ is disjoint from $E_2$, the \emph{direct sum of $M_1$ and $M_2$}, written $M_1\oplus M_2$, is constructed as follows.
\[M_1\oplus M_2:=(E_1\cup E_2,\{B_1\cup B_2\mid B_1\in \mathcal{B}_1\text{ and } B_2\in\mathcal{B}_2\}).\]
If $M=M_1\oplus M_2$, for non-trivial $M_1$ and $M_2$, then we say that $M$ is \emph{disconnected} and that $E_1$ and $E_2$ are each \emph{separating}.  We say that $M$ is \emph{connected} if it is not disconnected.
The connectivity of cycle matroids is closely linked to the connectivity of the underlying graph. A graph is \emph{$2$-connected} if it has a single connected component and no separating vertex.
The following is well-known~\cite{Oxley11}.

\begin{proposition}\label{prop:matconn}
Let $G$ be a graph. Then $M(G)$ is connected if and only if $G$ is $2$-connected. Moreover if $M(G)=M_1\oplus M_2$, for non-trivial $M_1$ and $M_2$, then $M_1=M(G_1)$ and $M_2=M(G_2)$ for some graphs $G_1=(V_1,E_1)$ and $G_2=(V_2,E_2)$ such that $G = (V_1\cup V_2,E_1\cup E_2)$, and $E_1$ and $E_2$ are disjoint, and $V_1$ and $V_2$ are either disjoint or intersect in a single vertex.
\end{proposition}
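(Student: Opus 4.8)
The plan is to reduce the statement to the standard description of the connected components of a matroid, combined with a classical fact about cycles in $2$-connected graphs.

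First I would recall (see \cite{Oxley11}) that for any matroid $M=(E,\mathcal B)$ the relation on $E$ defined by ``$e=f$, or some circuit of $M$ contains both $e$ and $f$'' is an equivalence relation — the only nontrivial point being transitivity, which is a standard consequence of the circuit elimination axiom — and that, writing $E_1,\dots,E_t$ for its classes, one has $M=\bigoplus_{i=1}^{t}M|E_i$ with each $M|E_i$ connected; in particular $M$ is connected if and only if $t\le 1$. Since the circuits of $M(G)$ are exactly the edge sets of the cycles of $G$, two edges of $G$ are equivalent in this sense precisely when they lie on a common cycle. I would then invoke the classical theorem (essentially Whitney's): in a $2$-connected graph with at least two edges any two edges lie on a common cycle, while in general the edges lying on a common cycle with a fixed edge $e$, together with $e$ itself, form exactly the edge set of the block of $G$ containing $e$. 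Hence the equivalence classes above are precisely the edge sets of the blocks of $G$ that contain an edge (the bridges and the maximal $2$-connected subgraphs), so
\[ M(G)=\bigoplus_{B} M(B), \]
the sum running over these blocks. Consequently $M(G)$ is connected if and only if $G$ has at most one such block, which — once one notes that $G$ is assumed to have a single connected component and disregards isolated vertices — is equivalent to $G$ being $2$-connected.

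For the ``moreover'' clause I would use that the decomposition displayed above is the finest direct-sum decomposition of $M(G)$: any expression $M(G)=M_1\oplus M_2$ with $M_1,M_2$ nontrivial must arise by partitioning the blocks of $G$ into two nonempty families, so that with $E_i:=E(M_i)$ each $E_i$ is a union of edge sets of blocks. Letting $G_i$ be the union of the blocks assigned to family $i$, with $V_i$ the set of endpoints of the edges of $E_i$ (and the isolated vertices of $G$ distributed arbitrarily among the two), we get $M_i=M(G_i)$, $E_1\cap E_2=\emptyset$, $E_1\cup E_2=E(G)$ and $V_1\cup V_2=V(G)$. Finally, two distinct blocks of $G$ meet in at most one vertex (a cut vertex) and the block--cut-vertex graph of a connected graph is a tree, so when $G$ is connected and the partition is taken along a single edge of that tree, $G_1$ and $G_2$ share exactly one vertex, while when $G_1$ and $G_2$ lie in different components of $G$ they are vertex-disjoint; this is the asserted structure.

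The one genuinely nontrivial ingredient is the classical graph fact that in a $2$-connected graph every two edges lie on a common cycle (equivalently, the identification of the circuit-equivalence classes of $M(G)$ with the blocks of $G$); with this in hand the rest is bookkeeping. The only care needed is over isolated vertices and over the point — which I would make explicit — that an arbitrary nontrivial splitting $M(G)=M_1\oplus M_2$ realises the single-cut-vertex picture only after the blocks have been grouped suitably, i.e. the final assertion is really a statement about the coarsenings of the finest decomposition. Alternatively, since the paper treats this as folklore, the whole proposition may simply be cited from \cite{Oxley11}.
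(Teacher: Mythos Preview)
The paper does not prove this proposition at all: it is introduced with ``The following is well-known~\cite{Oxley11}'' and left without argument. Your sketch therefore supplies strictly more than the paper does, and the route you take --- identify the circuit-equivalence classes of $M(G)$ with the edge sets of the blocks of $G$, then read off both assertions from the block decomposition --- is the standard one and is correct for the first sentence of the proposition.

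Your caution about the ``moreover'' clause is well placed, and in fact the issue is sharper than you indicate. As literally stated, the clause fails for an arbitrary nontrivial splitting $M(G)=M_1\oplus M_2$: take $G$ to be three triangles $T_1,T_2,T_3$ with $T_1\cap T_2=\{v_1\}$ and $T_2\cap T_3=\{v_2\}$, and set $E_1=E(T_1)\cup E(T_3)$, $E_2=E(T_2)$. Any graphs $G_1,G_2$ with these edge sets and with $G=(V_1\cup V_2,E_1\cup E_2)$ must satisfy $\{v_1,v_2\}\subseteq V_1\cap V_2$. Your proposed fix --- regrouping the blocks --- changes $M_1$ and $M_2$, so it does not rescue the clause as written. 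What \emph{is} true, and what the paper actually uses in the next proposition, is the existence statement: if $M(G)$ is disconnected then $G$ is disconnected or has a separating vertex, and one can choose a splitting realising the single-cut-vertex picture. Your argument via the block--cut-vertex tree proves exactly this, so the downstream applications are safe; only the phrasing of the proposition is loose.
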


Motivated by separability for ribbon graphs, we generalize this concept to delta-matroids in two slightly different ways.  The second definition is from~\cite{geelen}.
\begin{definition}
Let $D=(E,\mathcal{F})$ be a delta-matroid. Then $D$ is \emph{separable} if $D_{\min}$ is disconnected.
\end{definition}

\begin{definition}
For delta-matroids $D=(E,\mathcal{F})$ and $\tilde{D}=(\tilde{E},\tilde{\mathcal{F}})$ with $E\cap \tilde{E}=\emptyset$, the \emph{direct sum} of $D$ and $\tilde{D}$ is written $D\oplus \tilde{D}$ and is the delta-matroid defined as
\[D\oplus \tilde{D}:=(E\cup \tilde{E},\{F\cup \tilde{F}\mid F\in \mathcal{F}\text{ and } \tilde{F}\in\tilde{\mathcal{F}}\}).\] If a delta-matroid can be written as $D\oplus \tilde{D}$ for some non-trivial delta-matroids $D$ and $\tilde{D}$, then we say it is \emph{disconnected}.  A delta-matroid is \emph{connected} if it is not disconnected.
\end{definition}

We defined separability and connectivity for delta-matroids so that they are compatible with the corresponding concepts for ribbon graphs, as in the following propositions, the first of which follows immediately from
Proposition~\ref{prop:matconn}.

\begin{proposition}\label{prop:sep}
Let $G$ be a ribbon graph. Then $D(G)$ is separable if and only if there exist non-trivial ribbon graphs $G_1$ and $G_2$ such that  $G=G_1 \sqcup G_2$ or $G = G_1 \oplus G_2$.

 Moreover if $D(G)_{\min}= M_1\oplus M_2$, for some non-trivial $M_1$ and $M_2$, then there exist non-trivial ribbon graphs $G_1$ and $G_2$ such that $D(G)_{\min} = M(G_1)\oplus M(G_2)$ and $G=G_1 \sqcup G_2$ or $G = G_1 \oplus G_2$.
\end{proposition}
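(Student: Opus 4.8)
The plan is to push everything down to the matroid level and then quote Proposition~\ref{prop:matconn}. By definition $D(G)$ is separable exactly when $D(G)_{\min}$ is disconnected, and by Corollary~\ref{c.4c}\eqref{c.4c.1} we have $D(G)_{\min}=M(G)$, the cycle matroid of the underlying graph of $G$. So the whole proposition reduces to: (i) deciding when $M(G)$ is disconnected, which is exactly what Proposition~\ref{prop:matconn} does, and (ii) a bookkeeping step that realises the graph-theoretic decomposition coming out of that proposition as an honest decomposition of the ribbon graph $G$.

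For the forward direction together with the ``moreover'' clause, suppose $D(G)_{\min}=M(G)=M_1\oplus M_2$ with $M_1,M_2$ non-trivial. Applying Proposition~\ref{prop:matconn} to the underlying graph of $G$ yields graphs $G_1=(V_1,E_1)$ and $G_2=(V_2,E_2)$ with $M(G_i)=M_i$, with $E_1\cap E_2=\emptyset$, with $V_1\cup V_2$ the whole vertex set of $G$, and with $V_1\cap V_2$ either empty or a single vertex $v$. I would lift this to ribbon graphs by letting $\hat G_i$ be the ribbon subgraph of $G$ obtained by deleting the edges in $E(G)\setminus E_i$ and then the vertices in $V(G)\setminus V_i$; since $G_i$ is a graph, every edge of $E_i$ has both ends in $V_i$, so $\hat G_i$ is a well-defined ribbon subgraph whose underlying graph is $G_i$, hence $M(\hat G_i)=M_i$, which is non-trivial, so $\hat G_i$ has an edge. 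If $V_1\cap V_2=\emptyset$ then $\hat G_1$ and $\hat G_2$ cover all vertices and edges of $G$ and are disjoint, so $G=\hat G_1\sqcup\hat G_2$; if $V_1\cap V_2=\{v\}$ then they cover $G$ and meet in the single vertex $v$, so $v$ is a separating vertex and $G=\hat G_1\oplus\hat G_2$. Taking $G_i:=\hat G_i$ then gives the ``moreover'' statement.

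For the converse, suppose $G=G_1\sqcup G_2$ or $G=G_1\oplus G_2$ with $G_1,G_2$ non-trivial. In both cases $E(G)$ is the disjoint union of $E(G_1)$ and $E(G_2)$, and $G_1$ and $G_2$ share no edge and at most one vertex; hence no cycle of $G$ can use edges from both $G_1$ and $G_2$, since such a cycle would have to pass through the common vertex at least twice. Therefore $M(G)=M(G_1)\oplus M(G_2)$, with both summands non-trivial because each $G_i$ has an edge, so $M(G)=D(G)_{\min}$ is disconnected and $D(G)$ is separable. The only step needing genuine care --- the main obstacle, such as it is --- is the lifting in the forward direction: one must check that the partition of the edge (and vertex) set handed over by Proposition~\ref{prop:matconn} really does come from a cut vertex (or from the components of a disconnected graph), so that the induced pieces are ribbon subgraphs meeting in exactly that vertex and no edge straddles the two sides.
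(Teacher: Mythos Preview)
Your proof is correct and takes exactly the approach the paper intends: the paper states that Proposition~\ref{prop:sep} ``follows immediately from Proposition~\ref{prop:matconn}'', and you have spelled out precisely that reduction via $D(G)_{\min}=M(G)$, together with the routine lifting of the graph decomposition from Proposition~\ref{prop:matconn} to a ribbon-subgraph decomposition of $G$. The extra care you take over the lifting step is appropriate but not something the paper considers worth recording.
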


\begin{proposition}
\label{coniscon}
Let $G$ be a ribbon graph. Then $D(G)$ is disconnected if and only if there exist non-trivial ribbon graphs $G_1$ and $G_2$ such that either $G=G_1 \sqcup G_2$ or $G = G_1 \curlyvee G_2$.
\end{proposition}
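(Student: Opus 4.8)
The plan is to prove the two implications separately. The backward implication is a direct computation; for the forward one I would combine the matroid-level result of Proposition~\ref{prop:sep} with a new argument, based on counting boundary components, that forbids interlacement between the two sides of a direct-sum decomposition.

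For the backward implication, suppose first that $G=G_1\sqcup G_2$ with $G_1,G_2$ non-trivial. By the definition of a spanning quasi-tree of a disconnected ribbon graph, $\F(G)=\{F_1\cup F_2\mid F_1\in\F(G_1),\,F_2\in\F(G_2)\}$, so $D(G)=D(G_1)\oplus D(G_2)$ is disconnected. Now suppose $G=G_1\vee G_2$, joined at a vertex $v$. Here I would first record the topological fact that, because the half-edges at $v$ belonging to $G_1$ and those belonging to $G_2$ each occupy a single unbroken arc of the boundary of $v$, for every $A_i\subseteq E(G_i)$ the spanning ribbon subgraph $(V(G),A_1\cup A_2)$ is obtained from $(V(G_1),A_1)$ and $(V(G_2),A_2)$ by identifying an arc of one boundary component of the first with an arc of one boundary component of the second; hence $k(A_1\cup A_2)=k_{G_1}(A_1)+k_{G_2}(A_2)-1$ and $f(A_1\cup A_2)=f_{G_1}(A_1)+f_{G_2}(A_2)-1$. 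As $f\ge k$ for every ribbon graph, this shows $A_1\cup A_2$ is a spanning quasi-tree of $G$ (that is, $f(A_1\cup A_2)=k(G)=1$) exactly when each $A_i$ is a spanning quasi-tree of $G_i$, and again $D(G)=D(G_1)\oplus D(G_2)$ is disconnected.

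For the forward implication, suppose $D(G)=D_1\oplus D_2$ with both $D_i$ non-trivial, on ground sets $E_1,E_2$. Since taking minimum-size feasible sets commutes with direct sum, $D(G)_{\min}=(D_1)_{\min}\oplus(D_2)_{\min}$, so $M(G)=D(G)_{\min}$ is a disconnected matroid by Corollary~\ref{c.4c}\eqref{c.4c.1}; and since restriction distributes over direct sum, $D_i=D(G)|E_i=D(G\ba E_{3-i})$ by Proposition~\ref{p.5minor}. I would then apply Proposition~\ref{prop:sep}: after, if necessary, regrouping the blocks of the underlying graph so as to obtain a direct-sum decomposition of $D(G)$ along a partition whose two classes have underlying subgraphs meeting in at most one vertex, we may assume either $G=G_1\sqcup G_2$ with $G_1,G_2$ non-trivial — and then there is nothing left to prove — or $G=G_1\oplus G_2$ at a vertex $v$ with $D(G)=D(G_1)\oplus D(G_2)$. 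It remains to show this $\oplus$ is a join. I claim that if $G=G_1\oplus G_2$ at $v$ and $D(G)=D(G_1)\oplus D(G_2)$ then no cycle of $G_1$ is interlaced with a cycle of $G_2$, which is precisely the condition $G=G_1\vee G_2$. Suppose not: let $C_1\subseteq G_1$ and $C_2\subseteq G_2$ be interlaced at $v$ (their edge sets are necessarily disjoint), choose $e_1\in E(C_1)$ and $e_2\in E(C_2)$, and let $T$ be a maximal spanning forest of $G$ with $(C_1-e_1)\cup(C_2-e_2)\subseteq T$ and $e_1,e_2\notin T$. Then $T$ is a spanning quasi-tree, and I would analyse the four sets $T$, $T\cup e_1$, $T\cup e_2$, $T\cup\{e_1,e_2\}$ via Euler's formula, using that adding the last edge of a cycle to a spanning forest yields a unicyclic ribbon subgraph of Euler genus $0$ or $1$ according as the cycle is orientable or not, and that completing a pair of interlaced cycles yields Euler genus $2$ unless both cycles are non-orientable, in which case it is $1$. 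This gives: if $C_1,C_2$ are both orientable, the spanning quasi-trees among the four sets are $T$ and $T\cup\{e_1,e_2\}$; if exactly one of them, say $C_i$, is non-orientable, they are $T$, $T\cup e_i$ and $T\cup\{e_1,e_2\}$; and if both are non-orientable, they are $T$, $T\cup e_1$ and $T\cup e_2$. On the other hand, writing $X_i=T\cap E(G_i)$, in the direct sum $D(G)=D(G_1)\oplus D(G_2)$ the feasibility of these four sets is governed by that of $X_i$ and $X_i\cup e_i$ in $D(G_i)$, and since $T$ (hence each $X_i$) is feasible, the feasible sets among the four are always exactly $\{T\}$, $\{T,T\cup e_1\}$, $\{T,T\cup e_2\}$, or all four. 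None of the three possibilities above has that form, a contradiction; hence $G=G_1\vee G_2$.

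The step I expect to be the main obstacle is this last claim, and within it the boundary-component count for $T$, $T\cup e_1$, $T\cup e_2$, $T\cup\{e_1,e_2\}$ across the three orientability cases, since that is where the real topological input — that interlaced cycles carry extra Euler genus, with the mixed-orientability case being the delicate one — is used. A secondary technical nuisance, handled in the set-up, is the regrouping of blocks needed to guarantee that a direct-sum decomposition of $D(G)$ can be taken along a partition whose two classes yield ribbon subgraphs sharing at most one vertex.
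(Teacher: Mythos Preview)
Your proposal is correct and follows essentially the same approach as the paper: the backward direction is handled directly, and the forward direction reduces via Proposition~\ref{prop:sep} to showing that if $G=G_1\oplus G_2$ with interlaced cycles $C_1,C_2$ then $D(G)\ne D(G_1)\oplus D(G_2)$, which both you and the paper establish by testing feasibility of $T$, $T\cup e_1$, $T\cup e_2$, $T\cup\{e_1,e_2\}$ against the orientability of $C_1,C_2$. Your case analysis is more explicit than the paper's (which simply notes that $T\cup\{e_1,e_2\}$ is feasible in $D(G)$ precisely when it is \emph{not} feasible in $D(G_1)\oplus D(G_2)$), and you correctly flag the block-regrouping subtlety that the paper absorbs into its citation of Proposition~\ref{prop:sep}.
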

\begin{proof}
If $G=G_1 \sqcup G_2$ or $G = G_1 \curlyvee G_2$ then it is easy to see that $D(G)$ is disconnected.

Suppose now that $D(G)$ is disconnected. Then $D(G)$ is separable.
The previous proposition implies
that this is only possible if
$G=G_1 \sqcup G_2$ or $G = G_1 \oplus G_2$ for some non-trivial $G_1$ and $G_2$. Moreover if $D(G)=D \oplus D'$ then $D_{\min}=D(G_1)_{\min}$ and $D'_{\min}=D(G_2)_{\min}$ for non-trivial ribbon graphs $G_1$ and $G_2$ such that
$G=G_1 \sqcup G_2$ or $G = G_1 \oplus G_2$.

It remains to show that if $G=G_1\oplus G_2$, but $G \ne G_1\curlyvee G_2$ then
$D(G) \ne D(G_1) \oplus D(G_2)$.
If $G=G_1\oplus G_2$, but $G \ne G_1\curlyvee G_2$ then there are two interlaced cycles $C_1$ and $C_2$ of $G_1$ and $G_2$, respectively,
intersecting in $G$ at a vertex $v$.
 Let $e_1\in E(C_1)$ and let $F_1$ be a maximal forest of $G_1$ with $C_1-\{e_1\}\subseteq F_1$. Define $F_2$ similarly. Now $F_i\cup \{e_i\} \in \mathcal{F} (D(G_i))$ if and only if $C_i$ is non-orientable. However $(F_1  \cup \{e_1\})\cup (F_2\cup \{e_2\}) \in \mathcal{F}(D(G))$ except when both $C_1$ and $C_2$ are non-orientable. Consequently $D(G) \ne D(G_1) \oplus D(G_2)$.
\end{proof}

We emphasize the unfortunate clash between  ribbon graph and delta-matroid notation  that while $D(G_1\curlyvee G_2) = D(G_1)\oplus D(G_2)$, in general, $D(G_1\oplus  G_2) \neq D(G_1)\oplus D(G_2)$.

For another illustration of how ribbon graphs inform delta-matroids we return to the problem of characterising twists of matroids from the end of Section~\ref{s5.4}. In that section we saw how ribbon graph theory led to an excluded minor characterisation of twists of matroids. We will now see how they lead to a rough structure theorem for twists of matroids.

As before the ribbon graph analogue of a twist of a matroid is a partial dual of a plane graph. Motivated by knot theory, in~\cite{Mo5} (see also~\cite{Mof11c}), Moffatt gave a rough structure theorem for the class of partial duals of plane graphs. This rough structure theorem ensures that every such ribbon graph admits a particular decomposition into plane ribbon graphs.
\begin{theorem}[Moffatt~\cite{Mo5}]\label{rgppd}
Let $G$ be a ribbon graph and $A\subseteq E(G)$. Then the partial dual $G^A$ is a plane graph if and only if  all of the connected components of $G|_A$ and $G|_{A^c}$ are plane and
 every vertex of $G$ that is in both $G|_A$ and $G|_{A^c}$ is a separating vertex of $G$.
\end{theorem}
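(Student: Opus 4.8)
The plan is to compute the Euler genus of $G^{A}$ entirely within delta-matroid theory and to read off when it is zero, using the identity $\gamma(H)=\spn(D(H))$ valid for every ribbon graph $H$ (Proposition~\ref{p.4b}\eqref{p.4b.3}). Since $G^{A}$ is plane precisely when $\gamma(G^{A})=0$, everything reduces to finding a usable formula for $\spn(D(G^{A}))$.

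First I would identify the two matroids governing this width. By Corollary~\ref{c.4c}\eqref{c.4c.1} and \eqref{c.4c.4}, $D(G^{A})_{\min}=M(G^{A})$ and $D(G^{A})_{\max}=(M((G^{A})^{*}))^{*}$; and since $(G^{A})^{*}=(G^{A})^{E}=G^{A\triangle E}=G^{A^{c}}$ by Proposition~\ref{p.pd2}, we get $D(G^{A})_{\max}=(M(G^{A^{c}}))^{*}$. Writing $G_{B}$ for the spanning ribbon subgraph $(V,B)$, the construction of the partial dual gives $v(G^{B})=f(G_{B})$, and partial duality preserves the number of components, so $r(M(G^{B}))=v(G^{B})-k(G^{B})=f(G_{B})-k(G)$. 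Combining this with $r(N^{*})=|E|-r(N)$ for a matroid $N$ on ground set $E$ yields
\[ \spn(D(G^{A})) \;=\; r\big((M(G^{A^{c}}))^{*}\big)-r(M(G^{A})) \;=\; |E|+2k(G)-f(G_{A})-f(G_{A^{c}}). \]
I would then substitute Euler's formula $\gamma(G_{B})=2k(G_{B})-v(G)+|B|-f(G_{B})$ for $B=A$ and for $B=A^{c}$ and add the two resulting expressions; the terms in $|E|$ cancel, leaving
\[ \spn(D(G^{A})) \;=\; \gamma(G_{A})+\gamma(G_{A^{c}})+2\big(v(G)+k(G)-k(G_{A})-k(G_{A^{c}})\big). \]

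The next step is to observe that all three summands on the right are non-negative. Euler genus is always non-negative; and for the last term I would introduce the auxiliary graph $H$ whose vertex set consists of the connected components of $G_{A}$ together with those of $G_{A^{c}}$, with one edge for each vertex $x$ of $G$ joining the component of $G_{A}$ containing $x$ to the component of $G_{A^{c}}$ containing $x$. Then $v(H)=k(G_{A})+k(G_{A^{c}})$, $e(H)=v(G)$ and $k(H)=k(G)$, so $v(G)+k(G)-k(G_{A})-k(G_{A^{c}})=e(H)-v(H)+k(H)$ is the first Betti number of $H$: it is $\geq 0$, and it is $0$ exactly when $H$ is a forest. Consequently $\spn(D(G^{A}))=0$ if and only if $\gamma(G_{A})=0$, $\gamma(G_{A^{c}})=0$, and $H$ is a forest. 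Because deleting isolated vertices does not change Euler genus, $\gamma(G_{A})=0$ says precisely that every connected component of $G|_{A}$ is plane, and similarly for $A^{c}$; this delivers the first two clauses of the theorem.

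It then remains to recognise the condition ``$H$ is a forest'', equivalently $k(G_{A})+k(G_{A^{c}})=v(G)+k(G)$, as the statement that every vertex of $G$ lying in both $G|_{A}$ and $G|_{A^{c}}$ is a separating vertex of $G$. After reducing to connected $G$, this is a purely combinatorial matter: one must describe how the connected pieces furnished by the components of $G|_{A}$ and of $G|_{A^{c}}$ are glued together along their common vertices to form $G$, verify that $H$ is exactly the ``intersection pattern'' of this gluing, and then match ``$H$ acyclic'' with ``every common vertex is a cut vertex of $G$'' --- here the hypothesis that each such component is connected is indispensable. I expect this translation, reconciling the first Betti number of $H$ with the block structure of $G$, to be the main obstacle; by contrast the delta-matroid input above is routine, serving only to turn the topological statement into the clean counting identity. (Alternatively one could argue wholly topologically, tracking boundary components of the surfaces $G$, $G_{A}$, $G_{A^{c}}$ and $G^{A}$, but the route above minimises the bookkeeping.)
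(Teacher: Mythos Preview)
The paper does not actually prove this theorem: it is quoted verbatim from \cite{Mo5} (see also \cite{Mof11c}) and used only as motivation for the delta-matroid analogue, Theorem~\ref{charaterise_delta_matroids_twists_matroids}. So there is no in-paper proof to compare your proposal against.

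That said, your approach is a genuinely different route from the one taken in \cite{Mo5,Mof11c}. Moffatt's original argument is topological, working directly with how partial duality interacts with the boundary components and the join decomposition of a ribbon graph. You instead compute $\gamma(G^{A})=\spn(D(G^{A}))$ via the upper and lower matroids, arriving at the clean identity
\[
\gamma(G^{A}) \;=\; \gamma(G_{A})+\gamma(G_{A^{c}})+2\big(v(G)+k(G)-k(G_{A})-k(G_{A^{c}})\big),
\]
and then read off planarity as the simultaneous vanishing of three non-negative terms. This derivation is correct and rather elegant; the formula itself is of independent interest and makes the structure of the theorem transparent.

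Your own assessment of the remaining obstacle is accurate. The equivalence between ``the auxiliary bipartite graph $H$ is a forest'' and ``every vertex common to $G|_{A}$ and $G|_{A^{c}}$ is a separating vertex of $G$'' is the substantive combinatorial step, and you have not carried it out. It is true, but note that you must use the paper's definition of separating vertex from Section~\ref{sss.se} (which allows, for instance, a vertex carrying two loops to be separating), not merely the usual cut-vertex notion; the argument goes through cleanly via the block tree of $G$, comparing it with $H$. Once that is done, your proof is complete and arguably more conceptual than the original, since it isolates exactly which part of the Euler genus of $G^{A}$ is contributed by the genera of the two pieces and which part by the failure of the shared vertices to separate.
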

We now translate this into delta-matroids.  If $D=D(G)$ then ``$G^A$ is a plane graph'' becomes ``$D\ast A$ is a matroid'', and ``$G|_A$ and $G|_{A^c}$ are plane'' becomes ``$D\setminus A^c$ and $D\setminus A$ are both matroids''. By Proposition~\ref{prop:sep}, $D(G)$ is separable if and only if there exist ribbon graphs $G_1$ and $G_2$ such that  $G=G_1 \sqcup G_2$ or $G = G_1 \oplus G_2$. Thus the condition that every vertex of $G$ that is incident with edges in $A$ and edges in $E(G)-A$ is a separating vertex of $G$ becomes $A$ is separating in $D_{\min}$.
Thus we have deduced the following theorem for ribbon-graphic delta-matroids. Our principle  that ribbon graphs inform us about delta-matroids led us to conjecture that it holds for delta-matroids in general, and we showed that this is indeed the case.
\begin{theorem}[Chun et al~\cite{CMNR}]
\label{charaterise_delta_matroids_twists_matroids}
Let $D$ be a delta-matroid and $A$ be a non-empty proper subset of $E(D)$. Then $D*A$ is a matroid if and only if the following two conditions hold:
\begin{enumerate}
\item $A$ is separating in $D_{\min}$, and
\item $D\setminus A$ and $D\setminus A^c$ are both matroids.
\end{enumerate}
\end{theorem}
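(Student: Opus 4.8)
The plan is to reduce the claim to the statement that a single integer-valued parameter is constant on the feasible sets of $D$, dispatch the ``only if'' direction via Proposition~\ref{pr:delcondualmany} and a basis count, and obtain the ``if'' direction by a minimal-counterexample argument driven by the Symmetric Exchange Axiom. Write $A^{c}=E(D)-A$ and, for $F\in\mathcal F(D)$, put $\delta(F):=|F\cap A^{c}|-|F\cap A|$. Since $|A\btu F|=|A|+\delta(F)$, the delta-matroid $D*A$ is a matroid if and only if $\delta$ is constant on $\mathcal F(D)$, and this is the statement I would actually prove.

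For the ``only if'' direction, suppose $M:=D*A$ is a matroid, so $D=M*A$. Proposition~\ref{pr:delcondualmany} gives $D\setminus A=(M*A)\setminus A=M/A$ and $D\setminus A^{c}=(M*A)|A=(M\setminus A^{c})*A=(M|A)^{*}$; since contractions, restrictions and duals of matroids are matroids, condition~(2) holds. For condition~(1), a feasible set of $M*A$ of minimum size is $A\btu B$ for a basis $B$ of $M$ with $|A\cap B|=r_{M}(A)$, and $(A\btu B)\cap A=A\setminus B$ has the fixed cardinality $|A|-r_{M}(A)$. Hence every basis of $D_{\min}$ meets $A$ in the same number of elements, and an elementary matroid argument then shows $r_{D_{\min}}(A)$ equals that common value (and likewise $r_{D_{\min}}(A^{c})$ the corresponding one for $A^{c}$), so that $r_{D_{\min}}(A)+r_{D_{\min}}(A^{c})=r(D_{\min})$; that is, $A$ is separating in $D_{\min}$.

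For the ``if'' direction, assume~(1) and~(2) and write $D_{\min}=N_{1}\oplus N_{2}$ with $E(N_{1})=A$ and $E(N_{2})=A^{c}$ of ranks $a$ and $b$, so every basis of $D_{\min}$ meets $A$ in exactly $a$ and $A^{c}$ in exactly $b$ elements. Applying Lemma~\ref{lem:useful} to $A$ and to $A^{c}$ gives $|F\cap A|\ge a$ and $|F\cap A^{c}|\ge b$ for all $F\in\mathcal F(D)$. Recalling that the feasible sets of $D\setminus A^{c}$ are the sets $F\cap A$ for those $F\in\mathcal F(D)$ minimising $|F\cap A^{c}|$ (equivalently, with $|F\cap A^{c}|=b$), and that this family contains every basis $B\cap A$ of $N_{1}$, which has size $a$: since $D\setminus A^{c}$ is a matroid, all of its feasible sets have size $a$, so $|F\cap A^{c}|=b$ forces $|F\cap A|=a$, hence $F\in\mathcal B(D_{\min})$. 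Symmetrically, using that $D\setminus A$ is a matroid, $|F\cap A|=a$ forces $|F\cap A^{c}|=b$. Thus every $F\in\mathcal F(D)$ either is a basis of $D_{\min}$ or satisfies both $|F\cap A|\ge a+1$ and $|F\cap A^{c}|\ge b+1$.

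It remains to prove that $|F\cap A|-a=|F\cap A^{c}|-b$ for every $F\in\mathcal F(D)$, and this is the heart of the proof. Suppose not; after possibly interchanging $A$ and $A^{c}$ — which swaps the two matroids in~(2), preserves~(1), and replaces $D*A$ by $(D*A)^{*}$, hence is harmless — choose $F\in\mathcal F(D)$ with $g(F):=(|F\cap A|-a)-(|F\cap A^{c}|-b)>0$ together with a basis $F_{0}$ of $D_{\min}$ so that the pair $(F,F_{0})$ minimises $|F|$, and then $|F\btu F_{0}|$. By the dichotomy just established $|F\cap A^{c}|\ge b+1>|F_{0}\cap A^{c}|$, so there is $w\in(F\setminus F_{0})\cap A^{c}$, and $w\in F\btu F_{0}$; Axiom~\ref{sea} applied to $F$, $F_{0}$ and $w$ yields $v\in F\btu F_{0}$ with $F':=F\btu\{w,v\}\in\mathcal F(D)$. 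Checking the three possibilities $v=w$, $v\in F\setminus F_{0}$, and $v\in F_{0}\setminus F$ shows that deleting $w$ from the $A^{c}$-side together with the effect of $v$ always leaves $g(F')\ge g(F)>0$; moreover $|F'|<|F|$ in the first two cases, whereas in the third $|F'|=|F|$ but $|F'\btu F_{0}|=|F\btu F_{0}|-2$. Each case contradicts the minimality of $(F,F_{0})$. Hence $\delta$ is constant and $D*A$ is a matroid. I expect this final step to be the only real obstacle: a single symmetric exchange against a smallest feasible set can return a set of the same size (exactly when $v\in F_{0}\setminus F$), so one cannot merely induct on $|F|$ but must carry the secondary quantity $|F\btu F_{0}|$ and check that no permissible exchange restores balance.
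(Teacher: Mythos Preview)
The paper does not actually prove this theorem: it motivates the statement by translating the ribbon-graph result Theorem~\ref{rgppd} into delta-matroid language and then attributes the general proof to the companion paper~\cite{CMNR}. So there is no argument in the present paper to compare against.

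Your proof is correct and self-contained. The reduction to constancy of $\delta(F)=|F\cap A^{c}|-|F\cap A|$ is exactly the right reformulation; the ``only if'' direction via Proposition~\ref{pr:delcondualmany} and the basis-intersection count is clean; and the dichotomy you extract from conditions~(1) and~(2) together with Lemma~\ref{lem:useful}---that every feasible set is either a basis of $D_{\min}$ or overshoots the thresholds $a$ and $b$ on both sides---is the key structural observation. One small point: your description of $\mathcal{F}(D\setminus A^{c})$ as $\{F\cap A:|F\cap A^{c}|\text{ minimal}\}$ is correct, but strictly speaking the argument in the proof of Proposition~\ref{p.rasp} gives $\{F\cap A: F\cap A^{c}=Y\}$ for one fixed minimising $Y$; to get your version you rerun that argument with each minimising $F$ playing the role of $F_{0}$, and appeal to the order-independence of $D|A$. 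The minimal-counterexample argument on the lexicographic pair $(|F|,|F\btu F_{0}|)$ is sound: in every case the exchange with $w\in(F\setminus F_{0})\cap A^{c}$ preserves $g(F')\ge g(F)>0$, and either $|F'|$ drops or $|F'\btu F_{0}|$ drops by two, so the secondary index genuinely handles the same-size case $v\in F_{0}\setminus F$ that you flagged as the obstacle.
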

We emphasise that the  ribbon graph theory genuinely led us to the formulation of Theorem~\ref{charaterise_delta_matroids_twists_matroids}. We  probably would not have found the result without the insights and guidance of ribbon graphs.

We have just given an example of how ribbon graphs inform delta-matroids. We now give an example of delta-matroid theory giving a result about ribbon graphs.

The following inductive tools have been fundamental in the development of matroid theory.

\begin{theorem}[Tutte~\cite{Tutte66}]
\label{1chain}
Let $M$ be a connected matroid.
If $e\in E(M)$, then $M\backslash e$ or $M/e$ is connected.
\end{theorem}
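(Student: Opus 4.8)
The plan is to prove this theorem of Tutte by contradiction, working with the matroid connectivity function $\lambda_M(X):=r_M(X)+r_M(E-X)-r(M)$, which is nonnegative, symmetric under complementation, and submodular, and which detects connectivity: $M$ is connected exactly when $\lambda_M(X)\geq 1$ for every $X$ with $\emptyset\subsetneq X\subsetneq E$, a \emph{separator} being a set with $\lambda_M(X)=0$. First I would clear away the degenerate cases. If $|E(M)|\leq 1$ then $M\backslash e=M/e$ is the empty matroid, which is connected. If $|E(M)|\geq 2$ and $M$ is connected, then $e$ is neither a loop nor a coloop, since $\{e\}$ would otherwise be a proper nonempty separator. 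So I may assume $e$ is neither a loop nor a coloop of $M$.

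Now suppose, for contradiction, that $M\backslash e$ and $M/e$ are both disconnected. Then $E-e$ has a partition $X_1\sqcup Y_1$ into nonempty sets that is a separator partition of $M\backslash e$, and a partition $X_2\sqcup Y_2$ into nonempty sets that is a separator partition of $M/e$. A short computation, using that $e$ is not a coloop, gives $\lambda_M(W)=\lambda_{M\backslash e}(W)+\bigl(r_M(E-W)-r_M(E-e-W)\bigr)$ for $W\subseteq E-e$, so $\lambda_M(W)\geq\lambda_{M\backslash e}(W)$; dually, using that $e$ is not a loop, $\lambda_M(W\cup e)\geq\lambda_{M/e}(W)$ for $W\subseteq E-e$. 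Feeding $W=X_1,Y_1$ and $W=X_2,Y_2$ into these inequalities, and then using that $M$ is connected to force each of $\lambda_M(X_1),\lambda_M(Y_1),\lambda_M(X_2\cup e),\lambda_M(Y_2\cup e)$ down to exactly $1$, one extracts the key closure facts
\[ e\notin\mathrm{cl}_M(X_1),\quad e\notin\mathrm{cl}_M(Y_1),\quad e\in\mathrm{cl}_M(X_2),\quad e\in\mathrm{cl}_M(Y_2), \]
together with the separator identities $r_M(X_1)+r_M(Y_1)=r_M(E-e)$ and $r_M(X_2\cup e)+r_M(Y_2\cup e)=r(M)+1$.

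Next I would take the common refinement of the two partitions, writing $A=X_1\cap X_2$, $B=X_1\cap Y_2$, $C=Y_1\cap X_2$, $D=Y_1\cap Y_2$, and play the two separator identities against repeated applications of the submodular inequality for $r_M$ on the pairs $\{X_1,X_2\}$, $\{X_1,Y_2\}$, $\{Y_1,X_2\}$, $\{Y_1,Y_2\}$ and their $e$-augmented variants. The goal is to manufacture a set $W$ with $\emptyset\subsetneq W\subsetneq E$ and $\lambda_M(W)=0$, contradicting connectivity; the natural candidates are one of the four blocks $A,B,C,D$, or such a block together with $e$. The mechanism is already visible in the warm-up case where the two partitions coincide (say $X_1=X_2$): then $e\in\mathrm{cl}_M(X_1)$ and $e\notin\mathrm{cl}_M(X_1)$ simultaneously, an immediate contradiction. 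More generally, the submodular inequalities combined with the closure facts force several of the blocks to split off as direct summands of $M\backslash e$, at which point the two separator identities become numerically inconsistent.

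The hard part will be exactly this last step: organizing the bookkeeping that locates the separator of $M$ among the blocks, and in particular handling the cases in which one or more of $A,B,C,D$ is empty, since each vanishing block encodes a containment relation between the two partitions and has to be disposed of separately, in the spirit of the warm-up case. I expect that once the possible empty-block patterns are enumerated, each either collapses to a contradiction like the warm-up one or yields, via one more submodularity estimate, a block (possibly augmented by $e$) whose $\lambda_M$-value is forced to $0$. An alternative route, which some readers may find cleaner, replaces the connectivity-function computations by circuit manipulations: one uses the characterization that $M$ is connected if and only if every two elements of $E(M)$ lie on a common circuit (together with the dual statement for cocircuits and strong circuit elimination) to contradict the disconnectedness of both minors; essentially the same case analysis resurfaces there.
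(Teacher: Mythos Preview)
The paper does not prove this theorem. In the surrounding text the authors attribute the result to Tutte~\cite{Tutte66} and state it only for later use; no argument is given. (The bracketed attribution to Chun et al.\ appears to be a labelling slip in the source.) So there is no ``paper's own proof'' against which to compare your proposal.

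As for the content of your plan: the strategy via the connectivity function $\lambda_M$, separators of $M\backslash e$ and $M/e$, and the common refinement of the two partitions is a standard and correct route to Tutte's theorem. Your closure facts $e\notin\mathrm{cl}_M(X_1)$, $e\notin\mathrm{cl}_M(Y_1)$, $e\in\mathrm{cl}_M(X_2)$, $e\in\mathrm{cl}_M(Y_2)$ are exactly what the argument needs, and the warm-up case (coinciding partitions) is handled cleanly. You are right that the remaining work is the bookkeeping on the four blocks $A,B,C,D$; this is not deep, but it is where careless treatments go wrong, so when you write it out be explicit about which submodular inequality is being applied at each step and check that the candidate separator you produce is both nonempty and proper. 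The alternative circuit-based approach you mention (every two elements lie on a common circuit) is also standard and arguably shorter; either would be acceptable, but neither is what the paper does, because the paper does nothing.
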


\begin{theorem}[Brylawski~\cite{Bry72}, Seymour~\cite{Seymour77}]
\label{lsplitter}
Let $M$ be a connected matroid with a connected minor $N$.
If $e\in E(M)-E(N)$, then $M\backslash e$ or $M/e$ is connected with $N$ as a minor.
\end{theorem}

Bouchet generalized Theorem~\ref{1chain} to the context of delta-matroids in~\cite{mmiii}.
The actual result that he proved is for an even more general object, called a multimatroid, but we state a special case of his result here in terms of delta-matroids.
\begin{theorem}[Bouchet~\cite{mmiii}]
\label{chainmm}
Let $D$ be a connected even delta-matroid.
If $e\in E(D)$, then $D\ba e$ or $D/e$ is connected.
\end{theorem}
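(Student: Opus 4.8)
The plan is to prove the equivalent statement that if $D$ is a connected even delta-matroid and $e\in E(D)$, then $D\ba e$ disconnected implies $D/e$ connected, which is just a rephrasing of ``$D\ba e$ or $D/e$ is connected''. I would first dispose of trivialities: if $|E(D)|\le 1$ then $D\ba e$ and $D/e$ are trivial, hence connected; so assume $|E(D)|\ge 2$. If $e$ were a loop of $D$ then every feasible set would avoid $e$ and $D=(D\ba e)\oplus(\{e\},\{\emptyset\})$ would be disconnected, against hypothesis; dually $e$ is not a coloop. Hence $e$ is neither a loop nor a coloop, so both $\F(D\ba e)$ and $\F(D/e)$ are nonempty, and I argue by contradiction: assume both $D\ba e$ and $D/e$ are disconnected, and deduce that $D$ is disconnected.

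The single place evenness is used is that the Symmetric Exchange Axiom can never be witnessed by $v=u$ in an even delta-matroid: if $F,F'\in\F(D)$ and $u\in F\btu F'$, the set $F\btu\{u,v\}$ given by Axiom~\ref{sea} cannot be $F\btu u$, which has the wrong parity, so $v\ne u$. From this ``strong'' exchange I would derive the mechanism that ties $D\ba e$ and $D/e$ together: for any $F\in\F(D\ba e)$ and $G\in\F(D/e)$, applying strong exchange to $F$ and $G\cup e$ along $u=e$ produces $v\in F\btu G$ with $(F\btu v)\cup e\in\F(D)$, i.e.\ $F\btu v\in\F(D/e)$; symmetrically there is $w\in F\btu G$ with $G\btu w\in\F(D\ba e)$. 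These moves transport feasible sets back and forth across $e$.

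Now write $D\ba e = ((D\ba e)|X)\oplus((D\ba e)|Y)$ for a partition of $E-e$ into nonempty sets $X,Y$, and $D/e=((D/e)|P)\oplus((D/e)|Q)$ for a partition into nonempty sets $P,Q$. The goal is to show the partitions $\{X,Y\}$ and $\{P,Q\}$ of $E-e$ are compatible and assemble into a single nontrivial partition $E=E'\cup E''$ (with $e$ placed on one side) with respect to which feasibility in $D$ factors as a product, contradicting connectivity of $D$; here one uses $\F(D)=\F(D\ba e)\cup\{F\cup e:F\in\F(D/e)\}$. Compatibility is forced by the transport moves: a strong exchange $F\btu\{x,y\}\in\F(D/e)$ with $x\in X$ and $y\in Y$, combined with the product structure of $\F(D/e)$, tightly constrains how $\{X,Y\}$ can cross $\{P,Q\}$, and a finite case analysis over the four sets $X\cap P$, $X\cap Q$, $Y\cap P$, $Y\cap Q$ leaves only the configuration that exhibits $D$ as a direct sum.

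The main obstacle is exactly that last reconciliation — ruling out the genuinely ``crossing'' configuration of the two partitions and then verifying that $e$ can be placed on one side so that all of $\F(D)$, not merely its two halves, factors. Evenness is essential there: without it the transport moves can degenerate (the witness might coincide with $e$), and indeed the conclusion is false for odd delta-matroids, so the proof must identify precisely the step at which the self-paired exchange is excluded.
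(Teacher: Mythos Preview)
The paper does not supply a proof of this theorem. It is stated as a known result, attributed to Bouchet~\cite{mmiii} (in the more general setting of multimatroids) and to Chun, Chun and Noble~\cite{chch}, and is used as a black box. So there is no ``paper's own proof'' to compare against.

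Your strategy---assuming both $D\ba e$ and $D/e$ are disconnected, writing down the separating partitions $\{X,Y\}$ and $\{P,Q\}$ of $E-e$, and using the strong form of the exchange axiom (which evenness guarantees: the witness $v$ must differ from $u$) to force these partitions to align into a single separation of $D$---is a sound plan, and is essentially how one would give a direct delta-matroid proof. The transport moves you describe are correct and are indeed the engine of such an argument.

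That said, what you have written is a sketch, not a proof. You identify the crucial step---ruling out the crossing configuration of $\{X,Y\}$ against $\{P,Q\}$ and showing that $e$ can be placed on one side so that all of $\F(D)=\F(D\ba e)\cup\{F\cup e:F\in\F(D/e)\}$ factors---and then call it ``a finite case analysis'' without carrying it out. That is where the real work lies: the transport lemma alone does not immediately preclude all four of $X\cap P$, $X\cap Q$, $Y\cap P$, $Y\cap Q$ being nonempty, and one must chase exchanges more carefully (or, as Bouchet does in the multimatroid setting, introduce a submodular connectivity function and argue with it). If you intend this as a self-contained proof, you must complete that step; as it stands you have correctly located the difficulty and explicitly acknowledged that it remains unresolved.
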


By exploiting results of Brijder and Hoogeboom~\cite{BH14}, Chun, Chun, and Noble in~\cite{chch} derived another consequence of Bouchet's result,
extending Theorem~\ref{chainmm} to the class of vf-safe delta-matroids. These were introduced by Brijder and Hoogeboom in~\cite{BH13}, and include ribbon-graphic delta-matroids and binary delta-matroids, which are discussed in Section~\ref{s5.7}.
Given a vf-safe delta-matroid $D$ and a subset $A$ of its ground set, the delta-matroid $D+A$ has ground set $E(D)$ and $F$ is feasible if and only if $D$ has an odd number of feasible sets $F'$ satisfying $F-A \subseteq F' \subseteq F$.
For more details on the delta-matroid $D+A$, including how the $+$ operation interacts with other delta-matroid operations such as twisting, see~\cite{BH11,BH13}.
\begin{theorem}[Chun et al.~\cite{chch}]\label{chainvf}
Let $D$ be a connected vf-safe delta-matroid. If $e\in E(D)$, then at least two of $D\ba e$, $D/e$ and $(D+e)/e$ are connected.
\end{theorem}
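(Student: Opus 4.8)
The plan is to recast the statement in the language of multimatroids, where it becomes an instance of Bouchet's connectivity theorem for multimatroids, using the correspondence between vf-safe delta-matroids and degree-three multimatroids developed by Brijder and Hoogeboom in \cite{BH14}. Before doing so I would dispose of the degenerate cases. If $|E(D)|=1$, then each of $D\ba e$, $D/e$ and $(D+e)/e$ has empty ground set, hence is trivial and connected. If $e$ is a loop or a coloop of $D$, then $D$ splits as the direct sum of the one-element delta-matroid on $\{e\}$ and a delta-matroid on $E(D)-e$, so connectedness of $D$ forces $|E(D)|=1$. Hence in the substantive case $e$ is neither a loop nor a coloop of $D$, and $D\ba e$, $D/e$, $(D+e)/e$ are bona fide deletion/contraction minors of $D$.

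The crux of the proof is the translation dictionary. To a vf-safe delta-matroid $D$ one associates a multimatroid $\tilde Z(D)$ of degree three whose skew classes are indexed by the elements of $E(D)$, each skew class having exactly three elements. The fact to extract from \cite{BH14} is that, for a fixed $e\in E(D)$, the three elements of the skew class indexed by $e$ can be labelled so that the three minors of $\tilde Z(D)$ obtained by selecting one element of that skew class are exactly $\tilde Z(D\ba e)$, $\tilde Z(D/e)$ and $\tilde Z((D+e)/e)$. This is natural once one notes that the group generated by the twist $\ast e$ and the loop complementation $+e$ acts as the symmetric group on the three elements of the skew class at $e$, and permutes the three minors accordingly: $D/e = (D\ast e)\ba e$ by Lemma~\ref{le:delcondual1}, so the minor $D/e$ is the image of $D\ba e$ under $\ast e$, and applying $+e$ produces the remaining minor $(D+e)/e$. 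One also needs that this correspondence preserves connectivity, namely that $D = D_1\oplus D_2$ for nontrivial $D_1, D_2$ if and only if $\tilde Z(D)$ is a corresponding nontrivial direct sum of multimatroids; this too is part of the \cite{BH14} framework.

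With the dictionary in hand, the proof concludes by invoking Bouchet's multimatroid connectivity result from \cite{mmiii}, which in the form needed here states that if $Z$ is a connected multimatroid and $\omega$ is one of its skew classes, then at most one of the minors of $Z$ obtained by selecting a single element of $\omega$ is disconnected. (When $\omega$ has two elements this is precisely Theorem~\ref{chainmm}; here $\omega$ has three elements.) Applying this to $Z = \tilde Z(D)$, which is connected because $D$ is, and to the skew class indexed by $e$, we obtain that at most one of $\tilde Z(D\ba e)$, $\tilde Z(D/e)$, $\tilde Z((D+e)/e)$ is disconnected; translating back through the correspondence, at least two of $D\ba e$, $D/e$, $(D+e)/e$ are connected.

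I expect the main obstacle to be making the dictionary precise: one must verify, using \cite{BH14}, that loop complementation followed by contraction really is the third elementary pivot at $e$ alongside deletion and contraction, that every skew class of $\tilde Z(D)$ genuinely has three elements so that the degree-three form of Bouchet's theorem applies cleanly, and that Bouchet's notion of a connected multimatroid corresponds exactly to indecomposability of $D$ under $\oplus$. Once these identifications are secured, no further combinatorial work beyond Bouchet's theorem is required.
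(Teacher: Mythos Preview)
Your proposal is correct and follows essentially the same approach the paper describes: the paper does not give its own proof of this theorem but explains that Chun, Chun and Noble derived it in \cite{chch} ``by exploiting results of Brijder and Hoogeboom~\cite{BH14}'' as ``another consequence of Bouchet's result'' (i.e.\ the multimatroid connectivity theorem from \cite{mmiii}), which is exactly the translation-to-multimatroids argument you outline.
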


For a ribbon graph $G$ and subset $A$ of its edges, informally we define $G+A$ to be the ribbon graph formed from $G$ by adding a ``half-twist'' to the edges in $A$ (see~\cite{EMM} for a formal definition of the partial Petrial and Petrie dual). It is shown in~\cite{CMNR} that $D(G+A)=D(G)+A$. A ribbon graph is said to be \emph{$2$-connected} if
$G\neq P\sqcup Q$ and $G\neq P\curlyvee Q$, for any non-trivial ribbon graphs $P$ and $Q$.
The next theorem, also proved by Chun, Chun, and Noble~\cite{chch} follows immediately from the two preceding theorems and Proposition~\ref{coniscon}.

\begin{theorem}[Chun et al.~\cite{chch}]\label{th:chainrib}
Let $G$ be a 2-connected ribbon graph. Then at least two of $G\ba e$, $G/e$ and $(G+e)/e$ are $2$-connected.
\end{theorem}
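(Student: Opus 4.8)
The plan is to transfer the statement to the level of delta-matroids and then read it off from Theorem~\ref{chainvf}, using the dictionary between ribbon graphs and ribbon-graphic delta-matroids developed in this section.

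The first, and only non-routine, step is to record the equivalence: a ribbon graph $H$ is $2$-connected if and only if $D(H)$ is connected. This is essentially Proposition~\ref{coniscon} read contrapositively, since $D(H)$ is disconnected exactly when $H=H_1\sqcup H_2$ or $H=H_1\vee H_2$ for non-trivial $H_1,H_2$, which is precisely the negation of $2$-connectedness in the sense defined above. Some care is needed to match up the small family of degenerate cases on the two sides (edgeless ribbon graphs, isolated vertices, trivial versus non-trivial components), but there is no genuine difficulty, and this is where essentially all the content of the argument sits.

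Next I would translate the three reductions appearing in the statement into delta-matroid operations. By Proposition~\ref{p.5minor}, $D(G\ba e)=D(G)\ba e$ and $D(G/e)=D(G)/e$. Since ribbon-graphic delta-matroids are vf-safe, the operation $D(G)+e$ is defined, and $D(G+e)=D(G)+e$ by the identity from~\cite{CMNR} recalled above; applying Proposition~\ref{p.5minor} once more with $G+e$ in place of $G$ gives $D\big((G+e)/e\big)=D(G+e)/e=\big(D(G)+e\big)/e$. Both $(G+e)/e$ and $\big(D(G)+e\big)/e$ are well defined, since contraction is always defined for ribbon graphs and for delta-matroids.

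Finally I would assemble the pieces. As $G$ is $2$-connected, the first step shows that $D:=D(G)$ is a connected vf-safe delta-matroid, so Theorem~\ref{chainvf} gives that at least two of $D\ba e$, $D/e$, $(D+e)/e$ are connected. By the translation step these delta-matroids are exactly $D(G\ba e)$, $D(G/e)$ and $D\big((G+e)/e\big)$, and by the first step a ribbon graph whose ribbon-graphic delta-matroid is connected is itself $2$-connected. Hence at least two of $G\ba e$, $G/e$, $(G+e)/e$ are $2$-connected. The main obstacle, as noted, lies solely in making the first step precise; everything after it is formal manipulation of the correspondences already established.
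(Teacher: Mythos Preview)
Your proposal is correct and follows exactly the route the paper indicates: the paper states that the theorem ``follows immediately from the two preceding theorems and Proposition~\ref{coniscon}'', and you have simply spelled out that deduction, invoking Theorem~\ref{chainvf}, the identity $D(G+e)=D(G)+e$, Proposition~\ref{p.5minor}, and Proposition~\ref{coniscon} in the expected way. There is no meaningful difference between your argument and the paper's.
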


In~\cite{chch}, Chun, Chun, and Noble generalized Theorem~\ref{lsplitter} to multimatroids. We state two special cases of the result here in terms of delta-matroids.
\begin{theorem}[Chun et al.~\cite{chch}]
\label{splitter}
Let $D$ be a connected even delta-matroid with a connected minor $D'$.
If $e\in E(D)-E(D')$, then $D\backslash e$ or $D/e$ is connected with $D'$ as a minor.
\end{theorem}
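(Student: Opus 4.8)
The plan is to run the classical proof of the matroid splitter theorem (Theorem~\ref{lsplitter}), using Theorem~\ref{chainmm} in place of Tutte's Theorem~\ref{1chain} at the single point where the latter enters. I will freely use three easy facts: (i) $D\ba f$, $D/f$ and $D^*$ are even whenever $D$ is (since $|F\btu f|$ and $|F|$ differ by one, and $|F\btu E|=|E|-|F|$), and connectedness is preserved by duality; (ii) the minor relation is preserved by duality, with $(D\ba f)^*=D^*/f$ and $(D/f)^*=D^*\ba f$; (iii) a non-trivial connected minor of a direct sum $D_{1}\oplus D_{2}$ of delta-matroids is a minor of $D_{1}$ or of $D_{2}$, and deleting or contracting an entire summand of a direct sum returns the other summand.

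Here is the structure. Since $e\notin E(D')$, removing $e$ from $D$ in the process of forming $D'$ is a deletion or a contraction, so $D'$ is a minor of $D\ba e$ or of $D/e$; replacing $(D,D')$ by $(D^*,(D')^*)$ if necessary---which by (i) and (ii) preserves evenness, connectedness and the minor relation, and interchanges $\ba e$ with $/e$---I may assume $D'$ is a minor of $D\ba e$. If $D\ba e$ is connected then $D\ba e$ is connected with $D'$ as a minor and we are done. So assume $D\ba e$ is disconnected. Since $D$ is connected and even, Theorem~\ref{chainmm} forces $D/e$ to be connected, and it remains only to show that $D'$ is also a minor of $D/e$; the conclusion then holds with $D/e$ as the required connected minor.

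To see that $D'$ is a minor of $D/e$, write the disconnected delta-matroid $D\ba e$ as a direct sum $D_{1}\oplus D_{2}$ with $D_{2}$ non-trivial; since $D'$ is a non-trivial connected minor of $D\ba e$, by (iii) it is a minor of one summand, say $D_{1}$, so $E(D')\subseteq E(D_{1})$. Set $A=E(D_{2})$. By (iii), deleting (equivalently, contracting) the whole summand $D_{2}$ yields $D_{1}$, so $D_{1}=(D\ba e)\ba A=D\ba A\ba e$. The one thing left to check is that $e$ is a loop or a coloop of $D\ba A$. Granting this, $D\ba A\ba e=D\ba A/e$, and since deletion of $A$ and contraction of $e$ commute, $D_{1}=D\ba A/e=(D/e)\ba A$, which is a minor of $D/e$; hence $D'$, a minor of $D_{1}$, is a minor of $D/e$, as needed. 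This step is the exact analogue of the place in the matroid proof where connectedness of $M$ is used to deduce that $e\notin\operatorname{cl}_{M}(E(M_{1}))$, i.e.\ that $e$ is a coloop of $M\ba E(M_{2})$.

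I expect that remaining claim---that $e$ is a loop or coloop of $D\ba A$---to be the main obstacle, since its matroid proof runs through the rank function and there is no comparably clean rank-function substitute for delta-matroids; indeed the naive form of the statement already fails for odd delta-matroids, so evenness must be used genuinely. The cleanest way around this, and the one underlying \cite{chch}, is to prove the whole statement one level up for multimatroids---the setting of Bouchet's generalization of Theorem~\ref{chainmm}---where the rank-function bookkeeping is replaced by systematic use of the symmetric exchange axiom, and then to recover Theorem~\ref{splitter} as the special case of an even delta-matroid regarded as a multimatroid. A more hands-on alternative, staying within delta-matroid language, would be to analyse how $e$ reattaches to $D_{1}$ using the matroid $D_{\min}$ together with Lemma~\ref{lem:useful}, invoking evenness to rule out the bad configuration; this is more delicate but sidesteps the multimatroid machinery.
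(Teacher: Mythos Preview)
The paper does not prove Theorem~\ref{splitter}; it merely states it and attributes the proof to~\cite{chch}, noting there that the result is obtained as a special case of a multimatroid theorem. So there is no proof in the paper to compare against, only the remark that the argument goes through multimatroids.

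Your outline follows the classical Brylawski--Seymour scheme exactly, and you correctly isolate the one step that does not transfer for free: the claim that $e$ is a loop or a coloop of $D\ba A$ where $A=E(D_{2})$. In matroids this follows from a one-line closure argument (if $e\in\operatorname{cl}(E(M_{2}))$ then $E(M_{1})$ would separate $M$), but for delta-matroids there is no closure operator to lean on, and as you observe, the statement genuinely fails without evenness. However, you do not prove the claim. You sketch two possible routes---lifting to multimatroids, or an ad hoc argument via $D_{\min}$ and Lemma~\ref{lem:useful}---but carry out neither. The first route is precisely what~\cite{chch} does, so invoking it is circular in the present context; the second is only a suggestion. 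As written, then, this is an accurate diagnosis of where the work lies rather than a proof: the reduction to the key claim is clean and correct, but the claim itself is left open, and that is exactly the step where the non-trivial content of the theorem sits.
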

To state the second special case, we need to concept of a $3$-minor in a vf-safe delta-matroid. We say that $D'$ is a \emph{$3$-minor} of a vf-safe delta-matroid $D$, if $D' = ((D \ba X / Y) + Z) / Z$ for disjoint subsets $X$, $Y$ and $Z$ of $E(D)$. It is not difficult to establish that the three operations used in forming a $3$-minor have the desirable property that they may be applied element by element in any order without changing the result.
\begin{theorem}[Chun et al.~\cite{chch}]
\label{splittervf}
Let $D$ be a connected vf-safe delta-matroid with a connected $3$-minor $D'$.
If $e\in E(D)-E(D')$, then $D\backslash e$, $D/e$ or $(D+e)/e$ is connected with $D'$ as a $3$-minor.
\end{theorem}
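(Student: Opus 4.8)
The plan is to deduce the statement from the multimatroid generalization of Theorem~\ref{lsplitter} established in~\cite{chch}, by passing through the correspondence between vf-safe delta-matroids and $3$-matroids developed by Brijder and Hoogeboom in~\cite{BH13,BH14}. The first step is to attach to the vf-safe delta-matroid $D$ its associated $3$-matroid $\widetilde{Z}(D)$: the ground set is $\bigsqcup_{e\in E(D)}\{e^{(1)},e^{(2)},e^{(3)}\}$, the skew classes are the triples $\{e^{(1)},e^{(2)},e^{(3)}\}$, and the three \emph{sector reductions} at the skew class of $e$ realize, respectively, the delta-matroids $D\ba e$, $D/e$ and $(D+e)/e$. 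That these three operations are precisely the ones parametrized by the natural $S_3$-action generated by the twist $*e$ and the loop complementation $+e$ (concretely, the three left cosets of $\langle{+e}\rangle$), and so genuinely form a single skew class, is part of the Brijder--Hoogeboom theory; the key small point is the identity $(D+e)\ba e=D\ba e$, which is what makes the three sector reductions well defined and matches them up with the three $3$-minor operations of the statement.

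The heart of the argument is a dictionary, all of whose ingredients can be read out of~\cite{BH13,BH14,chch}: (i) $D$ is connected as a delta-matroid if and only if $\widetilde{Z}(D)$ is connected as a multimatroid; (ii) reducing the skew class of $e$ in $\widetilde{Z}(D)$ to its $i$-th sector yields $\widetilde{Z}$ of the corresponding one of $D\ba e$, $D/e$, $(D+e)/e$; and hence, iterating, (iii) $D'$ is a $3$-minor of $D$ obtained by applying $3$-minor operations only to elements of $E(D)-E(D')$ if and only if $\widetilde{Z}(D')$ is a multimatroid minor of $\widetilde{Z}(D)$ obtained by reducing exactly the skew classes indexed by $E(D)-E(D')$. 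The element-by-element commutativity of the $3$-minor operations, noted just before the statement, is precisely what makes $3$-minors well defined and lets this translation go through cleanly. One also checks, again from vf-safeness, that $\widetilde{Z}(D)$ satisfies the nondegeneracy hypothesis required to apply the multimatroid splitter theorem.

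Granted the dictionary, the deduction is immediate. Apply the multimatroid generalization of Theorem~\ref{lsplitter} from~\cite{chch}, which asserts that in a connected multimatroid with a connected minor $Z'$ and a skew class disjoint from the ground set of $Z'$, at least one of the reductions at that skew class is connected with $Z'$ as a minor. Take the multimatroid to be $\widetilde{Z}(D)$, the minor to be $\widetilde{Z}(D')$, and the skew class to be $\{e^{(1)},e^{(2)},e^{(3)}\}$, which is disjoint from the ground set of $\widetilde{Z}(D')$ because $e\notin E(D')$. This produces a sector $e^{(i)}$ whose reduction is connected and has $\widetilde{Z}(D')$ as a minor; running (i)--(iii) backwards shows that the corresponding member of $\{D\ba e,\ D/e,\ (D+e)/e\}$ is connected and has $D'$ as a $3$-minor.

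I expect the dictionary, rather than the final deduction, to be the main obstacle: one must keep careful track of how twist, loop complementation, deletion and contraction interact under the $\widetilde{Z}(\cdot)$ construction, and must reconcile Bouchet's notions of multimatroid minor and connectivity with the delta-matroid ones, including the basic point that each sector reduction of $\widetilde{Z}(D)$ really is a matroid (so that $\widetilde{Z}(D)$ is a multimatroid at all), which is exactly where vf-safeness enters. It is worth recording why a naive argument straight from Theorem~\ref{chainvf} does not suffice: that theorem supplies two connected members of $\{D\ba e,\ D/e,\ (D+e)/e\}$, but on writing $D'=((D\ba X/Y)+Z)/Z$ one sees that $D'$ is visibly a $3$-minor of only the \emph{single} member dictated by whether $e$ lies in $X$, $Y$ or $Z$, and that member need not be one of the two connected ones. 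Carrying $D'$ along inside the induction, which is precisely what the multimatroid splitter theorem does, is what repairs this gap.
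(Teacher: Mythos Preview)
The paper does not give its own proof of this theorem; it merely states it as a special case of the multimatroid generalization of Theorem~\ref{lsplitter} proved in~\cite{chch}. Your proposal is exactly this deduction: invoke the multimatroid splitter theorem and translate back via the Brijder--Hoogeboom correspondence between vf-safe delta-matroids and tight $3$-matroids, so your approach is the intended one and is correct.
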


The next two results follow immediately from the previous two.

\begin{theorem}[Chun et al.~\cite{chch}]\label{th:splittergrapheven}
Let $G$ be a $2$-connected, orientable ribbon graph. If $H$ is a $2$-connected minor of $G$ and $e \in E(G)-E(H)$, then $G\ba e$ or $G/e$ is $2$-connected with $H$ as a minor.
\end{theorem}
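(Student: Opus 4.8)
The plan is to transport Theorem~\ref{splitter} across the ribbon-graph/delta-matroid dictionary assembled in Sections~\ref{s5.2}--\ref{s5.5}. Set $D:=D(G)$ and $D':=D(H)$. Since $G$ is orientable, $D$ is even by Proposition~\ref{p.4b}\eqref{p.4b.4}; since $G$ and $H$ are $2$-connected, Proposition~\ref{coniscon} (together with the definition of $2$-connectedness for ribbon graphs) shows that $D$ and $D'$ are connected; and by Corollary~\ref{p.5minor3}, $D'$ is a minor of $D$. As $e\in E(G)-E(H)=E(D)-E(D')$, Theorem~\ref{splitter} yields that $D\ba e$ or $D/e$ is connected with $D'$ as a minor. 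Because $D\ba e=D(G\ba e)$ and $D/e=D(G/e)$ by Proposition~\ref{p.5minor}, it follows from Proposition~\ref{coniscon} that one of $G\ba e$, $G/e$ is $2$-connected and has a delta-matroid with $D(H)$ as a minor.

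The one point requiring care is strengthening ``$D(H)$ is a minor of $D(G\ba e)$'' to ``$H$ is a minor of $G\ba e$'', since the converse of Corollary~\ref{p.5minor3} is false in general. First I would normalise which of deletion and contraction sends $H$ to a minor: fixing a representation $H=G\ba X/Y$, we have $e\in X\cup Y$, and by order-independence of ribbon-graph minor operations (Section~\ref{sss.rgmin}), $H$ is a minor of $G\ba e$ if $e\in X$ and of $G/e$ if $e\in Y$. In the latter case I would replace $G$ by the partial dual $G^{e}$: by Theorem~\ref{t.twdu} we have $D(G^{e})=D(G)\ast e$, which is still even and connected because a single-element twist is readily seen to preserve evenness and connectedness of a delta-matroid, so $G^{e}$ is again $2$-connected and orientable; moreover $H$ is a minor of $G^{e}\ba e=G/e$ by Proposition~\ref{p.pd2}, so $e$ is now deleted, and since $\{G^{e}\ba e,\,G^{e}/e\}=\{G/e,\,G\ba e\}$ the conclusion for $G^{e}$ is exactly the conclusion for $G$. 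Hence we may assume $H$ is a minor of $G\ba e$. Applying Theorem~\ref{splitter} as above, if $D(G\ba e)$ is connected then $G\ba e$ is $2$-connected and already carries $H$ as a minor, and we are done.

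The remaining case, which I expect to be the main obstacle, is that $D(G\ba e)$ is disconnected, so Theorem~\ref{splitter} forces $D(G/e)$ to be connected and to have $D(H)$ as a minor, whence $G/e$ is $2$-connected, but we must still exhibit $H$ \emph{itself} as a minor of $G/e$. By Proposition~\ref{coniscon}, $G\ba e=G_1\sqcup G_2$ or $G_1\vee G_2$; since $H$ is connected all of $E(H)$ lies in one part, say $G_1$, and $H$ is a minor of $G_1$. Because $G$ is $2$-connected, adding $e$ back must destroy this separation, and a short analysis of where the ends of $e$ sit --- and, in the join case, of the cyclic order of edge-ends at the shared vertex --- shows that contracting $e$ identifies a vertex of $G_1$ with a vertex of $G_2$; consequently $(G/e)\ba E(G_2)$ is $G_1$ together with some isolated vertices, and deleting those vertices realises $G_1$, and hence $H$, as a minor of $G/e$. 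This last structural step is clean when $G\ba e$ is a disjoint union, but the join case (where one must check that the contraction does not split a vertex of $G_1$) is the only part not handed to us directly by the dictionary and by Theorem~\ref{splitter}, and is where the real work lies.
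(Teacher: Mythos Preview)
Your approach is exactly the one the paper takes: it asserts only that Theorems~\ref{th:splittergrapheven} and~\ref{th:splittergraph} ``follow immediately from the previous two'' (namely Theorems~\ref{splitter} and~\ref{splittervf}), implicitly via the dictionary of Propositions~\ref{p.4b}\eqref{p.4b.4}, \ref{p.5minor} and~\ref{coniscon}. So on the level of strategy you and the paper agree.

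You have, however, spotted a genuine subtlety that the paper's one-line derivation glosses over. Theorem~\ref{splitter} only delivers that one of $D(G)\ba e$, $D(G)/e$ is connected and has $D(H)$ as a minor; Proposition~\ref{p.5minor} then produces a ribbon-graph minor $H'$ of $G\ba e$ or $G/e$ on the same edge set with $D(H')=D(H)$, but since $D(\cdot)$ is not injective on ribbon graphs this does not by itself force $H'=H$. Your partial-duality reduction to the case that $H$ is already a minor of $G\ba e$ is clean, and the disjoint-union branch of your structural argument is correct. The join branch is, as you acknowledge, not finished: for instance $e$ may well be a loop at the join vertex $v$ (with its two ends interlacing the $G_1$-arc and the $G_2$-arc around $v$), and then $G/e$ can split $v$ into two vertices rather than ``identify a vertex of $G_1$ with a vertex of $G_2$'', so your description of what contraction does there needs more care. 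In short, your overall plan matches the paper, you have gone further than the paper in recognising what needs to be checked, but the argument remains incomplete precisely where you said it would.
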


A $3$-minor in a ribbon graph in an analogous way to which it is defined in a vf-safe delta-matroid.

\begin{theorem}[Chun et al.~\cite{chch}]\label{th:splittergraph}
Let $G$ be a $2$-connected ribbon graph. If $H$ is a $2$-connected $3$-minor of $G$ and $e \in E(G)-E(H)$, then $G\ba e$, $G/e$ or $(G+e)/e$ is $2$-connected with $H$ as a $3$-minor.
\end{theorem}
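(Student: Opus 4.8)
The plan is to deduce this from the vf-safe splitter theorem (Theorem~\ref{splittervf}) by transporting it across the dictionary between ribbon graph operations and delta-matroid operations built up in Section~\ref{s5}. First I would assemble the dictionary entries I need: Proposition~\ref{p.5minor} gives $D(G\ba e)=D(G)\ba e$ and $D(G/e)=D(G)/e$; the identity $D(G+A)=D(G)+A$ from~\cite{CMNR}, together with Proposition~\ref{p.5minor}, gives $D((G+e)/e)=(D(G)+e)/e$; Proposition~\ref{coniscon} tells us a ribbon graph is $2$-connected precisely when its ribbon-graphic delta-matroid is connected; and ribbon-graphic delta-matroids are vf-safe. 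Applying the first three element by element --- which is legitimate since the three ribbon graph operations, like their delta-matroid analogues, may be performed in any order --- shows that if $H$ is a $3$-minor of $G$ then $D(H)$ is a $3$-minor of $D(G)$.

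With this in place the skeleton of the argument is short. Since $G$ is $2$-connected, $D(G)$ is a connected vf-safe delta-matroid; since $H$ is a $2$-connected $3$-minor of $G$, $D(H)$ is a connected $3$-minor of $D(G)$; and $e\in E(D(G))-E(D(H))$. Theorem~\ref{splittervf} then yields that one of $D(G)\ba e$, $D(G)/e$, $(D(G)+e)/e$, i.e.\ one of $D(G\ba e)$, $D(G/e)$, $D((G+e)/e)$, is connected with $D(H)$ as a $3$-minor; by Proposition~\ref{coniscon} the corresponding member of $\{G\ba e,\,G/e,\,(G+e)/e\}$ is $2$-connected.

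The hard part is the last mile: Theorem~\ref{splittervf} only hands back a ribbon graph $G'$ with $D(H)$ --- not $H$ itself --- as a $3$-minor, and a ribbon-graphic delta-matroid does not determine its ribbon graph (Corollary~\ref{p.5minor3} and the remark following it). To obtain the statement literally about $H$, I would fix a $3$-minor presentation $H=((G\ba X/Y)+Z)/Z$ with $e$ lying in exactly one of $X$, $Y$, $Z$, pull the operation on $e$ to the front using the element-by-element commutativity, and thereby realise $H$ as a $3$-minor of whichever of $G\ba e$, $G/e$, $(G+e)/e$ corresponds to the role of $e$. The remaining point is to check that when that particular ribbon graph fails to be $2$-connected the presentation can be rerouted through one of the other two; this is exactly what the proof of Theorem~\ref{splittervf} does at the delta-matroid level, and each of its steps lifts to ribbon graphs through the dictionary above. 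I expect this bookkeeping to be the only non-formal ingredient. Finally, the even, minor-only companion result (Theorem~\ref{th:splittergrapheven}) falls out in the same way from Theorem~\ref{splitter}, with Proposition~\ref{p.4b}\eqref{p.4b.4} used to translate between $G$ being orientable and $D(G)$ being even.
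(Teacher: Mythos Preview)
Your approach is exactly the paper's: the paper simply states that the result ``follows immediately'' from Theorem~\ref{splittervf} via the dictionary built up in Section~\ref{s5} (Propositions~\ref{p.5minor} and~\ref{coniscon}, together with $D(G+A)=D(G)+A$). You are in fact more careful than the paper about the last step, correctly noting that the black-box statement of Theorem~\ref{splittervf} only returns $D(H)$ as a $3$-minor, and that recovering $H$ itself requires fixing a presentation $H=((G\ba X/Y)+Z)/Z$, pulling $e$ to the front, and then lifting the rerouting argument inside the proof of Theorem~\ref{splittervf}; this is the right way to make precise what the paper leaves implicit.
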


As we mentioned above, this result is a nice example of delta-matroids providing insight into ribbon-graphs. It is extremely unlikely that we would have established Theorem~\ref{th:splittergraph} without the intuition provided by delta-matroids.

\subsection{Rank functions}\label{s5.6}
In this section we examine delta-matroid rank and its connections to ribbon graph structures.
Let $G=(V,E)$ be a  graph, $M=M(G)$ be its cycle matroid, and $A\subseteq E$. It is well-known that the rank function of $M$ can be expressed in terms of graph parameters: $r_M(A)= v(G)-k_G(A)$. In this section we  express the rank function of a ribbon-graphic delta-matroid in terms of ribbon graph parameters.

For our next proof, we need a new piece of terminology.
Let $H$ and $K$ be distinct spanning ribbon subgraphs of $G$. Then we say that $K$ is obtained from $H$ by an \emph{edge-toggle} if   $E(H)=E(K)\btu e$ for some edge $e\in E(G)$.
Recall that, for ribbon graph $G$ and $A\subseteq E(G)$, functions such as $\rho_{D(G)}(A)$, $e(A)$, and $f(A)$ refer to $\rho_{D(G)}((V(G),A))$, $e((V(G),A))$, and $f((V(G),A))$, respectively, as defined in Section~\ref{sss.srs}.

\begin{theorem}\label{t.rankinterp}
Let $G=(V,E)$ be a ribbon graph and $A\subseteq E$. Then
\[ \rho_{D(G)}(A) =  e(G)-f(A)+k(G) . \]
\end{theorem}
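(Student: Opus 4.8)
The goal is to identify $\rho_{D(G)}(A) = |E| - \min\{|A \btu F| \mid F \in \F(G)\}$ in terms of ribbon graph parameters. Since $\F(G)$ consists of edge sets of spanning quasi-trees, the task reduces to showing $\min\{|A \btu F| \mid Q = (V,F) \text{ a spanning quasi-tree}\} = f(A) - k(G)$, which rearranges to the claimed formula. The natural approach is to establish this minimum by a two-sided argument: a lower bound valid for every spanning quasi-tree, and the construction of a spanning quasi-tree achieving equality.

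First I would prove the lower bound $|A \btu F| \geq f(A) - k(G)$ for every spanning quasi-tree $Q = (V,F)$. The cleanest route is an edge-toggle argument: starting from $(V,A)$ and ending at $(V,F)$, one can pass through a sequence of spanning ribbon subgraphs each obtained from the previous by toggling a single edge in $A \btu F$, so the number of steps is exactly $|A \btu F|$. Since each edge of a ribbon graph meets one or two boundary components, a single edge-toggle changes the number of boundary components by at most one; hence $|f(A) - f(F)| \leq |A \btu F|$. But $f(F) = k(G)$ because $Q$ is a spanning quasi-tree (its components each have one boundary component), so $|A \btu F| \geq f(A) - k(G)$. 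Here I would want to be slightly careful that $f(A) \geq k(G)$ always, which follows since each connected component of $(V,A)$ has at least one boundary component and $(V,A)$ has at least $k(G)$ components (it is spanning).

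For the matching upper bound I need a spanning quasi-tree $F$ with $|A \btu F| = f(A) - k(G)$. The idea is to build $F$ from $(V,A)$ by a greedy sequence of $f(A) - k(G)$ edge-toggles, each strictly decreasing the number of boundary components (from $f(A)$ down to $k(G)$) while keeping the number of components equal to $k(G)$ once we reach a spanning quasi-tree — or more carefully, reaching a spanning ribbon subgraph with $k(G)$ boundary components, which by an Euler-characteristic/parity bookkeeping argument forces it to be a spanning quasi-tree. Concretely, as long as the current spanning subgraph $(V,A')$ has more than $k(G)$ boundary components, I would argue there is always an edge $e$ (either in $A'$ meeting two distinct boundary components, which we delete, or outside $A'$ joining two distinct boundary components or two components, which we add) whose toggle decreases $f$ by one — the only obstruction would be the situation ruled out by the coloring argument inside the proof of Lemma~\ref{l1}, or simply the case where $(V,A')$ is already a disjoint union of quasi-trees on the components of $G$, i.e. $f(A') = k(G)$. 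This mirrors the structure of Lemma~\ref{l1} closely, so I expect to be able to reuse or lightly adapt that reasoning.

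**The main obstacle.** The delicate point is the upper bound, specifically verifying that one can always find an edge-toggle decreasing the boundary-component count whenever $(V,A')$ is not a disjoint union of quasi-trees, and that the greedy process genuinely terminates at a \emph{spanning} quasi-tree (same number of components as $G$) rather than at some spanning subgraph with $k(G)$ boundary components but too many components. The resolution is that each edge-toggle changes $f$ by exactly $\pm 1$ or $0$, and a short parity or Euler-formula count ($\gamma = 2k - v + e - f$, so $f \equiv e \pmod 2$ among subgraphs of fixed $\gamma$-behavior is not quite it — rather one tracks that decreasing $f$ to its minimum $k(G)$ forces each component to be connected with one boundary component). Alternatively, and perhaps more robustly, I could avoid the explicit greedy construction by invoking Lemma~\ref{lem:useful} or the delta-matroid rank machinery of Lemma~\ref{minorbirank} together with induction on $|E|$, computing $\rho_{D(G)}$ via deletion/contraction and matching it against the recursions satisfied by $e(G) - f(A) + k(G)$ under $G \ba e$ and $G / e$ (using Proposition~\ref{p.5minor} that $D(G \ba e) = D(G) \ba e$ and $D(G/e) = D(G)/e$). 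This inductive approach sidesteps the combinatorial topology entirely, at the cost of carefully tracking how $f$, $k$, and $e$ behave under deletion and contraction in the bridge/loop/ordinary cases — which I would expect to be the real bookkeeping burden but entirely routine.
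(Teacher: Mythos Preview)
Your primary approach is correct and essentially identical to the paper's: both reduce to showing $\min\{|A\btu F|: F\in\mathcal F(G)\}=f(A)-k(G)$, get the lower bound from the fact that a single edge-toggle changes $f$ by at most one, and get the upper bound by toggling edges one at a time so that $f$ drops by one at each step (the paper phrases this as induction on $f(A)$).

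Your ``main obstacle'' is not an obstacle, and neither the colouring argument of Lemma~\ref{l1} nor the alternative deletion--contraction induction is needed. For the existence of a decreasing toggle the paper uses a clean two-case split: if $k(A')>k(G)$ there is an edge $e\notin A'$ joining two components of $(V,A')$, and adding it drops both $k$ and $f$ by one; if $k(A')=k(G)$ but $f(A')>k(G)$ then some component has at least two boundary components, so some edge $e\in A'$ meets two of them, and deleting it drops $f$ by one. Your termination worry that one might reach $f(A')=k(G)$ with ``too many components'' is vacuous: since $(V,A')$ is spanning we have $k(A')\geq k(G)$, and each component contributes at least one boundary component, so $f(A')\geq k(A')\geq k(G)$; equality $f(A')=k(G)$ forces $k(A')=k(G)$ with exactly one boundary component per component, i.e.\ a spanning quasi-tree.
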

\begin{proof}
To prove the theorem it is enough to show that for a ribbon graph $G=(V,E)$ with $D(G)=(E,\mathcal{F})$ we have
$\min\{|A\bigtriangleup F| \mid F\in \mathcal{F}\} = f(A)-k(G)$.
To do this, set
\[q(A):= \min\{  |X|+|Y| \mid X,Y \subseteq E, \text{ and }  (V, (A-X)\cup Y)  \text{ is a spanning quasi-tree} \}. \]
Then $q(A)$ is the smallest number of edge-toggles needed to transform $(V,A)$ into a spanning quasi-tree. Clearly $q(A)=\min\{|A\bigtriangleup F| \mid F\in \mathcal{F}\}$, and so we need to show that $q(A)=f(A)-k(G)$.

First observe that $q(A)\geq f(A)-k(G)$ since an edge-toggle  can decrease the number of boundary components by at most one.

To show that $q(A)\leq f(A)-k(G)$ we argue by induction on $f(A)$.
If $f(A)=k(G)$, then $(V,A)$ is a spanning quasi-tree and $q(A)=0=f(A)-k(G)$.
For the inductive hypothesis, suppose that $q(A)\leq f(A)-k(G)$ for all $A$ with $f(A)< r$.
Now suppose that $f(A)=r>k(G)$. There are two cases to consider: $k(A)>k(G)$ and $k(A)=k(G)$.

If $k(A)>k(G)$, then $G$ has an edge $e\notin A$ such that $k(A\cup e)=k(A)-1$. Then we must also have $f(A\cup e)=f(A)-1$. The inductive hypothesis then gives
$q(A\cup e) \leq f(A\cup e) -k(G) =  f(A)-k(G)-1$. So a sequence  of at most $f(A)-k(G)-1$ edge-toggles transforms $(V,A\cup e)$ to a spanning quasi-tree. Placing `add $e$' at the start of this sequence of edge-toggles  gives a sequence of at most $f(A)-k(G)$ edge-toggles that transforms $(V,A)$ to a spanning quasi-tree. Thus  $q(A) \leq f(A)-k(G)$.

If $k(A)=k(G)$, then, since $(V,A)$ has more than $k(G)$ boundary components, $A\neq \emptyset$. Each edge of $(V,A)$ intersects either one or two boundary components of $(V,A)$. There must be some edge $e\in A$ that intersects two boundary components since $f(A)>k(G)$ and $k(A)=k(G)$.  Then $f(A- e)=f(A)-1$. The inductive hypothesis then gives
$q(A- e) \leq f(A- e) -k(G) =  f(A)-k(G)-1$. So, proceeding as in the case where $k(A)>k(G)$,  a sequence  of at most $f(A)-k(G)-1$ edge-toggles transforms $(V,A- e)$ to a spanning quasi-tree. Placing  `subtract $e$' at the start of this sequence of edge-toggles gives a sequence of at most $f(A)-k(G)$ edge-toggles that transforms $(V,A)$ to a spanning quasi-tree. Thus  $q(A) \leq f(A)-k(G)$. This completes the proof of the theorem.
\end{proof}

\begin{remark}
This theorem can be seen as a corollary of the extended Cohn-Lempel equality from~\cite{TRA11}. One associates to the ribbon graph its medial graph, which is 4-regular. Next, one translates the ribbon graph parameters $e(G), f(A)$ and $k(G)$ into parameters depending on the medial graph. One can also construct the delta-matroid of the ribbon graph from the medial graph, see~\cite{ab2} and the proof of Theorem~\ref{ribboneuler}. Once the definition of the rank of the delta-matroid is translated to the medial graph, Theorem~\ref{t.rankinterp} follows from the extended Cohn-Lempel equality.
\end{remark}

The theorem above immediately provides us with the following interpretation of $\rho$ for ribbon-graphic delta-matroids.
\begin{corollary}
Let $G=(V,E)$ be a ribbon graph, $A\subseteq E(G)$, and $D=D(G)$. Then $|E|-\rho_D(A)$ is equal to the minimum number of edge-toggles required to transform $(V,A)$ into a spanning quasi-tree of $G$.
\end{corollary}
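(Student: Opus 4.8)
The plan is to simply unwind the definition of the delta-matroid rank function and then invoke the key computation already carried out in the proof of Theorem~\ref{t.rankinterp}. Recall that $\rho_D(A) = |E| - \min\{|A\btu F| \mid F\in\mathcal{F}\}$, so that
\[ |E| - \rho_D(A) = \min\{|A\btu F| \mid F\in\mathcal{F}\}. \]
It therefore suffices to show that the right-hand side equals the minimum number of edge-toggles needed to transform $(V,A)$ into a spanning quasi-tree of $G$. But this is precisely the quantity $q(A)$ introduced in the proof of Theorem~\ref{t.rankinterp}, where it was observed that $q(A) = \min\{|A\btu F| \mid F\in\mathcal{F}\}$.

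First I would spell out why $q(A)$, the minimum number of edge-toggles, coincides with $\min\{|A\btu F| \mid F\in\mathcal{F}\}$, in case the reader wants the detail that was glossed over in the earlier proof: performing a sequence of $m$ edge-toggles on $(V,A)$ produces the spanning ribbon subgraph $(V, A\btu S)$, where $S$ is the set of edges toggled an odd number of times, and clearly $m\geq |S|$ with equality achieved by toggling each edge of $S$ exactly once. Hence the least number of edge-toggles reaching some spanning quasi-tree $(V,F)$ (i.e.\ some $F\in\mathcal F$) is exactly $\min\{|S| : A\btu S \in \mathcal F\} = \min\{|A\btu F| : F\in\mathcal F\}$. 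Combining this with the displayed identity for $|E|-\rho_D(A)$ gives the result.

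There is essentially no obstacle here: the statement is an immediate reformulation of Theorem~\ref{t.rankinterp} together with the definition of $\rho$. The only point requiring a sentence of care is the observation that repeating an edge-toggle is never advantageous, so that the minimum toggle count equals the size of a smallest symmetric difference with a feasible set — and even that was implicitly used in the proof of Theorem~\ref{t.rankinterp}.
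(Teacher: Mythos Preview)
Your proposal is correct and is essentially the same as the paper's treatment: the paper simply states that the corollary follows immediately from Theorem~\ref{t.rankinterp}, since in that proof $q(A)$ was defined as the minimum number of edge-toggles and identified with $\min\{|A\btu F|\mid F\in\mathcal F\}=|E|-\rho_D(A)$. Your additional sentence justifying why repeated toggles are never beneficial just makes explicit what the paper dismissed with ``Clearly''.
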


The ribbon graph interpretation of $\rho_{D(G)}$ can be used to discover results about $\rho_D$ for a general delta-matroid $D$. For example, recall from the proof of Lemma~\ref{l.3a} that the boundary components of $G\ba A^c$ and $G^*\ba A$ coincide and so $f_G(A)=f_{G^*}(A^c)$. Thus, for ribbon-graphic delta-matroids, it follows that $\rho_{D^*} (A) = \rho_{D} (E-A)$. This identity holds  for delta-matroids in general, as we saw earlier in Lemma~\ref{le:rankdual}.

For reference later, we record the following basic facts about rank functions.
\begin{corollary}\label{c.ranks}
Let $G=(V,E)$ be a ribbon graph. Then
\begin{enumerate}
\item $r_{M(G)}(A)   = r_{D(G)_{\min}}(A)$;
\item \label{c.ranks.2} $ r_{M(G^*)}(A)  =   r_{(D(G)_{\max})^*}(A)=   r_{(D(G)_{\max})}(A^c)+|A|-r_{(D(G)_{\max})}(E)$;
\item $  \rho_{D(G)} (A)=  \rho_{D(G^*)}(E- A)$.
\end{enumerate}
\end{corollary}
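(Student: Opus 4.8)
The plan is to obtain all three identities as immediate consequences of results already established, so that the proof is essentially bookkeeping with matroid and delta-matroid duals. For part~(1), I would simply quote Corollary~\ref{c.4c}\eqref{c.4c.1}, which asserts $D(G)_{\min}=M(G)$; equal matroids have equal rank functions, so $r_{M(G)}(A)=r_{D(G)_{\min}}(A)$ for every $A\subseteq E$.

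For part~(2), I would begin from Corollary~\ref{c.4c}\eqref{c.4c.4}, namely $D(G)_{\max}=(M(G^*))^*$. Since matroid duality is an involution, taking duals of both sides gives $(D(G)_{\max})^*=M(G^*)$, whence $r_{M(G^*)}(A)=r_{(D(G)_{\max})^*}(A)$. The second equality in part~(2) is then exactly Equation~\eqref{eq:matrank} applied to the matroid $M=D(G)_{\max}$, which gives $r_{M^*}(A)=r_M(E-A)+|A|-r_M(E)$.

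For part~(3), I would combine Corollary~\ref{c.gedu}, which gives $D(G^*)=D(G)^*$, with Lemma~\ref{le:rankdual}, which gives $\rho_{D^*}(A)=\rho_D(E-A)$ for any delta-matroid $D$. Applying the latter to $D=D(G)$ with the set $E-A$ in place of $A$ yields $\rho_{D(G)^*}(E-A)=\rho_{D(G)}(A)$, and substituting $D(G)^*=D(G^*)$ gives the claim. One could equally read this off from Theorem~\ref{t.rankinterp} together with the identity $f_G(A)=f_{G^*}(A^c)$ noted in the proof of Lemma~\ref{l.3a}.

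There is no real obstacle here; the only point requiring care is keeping the complementations consistent across the various duality formulas---in particular that $\rho_{D^*}$ is evaluated at the complementary set, and that \eqref{eq:matrank} carries the correction term $|A|-r_M(E)$.
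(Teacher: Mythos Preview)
Your proposal is correct and matches the paper's own proof essentially line for line: part~(1) from $D(G)_{\min}=M(G)$, part~(2) from Corollary~\ref{c.4c}\eqref{c.4c.4} together with Equation~\eqref{eq:matrank}, and part~(3) from Corollary~\ref{c.gedu} combined with Lemma~\ref{le:rankdual}. The alternative route you mention for part~(3) via Theorem~\ref{t.rankinterp} and $f_G(A)=f_{G^*}(A^c)$ is also valid and is precisely the observation the paper makes just before stating this corollary.
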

\begin{proof}
The first part follows immediately from the fact that $M(G)=D(G)_{\min}$.
For the second part, first note that Corollary~\ref{c.4c}\eqref{c.4c.4} implies that
 $M(G^*)=(D(G)_{\max})^*$. Thus
$r_{M(G^*)}(A)  =   r_{(D(G)_{\max})^*}(A)$.  Equation~\eqref{eq:matrank} implies that this is equal to $r_{(D(G)_{\max})}(A^c)+|A|-r_{(D(G)_{\max})}(E)$.
Thus \eqref{c.ranks.2} holds.
As $(D(G))^*=D(G^*)$ by Corollary~\ref{c.gedu}, the third part follows from Lemma~\ref{le:rankdual}.
\end{proof}

To motivate some delta-matroid results, consider a ribbon graph $G$ and a set $A\subseteq E(G)$. Then
$r(A)=v(G)-k(A)$ and $\rho(A)=e(G)-f(A)+k(G)$. Euler's formula and Proposition~\ref{p.4b}\eqref{p.4b.3} give
 \begin{align*}
\rho (A) -r(A)- n(G)+n(A) &=  (e(G)-f(A)+k(G)) - (v(G)-k(A))\\ &\phantom{=} \mbox{ } - (e(G) -v(G)+k(G)) + n(A))\\
& =   k(A) - f(A) + n(A) = \gamma(A)\\ & =\spn(D(G \ba A^c)) = \spn(D(G)\ba A^c)=\spn(D(G)|A) .  \end{align*}

This identity holds more generally for delta-matroids, which we will show after we state the following lemma, the simple proof of which we omit.
\begin{lemma}\label{le:simprank}
Let $D=(E,\mathcal{F})$ be a delta-matroid. Then $r(D_{\max})= \rho_D(E)$ and $r(D_{\min})=|E|-\rho_D(\emptyset)$.
\end{lemma}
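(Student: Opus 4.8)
The plan is to unwind the definition of $\rho_D$ and match it directly against the cardinalities of the extremal feasible sets, so this is essentially a one-line calculation in each case. The two facts to keep in mind are: for any $F\in\mathcal{F}$ we have $E\btu F = E-F$, since $F\subseteq E$, and hence $|E\btu F| = |E|-|F|$; and $\emptyset\btu F = F$, so $|\emptyset\btu F| = |F|$.

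First I would treat $r(D_{\max})=\rho_D(E)$. By the definition of the delta-matroid rank function,
\[
\rho_D(E) = |E| - \min\{|E\btu F| \mid F\in\mathcal{F}\} = |E| - \min\{|E|-|F| \mid F\in\mathcal{F}\} = \max\{|F| \mid F\in\mathcal{F}\}.
\]
Since $D_{\max}$ is a matroid (recorded in the subsection on width and evenness), its rank $r(D_{\max})$ is the common cardinality of its bases, which are precisely the feasible sets of $D$ of maximum size; hence $r(D_{\max})=\max\{|F|\mid F\in\mathcal{F}\}=\rho_D(E)$. For the second identity I would argue symmetrically: $\rho_D(\emptyset) = |E| - \min\{|\emptyset\btu F| \mid F\in\mathcal{F}\} = |E| - \min\{|F| \mid F\in\mathcal{F}\}$, and since $D_{\min}$ is a matroid whose bases are the feasible sets of $D$ of minimum size, $r(D_{\min}) = \min\{|F|\mid F\in\mathcal{F}\} = |E|-\rho_D(\emptyset)$, which rearranges to the claimed equality.

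There is no genuine obstacle here; the only step needing a moment's care is the appeal to the fact that $D_{\max}$ and $D_{\min}$ are matroids, so that "rank" is well-defined and equals the cardinality of a basis — and this is exactly what is recorded when the upper and lower matroids are introduced. (One could alternatively observe that $\rho_D(E)=\rho_{D_{\max}}(E)=r(D_{\max})$ via Lemma~\ref{le:rankdual} and the matroid rank formula, but the direct computation above is shorter.)
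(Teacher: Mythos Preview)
Your proof is correct and is exactly the direct computation the paper has in mind; indeed the paper omits the proof entirely, describing it as ``simple,'' and your unwinding of the definition of $\rho_D$ at $A=E$ and $A=\emptyset$ is the obvious way to supply the details.
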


\begin{proposition}\label{p.rasp}
Let $D=(E,\mathcal{F})$ be a delta-matroid and let $A\subseteq E$. Then
\begin{enumerate}
\item \label{p.rasp.1} $r((D|A)_{\min}) = r_{D_{\min}}(A)$;
\item $r((D|A)_{\max}) = \rho_D (A) - n_{D_{\min}}(E)+n_{D_{\min}}(A)$;
\item $\spn(D|A)= \rho_D (A) -r_{D_{\min}}(A)- n_{D_{\min}}(E)+n_{D_{\min}}(A)$.
\end{enumerate}
\end{proposition}

\begin{proof}
Let $F_0$ be a feasible set of $D$ having smallest possible intersection with $A^c$. By Lemma~\ref{lem:useful}, we may assume that $F_0 \in \mathcal{F}(D_{\min})$.
Let $Y=F_0 \cap A^c$ and $Z=A^c-Y$. If the elements of $Z$ are deleted one by one from $D$, then no coloop is deleted because there is a feasible set $F_0$ missing $Z$. However every element of $Y$ is a coloop of $D\ba Z$. Thus $D|A=D\ba Z/Y$. We have
\[ \mathcal{F}(D|A) = \{F-Y \mid F\in\mathcal{F(D)},\ F\cap A^c=Y\}.\]
Therefore
\begin{equation} r((D|A)_{\min}) = |F_0|-|Y| = \max_{F\in\mathcal{F}(D_{\min})}\{|F \cap A|\} = r_{D_{\min}}(A),\label{eq:minrestrict}\end{equation}
establishing the first part.

Applying Lemma~\ref{minorbirank} $|A^c|$ times to delete first the elements of $Z$ and then those of $Y$ implies that
\begin{equation} \rho_{D|A}(A) = \rho_D(A) - |E|+|A|+|Y|.\label{eq:rhodelminor}\end{equation}

By applying Lemma~\ref{le:simprank} to $D|A$ and Equation~\eqref{eq:rhodelminor}, we obtain
\begin{equation} r((D|A)_{\max}) = \rho_{D|A}(A) = \rho_D(A) + |A| + |Y| - |E|.\label{eq:maxrestrict}\end{equation}
Now $n_{D_{\min}}(E)=|E|-|F_0|$ and, by Equation~\eqref{eq:minrestrict}, $n_{D_{\min}}(A)=|A|-(|F_0|-|Y|)$. Substituting into Equation~\eqref{eq:maxrestrict} yields the second part.

The final part follows immediately by subtracting the equation in the first part from that in the second part.
\end{proof}

\subsection{Representability}\label{s5.7}
Let $\mathbb{K}$ be a finite field. For a finite set $E$, let $C$ be a skew-symmetric $|E|$ by $|E|$ matrix over $\mathbb{K}$, with rows and columns indexed, in the same order, by the elements of $E$. Note that we only allow the diagonal of $C$ to be non-zero when $\mathbb{K}$ has characteristic two.
Let $C\left[ A\right]$ be the principal submatrix of $C$ induced by the set $A\subseteq E$.

We define the delta-matroid $D(C)=(E,\F)$, where  $A\in \F$ if and only if $C[A]$ is non-singular over $\mathbb{K}$. By convention $C[\emptyset]$ is non-singular.
Bouchet  showed in~\cite{abrep} that $D(C)$ is indeed a delta-matroid. Observe that $\emptyset\in \F(D(C))$, for every  $C$.

A delta-matroid is called \emph{representable over $\mathbb{K}$} if it has a twist that is isomorphic to $D(C)$ for some matrix $C$.

\begin{lemma}[Bouchet~\cite{abrep}]\label{for1}
Suppose that a delta-matroid $D$ is representable over a field $\mathbb{K}$. Let $F$ be any feasible set of $D$. Then $D\ast F=D(C)$ for some  skew-symmetric   matrix $C$ over $\mathbb{K}$.
\end{lemma}

Suppose that $M$ is a matroid representable over $\mathbb{K}$ and that $B$ is a basis of $M$. Then $M$ has a representation of the form $(I|A)$ where $I$ is a $|B|$ by $|B|$ identity matrix and the columns of $I$ correspond to the elements of $B$. It is not difficult to see that if
\[ C = \begin{pmatrix} 0 & A \\ -A^T & 0\end{pmatrix},\]
then $M*B = D(C)$. Thus we have the following result.
\begin{proposition}[Bouchet~\cite{abrep}]\label{prop:binisbin}
A matroid representable over a field $\mathbb{K}$ is also representable over $\mathbb{K}$ as a delta-matroid.
\end{proposition}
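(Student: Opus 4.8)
The plan is to follow precisely the construction indicated just before the statement. Given that $M$ is representable over $\mathbb{K}$, fix a basis $B$ of $M$ and a representation of $M$ in standard form $(I \mid A)$ relative to $B$, with the columns of the identity block $I$ indexed by the elements of $B$ and the columns of $A$ indexed by $E - B$. Form the matrix
\[ C = \begin{pmatrix} 0 & A \\ -A^T & 0 \end{pmatrix}, \]
with rows and columns indexed, in this order, by $B$ followed by $E - B$. The goal is to prove the single identity $M * B = D(C)$. Since twisting is an involution this gives $M = D(C) * B$, so by the definition of a representable delta-matroid, $M$, regarded as a set system, is representable over $\mathbb{K}$ as a delta-matroid, which is what is asserted.

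First I would check that $C$ is a legitimate input to the construction of $D(C)$: it is skew-symmetric, since $C^T = -C$, and its diagonal is identically zero, so no characteristic-two hypothesis is needed; thus $D(C)$ is a delta-matroid by Bouchet's result in~\cite{abrep}. Next, fix $Z \subseteq E$ and write $Z = Z_1 \sqcup Z_2$ with $Z_1 \subseteq B$ and $Z_2 \subseteq E - B$. After reordering rows and columns, the principal submatrix $C[Z]$ takes the anti-block-diagonal form
\[ \begin{pmatrix} 0 & A[Z_1, Z_2] \\ -A[Z_1, Z_2]^T & 0 \end{pmatrix}, \]
where $A[Z_1, Z_2]$ is the submatrix of $A$ with rows indexed by $Z_1$ and columns indexed by $Z_2$. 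Such a matrix is non-singular precisely when $A[Z_1, Z_2]$ is square, i.e. $|Z_1| = |Z_2|$, and non-singular.

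Then I would analyse when $B \btu Z = (B - Z_1) \cup Z_2$ is a basis of $M$. Restricting $(I \mid A)$ to the columns indexed by $(B - Z_1) \cup Z_2$ and ordering the rows so that those indexed by $B - Z_1$ precede those indexed by $Z_1$, the identity columns contribute a block $\begin{pmatrix} I \\ 0 \end{pmatrix}$ and the $A$-columns contribute $\begin{pmatrix} A[B - Z_1, Z_2] \\ A[Z_1, Z_2] \end{pmatrix}$, so that this column submatrix is block upper-triangular with diagonal blocks $I$ and $A[Z_1, Z_2]$. Hence $B \btu Z$ is a basis of $M$ if and only if $A[Z_1, Z_2]$ is square and non-singular, which by the previous paragraph holds if and only if $C[Z]$ is non-singular. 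Since $Z \in \mathcal{F}(M * B)$ exactly when $B \btu Z \in \mathcal{B}(M)$, while $Z \in \mathcal{F}(D(C))$ exactly when $C[Z]$ is non-singular, we obtain $M * B = D(C)$, completing the proof.

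The only genuine content — and the step I expect to require the most care — is the double block-matrix bookkeeping: lining up the Laplace/Schur-type reductions that relate non-singularity of $C[Z]$, non-singularity of the minor $A[Z_1, Z_2]$ of the representing matrix, and membership of $B \btu Z$ in $\mathcal{B}(M)$, all while keeping the row/column index orderings consistent. Everything else — that $C$ is an admissible skew-symmetric matrix, that twists are involutive, and that $D(C)$ is a delta-matroid — is either immediate or already quoted from~\cite{abrep}.
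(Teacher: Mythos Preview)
Your proposal is correct and follows precisely the approach the paper indicates: the paper merely asserts that with $C=\begin{pmatrix}0&A\\-A^T&0\end{pmatrix}$ one has $M*B=D(C)$, stating that ``it is not difficult to see'' this, and you have supplied exactly the block-matrix verification that establishes it. There is no divergence in method; you have simply written out the details the paper omits.
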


A delta-matroid representable over the field with two elements is called \emph{binary}.
If $D=D(C)$ is a binary delta-matroid, then its feasible sets of all sizes are  determined by its feasible sets of size at most two. By combining this observation with  Lemma~\ref{for1}, we obtain the following.
\begin{lemma}[Bouchet and Duchamp~\cite{BD91}]\label{for2}
Let $F$ be a feasible set of a binary delta-matroid $D$. Then the feasible sets of $D$ are determined by
$\{X \mid |F\btu X| \leq 2\text{ and } X\in \F(D) \}$.
\end{lemma}

Bouchet and Duchamp gave an excluded-minor characterisation of binary delta-matroids.
\begin{theorem}[Bouchet and Duchamp~\cite{BD91}]
\label{binarychar}
A delta-matroid is a binary delta-matroid if and only if it has no minor isomorphic to a twist of $S_1, S_2,S_3,S_4$, or $S_5$, where
\begin{enumerate}
\item $S_1=(\{1,2,3\},\{\emptyset , \{1,2\},\{1,3\}, \{2,3\},\{1,2,3\}\})$,
\item $S_2=(\{1,2,3\},\{\emptyset , \{1\},\{2\}, \{3\}, \{1,2\},\{1,3\}, \{2,3\}\})$,
\item $S_3=(\{1,2,3\},\{\emptyset ,\{2\}, \{3\}, \{1,2\},\{1,3\},\{1,2,3\}\})$,
\item $S_4=(\{1,2,3,4\},\{\emptyset , \{1,2\},\{1,3\}, \{1,4\},\{2,3\}, \{2,4\},\{3,4\}\})$,
\item $S_5=(\{1,2,3,4\},\{\emptyset , \{1,2\},\{1,4\},\{2,3\}, \{3,4\},\{1,2,3,4\}\})$.
\end{enumerate}
\end{theorem}

It is easy to check that no twist of $S_1$, $S_2$, $S_3$ or $S_4$ is a matroid and that the uniform matroid $U_{2,4}$ is the only twist of $S_5$ that is a matroid.
Note that this result implies Tutte's characterization of
binary matroids~\cite{Tutte56} because $U_{2,4}$ is the unique excluded minor for the class of binary matroids.

\medskip

It is well known that  graphic matroids are representable over every field. An analogous result holds for ribbon graphic delta-matroids.
Let $D$ be a ribbon-graphic delta-matroid. It is readily verified that $S_1, \ldots, S_5$ do not arise as the delta-matroids of any ribbon graph.
Consequently $D$ has no twist of any delta-matroid in $\{S_1,\dots ,S_5\}$ as a minor. So Theorem~\ref{binarychar} implies that $D$ is binary.
\begin{theorem}[Bouchet~\cite{abrep}]
\label{ribbonisbinary}
Every ribbon-graphic delta-matroid is a binary delta-matroid.
\end{theorem}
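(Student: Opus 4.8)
The plan is to reduce the statement, in two standard steps, to the case of a one-vertex ribbon graph, and there to exhibit an explicit binary representation built from the interlacement structure of the loops.

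\emph{Step 1 (reduce to the connected case).} If $G=G_1\sqcup G_2$ then a subset of $E(G)$ is the edge set of a spanning quasi-tree of $G$ if and only if its restriction to each $E(G_i)$ is the edge set of a spanning quasi-tree of $G_i$, so $D(G)=D(G_1)\oplus D(G_2)$. A direct sum of binary delta-matroids is binary: if $D(G_i)\ast A_i\cong D(C_i)$ with each $C_i$ symmetric over $\mathbb{F}_2$, then $\bigl(D(G_1)\oplus D(G_2)\bigr)\ast(A_1\cup A_2)=D(C_1)\oplus D(C_2)=D(C_1\oplus C_2)$, and the block-diagonal matrix $C_1\oplus C_2$ is again symmetric over $\mathbb{F}_2$. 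So one may assume $G$ is connected.

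\emph{Step 2 (reduce to one vertex).} Pick a spanning quasi-tree $Q$ of the connected ribbon graph $G$ and put $A=E(Q)$. In the construction of the partial dual $G^A$ the vertices are the discs glued along the boundary components of the spanning ribbon subgraph $(V,A)$, and there is exactly one such component because $Q$ is a quasi-tree; hence $G^A$ has a single vertex, so it is connected and every one of its edges is a loop. By Theorem~\ref{t.twdu}, $D(G^A)=D(G)\ast A$, and since representability over $\mathbb{K}$ is by definition preserved under twists, $D(G)$ is binary if and only if $D(G^A)$ is. So it is enough to treat connected one-vertex ribbon graphs.

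\emph{Step 3 (the representing matrix).} For a connected one-vertex ribbon graph $G$, define the symmetric $E(G)\times E(G)$ matrix $C=C(G)$ over $\mathbb{F}_2$ by $C_{ef}=C_{fe}=1$ exactly when the distinct loops $e$ and $f$ are interlaced, and $C_{ee}=1$ exactly when $e$ is a non-orientable loop. Since $\mathbb{F}_2$ has characteristic two this is a matrix of the kind allowed in Section~\ref{s5.7}, so $D(C)$ is a delta-matroid; the goal is to prove $D(G)=D(C)$, which gives the theorem because $D(C)$ is binary by definition. Since interlacedness of two loops and orientability of a single loop are unaffected by deleting the other edges, for each $F\subseteq E(G)$ the principal submatrix $C[F]$ is exactly the matrix $C\bigl((V,F)\bigr)$ of the one-vertex ribbon graph $(V,F)$. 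Hence $D(G)=D(C)$ is equivalent to the statement that, for every one-vertex ribbon graph $H$, the matrix $C(H)$ is non-singular over $\mathbb{F}_2$ if and only if $f(H)=1$. As $v(H)=k(H)=1$, Euler's formula gives $\gamma(H)=e(H)+1-f(H)$, so it suffices to prove the rank identity $\operatorname{rank}_{\mathbb{F}_2}C(H)=\gamma(H)$.

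\emph{The main obstacle.} This rank identity is the heart of the argument and the only step requiring real work; everything above is immediate from results already in the paper. One route is to quote the extended Cohn--Lempel (circuit--nullity) equality, of which it is the special case obtained from the medial graph of $H$ and its white/black Eulerian circuit (compare \cite{COH72,TRA11} and the remark following Theorem~\ref{t.rankinterp}). A self-contained route is induction on $e(H)$: a trivial orientable loop contributes a zero row and column to $C(H)$ while increasing $f(H)$ by one and fixing $\gamma(H)$; a trivial non-orientable loop contributes a $1\times1$ diagonal block $(1)$ while increasing $\gamma(H)$ by one and fixing $f(H)$; and when no loop is trivial, each loop is interlaced with another, and a handle-slide on a pair of interlaced loops realizes a principal pivot transform of $C(H)$ (hence leaves its $\mathbb{F}_2$-rank unchanged) while preserving $\gamma(H)$ and $f(H)$ and producing a trivial loop, returning us to the preceding cases. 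The delicate point is to verify that the handle-slide exactly implements the matrix pivot and to track the genus through it; I expect this to be the crux of the write-up.
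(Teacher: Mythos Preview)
Your proposal is correct in outline, but it takes a different route from the paper's own proof.

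The paper proves Theorem~\ref{ribbonisbinary} by invoking the excluded-minor characterisation of binary delta-matroids (Theorem~\ref{binarychar}, due to Bouchet and Duchamp): one simply checks that none of $S_1,\dots,S_5$ is ribbon-graphic, so no ribbon-graphic delta-matroid can have a twist of one of these as a minor, and binarity follows. This is short, but it leans on a substantial external result.

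Your approach is instead the direct, constructive one: twist by a spanning quasi-tree to reduce to the one-vertex case, write down the interlacement matrix $C$, and prove $D(G)=D(C)$ via the rank identity $\operatorname{rank}_{\mathbb{F}_2}C(H)=\gamma(H)$. This is essentially Bouchet's original line of argument, which the paper explicitly describes (including the matrix $C$ you define) but declines to carry out, remarking that ``it is difficult to show that $D*F$ and $D(C)$ are equal''. Your reduction steps are sound and already justified by results in the paper (Theorem~\ref{t.twdu} for Step~2; the closure of binary delta-matroids under direct sum for Step~1). The rank identity in Step~3 is, as you note, exactly the extended Cohn--Lempel equality (cf.\ the remark after Theorem~\ref{t.rankinterp}), so citing it would finish the proof cleanly. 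Your inductive sketch is also viable; the handle-slide/pivot correspondence is standard in the chord-diagram literature, though writing it out carefully is indeed the main labour. Either way, your route has the virtue of producing an explicit representation rather than a mere existence statement, at the cost of doing more work than the paper's excluded-minor shortcut.
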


Knowing that $D(G)$ is binary, it is straightforward to write down a binary representation for the delta-matroid of a ribbon graph $G=(V,E)$ that has a single vertex.
Let $C=(c_{e,f}\mid e,f\in E)$ be the binary matrix representing $D(G)$.
Let $c_{e,e}$ be one if $e$ is non-orientable and let $c_{e,e}$ be zero otherwise.
Let both $c_{e,f}$ and $c_{f,e}$ be one if $e$ and $f$ are interlaced; otherwise they are both zero.

If $G$ is connected and has more than one vertex, then a binary representation for $D(G)$ can be found by forming the partial dual $G^Q$, where $Q$ is the edge set of a spanning quasi-tree, then forming a matrix $C$ as above using $G^Q$.

Bouchet's proof of Theorem~\ref{ribbonisbinary}  predates Theorem~\ref{binarychar}, and is more involved. The difficulty is showing that $D(G)=D(C)$.
He extended Theorem~\ref{ribbonisbinary} to other fields as follows.
\begin{theorem}[Bouchet~\cite{abrep}]
An even ribbon-graphic delta-matroid is representable over any field.
\end{theorem}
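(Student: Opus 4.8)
The plan is to reduce to a one-vertex ribbon graph and then exhibit a single skew-symmetric \emph{integer} matrix that represents the delta-matroid simultaneously over every field; the only genuinely non-routine ingredient is showing that this matrix is principally unimodular. Recall first that, by Proposition~\ref{p.4b}\eqref{p.4b.4}, an even ribbon-graphic delta-matroid is $D(G)$ for an orientable ribbon graph $G$ (equivalently, it is directed Eulerian). Since the delta-matroid of a disjoint union of ribbon graphs is the direct sum of the delta-matroids of the pieces, and a block-diagonal matrix represents a direct sum of representable delta-matroids, we may assume $G$ is connected; its subgraphs are then orientable as well. Choose a spanning tree $T$ of $G$. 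By Theorem~\ref{t.twdu}, $D(G)\ast T=D(G^{T})$, and as representability is defined up to twists it suffices to represent $D(G^{T})$. Now $(V(G),T)$ is a connected genus-zero spanning subgraph, hence has a single boundary component, so $H:=G^{T}$ has exactly one vertex; and $D(G)\ast T$ is again even, because the parity of $|T\btu F|$ equals the parity of $|T|+|F|$, which is constant over $F\in\F(G)$. Thus, by Proposition~\ref{p.4b}\eqref{p.4b.4}, $H$ is a one-vertex orientable ribbon graph and $\emptyset\in\F(H)$.

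Next I would fix an orientation of $H$ and view it as a chord diagram on its single vertex-disc. Let $C$ be the $|E(H)|\times|E(H)|$ integer matrix with $C_{ee}=0$ for all $e$, with $C_{ef}=0$ when the loops $e$ and $f$ are not interlaced, and with $C_{ef}=-C_{fe}=\pm1$ for interlaced $e\ne f$, the sign being the one dictated by the orientation. Then $C$ is skew-symmetric with zero diagonal, and $C\bmod 2$ is exactly the interlacement matrix $A(H,T_{0})$ of Section~\ref{s4.2} attached to the transition system corresponding to the feasible set $\emptyset$. The binary case, which is Theorem~\ref{ribbonisbinary} (equivalently the identity $f(A)=\operatorname{nullity}_{\mathbb F_{2}}C[A]+1$ for a one-vertex ribbon graph), then gives $D(C\bmod 2)=D(H)$; that is, for $A\subseteq E(H)$, the submatrix $C[A]$ is non-singular over $\mathbb F_{2}$ if and only if $(V(H),A)$ is a spanning quasi-tree of $H$, i.e. $A\in\F(H)$.

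The crux --- and the step I expect to be the main obstacle --- is to show that $C$ is \emph{principally unimodular}: every principal submatrix $C[A]$ that is non-singular over $\mathbb Q$ has $\det C[A]=\pm1$, equivalently $\det_{\mathbb Z}C[A]\in\{-1,0,1\}$ for every $A\subseteq E(H)$. I would argue this by induction on $|E(H)|$. The key facts are that principal unimodularity of a skew-symmetric matrix is preserved under deleting a matching row-and-column and under principal pivoting, and that, by the calculus of twists, deletions and contractions for matrix-representable delta-matroids (Bouchet~\cite{abrep}) together with Proposition~\ref{p.5minor}, the operations $D(H)\ba e$, $D(H)/e$ and $D(H)\ast X$ are realized on $C$ by row/column deletions and principal pivots producing the analogous matrices for $H\ba e$, $H/e$ and partial duals $H^{X}$ --- all of which are orientable with strictly smaller ground set. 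What must then be checked is that the orientation of $H$ supplies a globally consistent choice of the $\pm1$ signs that survives these reductions down to the empty matrix; this sign-tracking is precisely Bouchet's technical contribution in~\cite{abrep} and may simply be quoted from there.

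Finally, granting principal unimodularity, I would conclude as follows. Let $\mathbb K$ be any field and $A\subseteq E(H)$. Since $\det_{\mathbb Z}C[A]\in\{-1,0,1\}$, the matrix $C[A]$ is non-singular over $\mathbb K$ if and only if $\det_{\mathbb Z}C[A]=\pm1$, if and only if $\det_{\mathbb Z}C[A]$ is odd, if and only if $C[A]$ is non-singular over $\mathbb F_{2}$. By the previous paragraph this last condition says $A\in\F(H)$. Hence $D(C)=D(H)$ as delta-matroids over $\mathbb K$, so $D(H)$ --- and therefore its twist $D(G)$ --- is representable over $\mathbb K$, which is what we wanted.
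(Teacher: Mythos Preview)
The paper does not prove this theorem at all: it is stated with attribution to Bouchet~\cite{abrep} and immediately followed by the obvious corollary for orientable ribbon graphs, with no argument given. So there is no ``paper's own proof'' to compare against.

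Your outline is essentially Bouchet's approach and is sound in its broad strokes: reduce via a twist by a spanning tree to a one-vertex orientable ribbon graph, write down the signed interlacement matrix $C$ (skew-symmetric with zero diagonal), and show that $C$ is principally unimodular so that non-singularity of $C[A]$ is the same over every field as over $\mathbb F_2$, where Theorem~\ref{ribbonisbinary} gives you $D(C)=D(H)$. Two remarks. First, a small technical point: for a skew-symmetric integer matrix with zero diagonal, $\det C[A]$ is always a square (the Pfaffian squared) or zero, so principal unimodularity here really means $\det C[A]\in\{0,1\}$ rather than $\{-1,0,1\}$. Second, and more substantively, your inductive scheme for PU is stated in the wrong direction: ``PU is preserved under deleting a row-and-column and under principal pivoting'' says that if the big matrix is PU then the smaller ones are, whereas induction needs the converse. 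What actually works is to note that every \emph{proper} principal submatrix $C[A]$ sits inside the matrix for $H\setminus e$ for some $e\notin A$, so induction handles those; the full determinant $\det C$ then needs a separate argument (for instance via a Pfaffian identity or Bouchet's local-complementation characterisation of PU skew-symmetric matrices). You acknowledge this by deferring the ``sign-tracking'' to~\cite{abrep}, which is fair, but be aware that this step is genuinely the heart of the proof and not merely bookkeeping.
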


As even ribbon-graphic delta-matroids correspond precisely to the delta-matroids formed from orientable ribbon graphs, the following is obvious.

\begin{corollary}[Bouchet~\cite{abrep}]
The delta-matroids of orientable ribbon graphs are representable over any field.
\end{corollary}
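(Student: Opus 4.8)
The plan is to obtain this simply as a combination of two facts already established in the paper, so the argument is genuinely short. First I would recall Proposition~\ref{p.4b}\eqref{p.4b.4}: for any ribbon graph $G$, the ribbon-graphic delta-matroid $D(G)$ is even if and only if $G$ is orientable. Consequently, if $G$ is an orientable ribbon graph, then $D(G)$ is an even ribbon-graphic delta-matroid; that is, the class of delta-matroids of orientable ribbon graphs is exactly the class of even ribbon-graphic delta-matroids.

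Next I would apply the preceding theorem of Bouchet~\cite{abrep}, that every even ribbon-graphic delta-matroid is representable over any field, to the delta-matroid $D(G)$. Since $D(G)$ is even by the previous paragraph, this immediately yields that $D(G)$ has a twist isomorphic to $D(C)$ for a suitable skew-symmetric matrix $C$ over the given field, i.e.\ $D(G)$ is representable over that field. As the field was arbitrary, $D(G)$ is representable over any field, which is the assertion.

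There is essentially no obstacle here: the only thing requiring a moment's attention is to verify that the hypothesis of Bouchet's theorem is met, and this is precisely the content of Proposition~\ref{p.4b}\eqref{p.4b.4}. No computation is involved; the corollary is a direct specialisation of the stated theorem to the (equivalent) language of orientable ribbon graphs rather than even ribbon-graphic delta-matroids.
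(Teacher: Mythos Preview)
Your proposal is correct and matches the paper's approach exactly: the paper simply notes that even ribbon-graphic delta-matroids correspond precisely to delta-matroids of orientable ribbon graphs (Proposition~\ref{p.4b}\eqref{p.4b.4}) and declares the corollary obvious from the preceding theorem.
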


\begin{remark}
If $\mathbb{K}$ is a field with a characteristic different from two, any nonsingular skew-symmetric matrix is of even size. Hence any delta-matroid that is representable over a field of charactistic different from two has to be even. Thus the delta-matroid of any non-orientable ribbon graph is not representable over any field with characteristic different from two.
\end{remark}

\begin{remark}
Not all binary delta-matroids are ribbon-graphic. The matroid $M(K_5)$ is a binary matroid and hence by Proposition~\ref{prop:binisbin} it is a binary delta-matroid. However, it is not ribbon-graphic.
If $M(K_5)$ is isomorphic to $D(G)$ for some graph $G$, then by Corollary~\ref{c.4c}\eqref{c.4c.2}, $G$ must be planar, and then $D(G)$ and $M(G)$ are isomorphic. This is impossible because
$M(K_5)$ is not isomorphic to the cycle matroid of any other graph, and $G$ is planar but $K_5$ is not.
\end{remark}

\subsection{Characterising ribbon-graphic delta-matroids}\label{s5.8}

Just as not all matroids are graphic, not all delta-matroids are ribbon-graphic. It is natural to ask for a characterisation of ribbon-graphic delta-matroids, and such a characterisation can be recovered from work of Geelen and Oum.  In~\cite{oumg} Geelen and Oum built on the work of Bouchet~\cite{abcircle} in the area of circle graphs and found pivot-minor-minimal non-circle-graphs. As an application of this they obtained the excluded minors for ribbon-graphic delta-matroids.
\begin{theorem}[Geelen and Oum~\cite{oumg}]
\label{exribbon}
A delta-matroid is ribbon-graphic if and only if it does not contain a minor isomorphic to a twist of a delta-matroid in $\{S_1,S_2,\dots ,S_5\}$, where $S_1,S_2,\dots ,S_5$ are as in Theorem~\ref{binarychar}, or in the set of 166 binary delta-matroids found by the authors of~\cite{oumg}.
\end{theorem}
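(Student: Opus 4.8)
The plan is to derive Theorem~\ref{exribbon} from the pivot-minor characterisation of circle graphs due to Geelen and Oum~\cite{oumg}, by translating the whole problem into graph-theoretic language. The first step is to set up a dictionary between binary delta-matroids and graphs with loops. Recall from Section~\ref{s5.7} that every binary delta-matroid is a twist of some $D(C)$, where $C$ is a symmetric matrix over $\mathbb{F}_2$ and a set $A$ is feasible exactly when the principal submatrix $C[A]$ is non-singular. Viewing $C$ as the adjacency matrix of a graph $\Gamma(C)$, with a loop at a vertex $v$ precisely when $C_{vv}=1$, gives a bijection between symmetric binary matrices and graphs with loops. The structural fact to establish is that, among the binary delta-matroids, the ribbon-graphic ones are exactly those that are a twist of some $D(C)$ for which $\Gamma(C)$ is a \emph{circle graph} (an interlacement graph of a chord diagram, the loops of $\Gamma(C)$ recording the non-orientable chords). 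One direction uses the explicit binary representation of a one-vertex ribbon graph given at the end of Section~\ref{s5.7}: its matrix is the interlacement matrix of the associated chord diagram, so its underlying simple graph is a circle graph; since every connected ribbon graph $G$ with spanning tree $T$ satisfies $D(G)\ast T = D(G^T)$ with $G^T$ a one-vertex ribbon graph (Theorem~\ref{t.twdu}), every ribbon-graphic delta-matroid is a twist of such a $D(C)$. The other direction realises an arbitrary chord diagram, with any prescribed set of non-orientable chords, by a one-vertex ribbon graph.

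The second step is to match the minor operations of delta-matroids with pivot-minor operations of graphs. Under $C\mapsto\Gamma(C)$, deleting an element of $D(C)$ corresponds to deleting the corresponding vertex of $\Gamma(C)$; contracting corresponds to a pivot followed by a vertex deletion; and twisting with respect to a feasible set corresponds to a principal pivot transform of $C$, that is, to a sequence of edge- and loop-pivots of $\Gamma(C)$. Consequently a delta-matroid $D$ arises from $D(C)$ by a sequence of deletions, contractions and twists precisely when the graph of $D$ (well-defined up to pivot-equivalence) is a pivot-minor of $\Gamma(C)$. Since the class of circle graphs is closed under taking pivot-minors, this recovers the fact (compare Corollary~\ref{p.5minor3}) that the class of ribbon-graphic delta-matroids is minor-closed, and it reduces the determination of its excluded minors to the determination of the pivot-minor-minimal non-circle-graphs, together with the excluded minors that do not lie in the binary delta-matroids. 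The former is precisely the finite list computed by Geelen and Oum~\cite{oumg}, building on Bouchet's work on local complementation and circle graphs~\cite{abcircle}; the latter is governed by the Bouchet--Duchamp theorem (Theorem~\ref{binarychar}), which shows that a delta-matroid fails to be binary exactly when it has a minor that is a twist of one of $S_1,\dots,S_5$.

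The final step is to assemble the excluded-minor list. A delta-matroid is not ribbon-graphic exactly when it is not binary, or it is binary but its associated graph is not a circle graph. Feeding Geelen and Oum's list of pivot-minor-minimal non-circle-graphs through the dictionary above, and combining it with the non-binary obstructions supplied by Theorem~\ref{binarychar} (noting that $S_1,\dots,S_5$ are themselves not ribbon-graphic, as observed just before the statement), one obtains, after discarding redundant members, the list consisting of $S_1,\dots,S_5$ and the $166$ binary delta-matroids. The main obstacle is this translation together with the accompanying finite bookkeeping: one has to make the correspondence between delta-matroid minors and graph pivot-minors completely precise in the presence of loops, coloops, and disconnected ribbon graphs (in particular pinning down exactly how a twist by a non-feasible one-element set behaves), and then carry out the substantial finite computation that converts Geelen and Oum's graph list into the claimed delta-matroid list with no omissions or duplicates. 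This is exactly the work done in~\cite{oumg}, which is why the result is attributed there.
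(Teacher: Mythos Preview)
The paper does not actually give a proof of this theorem; it is merely stated as a result taken from the literature, with the single sentence of context that ``Geelen and Oum built on the work of Bouchet~\cite{abcircle} in the area of circle graphs and found pivot-minor-minimal non-circle-graphs. As an application of this they obtained the excluded minors for ribbon-graphic delta-matroids.'' So there is no proof in the paper to compare your proposal against.

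That said, your proposal is a faithful expansion of exactly that one-sentence hint. You correctly identify the dictionary between binary delta-matroids with a distinguished feasible set and graphs with loops (via the representing symmetric $\mathbb{F}_2$-matrix), the fact that ribbon-graphic corresponds to circle graph under this dictionary, and that delta-matroid minors correspond to pivot-minors. You then correctly split the obstruction analysis into the non-binary case (handled by Theorem~\ref{binarychar}, giving $S_1,\dots,S_5$) and the binary-but-not-circle-graph case (handled by the Geelen--Oum computation, giving the $166$ binary obstructions). This is the right architecture and matches both the paper's brief description and the actual approach in~\cite{oumg}. Your caveats about the bookkeeping---handling loops, coloops, and making the minor/pivot-minor correspondence precise---are appropriate, and you are right that this is where the real work lies and why the result is attributed rather than reproved.
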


\section{Topological analogues of the Tutte polynomial}
\label{s6}

The \emph{Tutte polynomial}, $T(G;x,y)$, of a graph or ribbon graph $G=(V,E)$  can be defined as the state sum
\begin{equation*}
T(G;x,y) =\sum\limits_{A\subseteq E(G)} (x-1)^{r(G)-r(A)}(y-1)^{n(A)}.
\end{equation*}
The Tutte polynomial is perhaps the most studied of all graph polynomials because of the vast range of its specializations, including graph invariants from statistical physics and knot theory, and because of its interplay with other key graph polynomials such as the interlace polynomial, Penrose polynomial, chromatic polynomial and flow polynomial. Tutte introduced his eponymous polynomial in~\cite{Tutte47}. A good recent survey is~\cite{EMCM}. More details on specializations can be found in~\cite{Webook} and~\cite{BryOx}, and historical background can be found in~\cite{Farr}.

We think of the Tutte polynomial as a polynomial over the ring of integers, $T(G;x,y) \in \mathbb{Z}[x,y]$. Both it and all the other polynomials in this section can also be defined over an arbitrary commutative unitary ring, but, for simplicity of exposition, we will work over $\mathbb{Z}$.

It is well-known that the Tutte polynomial is matroidal, in the sense that all of its parameters depend only on the cycle matroid $M(G)$ of $G$, rather than the graph itself. It is defined for all matroids by replacing $G$ with $M$ in the definition above. The Tutte polynomial can readily be extended to delta-matroids by setting
\begin{equation*}
T(D;x,y) := T(D_{\min};x,y)=   \sum\limits_{A\subseteq E(D)} (x-1)^{r_{D_{\min}}(D)-r_{D_{\min}}(A)}(y-1)^{n_{D_{\min}}(A)}.
\end{equation*}
Since $D(G)_{\min}=M(G)$, we have $T(D(G);x,y)=T(G;x,y)$.

There has been much recent interest in extensions of the Tutte polynomial to embedded graphs and ribbon graphs. By the term `extension' here we mean that the polynomial should include the Tutte polynomial as a specialization, and that it should encode topological information about the embedding of the graph in some way. We refer to such polynomials loosely as `topological Tutte polynomials'. The Tutte polynomial itself clearly does not depend upon the embedding.

Here we are concerned with three such polynomials: the Las~Vergnas polynomial, the ribbon graph polynomial of Bollob\'as and Riordan, and the Krushkal polynomial. We show that, while the Tutte polynomial is matroidal, the topological Tutte polynomials are delta-matroidal, that is, they depend only on the delta-matroid of a ribbon graph, and they are well-defined for delta-matroids.

Why should we expect this to be the case? Above we defined the Tutte polynomial in terms of a sum over spanning subgraphs of $G$. The Tutte polynomial was originally defined (see~\cite{Tutte47}) as a sum over the set of maximal spanning forests of $G$.  It was recently shown that  each of the three topological Tutte polynomials mentioned above can be expressed as a sum over the set of spanning quasi-trees of a ribbon graph.  See~\cite{CKS07,Dew,VT10} for the ribbon graph polynomial, and~\cite{Bu12} for the Krushkal and Las~Vergnas polynomials.
Given that $T(G)$ is determined by $M(G)$, which is in turn determined by the set of maximal spanning forests of $G$, and the topological Tutte polynomials are determined by their spanning quasi-trees which also determine $D(G)$,
it seems reasonable to expect, and it is indeed the case, that the topological Tutte polynomials are determined by $D(G)$.

We consider the three polynomials in their chronological order, and so start with the Las Vergnas polynomial $L(G;x,y,z)$ from~\cite{Las78,Las80,Las78a}. The Las Vergnas  polynomial arose as a special case of Las~Vergnas' Tutte polynomial of a morphism of  matroids of~\cite{Las75}, and can be defined in terms of the cycle  matroid $M(G)$ of an embedded graph $G$ and the \emph{bond matroid} $B(G^*):=(M(G^*))^*$ of its geometric dual $G^*$.
  The \emph{Las~Vergnas polynomial}, $L(G;x,y,z)\in \mathbb{Z}[x,y,z]$, of an embedded graph or ribbon graph $G$ is defined by
  \begin{multline*}
   L(G;x,y,z):= \sum_{A \subseteq E( G)}(x - 1)^{ r_{M(G)}(E) - r_{M(G)}( A )}\\
               \cdot  (y-1)^{ n_{B(G^*)}(A)}
                z^{r_{B(G^*)}(E) -r_{M(G)}(E)-(r_{B(G^*)}(A)-r_{M(G)}(A))}.
  \end{multline*}
Observe that when $G$ is a plane graph, then   $B(G^*)= (M(G^*))^*=M(G)$ and so $L(G;x,y,z) =T(G;x,y)$.
Las Vergnas~\cite{Las80} proved that for any embedded graph $G$, \[(y-1)^{\gamma(G)} L(G; x,y,  1/(y-1) ) =T(G;x,y).\]

Recalling from Corollary~\ref{c.4c}\eqref{c.4c.4} that $D(G)_{\min}=M(G)$ and $D(G)_{\max}= (M(G^*))^*= B(G^*)$, it is clear how to extend   $L(G;x,y,z)$  to delta-matroids.
  \begin{definition}
Let $D=(E,\mathcal{F})$ be a delta-matroid. Then the Las~Vergnas polynomial $L(D;x,y,z)$ is given by
\begin{multline*}  L(D;x,y,z) := \sum_{A \subseteq E}    (x - 1)^{ r_{D_{\min}}(E) - r_{D_{\min}}( A )}\\
\cdot                 (y-1)^{ n_{D_{\max}}(A)}
                z^{r_{D_{\max}}(E) -r_{D_{\min}}(E)-(r_{D_{\max}}(A)-r_{D_{\min}}(A))}.\end{multline*}
                \end{definition}

It is immediate from the definition  that the ribbon graph and delta-matroid versions of $L(G)$ coincide.
\begin{theorem}
Let $G$ be a connected ribbon graph. Then
\[ L(G;x,y,z)= L(D(G);x,y,z). \]
\end{theorem}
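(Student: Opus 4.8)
The plan is to show that the summand in the delta-matroid definition of $L(D(G);x,y,z)$ agrees, term by term over all $A\subseteq E(G)$, with the summand in the ribbon-graph definition of $L(G;x,y,z)$. The whole content of the argument is the identification of the two matroids $D(G)_{\min}$ and $D(G)_{\max}$ with the cycle matroid $M(G)$ and the bond matroid $B(G^*)$, which has already been carried out.

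First I would invoke Corollary~\ref{c.4c}, which gives $D(G)_{\min}=M(G)$ and $D(G)_{\max}=(M(G^*))^*=B(G^*)$. These equalities of matroids immediately yield equalities of the associated rank functions, $r_{D(G)_{\min}}(A)=r_{M(G)}(A)$ and $r_{D(G)_{\max}}(A)=r_{B(G^*)}(A)$ for every $A\subseteq E(G)$, and in particular for $A=E(G)$. Since the nullity of a matroid $N$ on ground set $E$ is determined by its rank function via $n_N(A)=|A|-r_N(A)$, we also obtain $n_{D(G)_{\max}}(A)=n_{B(G^*)}(A)$ for every $A$.

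Next I would substitute these identities directly into the definition of $L(D(G);x,y,z)$. The three exponents occurring in its summand,
\[ r_{D(G)_{\min}}(E)-r_{D(G)_{\min}}(A),\quad n_{D(G)_{\max}}(A),\quad r_{D(G)_{\max}}(E)-r_{D(G)_{\min}}(E)-\bigl(r_{D(G)_{\max}}(A)-r_{D(G)_{\min}}(A)\bigr), \]
become respectively $r_{M(G)}(E)-r_{M(G)}(A)$, $n_{B(G^*)}(A)$, and $r_{B(G^*)}(E)-r_{M(G)}(E)-(r_{B(G^*)}(A)-r_{M(G)}(A))$, which are exactly the exponents appearing in the summand of $L(G;x,y,z)$. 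Because both sums range over all subsets of $E(G)=E(D(G))$, the two polynomials are equal.

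There is no substantial obstacle here: beyond routine bookkeeping the proof uses only Corollary~\ref{c.4c}, both parts of which are already established. I would add the small remark that connectedness of $G$ is not in fact used, since Corollary~\ref{c.4c} and the definitions of $M(G)$, $B(G^*)$, and $D(G)$ all make sense for an arbitrary ribbon graph; the hypothesis is presumably included only to stay within the usual setting for the Las~Vergnas polynomial.
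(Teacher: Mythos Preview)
Your proof is correct and matches the paper's approach exactly: the paper simply states that the equality ``is immediate from the definition,'' which amounts precisely to invoking Corollary~\ref{c.4c} to identify $D(G)_{\min}=M(G)$ and $D(G)_{\max}=B(G^*)$ and then comparing summands term by term. Your remark that connectedness is not actually used is a valid observation not made explicit in the paper.
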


Just as with the ribbon graph version, $L(D;x,y,z) =T(D;x,y)$ when $D$ is a matroid, and  for any delta-matroid $D$ we have \[(y-1)^{\spn(D)} L(D; x,y,  1/(y-1) ) =T(D;x,y).\] To see why this identity holds, expand and simplify the exponents of $(y-1)^{\spn(D)} L(D; x,y,  1/(y-1) )$, noting that $\spn(D)=r_{D_{\max}}(E) -r_{D_{\min}}(E)$.

\medskip

The chronologically second and  most studied of the three topological graph polynomials in this section is Bollob\'as and Riordan's ribbon graph polynomial of~\cite{BR1,BR2}. Let $G=(V,E)$ be a ribbon graph. Then the \emph{ribbon graph polynomial} or the \emph{Bollob\'as-Riordan polynomial} of $G$, denoted by $R(G;x,y,z,w) \in \mathbb{Z}[x,y,z,w]/ \langle w^2  - w\rangle$,    is defined by
\begin{equation}\label{e.defR}
R(G;x,y,z,w) = \sum_{A \subseteq E}   (x - 1)^{r( E ) - r( A )}   y^{n(A)} z^{\gamma(A)} w^{t(A)} .
\end{equation}
To extend this polynomial to  delta-matroids $D=(E,\mathcal{F})$, first, for $A\subseteq E$, define $t(A)$ by setting $t(A)=0$  if $D|A$ is even, and $t(A)=1$ otherwise. Next observe that, by Lemma~\ref{p.4b}\eqref{p.4b.3} and Proposition~\ref{p.5minor}, we have $\gamma(A)=\gamma(G\ba A^c)= \spn(D(G\ba A^c))= \spn(D(G) | A)$. To simplify notation a little, we let $\spn_D(A):= \spn (D|A)$.
\begin{definition}
Let $D=(E,\mathcal{F})$ be a delta-matroid. Then the \emph{Bollob\'as-Riordan polynomial} $R(D;x,y,z,w) \in \mathbb{Z}[x,y,z,w]/
                  \langle w^2  - w\rangle$, of $D$ is
                  \[   R(D;x,y,z,w) := \sum_{A \subseteq E}   (x - 1)^{r_{D_{\min}}( E ) - r_{D_{\min}}( A )}   y^{n_{D_{\min}}(A)} z^{ \spn_D(A)}   w^{t(A)} .\]
\end{definition}

By construction, the ribbon graph and delta-matroid versions of $R(G)$ coincide, that is, Bollob\'as and Riordan's ribbon graph polynomial is delta-matroidal.
\begin{theorem}\label{t.brdet}
Let $G$ be a  ribbon graph. Then \[R(G;x,y,z,w)= R(D(G);x,y,z,w).\]
\end{theorem}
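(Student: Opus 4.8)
The plan is to prove the identity by comparing the two state sums term by term. Both $R(G;x,y,z,w)$ and $R(D(G);x,y,z,w)$ are sums indexed by the subsets $A$ of $E=E(G)$, so it suffices to show that for each such $A$ the four exponents agree; that is, writing $D:=D(G)$ and $D_{\min}:=D(G)_{\min}$, one must check $r(E)-r(A)=r_{D_{\min}}(E)-r_{D_{\min}}(A)$, $n(A)=n_{D_{\min}}(A)$, $\gamma(A)=\spn_D(A)$, and that the ribbon-graph quantity $t(A)$ equals the delta-matroid quantity $t(A)$.

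First I would dispose of the $(x-1)$- and $y$-exponents. As noted in Section~\ref{sss.st}, the function $r$ on subsets of $E$ coincides with the rank function of the cycle matroid $M(G)$, and by Corollary~\ref{c.ranks}(1) we have $r_{M(G)}=r_{D(G)_{\min}}$; hence $r(A)=r_{D_{\min}}(A)$ for every $A\subseteq E$, which settles the $(x-1)$-exponents. Since $n(A)=e(A)-r(A)=|A|-r(A)$ and $n_{D_{\min}}(A)=|A|-r_{D_{\min}}(A)$, the $y$-exponents agree as well.

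Next comes the $z$-exponent. The spanning ribbon subgraph $(V,A)$ is exactly $G\ba A^c$, so iterating Proposition~\ref{p.5minor}\eqref{p.5minor.1} (deletion of a set of edges being carried out one edge at a time, independently of order) gives $D((V,A))=D(G\ba A^c)=D(G)\ba A^c=D(G)|A$. Applying Proposition~\ref{p.4b}\eqref{p.4b.3} to the ribbon graph $(V,A)$ now yields $\gamma(A)=\gamma((V,A))=\spn(D((V,A)))=\spn(D(G)|A)=\spn_D(A)$. For the $w$-exponent, recall that by definition the ribbon-graph $t(A)$ is $1$ precisely when $(V,A)$ is non-orientable, while the delta-matroid $t(A)$ is $1$ precisely when $D(G)|A$ is odd; by Proposition~\ref{p.4b}\eqref{p.4b.4} the ribbon graph $(V,A)$ is orientable if and only if $D((V,A))=D(G)|A$ is even, so the two meanings of $t(A)$ coincide.

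With all four exponents matched for every $A\subseteq E$, the two state sums are identical as elements of $\mathbb{Z}[x,y,z,w]/\langle w^2-w\rangle$, which proves the theorem. I do not expect a serious obstacle: the content is essentially bookkeeping, and the only point demanding care is the chain of identifications $(V,A)=G\ba A^c$ and $D(G\ba A^c)=D(G)|A$, together with the fact recorded in Section~\ref{sss.st} that the ribbon-graph rank function on subsets literally is the cycle-matroid rank function — once these are in place, Corollary~\ref{c.ranks} and Proposition~\ref{p.4b} do all the work.
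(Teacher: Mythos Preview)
Your proof is correct and is essentially the same as the paper's: the paper states the theorem ``by construction,'' having already noted in the paragraph preceding the definition of $R(D;x,y,z,w)$ that $\gamma(A)=\spn(D(G)|A)$ via Proposition~\ref{p.4b}\eqref{p.4b.3} and Proposition~\ref{p.5minor}, and having defined the delta-matroid $t(A)$ precisely so that Proposition~\ref{p.4b}\eqref{p.4b.4} makes it match the ribbon-graph $t(A)$. You have simply made explicit the termwise comparison (including the $(x-1)$- and $y$-exponents via $D(G)_{\min}=M(G)$) that the paper leaves implicit.
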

Recall from Section~\ref{s4.2} that the isotropic matroid of a ribbon graph $G$ is defined in terms of $G$ and a quasi-tree $Q$ of $G$. In~\cite{Tr15trans}, Traldi, working in the language of transition matroids, showed that $R(G)$ can be determined from $k(G)$, the isotropic matroid of $G$ and the quasi-tree $Q$.
By the discussion following Corollary~\ref{cor:beforetraldi},  the isotropic matroid and a quasi-tree determine $D(G)$, and so  it can  be deduced from Theorem~\ref{t.brdet} that knowledge of $k(G)$ is not needed: $R(G)$ is determined entirely by information in the isotropic matroid and the quasi-tree $Q$.

\begin{remark}
The observation that the Bollob\'as--Riordan polynomial is delta-matroidal helps to explain the form of the deletion--contraction identity for the Bollob\'as--Riordan polynomial. More precisely it helps to explain why there is generally no known deletion--contraction identity when the edge being removed is a loop. The exponents of $x$ and $y$ depend on the rank function of the lower matroid. An orientable non-trivial loop $e$ of a ribbon graph $G$ is not a loop of $D(G)$ but is a loop of $D(G)_{\min}$. This means that $({D(G)/e})_{\min}$ is not generally the same as $(D(G)_{\min})/e$ and moreover
$(D(G)_{\min})/e$ cannot always be recovered from $({D(G)/e})_{\min}$.
\end{remark}

Most of the results on the Bollob\'as-Riordan polynomial in the literature (for example,~\cite{BR2,CP,Detal,EMM,EMM15,ES,KP}) hold not for the full four-variable polynomial but for the normalised two-variable version $x^{\gamma(G)/2}   R_G(x+1,y,1/\sqrt{xy},1)$.
This two-variable version of the polynomial has a particularly natural form when expressed in terms of delta-matroids. Define a  function $\sigma$ on delta-matroids by
$ \sigma(D) :=  \frac{1}{2}( r(D_{\max})  +   r(D_{\min}) )$,
and for $A\subseteq E(D)$,
$ \sigma_D(A) :=   \sigma(D|A)$, omitting the subscript $D$ whenever the context is clear.
We define the  \emph{two-variable Bollob\'as-Riordan polynomial} of a delta-matroid to be
\begin{equation}\label{e.tvbr}
\tilde{R}(D;x,y) :=  \sum_{A\subseteq E}  (x-1)^{\sigma(E)-\sigma(A)}(y-1)^{|A|-\sigma(A)}.
\end{equation}
One immediately notices from \eqref{e.tvbr} that if $D$ is a matroid with rank function $r$, then $\sigma(A) = r(A)$, so  $ \tilde{R}(D;x,y)$ is exactly the Tutte polynomial $T(D;x,y)$. It is also readily verified, using Proposition~\ref{p.rasp}\eqref{p.rasp.1}, that $\tilde{R}(D;x+1,y+1)  =    x^{w(D)/2}   R(D;x+1,y,1/\sqrt{xy},1) $.

It is well-known that the Tutte polynomial of a graph or matroid has a recursive deletion-contraction definition that expresses $T(M)$ as a $\mathbb{Z}[x,y]$-linear combination of Tutte polynomials. Analogously, the two-variable Bollob\'as-Riordan polynomial was shown to have a recursive deletion-contraction definition in~\cite{CMNR}, given in terms of  $R(D;x+1,y,1/\sqrt{xy},1)$, and  in~\cite{KMT}, given in terms of  $\tilde{R}(D;x,y)$. The difference in the two forms is due to the factor $x^{w(E)/2}$. Moreover, Krajewski, Moffatt, and Tanasa showed in~\cite{KMT} that $\tilde{R}(D;x,y)$ is the graph polynomial canonically associated with a natural Hopf algebra generated by delta-matroid deletion and contraction, just as the Tutte polynomial is the polynomial canonically  associated with a Hopf algebra generated by matroid deletion and contraction. Furthermore, $\tilde{R}(D)$ encodes fundamental combinatorial information about $D$.
\begin{theorem}\label{t.evals}
For any delta-matroid $D$, the following hold.
\begin{enumerate}
\item \label{t.evals1} $\tilde R(D;u/v+1,uv+1)$  gives the bivariate generating function of $D$ with respect to number of feasible sets of each size and rank:
\[  v^{\sigma(D)} u^{-w(D)/2}  \tilde R(D;u/v+1,uv+1)  = \sum_{A\subseteq E(D)} v^{|A|} u^{|E(D)|-\rho_D(A)};\]
\item \label{t.evals2} $\tilde R(D^*;x,y)= \tilde R(D;y,x)$;
\item \label{t.evals3} $\tilde{R}(D;1,1)=0$ unless $D$ is a matroid, in which case it equals the number of bases of $D$;

\item \label{t.evals4} $\tilde{R}(D;1,2)$ is the number of independent sets in $D_{\min}$;
\item \label{t.evals5}$\tilde{R}(D;2,1)$ is the number of spanning sets in $D_{\max}$;
\item \label{t.evals6}$\tilde{R}(D;2,2) = 2^{|E(D)|}$.
\end{enumerate}
\end{theorem}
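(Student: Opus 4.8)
The plan is to collapse the whole theorem onto a single identity for $\sigma$. Adding parts~(1) and~(2) of Proposition~\ref{p.rasp} and using $r_{D_{\min}}(A)+n_{D_{\min}}(A)=|A|$ yields
\[ \sigma_D(A)=\tfrac12\bigl(\rho_D(A)+|A|-n_{D_{\min}}(E)\bigr)\qquad\text{for every }A\subseteq E(D). \]
Alongside this I would record, using Lemma~\ref{le:simprank}, that $n_{D_{\min}}(E)=\rho_D(\emptyset)$, $r(D_{\max})=\rho_D(E)$, $r(D_{\min})=|E|-\rho_D(\emptyset)$ and $\sigma(D)=\sigma_D(E)$; in particular $\sigma_D(E)+n_{D_{\min}}(E)=|E|+\tfrac12 w(D)$. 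Everything below is substitution of this identity.

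For part~\eqref{t.evals1}, put $x=u/v+1$, $y=uv+1$ in~\eqref{e.tvbr}; the term indexed by $A$ becomes $u^{\sigma_D(E)+|A|-2\sigma_D(A)}v^{|A|-\sigma_D(E)}$. Multiplying by $v^{\sigma(D)}=v^{\sigma_D(E)}$ makes the exponent of $v$ equal to $|A|$, and substituting the $\sigma$-identity for $2\sigma_D(A)$ and the displayed value of $\sigma_D(E)+n_{D_{\min}}(E)$ makes the exponent of $u$, after multiplying by $u^{-w(D)/2}$, equal to $|E|-\rho_D(A)$; this is exactly the asserted generating function. For part~\eqref{t.evals2}, apply the $\sigma$-identity to $D^*$ together with Lemma~\ref{le:rankdual} ($\rho_{D^*}(A)=\rho_D(A^c)$) to get $\sigma_{D^*}(A)=\sigma_D(A^c)+|A|-\sigma(D)$ and $\sigma(D^*)=|E|-\sigma(D)$. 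Feeding these into $\tilde R(D^*;x,y)$ turns the exponent of $(x-1)$ into $|A^c|-\sigma_D(A^c)$ and that of $(y-1)$ into $\sigma_D(E)-\sigma_D(A^c)$, so reindexing $A\mapsto A^c$ gives $\tilde R(D;y,x)$.

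Parts~\eqref{t.evals3}--\eqref{t.evals6} are evaluations, needing only that both exponents in~\eqref{e.tvbr} are non-negative integers together with a description of when each vanishes. Since every feasible set $F$ has $r(D_{\min})\le|F|\le r(D_{\max})$, we get $|A\btu F|\ge|F|-|A|\ge r(D_{\min})-|A|$ and $|A\btu F|\ge|A|-|F|\ge|A|-r(D_{\max})$; via the $\sigma$-identity these read $|A|-\sigma_D(A)\ge0$ and $\sigma_D(E)-\sigma_D(A)\ge0$. Forcing equality shows $\sigma_D(A)=|A|$ exactly when some minimum-size feasible set of $D$ contains $A$, i.e. $A$ is independent in $D_{\min}$, and dually (via part~\eqref{t.evals2}) $\sigma_D(A)=\sigma_D(E)$ exactly when $A$ is spanning in $D_{\max}$. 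Hence $\tilde R(D;2,2)=\sum_A 1=2^{|E|}$; the evaluations at $(1,2)$ and $(2,1)$ count, respectively, the sets with $\sigma_D(A)=\sigma_D(E)$ and those with $\sigma_D(A)=|A|$, i.e. the spanning sets of $D_{\max}$ and the independent sets of $D_{\min}$; and $\tilde R(D;1,1)$ counts the $A$ satisfying both, which forces $r(D_{\max})\le|A|\le r(D_{\min})$ — impossible when $w(D)>0$, and counting exactly the bases when $w(D)=0$, that is, when $D$ is a matroid.

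The work here is care, not ideas: the exponent arithmetic in parts~\eqref{t.evals1} and~\eqref{t.evals2} must close up precisely (the factors $v^{\sigma(D)}$, $u^{-w(D)/2}$ and the complementation $A\mapsto A^c$ all have to cancel as claimed), and the tightness argument in part~\eqref{t.evals3} must be run correctly — one checks that a feasible set minimising $|A\btu F|$ is forced to be a basis of $D_{\min}$ lying inside $A$ (respectively a basis of $D_{\max}$ containing $A$), so that no set is simultaneously independent in $D_{\min}$ and spanning in $D_{\max}$ unless $r(D_{\min})=r(D_{\max})$. Given the $\sigma$-identity supplied by Proposition~\ref{p.rasp}, nothing else is delicate.
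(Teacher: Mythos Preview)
Your proof is correct and follows essentially the same route as the paper: both arguments rest entirely on Proposition~\ref{p.rasp}, with your version packaging its two parts into the single identity $\sigma_D(A)=\tfrac12(\rho_D(A)+|A|-n_{D_{\min}}(E))$ and then reading off each evaluation. Note that your computation (like the paper's own) shows $\tilde R(D;2,1)$ counts independent sets of $D_{\min}$ and $\tilde R(D;1,2)$ counts spanning sets of $D_{\max}$, so items~\eqref{t.evals4} and~\eqref{t.evals5} are swapped in the statement.
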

\begin{proof}
Part \eqref{t.evals1} follows easily from the definition of $\tilde{R}(D)$  and Proposition~\ref{p.rasp}.

Let $E=E(D)$. Then~\eqref{t.evals2} follows by applying Proposition~\ref{p.rasp} to show that for any subset $A$ of $E$, the difference $\sigma_{D^*}(E)-\sigma_{D^*}(A) =|E-A| - \sigma_{D}(E-A)$.

For \eqref{t.evals3}, $\tilde{R}(D;1,1) =  \sum_{A\subseteq E}  0^{\sigma(E)-\sigma(A)}0^{|A|-\sigma(A)}$.
A term in the sum is non-zero if and only if $\sigma(E)-\sigma(A)=|A|-\sigma(A)=0$.
We have $\sigma(E)=\sigma(A)$ if and only if $r(D_{\max}) - r((D|A)_{\max}) + r(D_{\min})- r((D|A)_{\min})=0$, which occurs if and only if
$r(D_{\max}) = r((D|A)_{\max})$ and  $r(D_{\min}) = r((D|A)_{\min})$. On the other hand, $|A|-\sigma(A)=0$ if and only if $r((D|A)_{\max})= r((D|A)_{\min})=|A|$.

Therefore $\sigma(E)-\sigma(A)=|A|-\sigma(A)=0$ if and only if \[r(D_{\max}) = r((D|A)_{\max}) = r(D_{\min}) = r((D|A)_{\min})= |A|,\] which occurs if and only if $D$ is a matroid and $A$ is a basis of $D$.

For \eqref{t.evals4}, $\tilde{R}(D;2,1) =  \sum_{A\subseteq E}  0^{|A|-\sigma(A)}$. It follows from above that a term in the sum is non-zero
if and only if $r((D|A)_{\max})= r((D|A)_{\min})=|A|$. If $r((D|A)_{\min})=|A|$ then, by Proposition~\ref{p.rasp}, $r_{D_{\min}}(A) = |A|$. Consequently $A$ is independent in $D_{\min}$. On the other hand, if $A$ is independent in $D_{\min}$, then $r((D|A)_{\min})=|A|$, the only feasible set of $D|A$ is $A$, so $r((D|A)_{\max})= r((D|A)_{\min})=|A|$.

Recall that a spanning set $A$ of a matroid $M$, is a subset of $E(M)$ such that $r(A)=r(M)$.
Part~\eqref{t.evals5} follows from Parts~\eqref{t.evals2} and~\eqref{t.evals4}, because the complement of an independent set of a matroid is a spanning set of its dual.

Part~\eqref{t.evals6} is obvious.
\end{proof}

The final polynomial we consider in this section is the Krushkal polynomial of~\cite{Kr11}. This polynomial generalizes the Bollob\'as-Riordan polynomial by adding a parameter that records some information about the geometric dual. Although the Krushkal polynomial is also defined for  non-cellularly embedded graphs, here we restrict to cellularly embedded graphs, or, equivalently, ribbon graphs.
The \emph{Krushkal polynomial} of $G$, denoted by $K(G;x,y,a,b) \in \mathbb{Z}[x,y, a,b]$, is defined by
\begin{equation}
 K(G;x,y,a,b) := \sum_{A \subseteq E( G)}   (x - 1)^{r_G( E ) - r_G( A )}   y^{r_{G^*}(E)-r_{G^*}(A^c) } a^{\gamma_G(A)} b^{\gamma_{G^*}(A^c)}  .
  \end{equation}

We note that the exponent of $a$ is usually written as $k(A) - f(A) + n(A)$, which is equal to $\gamma(A)$ by Euler's formula, and similarly for the $b$ exponent. (An analogous comment holds for the $z$ exponent of the Bollob\'as-Riordan polynomial.) Also note, for comparison with the literature, that the exponents of $a$ and $b$ here are given by the Euler genus, rather than one-half of the Euler genus as in~\cite{Kr11}.

We showed that  $\gamma(A) = \spn_D(A)$ in Proposition~\ref{p.4b}\eqref{p.4b.3}. Using Corollary~\ref{c.gedu}, we have
$\gamma_{G^*}(A^c)=\gamma(G^*\ba A)= \spn(D(G^*\ba A))= \spn(D(G^*)\ba A)= \spn(D(G)^*\ba A) = \spn_{D(G)^*}(A^c)$.
{\sloppy \begin{definition}
Let $D=(E,\mathcal{F})$ be a delta-matroid. Then the \emph{Krushkal polynomial} $K(D;x,y,a,b) \in \mathbb{Z}[x,y,a,b]$, of $D$ is
                  \begin{align*}  \PullBack{K(D;x,y,a,b) }\\ &:=\sum_{A \subseteq E}   (x - 1)^{r_{D_{\min}}( E ) - r_{D_{\min}}( A )}   y^{r_{(D^*)_{\min}}( E ) - r_{(D^*)_{\min}}( A^c )} a^{ \spn_D(A)}  b^{ \spn_{D^*}(A^c)}. \end{align*}
\end{definition}}

We immediately have that the Krushkal polynomial of a ribbon graph is delta-matroidal.
\begin{theorem}
Let $G$ be a  ribbon graph. Then \[K(G;x,y,a,b)= K(D(G);x,y,a,b).\]
\end{theorem}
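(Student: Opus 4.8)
The plan is to show that the two state sums agree term by term. Fix a ribbon graph $G=(V,E)$ and set $D=D(G)$; as in Section~\ref{ss.dual} we identify $E(G)$ with $E(G^*)=E(D^*)$ via the natural correspondence, so that a single index set $2^E$ underlies both $K(G;x,y,a,b)$ and $K(D;x,y,a,b)$. It then suffices to prove that for each $A\subseteq E$ the four exponents — of $(x-1)$, of $y$, of $a$, and of $b$ — coincide in the two definitions. Every ingredient needed is already in place: the identification of the lower and upper matroids of $D(G)$ with cycle matroids (Corollary~\ref{c.4c}), the commutation of geometric duality with twisting (Corollary~\ref{c.gedu}), the commutation of deletion (Proposition~\ref{p.5minor}), and the equality of width with Euler genus (Proposition~\ref{p.4b}\eqref{p.4b.3}).

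First I would treat the two rank exponents. By Corollary~\ref{c.4c}\eqref{c.4c.1}, $D(G)_{\min}=M(G)$, and the function $r_G$ on subsets of $E$ is exactly the rank function of $M(G)$ (Section~\ref{sss.st}); hence $r_G(E)-r_G(A)=r_{D_{\min}}(E)-r_{D_{\min}}(A)$, which is the exponent of $(x-1)$ in $K(D)$. For the exponent of $y$, Corollary~\ref{c.gedu} gives $D^*=D(G)^*=D(G^*)$, and applying Corollary~\ref{c.4c}\eqref{c.4c.1} to $G^*$ yields $(D^*)_{\min}=D(G^*)_{\min}=M(G^*)$; since $r_{G^*}$ is the rank function of $M(G^*)$, we obtain $r_{G^*}(E)-r_{G^*}(A^c)=r_{(D^*)_{\min}}(E)-r_{(D^*)_{\min}}(A^c)$, as required.

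For the exponent of $a$, the discussion immediately preceding the definition of $K(D)$ already records $\gamma_G(A)=\gamma(G\ba A^c)=\spn(D(G\ba A^c))=\spn(D(G)|A)=\spn_D(A)$, which follows from Proposition~\ref{p.4b}\eqref{p.4b.3}, Proposition~\ref{p.5minor}, and the identity $D\ba A^c=D|A$ from Section~\ref{matroidsanddelta-matroids}. For the exponent of $b$, the same discussion gives $\gamma_{G^*}(A^c)=\gamma(G^*\ba A)=\spn(D(G^*)\ba A)=\spn(D(G)^*\ba A)=\spn_{D^*}(A^c)$, using Corollary~\ref{c.gedu} once more. Thus every term of $K(G;x,y,a,b)$ equals the corresponding term of $K(D(G);x,y,a,b)$, and the two polynomials are equal. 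I do not anticipate a genuine obstacle here — the argument is pure bookkeeping within the dictionary built in Sections~\ref{s4} and~\ref{s5} — and the only point demanding care is tracking which sets live in $E(G)$ and which in $E(G^*)$ under the identification, which is precisely the convention that makes both polynomials sum over the same $2^E$.
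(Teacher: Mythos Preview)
Your proof is correct and is exactly the approach the paper takes. The paper does not give an explicit proof of this theorem, stating only ``We immediately have that the Krushkal polynomial of a ribbon graph is delta-matroidal''; the justification is the term-by-term matching of exponents carried out in the paragraph immediately preceding the definition of $K(D)$, which you have spelled out in full.
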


Krushkal observed in~\cite{Kr11} that, when $G$ is a plane graph, $T(G;x,y)=K(G;x,y-1,a,b)$. The analogous result holds for delta-matroids.
Using Equation~\eqref{eq:matrank}, for any matroid $M=(E,\mathcal B)$ and subset $A$ of $E$, we have
\begin{equation}
\label{eq:matnulldual}
 |A| - r_M(A) = r(M^*) - r_{M^*}(E-A).
\end{equation}
When $D$ is a matroid, this equation together with the fact that  $\spn_D(A)=\spn_{D^*}(A^c)=0$ implies that $T(D;x,y)=K(D;x,y-1,a,b)$.

For non-plane graphs, the Tutte polynomial can still be recovered from the Krushkal polynomial using the identity \[T(G;x,y+1) =y^{\gamma(G)/2} K(G;x,y,y^{1/2},y^{-1/2})\] (see~\cite{Bu12,Kr11}).
The Krushkal polynomial, however, contains not only the Tutte polynomial as a specialization, but also the Bollob\'as-Riordan polynomial at $w=1$ (see~\cite{Kr11}), and the Las~Vergnas polynomial (see~\cite{ACEMS11,Bu12}). Each of these results holds in the delta-matroid setting.
\begin{theorem}\label{t.spc}
Let $D$ be a delta-matroid. Then
\begin{enumerate}
\item $T(D;x,y+1) =y^{\spn(D)/2} K(D;x,y,y^{1/2},y^{-1/2})$;
\item $L(D;x,y,z)=  z^{\spn(D)/2} K(D; x,y-1,z^{-1/2},z^{1/2})$;
\item $R(D;x,y,z,1) =y^{\spn(D)/2} K(D;x,y,zy^{1/2},y^{-1/2})$.
\end{enumerate}

\end{theorem}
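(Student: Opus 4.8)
The plan is to prove all three identities by comparing the state sums term by term, since $T(D;x,y)$, $L(D;x,y,z)$, $R(D;x,y,z,w)$ and $K(D;x,y,a,b)$ are all defined as sums indexed by the subsets $A$ of $E=E(D)$. The preparatory step is to build a short dictionary expressing the exponents occurring in $K(D)$ in terms of $\rho_D(A)$, $r_{D_{\min}}(A)$, $r_{D_{\max}}(A)$, $|A|$ and the global constants $r_{D_{\min}}(E)$, $r_{D_{\max}}(E)$, $|E|$. First I would note that the definition of twist gives $(D^*)_{\min}=(D_{\max})^*$ and $(D^*)_{\max}=(D_{\min})^*$, so that, using Equation~\eqref{eq:matrank} and $r(M^*)=|E|-r(M)$,
\[ r_{(D^*)_{\min}}(E)-r_{(D^*)_{\min}}(A^c)=|A|-r_{D_{\max}}(A)=n_{D_{\max}}(A). \]
Next, applying Proposition~\ref{p.rasp}(3) to $D$ and to $D^*$, together with Lemma~\ref{le:simprank} and Lemma~\ref{le:rankdual}, one gets
\[ \spn_D(A)=\rho_D(A)-|E|+r_{D_{\min}}(E)+|A|-2r_{D_{\min}}(A), \]
\[ \spn_{D^*}(A^c)=\rho_D(A)-|E|+r_{D_{\max}}(E)+|A|-2r_{D_{\max}}(A), \]
and $\spn(D)=r_{D_{\max}}(E)-r_{D_{\min}}(E)$.

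With this dictionary, each identity reduces to checking that the exponent of each variable agrees after the stated substitution. For part~(1), the $A$-term on the left is $(x-1)^{r_{D_{\min}}(E)-r_{D_{\min}}(A)}\,y^{n_{D_{\min}}(A)}$, while on the right it is $y^{\spn(D)/2}$ times $(x-1)^{r_{D_{\min}}(E)-r_{D_{\min}}(A)}\,y^{\,n_{D_{\max}}(A)}\,(y^{1/2})^{\spn_D(A)}\,(y^{-1/2})^{\spn_{D^*}(A^c)}$; substituting the dictionary, the $y$-exponent on the right collapses to $|A|-r_{D_{\min}}(A)=n_{D_{\min}}(A)$, as required. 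Part~(3) is the same computation with the extra factor $z^{\spn_D(A)}$ carried along unchanged (and $w=1$ killing $w^{t(A)}$); the $y$-exponent check is literally identical to part~(1). Part~(2) is the analogous bookkeeping with the roles of $y$ and $z$ swapped: the $(y-1)$-exponents match because $r_{(D^*)_{\min}}(E)-r_{(D^*)_{\min}}(A^c)=n_{D_{\max}}(A)$, and after substituting the dictionary the $z$-exponent on the right simplifies to $\spn(D)-\bigl(r_{D_{\max}}(A)-r_{D_{\min}}(A)\bigr)$, which is exactly the $z$-exponent appearing in $L(D;x,y,z)$.

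There is no genuine obstacle here; the whole proof is routine bookkeeping once Proposition~\ref{p.rasp} is in hand. The one place that demands care is the distinction between $r_{D_{\max}}(A)$, the rank of $A$ in the upper matroid $D_{\max}$, and $r\bigl((D|A)_{\max}\bigr)$, the rank of the upper matroid of the restriction $D|A$: these differ in general, and Proposition~\ref{p.rasp} is precisely the tool that expresses $\spn(D|A)=\spn_D(A)$ in terms of $\rho_D(A)$ and $r_{D_{\min}}$ rather than in terms of $r_{D_{\max}}(A)$. It is also worth remarking that the half-integer powers $y^{1/2}$, $z^{1/2}$ on the right-hand sides are to be read in the appropriate extension ring; the identities themselves guarantee that every exponent occurring in a nonzero term is an integer, so no ambiguity arises. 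Finally I would record the sanity check that when $D$ is a matroid all three identities reduce to the matroid specializations stated just above Theorem~\ref{t.spc}, since then $\spn(D)=\spn_D(A)=\spn_{D^*}(A^c)=0$ for every $A$.
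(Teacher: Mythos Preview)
Your proposal is correct and follows essentially the same approach as the paper: a term-by-term comparison of the state sums, using $(D^*)_{\min}=(D_{\max})^*$, Equation~\eqref{eq:matrank}, Lemma~\ref{le:rankdual}, and Proposition~\ref{p.rasp} to translate all exponents into a common language. The only cosmetic differences are that the paper deduces item~(1) from item~(3) via $T(D;x,y+1)=R(D;x,y,1,1)$ rather than checking it directly, and carries out the algebra inline rather than first isolating your ``dictionary'' formulas for $\spn_D(A)$ and $\spn_{D^*}(A^c)$.
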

\begin{proof}
The first item follows from the third item upon noting that $T(D;x,y+1)=R(D;x,y,1,1)$.

For the second item, the exponents of $x$ in each summand on the left-hand and right-hand side agree.
Using Equation~\eqref{eq:matnulldual} and $(D_{\max})^*=(D^*)_{\min}$, we see that the exponents of $y-1$ in each summand on both sides agree.
For the $z$ term, the $z$ exponent of  each summand of $z^{\spn(D)/2} K(D; x,y-1,z^{-1/2},z^{1/2})$ is $\frac{1}{2}( \spn(D)-\spn_D(A)+\spn_{D^*}(A^c) )$.  By  Proposition~\ref{p.rasp},
\begin{align}
\PullBack{ \spn(D)-\spn_D(A)+\spn_{D^*}(A^c)}\nonumber\\
\label{eq.t.spc1} &=   r_{D_{\max}}( E ) - r_{D_{\min}}(E)
 -\rho_D (A) +r_{D_{\min}}(A)+ |E|-r_{D_{\min}}(E)-|A|  \\
&  \phantom{=} {} +r_{D_{\min}}(A)+\rho_{D^*} (A^c) -r_{D^*_{\min}}(A^c)- |E|+r_{D^*_{\min}}(E)+|A^c|- r_{D^*_{\min}}(A^c).\nonumber
\end{align}
By Lemma~\ref{le:rankdual}, $\rho_D (A)=\rho_{D^*} (A^c) $.  Additionally using Equation~\eqref{eq:matrank} and $(D_{\max})^*=(D^*)_{\min}$ we obtain
\[r_{D^*_{\min}}(A)  =  r_{(D_{\max})^*}(A)  =   |A|+r_{D_{\max}}(A^c) -r_{D_{\max}}(E). \]
Substituting these into Equation~\eqref{eq.t.spc1} allows us to rewrite it as
\begin{align*}
\PullBack{ \spn(D)-\spn_D(A)+\spn_{D^*}(A^c)}\\
&=      r_{D_{\max}}( E ) - 2r_{D_{\min}}(E)
  +2r_{D_{\min}}(A)
  -|A| \\
&\phantom{=} {}     -2 |A^c|-2r_{D_{\max}}(A) +2r_{D_{\max}}(E)
    + |E|-r_{D_{\max}}(E)
    +|A^c|\\
&=2(r_{D_{\max}}(E)-r_{D_{\min}}(E)    - r_{D_{\max}}(A)+ r_{D_{\min}}(A)).
\end{align*}
But this is just twice the $z$ exponent of the summands of $L(D;x,y,z)$.

For the third item, it is easy to see that the exponents of $x$ and $z$ on the left-hand and right-hand sides agree.
The exponent of $y$ on the right-hand side is
\[ \frac 12 (w(D) +2r_{(D^*)_{\min}}(E)-2r_{(D^*)_{\min}}(A^c) +w_D(A) - w_{D^*}(A^c)).\]
Using Proposition~\ref{p.rasp} and $(D_{\max})^*=(D^*)_{\min}$, it is  straightforward to show that this is equal to $n_{D_{\min}}(A)$, as required.
\end{proof}
The ribbon graph versions of Theorem~\ref{t.spc} from~\cite{ACEMS11,Bu12,Kr11} can be recovered by taking $D$ to be $D(G)$.

Since the Tutte polynomial can be defined in terms of matroid rank functions, it is interesting to observe that, by Proposition~\ref{p.rasp}, we can express $K(D)$, and therefore $R(D)$, entirely in terms of rank functions associated with $D$.
Let $E=E(D)$.
For $A\subseteq E(D)$, let
\begin{itemize}
\item $K_x(D,A)=r_{D_{\min}}( E ) - r_{D_{\min}}( A )$;
\item $K_y(D,A)=r_{(D^*)_{\min}}( E ) - r_{(D^*)_{\min}}( A^c )$;
\item $K_a(D,A)=\rho_D (A) -r_{D_{\min}}(A)- n_{D_{\min}}(E)+n_{D_{\min}}(A)$;
\item $K_b(D,A)=\rho_{D^*} (A^c) -r_{(D^*)_{\min}}(A^c)- n_{(D^*)_{\min}}(E)+n_{(D^*)_{\min}}(A^c)$.
\end{itemize}
Then
\begin{equation}
\label{kintermsofrank}
K(D;x,y,a,b)=\sum _{A\subseteq E} (x-1)^{K_x(D,A)}y^{K_y(D,A)}a^{K_a(D,A)}b^{K_b(D,A)}.
\end{equation}

The Tutte polynomial of a plane graph satisfies the duality relation: \[T(G; x,y)=T(G^*; y,x).\] This identity is actually matroidal as  $T(M; x,y)=T(M^*; y,x)$, and the result for graphs follows since $M(G^*)=M(G)^*$ when $G$ is a plane graph. Similar duality identities were shown for the Bollob\'as-Riordan polynomial in~\cite{ES11,Mo1}, the Krushkal polynomial in~\cite{Kr11}, and the Las~Vergnas polynomial in~\cite{Las78}. The following theorem shows that each of these duality relations holds on the level of delta-matroids.
\begin{theorem}
\label{KRL}
Let $D$ be a delta-matroid.  Then
\begin{enumerate}
\item $K(D; x,y-1,a,b)=K(D^*; y,x-1,b,a)$;
\item $x^{\spn(D)/2} R(D; x+1, y, 1/\sqrt{xy},1) = y^{\spn(D^*)/2} R(D^*; y+1, x, 1/\sqrt{xy},1)$;
\item $L(D;x,y,z) =  z^{\spn(D)} L(D^*;y,x,z^{-1})$.
\end{enumerate}

\end{theorem}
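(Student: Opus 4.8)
The plan is to prove part~(1) directly from the rank-function expansion of the Krushkal polynomial recorded in Equation~\eqref{kintermsofrank}, and then to deduce parts~(2) and~(3) from part~(1) by feeding it into the specialisations of Theorem~\ref{t.spc}. A preliminary fact I would record first is that $\spn(D^*)=\spn(D)$: twisting by $E$ sends a feasible set $F$ of $D$ to $E-F$, so it exchanges the largest and smallest feasible sets and hence leaves the difference of their sizes unchanged (equivalently, combine Lemmas~\ref{le:simprank} and~\ref{le:rankdual}).

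For part~(1), I would expand both sides using Equation~\eqref{kintermsofrank}. Since that formula reads $K(D;x,y,a,b)=\sum_{A\subseteq E}(x-1)^{K_x(D,A)}y^{K_y(D,A)}a^{K_a(D,A)}b^{K_b(D,A)}$, substituting $y\mapsto y-1$ on the left and $(x,y,a,b)\mapsto(y,x-1,b,a)$ on the right gives $K(D;x,y-1,a,b)=\sum_{A\subseteq E}(x-1)^{K_x(D,A)}(y-1)^{K_y(D,A)}a^{K_a(D,A)}b^{K_b(D,A)}$ and $K(D^*;y,x-1,b,a)=\sum_{A\subseteq E}(y-1)^{K_x(D^*,A)}(x-1)^{K_y(D^*,A)}b^{K_a(D^*,A)}a^{K_b(D^*,A)}$. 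I would then reindex the latter sum by the involution $A\mapsto A^c$ and compare term by term, reducing the identity to checking $K_x(D,A)=K_y(D^*,A^c)$, $K_y(D,A)=K_x(D^*,A^c)$, $K_a(D,A)=K_b(D^*,A^c)$ and $K_b(D,A)=K_a(D^*,A^c)$ for all $A\subseteq E$. Using $(D^*)^*=D$, each of these four equalities is either literally the definition of one of $K_x,K_y,K_a,K_b$ with the roles of $D,D^*$ and of $A,A^c$ interchanged, or follows at once from it; no computation beyond unwinding notation is needed.

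For part~(3), Theorem~\ref{t.spc}(2) applied to $D^*$ gives $L(D^*;y,x,z^{-1})=z^{-\spn(D^*)/2}K(D^*;y,x-1,z^{1/2},z^{-1/2})$; multiplying by $z^{\spn(D)}$ and using $\spn(D^*)=\spn(D)$ gives $z^{\spn(D)}L(D^*;y,x,z^{-1})=z^{\spn(D)/2}K(D^*;y,x-1,z^{1/2},z^{-1/2})$, which by part~(1) with $a=z^{-1/2}$, $b=z^{1/2}$ equals $z^{\spn(D)/2}K(D;x,y-1,z^{-1/2},z^{1/2})=L(D;x,y,z)$, again by Theorem~\ref{t.spc}(2). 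Part~(2) is obtained the same way from Theorem~\ref{t.spc}(3): one checks that $x^{\spn(D)/2}R(D;x+1,y,1/\sqrt{xy},1)=(xy)^{\spn(D)/2}K(D;x+1,y,x^{-1/2},y^{-1/2})$ and, using $\spn(D^*)=\spn(D)$, that $y^{\spn(D^*)/2}R(D^*;y+1,x,1/\sqrt{xy},1)=(xy)^{\spn(D)/2}K(D^*;y+1,x,y^{-1/2},x^{-1/2})$, and these two $K$-values coincide by part~(1) under the substitution $x\mapsto x+1$, $y\mapsto y+1$, $a\mapsto x^{-1/2}$, $b\mapsto y^{-1/2}$.

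The only place where care is needed is with the half-integer and negative exponents: these identities should be read in a ring such as $\mathbb{Z}[x^{\pm 1/2},y^{\pm 1/2},z^{\pm 1/2}]$ (or after clearing denominators), and one must track exactly which variable is plugged into which slot of $K$ in Theorem~\ref{t.spc}. The substantive content is entirely in part~(1), the point being simply that simultaneously dualising the delta-matroid and complementing the subset swaps $K_x\leftrightarrow K_y$ and $K_a\leftrightarrow K_b$; I expect this bookkeeping of exponents to be the only mild obstacle.
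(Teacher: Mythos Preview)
Your proposal is correct and follows essentially the same approach as the paper: prove part~(1) by reindexing the sum via $A\mapsto A^c$ and observing that the exponent functions $K_x,K_y,K_a,K_b$ swap in pairs under simultaneous dualisation and complementation, then derive parts~(2) and~(3) from part~(1) via the specialisations in Theorem~\ref{t.spc}. Your write-up is in fact more explicit than the paper's, which compresses the argument to a couple of sentences; in particular, you spell out the observation $\spn(D^*)=\spn(D)$ and the exact substitutions into Theorem~\ref{t.spc}, both of which the paper leaves implicit.
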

\begin{proof}
The first part can be proven by writing down the sums for the two sides of the equation and observing that summing over all $A\subseteq E$ is the same as summing over all $A^c\subseteq E$. The second and third parts then follow by Theorem~\ref{t.spc}.
\end{proof}
The corresponding duality relations for the ribbon graph versions of the polynomials from~\cite{ES11,Kr11,Las78,Mo1} follow  from the theorem as $D(G^*)=D(G)^*$.

\medskip

We conclude  with an application to knot theory. There is a well-known way to associate a plane graph $G_L$ to an alternating link diagram $L$ such that the  Kauffman bracket $\langle L \rangle$, or Jones polynomial (if the writhe of the link is known), of $L$ can be recovered from the Tutte polynomial of $G_L$ together with knowledge of $k(G_L)$ (see~\cite{This}).
Recently, Dasbach,  Futer, Kalfagianni, Lin and Stoltzfus, in~\cite{Detal}, extended this result to all link diagrams (including those that are not alternating) by describing how a ribbon graph $\mathbb{A}_L$ can be associated with any link diagram $L$. It was also shown in~\cite{Detal} that the Kauffman bracket  and Jones polynomial (again provided the writhe of the link is known) of $L$ can be recovered from the Bollob\'as-Riordan polynomial of $\mathbb{A}_L$ together with knowledge of $k(\mathbb{A}_L)$.

If a link diagram is split, then we can use a sequence of Reidemeister II moves to obtain an equivalent non-split diagram. If we construct  $\mathbb{A}_L$ from this diagram then we know it is a connected ribbon graph, so we no longer need knowledge of $k(\mathbb{A}_L)$.

Recalling that the Tutte polynomial of $G_L$  can be recovered from its cycle matroid $M(G_L)$, this means that the Kauffman bracket of an {\em alternating} link is matroidal in the sense that it can be recovered from a matroid associated with any of its non-split diagrams.  Since the Bollob\'as-Riordan polynomial of $\mathbb{A}_K$ is determined by $D(\mathbb{A}_K)$, this means that in general, the Kauffman bracket can be regarded as a delta-matroidal object.
\begin{theorem}
The Kauffman bracket  of a link is delta-matroidal, in the sense that it is  determined by  delta-matroids associated with non-split link diagrams.
\end{theorem}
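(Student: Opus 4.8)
The plan is to combine the ribbon-graph model of link diagrams due to Dasbach, Futer, Kalfagianni, Lin and Stoltzfus~\cite{Detal} with Theorem~\ref{t.brdet}. Recall from~\cite{Detal} that to any link diagram $L$ one associates a ribbon graph $\mathbb{A}_L$, whose edges correspond to the crossings of $L$, in such a way that the Kauffman bracket $\langle L\rangle$ is recovered from the Bollob\'as--Riordan polynomial $R(\mathbb{A}_L)$ via a fixed substitution of variables, up to a monomial prefactor whose exponents are determined by $k(\mathbb{A}_L)$, $v(\mathbb{A}_L)$ and $e(\mathbb{A}_L)$. So the first step is to reduce to the case in which $L$ is a non-split diagram: this can always be arranged by a sequence of Reidemeister~II moves, which change neither $\langle L\rangle$ nor the underlying link, and for such $L$ the ribbon graph $\mathbb{A}_L$ is connected, so $k(\mathbb{A}_L)=1$.

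Next I would run the following chain of implications. In the non-split case $k(\mathbb{A}_L)=1$ is known a priori, so $\langle L\rangle$ is determined by $R(\mathbb{A}_L)$ together with the parameters $v(\mathbb{A}_L)$ and $e(\mathbb{A}_L)$. But these are themselves recoverable from the delta-matroid $D:=D(\mathbb{A}_L)$: indeed $e(\mathbb{A}_L)=|E(D)|$, and $v(\mathbb{A}_L)=r(D_{\min})+1$ by Proposition~\ref{p.4b}\eqref{p.4b.2} (using $k(\mathbb{A}_L)=1$). Finally, Theorem~\ref{t.brdet} gives $R(\mathbb{A}_L;x,y,z,w)=R(D;x,y,z,w)$, so $R(\mathbb{A}_L)$ is determined by $D$. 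Putting these together, $\langle L\rangle$ is determined by $D(\mathbb{A}_L)$, a delta-matroid attached to the non-split diagram $L$, which is exactly the assertion.

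There is no deep obstacle here; the work is entirely bookkeeping. The points that need care are, first, checking that the passage from an arbitrary diagram to a non-split one is harmless (it is, since Reidemeister~II preserves $\langle\cdot\rangle$ and makes $\mathbb{A}_L$ connected), and second, verifying that no combinatorial data beyond $D(\mathbb{A}_L)$ sneaks into the substitution that turns $R(\mathbb{A}_L)$ into $\langle L\rangle$; by the previous paragraph all the auxiliary exponents are functions of $|E(D)|$ and $r(D_{\min})$, so nothing is lost. One should also note that, because the Kauffman bracket is a diagram invariant rather than a link invariant, the statement is naturally read as: for every link, each of its non-split diagrams $L$ has $\langle L\rangle$ determined by the delta-matroid $D(\mathbb{A}_L)$.
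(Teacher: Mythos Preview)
Your proposal is correct and follows essentially the same route as the paper: invoke the Dasbach--Futer--Kalfagianni--Lin--Stoltzfus result that $\langle L\rangle$ is recovered from $R(\mathbb{A}_L)$ together with $k(\mathbb{A}_L)$, use Reidemeister~II to pass to a non-split diagram so that $\mathbb{A}_L$ is connected and $k(\mathbb{A}_L)=1$, and then apply Theorem~\ref{t.brdet} to conclude that $R(\mathbb{A}_L)$, and hence $\langle L\rangle$, is determined by $D(\mathbb{A}_L)$. Your version is slightly more explicit than the paper's in checking that the auxiliary parameters $v(\mathbb{A}_L)$ and $e(\mathbb{A}_L)$ appearing in the substitution are themselves read off from $D(\mathbb{A}_L)$ via $|E(D)|$ and Proposition~\ref{p.4b}\eqref{p.4b.2}; the paper leaves this implicit.
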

This result also holds for virtual link diagrams by~\cite{CV} and for links in real projective space by~\cite{most}, and, if we know the writhe of the link diagram we can extend both results to the Jones polynomial. Finally, using~\cite{Mo1}, we can show the homfly-pt polynomial of a class of links is delta-matroidal.

\section*{Acknowledgements}
We thank the anonymous referees for a very careful reading. Their comments have significantly improved the presentation of this paper. We also thank Tony Nixon, Sang-il Oum, and Lorenzo Traldi for many helpful suggestions and comments.

\bibliographystyle{amsplain}
\section*{\refname}

\end{document}